\begin{document}
\title{$\scr{A}$-schemes and Zariski-Riemann spaces}
\author{Satoshi Takagi}
\date{}
%\thanks{\textit{Date}: Feb 24, 2011}
\maketitle

\begin{abstract}
In this paper,
we will investigate further properties of $\scr{A}$-schemes
introduced in \cite{Takagi}.
The category of $\scr{A}$-schemes possesses
many properties of the category of coherent
schemes, and in addition,
it is co-complete and complete.
There is the universal compactification, namely,
the Zariski-Riemann space in the category
of $\scr{A}$-schemes.
We compare it with the classical Zariski-Riemann
space, and characterize the latter by a left adjoint.
\end{abstract}

\tableofcontents

\setcounter{section}{-1}
\section{Introduction}
In this paper,
we will investigate further properties of $\scr{A}$-schemes
introduced in \cite{Takagi}.
The first motivation of introducing
$\scr{A}$-schemes was to construct
a scheme-like geometrical object from various kinds of
algebraic systems, such as commutative monoids, semirings,
and etc. However,
it happens to have advantages even in the
case the algebraic system is that of rings.
The category of $\scr{A}$-schemes is 
much more flexible than that of ordinary schemes:
let us list up the properties
of $\scr{A}$-schemes, and compare with ordinary schemes:
\begin{enumerate}
\item Let $\cat{Coh.Sch}$ be the category of coherent schemes
and quasi-compact morphisms.
Then, the category $\cat{$\scr{A}$-Sch}$
of $\scr{A}$-schemes contains 
$\cat{Coh.Sch}$
as a full subcategory.
Also, $\cat{$\scr{A}$-Sch}$
is a full subcategory of the category
$\cat{LRCoh}$ of locally ringed coherent spaces
and quasi-compact morphisms (Proposition \ref{prop:asch:loc}).
\item There is a spectrum functor, and is the
left adjoint of the global section functor
$\Gamma: \cat{$\scr{A}$-Sch}^{\op} \to \cat{Rng}$
(\cite{Takagi}).
\item The inclusion functor $\cat{Coh.Sch} \to \cat{$\scr{A}$-Sch}$
preserves fiber products
(Corollary \ref{cor:preserve:fiber:prod}), and patchings via quasi-compact opens
(Proposition \ref{prop:cocomp:patch}).
\item There is a valuative criterion of separatedness
(Proposition \ref{prop:val:crit:sep})
and that of properness
(Proposition \ref{prop:val:crit:proper}).
\end{enumerate}
These imply that $\scr{A}$-schemes behave
much like ordinary schemes.
By contrast, they have more virtues than
ordinary schemes:
\begin{enumerate}
\setcounter{enumi}{4}
\item The category $\cat{$\scr{A}$-Sch}$
is small co-complete (Proposition \ref{prop:cocomp:patch})
 and small complete (Proposition \ref{prop:asch:comp}).
\textit{We don't need filteredness}.
\item There is a functorial epi-monic
decomposition of morphisms
of $\scr{A}$-schemes
(Theorem \ref{thm:PQ:decomp}). In particular,
we have the `image scheme' for each morphism.
\end{enumerate}
Therefore, we need not
distinguish pro-schemes and ind-schemes from
schemes anymore, if we work out on this category
of $\scr{A}$-schemes.
These properties give us various profits:
\begin{enumerate}
\setcounter{enumi}{6}
\item We can consider quotient $\scr{A}$-schemes
whenever there is a group action on an $\scr{A}$-scheme.
We don't need any additional condition.
\item Formal schemes can be treated 
on the same platform, as $\scr{A}$-schemes
(Example \ref{exam:formal:sch}).
\item We can think of universal `separation' of $\scr{A}$-schemes
(Proposition \ref{prop:sep:functor}).
\item We can think of universal `compactification' of $\scr{A}$-schemes,
namely, the Zariski-Riemann space
(Theorem \ref{thm:ZR:functor}).
The construction is the analog of the Stone-\v{C}ech compactification.
\end{enumerate}
The key of this extension of the category of ordinary schemes
is simple: \textit{to abandon the principal property of schemes},
namely, `locally being a spectrum of a ring'.
It is because this condition forces us only to use finite categorical operation
in the category of ordinary schemes,
and makes it very inconvenient.
In particular, we have to give up Zariski-Riemann spaces in the
category of ordinary schemes,
although it is a fairly nice locally ringed space and there are
various applications. On the other hand,
Zariski-Riemann spaces can be treated equally with
ordinary schemes, when we extend our perspective
to the category of $\scr{A}$-schemes.
Moreover, the construction of Zariski-Riemann spaces
appears to be as natural as the spectrum of a ring.

Let us describe the contents of this paper.
In \S 1, we will prove some properties of
coherent spaces, which we will need later.
In particular, a quasi-compact morphism
of coherent spaces is epic if and only if it is surjective,
and its image is closed if and only if it is
specialization closed.

In \S 2, we will discuss the properties
of the category of $\scr{A}$-schemes,
namely we will prove the co-completeness
and completeness.
The key lemma is the functorical decomposition
of morphisms. This gives us the upper bound
of the cardinality of the set of morphisms
with fixed targets, and hence enables
us to construct limits.
This is the analog of the construction
of co-limits in the category of algebras of various kinds.

In \S 3, we define separated and proper morphisms
of $\scr{A}$-schemes, and give valuative criteria
for separatedness and properness.
Unlike ordinary schemes, we don't have a canonical
morphism $\Spec \scr{O}_{X,x} \to X$
in the category of $\scr{A}$-schemes,
where $x$ is a point of an $\scr{A}$-scheme $X$.
Hence, we had to modify the testing morphisms.
The valuative criteria in the category of ordinary schemes
check the right lifting properties of the commutative square
\[
\xymatrix{
\Spec K \ar[r] \ar[d] & X \ar[d] \\
\Spec R \ar[r] & S
}
\]
where $K$ is an arbitrary field and $R$ is its valuation field.
The left vertical arrow is the `testing morphism'.
In the category of $\scr{A}$-schemes,
we must replace the testing morphisms to
formulate the valuative criteria:
namely, just take the set $\{\xi,\eta\}$ of the generic point and
the closed point of $\Spec R$ with a natural
induced $\scr{A}$-scheme structure.
Once we have the valuative criteria,
we can construct the universal separation
and the universal compactification;
the latter is treated in \S 4.
We note here, that we will not include
`of finite type' condition in the definition of proper
morphisms, for we want to take limits.
We emphasize the fact
that separated morphisms and universally closed morphisms are
closed under taking infinite limits,
while morphisms of finite types are not.

In \S 4, we construct the Zariski-Riemann space
as the universal compactification using the
adjoint functor theorem.
Later, we will also consider `classical Zariski-Riemann
space' on irreducible, reduced $\scr{A}$-schemes,
as it happens to be more easier to analyze than
the previous Zariski-Riemann space.
This construction is the analog of the conventional
one, but with different flavor: \textit{we tried
not to use valuation rings when defining it}.
This reveals the naturalness of the concept
of valuation rings--- that it is as natural as the
concept of `local rings' of spectra of rings.
Also, note that what localization is to the spectrum
is what separated, of finite type morphism is
to the Zariski-Riemann space.
These also imply that the way of 
constructing a 
topological object from rings is not at all
unique---we can consider another
`algebraic geometry'.

Here, we also compare our previous
Zariski-Riemann space with the classical one.
Actually, the topology of the classical Zariski-Riemann
space happens to be coarser:
it is, in a sense, the coarsest possible topology.
This property, which we will call `of profinite type',
characterizes the classical Zariski-Riemann space.
Though the classical Zariski-Riemann space
has a weaker universality, it is valuable
since it gives us concrete descriptions of its
structure and morphisms.
Any separated dominant morphism 
of ordinary integral schemes is of profinite type,
so that they can be embedded into
a universal proper, of profinite type $\scr{A}$-scheme.

In this paper,
we only constructed Zariski-Riemann space
for irreducible, reduced $\scr{A}$-schemes,
since this assumption makes the argument much simpler,
and it will be sufficient for most of the applications.
We believe that it is possible to extend it
to arbitrary $\scr{A}$-schemes
with a little more effort.
We have proved a variant of the Nagata embedding 
(Corollary \ref{cor:vari:nagata}).
The original version of the Nagata embedding
can also be proven, and will be shown in the forthcoming paper.
%Also, we are quite conscious that
%we are approaching near to the Nagata embedding (\cite{Con}).
%However, 
We decided not to prove it here,
since there are various proofs published already
(\cite{Nagata},\cite{Con},\cite{Tem}),
and it takes a little more detailed work
which will make this paper more longer if we include it.
However, the proof is rather natural and intuitive
than the former ones.

We summarized the definition of $\scr{A}$-schemes
at the end of this paper, as an appendix.
This will be sufficient for the reader
to go through this paper, though
he hasn't looked over \cite{Takagi}.

\subsection{Notation and conventions}
The reader is assumed to have
standard knowledge of categorical theories;
see for example, \cite{CWM}, \cite{KS}.
We fix a universe, and
all sets are assumed to be small.
The category of small sets (resp. sober spaces
and continuous maps)
is denoted by $\cat{Set}$ (resp. $\cat{Sob}$).

When we talk of an algebraic system,
all the operators are finitary,
and all the axioms are identities.
Any ring and any monoid is commutative,
and unital.
For a ring (or, other algebras with a structure
of a multiplicative monoid) $R$,
we denote by $R_{S}$ the localization
of $R$ along the multiplicative system $S$ of $R$.

For any set $\mathcal{S}$,
$\scr{P}(\mathcal{S})$ is the power set of $\mathcal{S}$,
and $\scr{P}^{f}(\mathcal{S})$ is the
set of finite subsets of $\mathcal{S}$.

When given an $\scr{A}$-scheme
(or, any topological space with its structure sheaf) $X$,
we denote the underlying space by $|X|$.

We frequently denote finite summations
by $\sum^{<\infty}$, when the range
of the index is not crucial.
The same thing can be said for
the notation $\cup^{<\infty}$ for finite
unions.

\section{Properties of coherent spaces}
In this section, we will
investigate some properties
of sober and coherent spaces.
Recall that a topological space $X$ is \textit{sober},
if every irreducible closed subset of $X$
has a unique generic point.
For a sober space $X$, $C(X)$
is the set of closed subsets of $X$.
This becomes a complete II-ring;
see appendix \S 5 for further details.
A topological space $X$ is \textit{coherent},
if it is sober, quasi-compact, quasi-separated,
and has a quasi-compact open basis.

\subsection{Monic and epic maps}
\begin{Lem}
\label{lem:eq:inj:monic}
Let $\sigma$ be any algebraic system,
and $f:A \to  B$ be a homomorphism
of $\sigma$-algebras.
Then, $f$ is monic if and only if $f$ is injective.
\end{Lem}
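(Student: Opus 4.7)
The easy direction, that injectivity implies monic, is formal: if $g, h : C \to A$ satisfy $f \circ g = f \circ h$, then for each $c \in C$ injectivity of $f$ forces $g(c) = h(c)$, so $g = h$.

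For the converse, the plan is to probe $A$ by free objects. Since $\sigma$ is an algebraic system in the sense of the paper (finitary operators, axioms that are identities), the category of $\sigma$-algebras is a variety of algebras and therefore admits a free $\sigma$-algebra $F = F(\{x\})$ on one generator, with the universal property $\Hom(F, A) \cong |A|$ via evaluation at $x$. Granting this, the argument closes in one step: suppose $f(a) = f(a')$ in $B$, and let $\phi_a, \phi_{a'} : F \to A$ be the unique homomorphisms with $\phi_a(x) = a$ and $\phi_{a'}(x) = a'$. Then $f \circ \phi_a$ and $f \circ \phi_{a'}$ are both the unique homomorphism $F \to B$ sending $x$ to the common value $f(a) = f(a')$, so they agree; monicity of $f$ forces $\phi_a = \phi_{a'}$, and evaluating at $x$ yields $a = a'$.

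A cleaner alternative, should one wish to avoid invoking free objects, is to use that a variety of algebras admits finite limits computed on underlying sets. Form the fibre product $P = A \times_B A$ in $\sigma$-algebras, with projections $p_1, p_2 : P \to A$; by construction $f \circ p_1 = f \circ p_2$, and monicity forces $p_1 = p_2$. Since the underlying set of $P$ is $\{(a,a') \in A \times A : f(a) = f(a')\}$, the equality $p_1 = p_2$ is precisely the assertion that $f$ is injective.

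The only real ingredient either way is the classical construction of free $\sigma$-algebras (equivalently, of finite limits) in a variety, which is standard, e.g.\ by the adjoint functor theorem applied to the forgetful functor to $\cat{Set}$. I therefore do not expect a genuine obstacle; the substance of the proof is essentially one line once the variety-theoretic apparatus is in place.
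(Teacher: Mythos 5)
Your proposal is correct and takes essentially the same approach as the paper: the paper probes $A$ with two maps out of $R_{0}[X]$, the polynomial algebra on one variable over the initial $\sigma$-algebra, which is precisely the free $\sigma$-algebra on one generator you invoke, and the monicity step is identical (the paper merely phrases it contrapositively). Your kernel-pair alternative is also valid, but the main argument matches the paper's.
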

\begin{proof}
The `if' part is clear.

Suppose $f$ is not injective.
Then there are two distinct elements $a_{1},a_{2} \in A$
such that $f(a_{1})=f(a_{2})$.
Let $R_{0}$ be the initial object in $\cat{$\sigma $-alg}$.
Define two homomorphisms $g_{i}:R_{0}[X] \to A$
by $X \mapsto a_{i}$ for $i=1,2$.
Then, $fg_{1}=fg_{2}$ but $g_{1} \neq g_{2}$,
a contradiction.
\end{proof}

In the sequel, let $f:X \to  Y$ be a morphism of sober spaces,
and $f^{\#}:C(Y) \to C(X)$ the corresponding 
homomorphism of complete idealic rings
(for complete idealic rings, see Appendix).

\begin{Prop}
The followings are equivalent:
\begin{enumerate}[(i)]
\item $f$ is injective.
\item $f$ is monic.
\item Any prime element of $C(X)$
is in the image of $f^{\#}$.
\end{enumerate}
\end{Prop}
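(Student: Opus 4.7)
The plan is to prove the equivalence by establishing $(\text{i})\Leftrightarrow(\text{ii})$ together with $(\text{iii})\Rightarrow(\text{i})$ and $(\text{i})\Rightarrow(\text{iii})$. The first pair is formal, the second is a clean soberness argument, and the last is the technically demanding step.

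For $(\text{i})\Leftrightarrow(\text{ii})$ I would follow the template of Lemma~\ref{lem:eq:inj:monic}, with the one-point sober space $\{\ast\}$ playing the role of the free object $R_{0}[X]$. An injective continuous map is trivially monic. Conversely, if $f(x_{1})=f(x_{2})$ with $x_{1}\neq x_{2}$, the two distinct continuous maps $g_{i}\colon\{\ast\}\to X$ sending $\ast\mapsto x_{i}$ are equalized by $f$, contradicting monicness.

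For $(\text{iii})\Rightarrow(\text{i})$: assume every prime of $C(X)$ lies in the image of $f^{\#}$, and suppose $f(x_{1})=f(x_{2})$. By soberness each $\overline{\{x_{i}\}}\in C(X)$ is prime, so we can choose $Z_{i}\in C(Y)$ with $\overline{\{x_{i}\}}=f^{\#}(Z_{i})=f^{-1}(Z_{i})$. From $x_{i}\in f^{-1}(Z_{i})$ we get $f(x_{i})\in Z_{i}$, and using $f(x_{1})=f(x_{2})$ this yields $x_{j}\in f^{-1}(Z_{i})=\overline{\{x_{i}\}}$ for $i\neq j$. Hence $\overline{\{x_{1}\}}=\overline{\{x_{2}\}}$, and soberness of $X$ forces $x_{1}=x_{2}$.

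The main obstacle is $(\text{i})\Rightarrow(\text{iii})$. Given an injective $f$, we must exhibit, for each $x\in X$, a $Z\in C(Y)$ with $f^{-1}(Z)=\overline{\{x\}}$. The natural candidate uses the left adjoint $f_{*}\dashv f^{\#}$ on closed subsets, $f_{*}(W):=\overline{f(W)}$: take $Z:=f_{*}(\overline{\{x\}})$, so that $\overline{\{x\}}\subseteq f^{\#}(Z)$ is immediate from the adjunction unit. The reverse inclusion is the delicate point; a hypothetical point $z\in f^{\#}(Z)\setminus\overline{\{x\}}$ would produce a non-trivial decomposition of the prime element $\overline{\{x\}}$ inside the complete idealic ring $C(X)$, and injectivity of $f$, together with the II-ring axioms recalled in the Appendix, must be used to rule this out. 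This compatibility between set-theoretic injectivity on points and prime-hitting at the level of the II-ring is, I expect, the heart of the argument.
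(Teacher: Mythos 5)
Your handling of (i)$\Leftrightarrow$(ii) and of (iii)$\Rightarrow$(i) is correct and essentially identical to the paper's: the one-point sober space is exactly the test object behind monic~$\Rightarrow$~injective (the paper routes this through Lemma~\ref{lem:eq:inj:monic}, but the argument is the same), and your mutual-specialization argument for (iii)$\Rightarrow$(i) is the paper's, except that the paper first normalizes the witness to $Z=\overline{\{f(x)\}}$, observing that if $\overline{\{x\}}$ lies in $\Imag f^{\#}$ at all then necessarily $f^{-1}(\overline{\{f(x)\}})=\overline{\{x\}}$.

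The genuine gap is (i)$\Rightarrow$(iii): you set up the correct candidate $Z=\overline{\{f(x)\}}$ and the trivial inclusion $\overline{\{x\}}\subseteq f^{-1}(Z)$, but the reverse inclusion is never proved --- you only assert that injectivity and the II-ring axioms ``must'' rule out a point $z\in f^{-1}(Z)\setminus\overline{\{x\}}$. This step cannot be completed, because injectivity does not imply it for sober spaces. Take $X=\{a,b\}$ discrete, $Y=\{\eta,s\}$ the Sierpi\'{n}ski space with $\{s\}$ closed and $\eta$ generic, and $f(a)=\eta$, $f(b)=s$: this $f$ is continuous and injective, yet $\Imag f^{\#}=\{\emptyset,\{b\},X\}$, so the prime element $\{a\}$ of $C(X)$ is not hit; concretely $f^{-1}(\overline{\{f(a)\}})=f^{-1}(Y)=X\neq\overline{\{a\}}$. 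What injectivity does give is only that the irreducible component of $f^{-1}(\overline{\{f(x)\}})$ \emph{containing} $x$ equals $\overline{\{x\}}$: its generic point $w$ satisfies $f(w)=f(x)$, since $f(w)$ and $f(x)$ are then mutual specializations and $Y$ is sober, whence $w=x$; nothing constrains the other components (the component $\{b\}$ in the example). You should be aware that the paper's own proof of this direction has the same defect: it takes $w$ to be the generic point of an \emph{arbitrary} irreducible component of $f^{-1}(\overline{\{f(x)\}})$ and asserts $f(w)=f(x)$ without justification, which the example above contradicts. In fact condition (iii) is strictly stronger than injectivity --- it says precisely that $f$ reflects specializations --- so the step you rightly singled out as delicate is exactly where the claimed equivalence breaks down; only (i)$\Leftrightarrow$(ii) and (iii)$\Rightarrow$(i) survive as stated.
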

\begin{proof}
(i)$\Leftrightarrow$(ii) follows from Lemma \ref{lem:eq:inj:monic}.

(i)$\Rightarrow$(iii):
Let $x$ be a point of $X$.
It suffices to show that $f^{-1}(\overline{f(x)})=\overline{\{x\}}$.
Let $w \in f^{-1}(\overline{f(x)})$ be the generic point of
an irreducible component of $f^{-1}(\overline{f(x)})$.
Then we have $f(w)=f(x)$.
Since $f$ is injective, $w=x$.
This shows that $f^{-1}(\overline{f(x)})=x$.

(iii)$\Rightarrow$(i):
Note that
$\Spec C(X) \subset \Imag f^{\#}$
shows that $\overline{\{x\}}=f^{-1}(\overline{f(x)})$
for any point $x$ of $X$.
If $f(x)=f(x^{\prime})$
for two points of $X$, then
\[
\overline{\{x\}}=f^{-1}(\overline{f(x)})=f^{-1}(\overline{f(x^{\prime})})
=\overline{\{x^{\prime}\}}.
\]
Since $X$ is sober, $x$ coincides $x^{\prime}$.
\end{proof}

\begin{Prop}
\label{prop:coh:surj:homeo}
The followings are equivalent:
\begin{enumerate}[(i)]
\item $f^{\#}$ is surjective.
\item $\Imag f$ is homeomorphic to $X$.
\end{enumerate}
\end{Prop}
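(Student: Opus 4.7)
The plan is to rephrase both conditions in terms of the natural corestriction $\bar{f}: X \to \Imag f$, where $\Imag f$ carries the subspace topology inherited from $Y$. I expect surjectivity of $f^{\#}$ to correspond precisely to $\bar{f}$ being a continuous closed bijection, i.e.\ a homeomorphism.

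For (i)$\Rightarrow$(ii), I would first observe that surjectivity of $f^{\#}$ trivially implies condition (iii) of the preceding proposition: every closed subset of $X$, and in particular every prime element of $C(X)$, lies in the image of $f^{\#}$. Hence by that proposition, $f$ is injective, so $\bar{f}$ is already a continuous bijection. It then remains to verify that $\bar{f}$ is a closed map. Given any closed $C \subseteq X$, use the surjectivity of $f^{\#}$ to produce a closed $Z \subseteq Y$ with $f^{-1}(Z) = C$; then the elementary set-theoretic identity
\[
\bar{f}(C) = f(f^{-1}(Z)) = Z \cap \Imag f
\]
exhibits $\bar{f}(C)$ as a closed subset of $\Imag f$, completing the argument.

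For (ii)$\Rightarrow$(i), I would use that the closed subsets of $\Imag f$ in the subspace topology are precisely the sets of the form $Z \cap \Imag f$ with $Z \in C(Y)$. Given an arbitrary $C \in C(X)$, transport it along the assumed homeomorphism $\bar{f}$ to obtain such a $Z$. Then
\[
f^{\#}(Z) = f^{-1}(Z) = \bar{f}^{-1}(Z \cap \Imag f) = C,
\]
so every closed subset of $X$ lies in the image of $f^{\#}$.

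I do not anticipate any serious obstacle: the whole argument is essentially a routine point-set manipulation, with the only non-trivial input being the previous proposition, invoked to upgrade surjectivity of $f^{\#}$ to injectivity of $f$ (so that $\bar{f}$ is a bijection to begin with). The one point requiring care is keeping track of the distinction between $\Imag f$ as a bare set and as a topological subspace of $Y$, since the whole equivalence really hinges on the identification of closed sets in the subspace topology with intersections $Z \cap \Imag f$.
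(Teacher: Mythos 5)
Your proof is correct and follows essentially the same route as the paper: both arguments reduce the statement to showing that the corestriction is an injective closed map, using the preceding proposition to get injectivity from (i) and the identity $f(f^{-1}(Z)) = Z \cap \Imag f$ for closedness. The only cosmetic difference is that you obtain injectivity via condition (iii) of that proposition (trivially implied by surjectivity of $f^{\#}$), whereas the paper passes through the categorical chain ``$f^{\#}$ surjective $\Rightarrow$ $f^{\#}$ epic $\Rightarrow$ $f$ monic $\Rightarrow$ $f$ injective''; both are equally valid appeals to the same result.
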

\begin{proof}
(i)$\Rightarrow$(ii):
Since $f^{\#}$ is surjective,
it is epic, hence $f$ is monic.
This shows that $f$ is injective.
The surjectivity of $f^{\#}$ shows that
$f^{-1}(\overline{f(z)}\cap \Imag f)=z$
for any closed subset $z$ of $X$.
Hence $\overline{f(z)}\cap \Imag f=f(z)$,
which implies $f(z)$ is closed in $\Imag f$.

(ii)$\Rightarrow$(i):
Since $f$ is injective,
$f^{-1}(f(z))=z$ for any closed subset $z$ of $X$.
On the other hand, $f(z)$ is closed in $\Imag f$
since $f$ is homeomorphic onto the image.
Therefore, $f^{-1}(\overline{f(z)})=z$,
which implies $f^{\#}$ is surjective.
\end{proof}

Next, we will investigate the
condition when a morphism $f:X \to Y$
of sober spaces becomes epic.
Let $\cat{IIRng$^{\dagger}$}$
be the catogory of complete idealic semirings
(see Appendix).
First, note that the functor
$C:\cat{Sob}^{\op} \to \cat{IIRng$^{\dagger}$}$
is fully faithful,
since it is the right adjoint and left inverse of $\Spec$
(\cite{Takagi}).
An object $R$ of $\cat{IIRng$^{\dagger}$}$
is \textit{spatial}, if $R$ is isomorphic to $C(X)$
for some sober space $X$.

\begin{Rmk}
There exists some non-spatial complete II-rings.
Let $C(\RR)$ be the complete II-rings
of closed subsets of the real line with the standard topology.
Let $R=C(\RR)/\equiv$ be the quotient complete II-ring,
where $\equiv$ is the congruence generated by
$Z=\overline{Z^{o}}$ for any $Z$,
where $Z^{o}$ is the open kernel of $Z$.
Then, $R$ is a non-trivial II-ring,
but has no points; see \cite{Steven}.
\end{Rmk}

\begin{Lem}
\label{lem:equiv:epi:sob:inj}
Let $R$ be an object of
$\cat{IIRng$^{\dagger}$}$,
and $R[t]$ be a polynomial complete idealic semiring
with idempotent multiplication.
\begin{enumerate}
\item An element of $R[t]$
can be expressed by $a+bt$,
where $a,b$ are elements of $R$
and $a \leq b$.
\item A prime element $p$ of $R[t]$
is either of the following:
\begin{enumerate}[(a)]
\item $p=a+bt$, where $a,b$ are prime elements of $R$
and $a \leq b$.
\item $p=a+t$, where $a$ is a prime element.
\end{enumerate}
\item If an object $R$ of $\cat{IIRng$^{\dagger}$}$
is spatial, then so is $R[t]$.
\item In particular,
a morphism $f:X \to Y$ of sober spaces
is epic if and only if $f^{\#}:C(Y) \to C(X)$
is monic.
\end{enumerate}
\end{Lem}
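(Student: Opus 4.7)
Parts (1)--(3) are preparation for the main content in part (4). Part (1) provides a concrete normal form for elements of $R[t]$, part (2) classifies its primes, and part (3) exhibits $R_{0}[t]$ (where $R_{0}$ is initial in the category of complete II-rings) as a spatial object; together, these identify $R_{0}[t]$ as a ``classifier for elements'' of an arbitrary complete II-ring. Part (4) then follows by transferring between the algebraic and topological sides through the fully faithful functor $C$.

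For (1), I would use idempotence of multiplication (so $t^{n}=t$ for $n\geq 1$) to collapse any polynomial to $c+dt$. Since $1$ is the top of the underlying lattice, $ct\leq c$, and we may absorb $ct$ freely to write $c+dt=c+ct+dt=c+(c+d)t$, obtaining the form $a+bt$ with $a=c$ and $b=c+d\geq a$. For (2), I would test the primality of $p=a+bt$ against two families of witnesses: first, the constant embeddings $x\mapsto x+xt$, giving $(x+xt)(y+yt)=xy+xyt\leq p$ whenever $xy\leq a$, which forces $x\leq a$ or $y\leq a$, so $a$ is prime in $R$; second, the ``pure $t$'' elements $0+yt$, giving $(0+xt)(0+yt)=0+xyt\leq p$ whenever $xy\leq b$, which forces $x\leq b$ or $y\leq b$, so either $b=1$ (case (b)) or $b$ is prime (case (a)).

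For (3), assuming $R=C(X)$ with $X$ sober, I would exhibit an isomorphism $R[t]\cong C(X\times S)$, where $S$ is the Sierpinski space (with $0$ closed and $1$ open). Closed subsets of $X\times S$ correspond bijectively to pairs of slices $(C_{0},C_{1})$ with $C_{0},C_{1}$ closed in $X$ and $C_{1}\subseteq C_{0}$; the assignment $a+bt\leftrightarrow(b,a)$ preserves addition componentwise, and for multiplication the inequalities $a\leq b$ and $a'\leq b'$ collapse the cross terms in $(a+bt)(a'+b't)$ to give $aa'+bb't$, matching $(b,a)\cdot(b',a')=(bb',aa')$ on the topological side.

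For (4), the direction ``$f^{\#}$ monic $\Rightarrow$ $f$ epic'' follows at once from Lemma \ref{lem:eq:inj:monic} (monic equals injective) together with the full faithfulness of $C$. For the converse I would argue by contrapositive: suppose $f^{\#}$ is not injective, so there exist $y_{1}\neq y_{2}$ in $C(Y)$ with $f^{\#}(y_{1})=f^{\#}(y_{2})$. By the universal property of $R_{0}[t]$ these correspond to two distinct homomorphisms $h_{1},h_{2}\colon R_{0}[t]\to C(Y)$ that agree after post-composition with $f^{\#}$. By (3), $R_{0}[t]\cong C(S)$ is spatial, so full faithfulness of $C$ converts the $h_{i}$ into two distinct morphisms $g_{1},g_{2}\colon Y\to S$ in $\cat{Sob}$ with $g_{1}f=g_{2}f$, contradicting the assumed epic-ness of $f$. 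The chief obstacle sits in exactly this last step: epic-ness of $f$ \emph{a priori} tests monic-ness of $f^{\#}$ only against spatial objects, whereas Lemma \ref{lem:eq:inj:monic} requires monic-ness against all complete II-rings; parts (1)--(3) are engineered precisely to produce the spatial classifier $C(S)$ through which any pair of distinct elements of $C(Y)$ can be witnessed, closing the gap.
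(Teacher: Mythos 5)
Your proposal is correct and takes essentially the same approach as the paper: the paper's own (very terse) proof uses the same normal form $a+bt$, the same primality tests for $a$ and $b$ (its hint ``$xt\cdot yt=xyt$'' is exactly your second family of witnesses), and for (4) precisely your argument that distinct elements of $C(Y)$ give distinct homomorphisms $\FF_{1}[t]\to C(Y)$, which spatiality of $\FF_{1}[t]$ and full faithfulness of $C$ convert into distinct maps $Y\to S$ equalized by $f$. One small correction to your (3): since addition in $C(X)$ is \emph{intersection}, the order $\leq$ is reverse inclusion, so the normal form $a+bt$ with $a\leq b$ corresponds to the pair of slices $(C_{0},C_{1})=(a,b)$ rather than $(b,a)$.
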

\begin{proof}
\begin{enumerate}
\item Easy.
\item Let $p=a+bt$ be a prime element.
It is easy to see that $a \in R$ must be prime.
Also, $b$ must be prime or $1$,
since $xt \cdot yt=xyt$.
\item Easy.
\item The `if' part is obvious,
since $\cat{Sob}^{\op}$ is can be regarded
as a full subcategory of $\cat{IIRng$^{\dagger}$}$
via $C$.
If $f^{\#}$ is not monic,
we have two distinct morphisms $g,h:\FF_{1}[t] \to C(Y)$
such that $f^{\#}g=f^{\#}h$,
where $\FF_{1}$ is the initial object of $\cat{IIRng$^{\dagger}$}$.
But since $\FF_{1}[t]$
is spatial, we conclude that $f$ is not epic.
\end{enumerate}
\end{proof}

\begin{Prop}
\label{prop:epic:top}
The followings are equivalent:
\begin{enumerate}
\item $f$ is epic.
\item $f^{\#}$ is injective.
\item $f^{\#}$ is monic (in $\cat{IIRng$^{\dagger}$}$).
\item $\Imag f \cap z$ is dense in $z$,
for any closed subset $z$ of $Y$.
\end{enumerate}
\end{Prop}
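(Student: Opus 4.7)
The plan is to notice that three of the four equivalences are essentially formal consequences of earlier results in the excerpt, and only the equivalence between the algebraic condition $(2)$ and the topological condition $(4)$ requires genuine argument.

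First I would dispose of the easy links. The equivalence $(2)\Leftrightarrow(3)$ is immediate from Lemma \ref{lem:eq:inj:monic}, since $\cat{IIRng$^{\dagger}$}$ is a category of algebras over a finitary algebraic system: monic and injective coincide there. The equivalence $(1)\Leftrightarrow(3)$ is exactly part (4) of Lemma \ref{lem:equiv:epi:sob:inj}, which states that $f$ is epic in $\cat{Sob}$ if and only if $f^{\#}$ is monic in $\cat{IIRng$^{\dagger}$}$. So it remains only to relate $(2)$ to $(4)$.

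The key observation for both implications is that for any closed subset $z\subseteq Y$, one has the identity
\[
\Imag f \cap z \;=\; f\bigl(f^{-1}(z)\bigr),
\]
which is a straightforward set-theoretic check.

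For $(2)\Rightarrow(4)$: let $z$ be closed in $Y$ and let $z'=\overline{\Imag f \cap z}$, the closure in $Y$. Then $z'\subseteq z$, and by the observation above $f^{-1}(z')\supseteq f^{-1}(\Imag f\cap z)=f^{-1}(z)$, while $f^{-1}(z')\subseteq f^{-1}(z)$ since $z'\subseteq z$. Hence $f^{\#}(z')=f^{\#}(z)$, and injectivity of $f^{\#}$ gives $z=z'$, i.e.\ $\Imag f\cap z$ is dense in $z$.

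For $(4)\Rightarrow(2)$: suppose $z_{1},z_{2}\in C(Y)$ satisfy $f^{\#}(z_{1})=f^{\#}(z_{2})$, and let $W=f^{-1}(z_{1})=f^{-1}(z_{2})$. By the observation, $\Imag f\cap z_{1}=f(W)=\Imag f\cap z_{2}$. Applying $(4)$ to $z_{1}$ and to $z_{2}$, both $z_{1}$ and $z_{2}$ equal $\overline{f(W)}$, so $z_{1}=z_{2}$; thus $f^{\#}$ is injective.

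I do not anticipate any serious obstacle: the only subtle point is the correct formulation of $(4)$, namely that we must test density on \emph{every} closed subset $z$ (not merely on $Y$ itself) in order to separate pairs of closed sets with equal preimage.
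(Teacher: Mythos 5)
Your proposal is correct and follows essentially the same route as the paper: the equivalences (1)$\Leftrightarrow$(2)$\Leftrightarrow$(3) are deduced from Lemma \ref{lem:eq:inj:monic} and Lemma \ref{lem:equiv:epi:sob:inj}, and the equivalence (2)$\Leftrightarrow$(4) is proved via the identity $f(f^{-1}(z))=\Imag f\cap z$ together with the closure argument, exactly as in the paper. Your write-up merely makes explicit the verification $f^{\#}(z')=f^{\#}(z)$ that the paper leaves implicit, which is a harmless (indeed welcome) addition.
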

\begin{proof}
(i)$\Leftrightarrow$(ii)$\Leftrightarrow$(iii) is a consequence of 
Lemma \ref{lem:eq:inj:monic} and \ref{lem:equiv:epi:sob:inj}.

(ii)$\Rightarrow$(iv):
For any closed subset $z$ of $X$,
let $z^{\prime}$ be the closure
of $\Imag f \cap z$.
Then we have $f^{\#}(z^{\prime})=f^{\#}(z)$.
Since $f^{\#}$ is injective, we have
$z^{\prime}=z$, hence the result follows.

(iv)$\Rightarrow$(ii):
Suppose $f^{\#}(z)=f^{\#}(z^{\prime})$
for some closed subsets $z,z^{\prime}$ of $Y$.
The equation $ff^{-1}(z)=\Imag f \cap z$
induces
\[
z=\overline{\Imag f \cap z}=\overline{\Imag f \cap z^{\prime}}
=z^{\prime}.
\]
\end{proof}
%\begin{Prop}
%The followings are equivalent:
%\begin{enumerate}[(i)]
%\item $f$ is surjective.
%\item $f^{\#}$ is injective, and
%for any irreducible closed subset $z$ of $Y$,
%$z \geq (f^{\#})^{-1}(w)$
%for some minimal prime $w$ of $f^{\#}(Z)$.
%\end{enumerate}
%\end{Prop}
%\begin{proof}
%(i)$\Rightarrow$(ii):
%It is already proven that $f^{\#}$ is injective.
%Let $z$ be any irreducible closed subset of $Y$.
%Since $f$ is surjective,
%$f^{\#}(w)=z$ for some $w$.

%(ii)$\Rightarrow$(i):
%\[
%z \geq (f^{\#})^{-1}(w) \geq (f^{\#})^{-1}f^{\#}(z) \geq z,
%\]
%which shows that $z=(f^{\#})^{-1}(w)$.
%\end{proof}
\subsection{Coherent spaces}
\begin{Prop}
Let $f:X \to Y$ be an epimorphism of sober spaces,
and $X$ be noetherian.
Then $f$ is surjective.
\end{Prop}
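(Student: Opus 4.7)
My plan is to combine the density-of-image characterization of epimorphisms from Proposition \ref{prop:epic:top}(iv) with the fact that a noetherian space has only finitely many irreducible components in any closed subspace, and then finish by soberness.

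Fix a point $y \in Y$; I need to show $y \in \Imag f$. Take $z = \overline{\{y\}}$ in Proposition \ref{prop:epic:top}(iv): since $f$ is epic, $\Imag f \cap \overline{\{y\}}$ is dense in $\overline{\{y\}}$. On the source side, $f^{-1}(\overline{\{y\}})$ is a closed subset of the noetherian space $X$, so it has only finitely many irreducible components, say $W_{1},\dots,W_{n}$. Since $X$ is sober, each $W_{i}$ has a unique generic point $w_{i}$, i.e.\ $W_{i}=\overline{\{w_{i}\}}$.

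Now I compute both sides. The image $\Imag f \cap \overline{\{y\}} = f(f^{-1}(\overline{\{y\}})) = \bigcup_{i=1}^{n} f(W_{i})$, so taking closures and using the density statement gives
\[
\overline{\{y\}} \;=\; \overline{\bigcup_{i=1}^{n} f(W_{i})} \;=\; \bigcup_{i=1}^{n}\overline{f(W_{i})}.
\]
Since $\overline{\{y\}}$ is irreducible, at least one summand—say $\overline{f(W_{i})}$—must equal $\overline{\{y\}}$. Using continuity of $f$ together with the fact that $W_{i}=\overline{\{w_{i}\}}$, a routine two-inclusion check gives $\overline{f(W_{i})}=\overline{\{f(w_{i})\}}$. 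Hence $\overline{\{f(w_{i})\}}=\overline{\{y\}}$, and the soberness of $Y$ forces $f(w_{i})=y$. Thus $y \in \Imag f$, and $f$ is surjective.

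The only nontrivial ingredient is step where I pass from ``image is dense'' to ``image contains $y$'', and this is exactly where noetherianness is used: without the finiteness of the decomposition into irreducible components I could not transfer the irreducibility of $\overline{\{y\}}$ back to a single component $W_{i}$ whose generic point maps to $y$. Everything else is just the interplay between soberness, continuity and closure.
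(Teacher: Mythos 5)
Your proposal is correct and follows essentially the same route as the paper's proof: apply the epimorphism characterization (Proposition \ref{prop:epic:top}(iv)) to $z=\overline{\{y\}}$, decompose $f^{-1}(\overline{\{y\}})$ into finitely many irreducible components using noetherianness, and conclude via soberness that some generic point maps to $y$. You merely spell out the final step (closure of a finite union, irreducibility forcing one component's image to be dense, uniqueness of generic points) that the paper leaves implicit.
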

\begin{proof}
Let $y$ be any point of $Y$, and
$Z=\overline{\{y\}}$ be the irreducible subset
corresponding to $y$.
Since $X$ is noetherian,
$f^{-1}(Z)$ can be covered by a finite number
of irreducible closed subsets:
$f^{-1}(Z)=\cup_{i}^{<\infty}W_{i}$.
Let $\xi_{i}$ be the generic point of $W_{i}$ for each $i$.
Since the image of $f$ is dense in $Z$
and $Z$ is irreducible, at least one
of the $\xi_{i}$'s must be mapped to $y$.
\end{proof}

The proof of the next theorem
requires some preliminaries
on ultrafilters (see \cite{CN}, for example).
The reader who knows well may skip
and go on to the next theorem.
\begin{Def}
Let $\mathcal{S}$ be a non-empty set.
\begin{enumerate}
\item A \textit{filter} $\scr{F}$ on $\mathcal{S}$
is a non-empty subset of $\scr{P}(\mathcal{S})$ satisfying:
\begin{enumerate}[(i)]
\item $\emptyset \notin \scr{F}$. 
\item If $A \in \scr{F}$ and $A \subset B$, then $B \in \scr{F}$.
\item If $A$ and $B$ are elements of $\scr{F}$,
then $A \cap B \in \scr{F}$.
\end{enumerate}
The set of filters becomes a poset by inclusions.
\item A maximal filter with respect to inclusions is called an \textit{ultrafilter}.
\end{enumerate}
\end{Def}
A filter $\scr{U}$ is an ultrafilter if and only if
$a \in \scr{U}$ or $a^{c} \in \scr{U}$ for any subset $a$ of $\mathcal{S}$.
Also, note that exactly one of $a$ or $a^{c}$ is in $\scr{U}$.

For any $s \in \mathcal{S}$,
\[
\scr{U}_{s}=\{ a \subset \mathcal{S} \mid s \in a\}
\]
becomes an ultrafilter, which is called \textit{principal}.
An ultrafilter is principal, if and only if it contains a finite subset of $\mathcal{S}$.

Let $\scr{S}$ be a subset of $\scr{P}(\mathcal{S})$
satisfying
\begin{enumerate}[(i)]
\item $\emptyset \notin \scr{S}$.
\item If $A$  and $B$ are elements of $\scr{S}$,
then $A \cap B \in \scr{S}$.
\end{enumerate}
Then there exists a ultrafilter containing $\scr{S}$,
using the axiom of choice.

A filter $\scr{F}$ on a non-empty set is \textit{prime},
if $a \cup b \in \scr{F}$ implies either $a \in \scr{F}$
or $b \in \scr{F}$. One can easily prove
that the notion of prime filters is equivalent to 
that of ultrafilters.

\begin{Lem}
\label{lem:coprod:ultrafilter}
Let $X=\{x_{\lambda}\}_{\lambda}$ be a set, and
$X_{\lambda}=\{x_{\lambda}\}$
be one-pointed spaces, regarded as coherent spaces.
Let $X_{\infty}=\amalg_{\lambda} X_{\lambda}$
be the coproduct of $X_{\lambda}$'s in the category
of coherent spaces.
Then, any point of $X_{\infty}$
corresponds to a ultrafilter on $X$.
\end{Lem}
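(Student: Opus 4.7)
The plan is to translate the problem to the dual side using the fully faithful functor $C:\cat{Sob}^{\op} \to \cat{IIRng$^{\dagger}$}$, and then reduce to the classical description of ultrafilters as characters on the Boolean algebra $\scr{P}(X)$.

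First, I would set up the duality. A point of the sober space $X_{\infty}$ is the same as a continuous map $\mathrm{pt} \to X_{\infty}$ from the one-point coherent space $\mathrm{pt}$. Via the fully faithful functor $C$, this corresponds bijectively to a morphism $\varphi:C(X_{\infty}) \to C(\mathrm{pt})=\FF_{1}$ in $\cat{IIRng$^{\dagger}$}$, where $\FF_{1}$ is the two-element complete idealic semiring (the initial object of $\cat{IIRng$^{\dagger}$}$).

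Second, I would compute $C(X_{\infty})$. As $C$ is a right adjoint it preserves limits, and the coproduct $X_{\infty}$ in the category of coherent spaces is dual to a product in the spatial part of $\cat{IIRng$^{\dagger}$}$. Hence $C(X_{\infty}) \cong \prod_{\lambda} C(X_{\lambda}) = \prod_{\lambda}\FF_{1}$, which is canonically identified with the power set $\scr{P}(X)$: a subset $S\subset X$ corresponds to its characteristic family $(\chi_{S}(x_{\lambda}))_{\lambda}$. Under this identification the II-ring operations are union (addition) and intersection (multiplication), both idempotent, and $0,1$ are $\emptyset, X$ respectively. Now I would match morphisms $\varphi:\scr{P}(X)\to\FF_{1}$ in $\cat{IIRng$^{\dagger}$}$ with ultrafilters on $X$. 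Put $\scr{U}=\varphi^{-1}(1)$. The identities $\varphi(\emptyset)=0$, $\varphi(A\cap B)=\varphi(A)\varphi(B)$, and $\varphi(A\cup B)=\varphi(A)+\varphi(B)$ force $\scr{U}$ to avoid $\emptyset$, to be upward closed (since $A\subset B$ gives $A\cup B=B$), to be closed under finite intersections, and to be prime. By the equivalence of prime filters and ultrafilters recalled above, $\scr{U}$ is an ultrafilter; conversely, the indicator function of an ultrafilter is readily checked to define an II-ring morphism to $\FF_{1}$, giving the two-sided inverse.

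The hard part will be the second step, namely the explicit identification $C(X_{\infty})\cong \scr{P}(X)$. One must verify both that $\scr{P}(X)$ with the stated II-ring structure is spatial---so that $\Spec \scr{P}(X)$ is a legitimate coherent space---and that this spectrum represents the coproduct in the category of coherent spaces, not merely in $\cat{Sob}$ (where the coproduct is the discrete space on $X$, which fails to be quasi-compact, hence non-coherent, when $X$ is infinite). Once this spatiality and the duality statement for the coproduct are established, the ultrafilter correspondence follows formally from the Boolean-algebra computation of the third step.
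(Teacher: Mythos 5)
Your strategy---dualize, compute the II-ring dual of the coproduct, and identify its characters with ultrafilters---is the right one (it is the paper's), but your Step 2 is false as stated, and this is a genuine gap rather than a routine verification. You work with the complete closed-set functor $C:\cat{Sob}^{\op}\to\cat{IIRng$^{\dagger}$}$ and claim $C(X_{\infty})\cong\prod_{\lambda}C(X_{\lambda})\cong\scr{P}(X)$. But $\prod_{\lambda}C(X_{\lambda})$, computed componentwise in $\cat{IIRng$^{\dagger}$}$, is exactly $C$ of the \emph{discrete} space $X^{\mathrm{disc}}$ on $X$, i.e.\ of the coproduct in $\cat{Sob}$; so if your isomorphism held, full faithfulness of $C$ would force $X_{\infty}\cong X^{\mathrm{disc}}$ in $\cat{Sob}$, which is absurd: $X_{\infty}$ is coherent, hence quasi-compact, while $X^{\mathrm{disc}}$ is not when $X$ is infinite. (Concretely: granting that $X_{\infty}=\Spec\scr{P}(X)$ is the Stone space of the Boolean algebra $\scr{P}(X)$, its lattice of \emph{all} closed subsets is the lattice of ideals of $\scr{P}(X)$, equivalently of filters on $X$, which is strictly larger than $\scr{P}(X)$.) The limit-preservation argument does not apply precisely because $X_{\infty}$ is a coproduct in $\cat{Coh}$, not in $\cat{Sob}$, as you yourself observe. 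The inconsistency then infects Step 3: by the same full faithfulness, $\Hom_{\cat{IIRng$^{\dagger}$}}(\scr{P}(X),\FF_{1})=\Hom_{\cat{Sob}}(\mathrm{pt},X^{\mathrm{disc}})$ consists only of the \emph{principal} ultrafilters (a morphism of complete II-rings must respect infinite sums), whereas your Boolean computation uses only binary unions and intersections, i.e.\ it computes \emph{finitary} morphisms in $\cat{IIRng}$. So Steps 1--3, as written, do not compose into a proof.

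The repair is to run the entire argument in the finitary Stone duality $\cat{Coh}^{\op}\simeq\cat{IIRng}$, $X\mapsto C(X)_{\cpt}$, rather than with $C$ and $\cat{IIRng$^{\dagger}$}$. A point of $X_{\infty}$ is a morphism $\mathrm{pt}\to X_{\infty}$ in $\cat{Coh}$ (quasi-compactness of such a map is automatic), hence by the equivalence a morphism $C(X_{\infty})_{\cpt}\to\FF_{1}$ in $\cat{IIRng}$; and since an equivalence carries coproducts to products, $C(X_{\infty})_{\cpt}\cong\prod_{\lambda}C(X_{\lambda})_{\cpt}\cong\scr{P}(X)$ holds by the very construction of the coproduct in $\cat{Coh}$, which disposes of both the spatiality and the representability issues you raised. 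Your Step 3 computation, now legitimately a computation of finitary characters $\scr{P}(X)\to\FF_{1}$, identifies them with ultrafilters. This corrected argument is essentially the paper's proof, which notes that $X_{\infty}\cong\Spec(\prod_{\lambda}\FF_{1})$ and reads off that its points, i.e.\ the prime ideals of $\scr{P}(X)$, are the prime filters of subsets of $X$, i.e.\ the ultrafilters.
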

\begin{proof}
First, note that $X$ is isomorphic to $\Spec (\prod_{\lambda}\FF_{1})$,
where $\FF_{1}$ is the initial object in the category of $\cat{IIRng}$.
Hence, a closed subset of $X_{\infty}$ corresponds to a filter on $X$,
and any point of $X_{\infty}$ corresponds to
a prime filter, in other words, a ultrafilter on $X$.
%Let $\scr{U}$ be a prime filter on $X$,
%namely, $a \cup b \in \scr{U}$ implies $a \in \scr{U}$
%or $b \in \scr{U}$. Then, $a \cup a^{c} =X \in \scr{U}$
%for any subset $a$ of $X$,
%hence $a \in \scr{U}$ or $a^{c} \in \scr{U}$ holds.
%This shows that $\scr{U}$ is in fact an ultrafilter.
%It is obvious that any ultrafilter is prime,
%hence the result.
\end{proof}

\begin{Thm}
\label{thm:alg:epi:surj}
Let $f:X \to Y$ be an epimorphism of coherent spaces.
Then $f$ is surjective.
\end{Thm}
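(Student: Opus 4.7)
The plan is to imitate the preceding (noetherian) proposition, replacing the finite-component argument by the ultrafilter machinery of Lemma \ref{lem:coprod:ultrafilter}.

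First, I would reduce to the case where $Y$ is irreducible with generic point $y$: set $Z=\overline{\{y\}}$ and restrict to $f|_{f^{-1}(Z)}\colon f^{-1}(Z)\to Z$. This is still a quasi-compact morphism of coherent spaces (closed subspaces of coherent spaces are coherent), and it remains epic because the density condition of Proposition \ref{prop:epic:top}(iv) for $f$, namely $\Imag f\cap z$ dense in $z$, restricts immediately to closed subsets $z\subset Z$. Henceforth assume $Y$ is irreducible, and we must find $x\in X$ with $f(x)=y$.

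Next, observe that in such a $Y$, $\bigcap_{U} U=\{y\}$ where $U$ ranges over non-empty quasi-compact opens of $Y$: any $y'\neq y$ lies in the proper closed subset $\overline{\{y'\}}$, whose complement is a non-empty open which, since quasi-compact opens form a basis, contains a non-empty quasi-compact open missing $y'$. Pulling back, each $f^{-1}(U)$ is a quasi-compact open of $X$ (as $f$ is quasi-compact), and is non-empty because $\Imag f$ is dense in $Y$ by Proposition \ref{prop:epic:top}(iv). Using quasi-separatedness of $Y$, the family $\{f^{-1}(U)\}_U$ is filtered under intersection.

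The crux is to show $\bigcap_U f^{-1}(U)\neq\emptyset$: any point $x$ in the intersection then satisfies $f(x)\in\bigcap_U U=\{y\}$, giving $f(x)=y$. The family $\{f^{-1}(U)\}_U$ generates a proper filter on $|X|$ (finite intersections are non-empty by filteredness), which by the ultrafilter lemma recalled before Lemma \ref{lem:coprod:ultrafilter} extends to an ultrafilter $\scr{U}$ on $|X|$. By that lemma, $\scr{U}$ corresponds to a point $\xi$ of the coproduct $X_\infty=\amalg_{x\in|X|}\{x\}$ in coherent spaces. The canonical map $p\colon X_\infty\to X$, induced by the inclusions of the one-point spaces, sends $\xi$ to a point $x_0\in X$ characterized by the property that for any closed $C\subset X$, $x_0\in C$ iff $C\in\scr{U}$ (equivalently, since $\scr{U}$ is an ultrafilter, $x_0\in V$ iff $V\in\scr{U}$ for any open $V$). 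Since every $f^{-1}(U)\in\scr{U}$, we conclude $x_0\in\bigcap_U f^{-1}(U)$.

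The main obstacle is this last identification: verifying that the point $x_0=p(\xi)$ has the correct membership behaviour. This rests on the $\Spec(\prod\FF_1)$-description underlying Lemma \ref{lem:coprod:ultrafilter} and on the fact that $p^{\#}\colon C(X)\to C(X_\infty)$ is, via that description, the inclusion $C(X)\hookrightarrow\scr{P}(|X|)$; consequently the prime filter on $C(X)$ cut out by $\scr{U}$ is exactly $\scr{U}\cap C(X)$, and this prime filter is the point $x_0$. Once this translation is made explicit, the coherent structure of $X$ together with the ultrafilter lemma delivers the required preimage of $y$.
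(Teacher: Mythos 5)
Your proof is correct and is essentially the paper's own argument: both proofs manufacture the required point by extending an explicit filter to an ultrafilter, invoking Lemma \ref{lem:coprod:ultrafilter} to realize that ultrafilter as a point of a coproduct of one-point coherent spaces, and pushing it along the canonical map, the key verification in both cases being the Stone-duality computation that the prime ideal of $C(X)_{\cpt}$ cut out by the ultrafilter $\scr{U}$ is $\scr{U}\cap C(X)_{\cpt}$, hence an honest point of $X$. The differences are only bookkeeping: you generate the filter on all of $|X|$ from the sets $f^{-1}(U)$ with $U$ a non-empty quasi-compact open of the irreducible target $\overline{\{y\}}$, whereas the paper chooses one preimage of each point of $\Imag f\cap \overline{\{y_{0}\}}$ and uses the complements of proper closed subsets of $\overline{\{y_{0}\}}$; your dual formulation avoids the choice of preimages and the paper's separate dismissal of the case where $\Imag f\cap\overline{\{y_{0}\}}$ is finite, but the mechanism (and the reliance on the density criterion of Proposition \ref{prop:epic:top}(iv) for an epimorphism of coherent spaces) is the same.
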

\begin{proof}
Let $y_{0} \in Y$ be any point of $Y$,
and $Y_{0}$ be the closure of $\{y_{0}\}$ in $Y$.
Since $f$ is epic, 
$\{y_{\lambda}\}_{\lambda}=\Imag f \cap Y_{0}$ is dense in $Y_{0}$.
Assume that $y_{0} \notin \Imag f \cap Y_{0}$.
Then, $\Imag f \cap Y_{0}$ must be an infinite set,
since if it is finite, then its closure is equal to
$\cup_{\lambda}^{<\infty}\overline{\{y_{\lambda}\}}$,
which is a proper closed subset of $Y_{0}$.
Choose $x_{\lambda} \in X$ such that $f(x_{\lambda})=y_{\lambda}$
for each $\lambda$, and set $S=\{x_{\lambda}\}_{\lambda}$.
Also, let $\tilde{X}=\amalg_{\lambda}\{x_{\lambda}\}$
be the coproduct of $\{x_{\lambda}\}$'s in the category
of coherent spaces. 
By Lemma \ref{lem:coprod:ultrafilter},
the points of $\tilde{X}$
correspond to the ultrafilters on $S$.
We have the natural commutative diagram
\[
\xymatrix{
\tilde{X} \ar[d]_{\iota} \ar[dr]^{\tilde{f}} & \\
X \ar[r]_{f} & Y
}
\]
Next, we define a map
$\varphi:C(Y_{0})\setminus \{Y_{0}\} \to \scr{P}S \setminus \{\emptyset\}$
by sending $Z$ to $\{x \in S \mid f(x) \notin Z\}$.
This is well defined, since $f(S)$ is dense in $Y_{0}$.
Also, note that $\Imag \varphi$ is stable under taking finite intersections:
$\varphi(Z_{1}) \cap \varphi(Z_{2})=\varphi(Z_{1} \cup Z_{2})$.
It follows that there is an ultrafilter $\scr{U}$ on $S$, containing
the image of $\varphi$.
Let $x_{0}$ be the point of $\tilde{X}$, corresponding to $\scr{U}$.
We claim that $\tilde{f}(x_{0})=y_{0}$,
from which $f(\iota(x_{0}))=y_{0}$ follows.
Indeed, the image of $x_{0}$ is the generic point
of the intersections of  $Z \in C(X)_{\cpt}$ 
such that $x_{0} \in \tilde{f}^{-1}(Z)$.
Hence, it suffices to show that $Y_{0}$ is the only closed
subset satisfying the condition. If $Z \neq Y_{0}$,
then $Z$ is a proper closed subset of $Y_{0}$,
and $x_{0} \in \tilde{f}^{-1}(Z)$ implies $\varphi(Z)^{c} \in \scr{U}$.
But on the other hand, $\varphi(Z) \in \scr{U}$,
which is a contradiction to $\scr{U}$ being a ultrafilter.
\end{proof}
\begin{Cor}
Let $f:X \to Y$ be a morphism of coherent spaces.
Then, the image of $f$ with its induced topology
is also coherent.
\end{Cor}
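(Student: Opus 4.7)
The plan is to factor the morphism as $X \twoheadrightarrow Z \hookrightarrow Y$, where $Z = \Imag f$ carries the subspace topology from $Y$, and then verify directly that $Z$ satisfies the four axioms of a coherent space. Three of these are routine: $Z$ is quasi-compact because it is the continuous image of the quasi-compact space $X$; if $\{U_{\alpha}\}$ is a quasi-compact open basis of $Y$, then $\{U_{\alpha}\cap Z\}$ is a basis of $Z$, and each $U_{\alpha}\cap Z$ equals $f(f^{-1}(U_{\alpha}))$, which is quasi-compact because $f$ is a quasi-compact morphism; and quasi-separatedness of $Z$ follows analogously, since $(U_{\alpha}\cap Z)\cap(U_{\beta}\cap Z)=f(f^{-1}(U_{\alpha}\cap U_{\beta}))$ and $U_{\alpha}\cap U_{\beta}$ is quasi-compact in the quasi-separated space $Y$.

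The real issue is sobriety. Let $F$ be an irreducible closed subset of $Z$, and let $\overline{F}^{Y}$ denote its closure in $Y$; this is irreducible in $Y$, hence has a unique generic point $\eta\in Y$ by sobriety of $Y$. One checks $F=\overline{F}^{Y}\cap Z$, so it is enough to show $\eta\in Z$; then $\eta$ will be a generic point of $F$ in $Z$, and uniqueness is inherited from the $T_{0}$-property of $Y$. To this end I would set $X_{0}:=f^{-1}(\overline{F}^{Y})$, which is closed in $X$ and hence again coherent, and consider the restriction $f_{0}:X_{0}\to\overline{F}^{Y}$: this is again a quasi-compact morphism of coherent spaces, and its image is exactly $F$, which is dense in $\overline{F}^{Y}$ by construction.

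To finish, I would rerun the ultrafilter construction from the proof of Theorem \ref{thm:alg:epi:surj}, applied to $f_{0}$ with target point $\eta$. The hypothesis actually used in that proof is not the full epimorphism condition on $f$, but only that $\Imag f\cap\overline{\{y_{0}\}}$ is dense in $\overline{\{y_{0}\}}$; in our situation this is exactly the density of $F$ in $\overline{F}^{Y}$. The argument then produces a point $x_{0}$ of the coproduct coherent space $\amalg_{\lambda}\{x_{\lambda}\}$ (where $f_{0}(x_{\lambda})=y_{\lambda}\in F$ is chosen so that $\{y_{\lambda}\}$ is dense in $\overline{F}^{Y}$) whose image in $\overline{F}^{Y}$ under the canonical map is precisely $\eta$; consequently $\eta\in\Imag f_{0}\subseteq Z$, and sobriety is established. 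The main obstacle is simply the bookkeeping of this re-run: one must observe that the ultrafilter argument of Theorem \ref{thm:alg:epi:surj} depends only on the density at a single target point, rather than on epicness of the whole map, so that the reduction to the closed subspace $\overline{F}^{Y}$ is legitimate.
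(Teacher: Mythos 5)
Your proof is correct, but it takes a genuinely different route from the paper's. The paper works entirely on the lattice side of Stone duality: the morphism corresponds to $f^{\#}:C(Y)_{\cpt}\to C(X)_{\cpt}$, one factors $f^{\#}$ through its image $R$, sets $W=\Spec R$ (which is coherent for free, being a spectrum), and then identifies $W$ with $\Imag f$ as a subspace of $Y$: the map $X\to W$ is epic because $R\to C(X)_{\cpt}$ is injective (Proposition \ref{prop:epic:top}), hence surjective by Theorem \ref{thm:alg:epi:surj}, while $W\to Y$ is a homeomorphism onto its image by Proposition \ref{prop:coh:surj:homeo} because $C(Y)_{\cpt}\to R$ is surjective. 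You instead verify the four axioms directly on $Z=\Imag f$ with the subspace topology. The three formal axioms go through exactly as you say, with two small remarks: quasi-separatedness must be propagated from basis elements to arbitrary quasi-compact opens of $Z$ by writing the latter as finite unions of the former, and the coherence of the closed subspaces $X_{0}$ and $\overline{F}^{Y}$ together with the quasi-compactness of the restriction $f_{0}$ deserve a line each (all routine). Your key observation --- that the ultrafilter proof of Theorem \ref{thm:alg:epi:surj} invokes epicness only once, namely to obtain density of $\Imag f\cap\overline{\{y_{0}\}}$ in $\overline{\{y_{0}\}}$, so that the argument applies verbatim to a dominant morphism at a generic point of the target --- is accurate, and it is in effect the content of the paper's very next result, Theorem \ref{thm:dominant:image:minimal}: a dominant morphism of coherent spaces hits every minimal point of the target. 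Since that theorem is proved independently of this corollary (by a filtered-limit argument rather than ultrafilters), you could equally well invoke it for $f_{0}:X_{0}\to\overline{F}^{Y}$ and the generic point $\eta$, instead of re-running the ultrafilter construction. As for what each approach buys: the paper's factorization is shorter, stays inside the duality formalism it has already built, and exhibits the image as $\Spec(\Imag f^{\#})$, thereby identifying its lattice of closed sets; yours is more elementary point-set topology, isolates sobriety as the only non-formal axiom, and makes explicit the localization of the ultrafilter argument that the paper itself exploits one theorem later.
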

\begin{proof}
The morphism $f$ corresponds to the homomorphism
$f^{\#}:C(Y)_{\cpt} \to C(X)_{\cpt}$ of II-rings.
Let $R$ be the image of $f^{\#}$.
Then, $C(Y)_{\cpt} \to R$ is surjective, and
$R \to C(X)_{\cpt}$ is injective. Set $W=\Spec R$.
Then, $f$ factors through $W$, with $X \to W$ epic
(hence surjective by Theorem \ref{thm:alg:epi:surj})
and $W \subset Y$ an immersion by Proposition \ref{prop:coh:surj:homeo}.
This tells that $W$ coincides with the image of $f$.
\end{proof}

\begin{Exam}
\begin{enumerate}
\item
Let $Y=|\Aff^{1}_{\CC}|$
be the underlying space of the affine line over $\CC$,
and $X=Y(\CC)$ be the set of closed points
of $Y$, endowed with a discrete topology.
Then, the natural map $X \to Y$ is
a morphism of sober spaces.
This is an epimorphism
by Proposition \ref{prop:epic:top},
but not surjective, since
the image does not contain the generic point of $Y$.

On the other hand, we have a morphism
$\alg(X) \to Y$ of coherent spaces:
here, $\alg$ is the left adjoint of 
the underlying functor $\cat{Coh} \to \cat{Sob}$
(\cite{Takagi}).
In this case, $\alg(X)$ is just the coproduct
$\amalg_{x \in X} \{x\}$
in the category of coherent spaces.
This is also epic, hence surjective
by \ref{thm:alg:epi:surj}.
The non-principal points of $\alg(X)$
maps onto the generic point of $Y$.
\item
Let $X$ be as above,
and set $V=|\Aff^{2}_{\CC}|$,
the affine plane over $\CC$.
Since there is a non-canonical bijection
$\varphi:\CC \to \CC^{2}$,
there exists a map $\varphi:X \to V$,
the image of which is the set
of closed points of $V$.
When we algebraize $X$,
we again obtain a surjective
map $\alg(X) \to V$.
Some of the non-principal points
of $X$ map to a generic point
of a curve on $V$,
others map to the generic point of $V$.
These two examples tell us that
the non-principal points of $\alg(X)$
behave like `universal generic points'
of $X$, although the Krull dimension
of $\alg(X)$ is zero.
\end{enumerate}
\end{Exam}

The next theorem is important when
we consider valuative criteria.
\begin{Thm}
\label{thm:dominant:image:minimal}
Let $f:X \to Y$ be a dominant morphism of coherent spaces.
Then, any minimal point of $Y$ (that is, the generic point of 
an irreducible component of $Y$) is contained in the
image of $f$.
\end{Thm}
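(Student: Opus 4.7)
The plan is to adapt the ultrafilter construction at the end of the proof of Theorem \ref{thm:alg:epi:surj} to the dominant setting, using the minimality of $\eta$ in place of an epic hypothesis. Writing $Y_0 = \overline{\{\eta\}}$, I would first use dominance of $f$ to pick, for each open neighborhood $U$ of $\eta$ in $Y$, a point $y_U \in U \cap \Imag f$ together with a lift $x_U \in f^{-1}(y_U)$. Set $S = \{x_U\}_U$ and form the coproduct $\tilde X = \amalg_U \{x_U\}$ in $\cat{Coh}$, with the induced morphism $\tilde f : \tilde X \to Y$ sending each principal point $x_U$ to $y_U$; by Lemma \ref{lem:coprod:ultrafilter} the points of $\tilde X$ correspond to ultrafilters on $S$.

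For each open neighborhood $U$ of $\eta$, set $[U]_S = \{x_V \in S : y_V \in U\}$. The sets $[U]_S$ are nonempty (each contains $x_U$) and satisfy $[U_1 \cap U_2]_S = [U_1]_S \cap [U_2]_S$, so they form a filter base; I would extend it to an ultrafilter $\scr{U}$ on $S$, obtaining a point $x_0 \in \tilde X$ whose image I denote $y^* = \tilde f(x_0) \in Y$. The heart of the argument is to show $y^* = \eta$. If $Z \in C(Y)_\cpt$ satisfies $\eta \notin Z$, then $Y \setminus Z$ is an open neighborhood of $\eta$, so $[Y \setminus Z]_S = S \setminus [Z]_S$ lies in $\scr{U}$; therefore $[Z]_S \notin \scr{U}$, and the same reasoning used at the end of the proof of Theorem \ref{thm:alg:epi:surj} (namely $x_0 \in \tilde f^{-1}(Z) \Leftrightarrow [Z]_S \in \scr{U}$) forces $y^* \notin Z$. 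Consequently every $Z \in C(Y)_\cpt$ containing $y^*$ contains $\eta$, and via the intersection description $\overline{\{y^*\}} = \bigcap \{Z \in C(Y)_\cpt : y^* \in Z\}$ used in that proof this gives $Y_0 \subseteq \overline{\{y^*\}}$.

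Now minimality of $\eta$ enters: $\overline{\{y^*\}}$ is irreducible (being the closure of a point), and $Y_0$ is a maximal irreducible closed subset of $Y$ (since it is an irreducible component), so the inclusion $Y_0 \subseteq \overline{\{y^*\}}$ forces equality; by sobriety $y^* = \eta$, so $\eta \in \Imag \tilde f \subseteq \Imag f$.

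The main delicate point I expect to negotiate is carrying over the machinery from Theorem \ref{thm:alg:epi:surj}: identifying $\overline{\{y^*\}}$ with the intersection of the quasi-compact closed subsets of $Y$ that contain $y^*$, and justifying the equivalence $x_0 \in \tilde f^{-1}(Z) \Leftrightarrow [Z]_S \in \scr{U}$. Both points rest on Lemma \ref{lem:coprod:ultrafilter} and have already been handled in that earlier proof. The genuinely new ingredient here is that minimality of $\eta$ promotes the one-sided inclusion $Y_0 \subseteq \overline{\{y^*\}}$ to an equality, so the argument succeeds without requiring $\Imag f \cap Y_0$ to be dense in $Y_0$—a condition which dominance of $f$ alone would not supply.
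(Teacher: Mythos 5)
Your proof is correct, but it takes a genuinely different route from the paper's. The paper argues by filtered limits: it takes the filtered projective system $\{U^{\lambda}\}$ of quasi-compact open neighbourhoods of the minimal point $y_{0}$, forms $U^{\infty}=\underleftarrow{\lim}_{\lambda}U^{\lambda}$ in $\cat{Coh}$, identifies $U^{\infty}$ with the one-point space $\{y_{0}\}$ (this is where minimality enters, together with the fact, cited from \cite{Takagi}, that the underlying functor $\cat{Coh}\to\cat{Set}$ preserves limits), and then checks that dominance survives passage to the limit: writing $C(U^{\infty})_{\cpt}\simeq\underrightarrow{\lim}_{\lambda}C(U^{\lambda})_{\cpt}$ as a filtered colimit of II-rings, the condition $f^{\#}(Z)=0\Rightarrow Z=0$ can be tested at a finite stage, where it holds because each $f^{-1}(U^{\lambda})\to U^{\lambda}$ is dominant; hence $f^{-1}(U^{\infty})=f^{-1}(y_{0})$ is nonempty. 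You instead rerun the coproduct-of-points/ultrafilter mechanism of Theorem \ref{thm:alg:epi:surj}, with the witnesses $y_{U}\in U\cap \Imag f$ supplied by dominance rather than by an epic hypothesis, and with minimality entering only at the very end, to upgrade the inclusion $Y_{0}\subseteq\overline{\{y^{*}\}}$ to an equality via maximality of irreducible components and sobriety. Both proofs are at bottom compactness arguments, and both ultimately use that a minimal point admits no proper generalization; but yours reuses only machinery already developed in \S 1 of this paper (Lemma \ref{lem:coprod:ultrafilter}, the ultrafilter extension lemma, and the description of $\overline{\{y\}}$ as $\bigcap\{Z\in C(Y)_{\cpt}\mid y\in Z\}$), so it is self-contained at the cost of length, while the paper's proof is shorter but leans on the limit-preservation results imported from \cite{Takagi}. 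Two points you leave implicit and should state explicitly: first, $\Imag\tilde{f}\subseteq\Imag f$ holds because $\tilde{f}$ factors as $f\circ\iota$ through $X$, exactly as in the commutative triangle in the proof of Theorem \ref{thm:alg:epi:surj}; second, since $y_{V}=f(x_{V})$, the set $[U]_{S}$ is unambiguously $S\cap f^{-1}(U)$ even when distinct neighbourhoods yield the same chosen point, so indexing $S$ by neighbourhoods rather than by points causes no ambiguity.
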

\begin{proof}
Let $y_{0}$ be any minimal point of $Y$,
and $\{U^{\lambda}\}_{\lambda}$ be the filtered
system of quasi-compact open neighborhood of $y_{0}$.
For each $\lambda$, $f^{-1}(U^{\lambda}) \to U^{\lambda}$ is dominant
since $f$ is so.
Set $U^{\infty}=\underleftarrow{\lim}_{\lambda}U^{\lambda}$.
This is a pointed space $\{y_{0}\}$,
since the underlying functor $\cat{Coh} \to \cat{Set}$
preserves limits (\cite{Takagi}).
We claim that $f^{-1}(U^{\infty}) \to U^{\infty}$
is dominant (in particular, $f^{-1}(U^{\infty}) \neq \emptyset$).
Since $C(U^{\infty})_{\cpt}$ and $C(f^{-1}(U^{\infty}))_{\cpt}$
are naturally isomorphic to $\underrightarrow{\lim} C(U^{\lambda})_{\cpt}$
and $\underrightarrow{\lim} C(f^{-1}(U^{\lambda}))_{\cpt}$
respectively, it suffices to show that the homomorphism
\[
f^{\#}:\underrightarrow{\lim} C(U^{\lambda})_{\cpt} \to 
\underrightarrow{\lim} C(f^{-1}(U^{\lambda}))_{\cpt}
\]
satisfies $f^{\#}(Z)=0 \Rightarrow Z=0$.
Let $Z$ be an element satisfying $f^{\#}(Z)=0$.
Since $\{C(U^{\lambda})_{\cpt}\}_{\lambda}$ is a filtered
inductive system, $Z$ and $f^{\#}(Z)$ can be represented by an element
of $C(U^{\lambda})_{\cpt}$ and $C(f^{-1}(U^{\lambda}))_{\cpt}$
for some $\lambda$, respectively.
Since $C(U^{\lambda})_{\cpt} \to C(f^{-1}(U^{\lambda}))_{\cpt}$
is dominant, $Z$ must be zero.
\end{proof}

\begin{Cor}
\label{cor:image:close:spe}
Let $X$ be a coherent subspace
of a coherent space $Y$. Then,
the closure of $X$ consists of all points
which are specializations of points on $X$.
\end{Cor}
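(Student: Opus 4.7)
The inclusion $\supseteq$ is trivial: if $x\in X$ then $\overline{\{x\}}\subset\overline{X}$. For $\subseteq$, fix $y\in\overline{X}$ and aim to produce $x\in X$ with $y\in\overline{\{x\}}$.

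First I would reduce to the case where $X$ is dense in $Y$ by replacing $Y$ with $\overline{X}$, which is a coherent space: soberness and quasi-compactness restrict to closed subspaces, and $\{U\cap\overline{X}\}_{U\in C(Y)_{\cpt}^{c}}$ furnishes a quasi-compact open basis stable under finite intersections.

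Now let $\{U^{\lambda}\}_{\lambda}$ be the filtered system of quasi-compact open neighborhoods of $y$ in $Y$, and let $f:X\hookrightarrow Y$. By density of $X$, each restriction $f^{-1}(U^{\lambda})=X\cap U^{\lambda}\hookrightarrow U^{\lambda}$ is dominant, since every nonempty open of $U^{\lambda}$ is a nonempty open of $Y$ and hence meets $X$. I would then mimic the end of the proof of Theorem \ref{thm:dominant:image:minimal}: passing to $U^{\infty}=\underleftarrow{\lim}_{\lambda}U^{\lambda}$ and $X_{\infty}=\underleftarrow{\lim}_{\lambda}(X\cap U^{\lambda})=f^{-1}(U^{\infty})$ in $\cat{Coh}$, the isomorphisms
\[
C(U^{\infty})_{\cpt}\cong\underrightarrow{\lim}_{\lambda}C(U^{\lambda})_{\cpt},\qquad C(X_{\infty})_{\cpt}\cong\underrightarrow{\lim}_{\lambda}C(X\cap U^{\lambda})_{\cpt},
\]
combined with dominance at each finite stage, force $f^{-1}(U^{\infty})\to U^{\infty}$ to remain dominant.

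Since $y\in U^{\infty}$, the target is nonempty, so $X_{\infty}$ must be nonempty as well (an empty space cannot map densely to a nonempty space). Using that the underlying-set functor $\cat{Coh}\to\cat{Set}$ preserves limits, a point of $X_{\infty}$ is a point $x\in X$ lying in every quasi-compact open neighborhood of $y$; as quasi-compact opens form a basis, every open neighborhood of $y$ then contains $x$, i.e.\ $y\in\overline{\{x\}}$, as required. The only nontrivial step is the preservation of dominance through the cofiltered limit, which is precisely the argument recycled verbatim from Theorem \ref{thm:dominant:image:minimal}; everything else is bookkeeping.
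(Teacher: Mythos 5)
Your proof is correct, but it is organized differently from the paper's. The paper deduces Corollary \ref{cor:image:close:spe} from Theorem \ref{thm:dominant:image:minimal} used as a black box: given $y\in\overline{X}$, it first picks a \emph{minimal} point $y_{0}$ of $\overline{X}$ generalizing $y$ (an implicit Zorn-type fact about coherent spaces), then applies the theorem to the dominant map $X\to\overline{X}$ to conclude $y_{0}\in X$, and finally observes that $y$ is a specialization of $y_{0}$. You instead bypass minimal points altogether: you rerun the cofiltered-limit argument from the theorem's proof centered at the arbitrary point $y$, using that dominance of $X\cap U^{\lambda}\to U^{\lambda}$ passes to the limit $U^{\infty}$, and that a point of the nonempty space $f^{-1}(U^{\infty})$ is a point of $X$ lying in every quasi-compact open neighborhood of $y$, hence a generalization of $y$. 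In effect you prove the stronger statement that for any dominant morphism of coherent spaces, every point of the target is a specialization of a point of the image; this subsumes both the theorem (when $y$ is minimal, the generalizing point must equal $y$ by soberness) and the corollary. What your route buys is self-containedness: you never invoke the existence of minimal generalizations, which the paper uses without proof. What it costs is duplication: you inline the internals of the theorem's proof (the identifications $C(U^{\infty})_{\cpt}\simeq\underrightarrow{\lim}_{\lambda}C(U^{\lambda})_{\cpt}$ and the filtered-colimit dominance argument) rather than citing its statement; as you correctly note, that argument nowhere uses minimality of the chosen point, so the recycling is legitimate.
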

\begin{proof}
Let $Z$ be the set of points which
are specializations of points on $X$.
It is clear that $Z \subset \overline{X}$,
so we will show the converse.
Let $y$ be any point in the closure of $X$ in $Y$.
Since $\overline{X}$ is also a coherent subspace,
there exists a minimal point $y_{0}$ of $\overline{X}$
which is a generalization of $y$.
Since $X \to \overline{X}$ is dominant,
$y_{0}$ is contained in $X$ by Theorem
\ref{thm:dominant:image:minimal}.
Since $Z$ is stable under specializations, $y$
must be in $Z$.
\end{proof}

\section{The category of $\scr{A}$-Schemes}

In \cite{Takagi}, we introduced
the definition of $\scr{A}$-schemes.
The advantage of the notion of $\scr{A}$-schemes
is not only generalizing the concept of schemes to other
algebraic systems, 
but also giving the way to infinite categorical
operations:
in fact, the category of $\scr{A}$-schemes
is small complete and co-complete.
A finite patching over quasi-compact open
sets, and fiber products commute
with those of $\scr{Q}$-schemes, namely,
ordinary schemes.

\textit{Notations:} from now on,
the homomorphism $C(Y)_{\cpt} \to C(X)_{\cpt}$
associated to a morphism $f:X \to Y$ of
$\scr{A}$-schemes is denoted by $f^{-1}$, or $|f|^{-1}$.
We use the notation $f^{\#}$ for
the morphism of structure sheaves
$\scr{O}_{Y} \to f_{*}\scr{O}_{X}$.

\subsection{Co-completeness}
First, we begin with describing
what $\scr{A}$-schemes is like
when the algebraic system is that of rings.
\begin{Prop}
\label{prop:asch:loc}
Let $\sigma$ be an algebraic system of rings,
and $\cat{LRCoh}$ be the category
of locally ringed coherent spaces and quasi-compact morphisms.
Then, there is a natural
underlying functor
$\cat{$\scr{A}$-Sch} \to \cat{LRCoh}$
defined by $(X,\scr{O}_{X},\beta_{X}) \mapsto (X,\scr{O}_{X})$.
Further, this functor is fully faithful.
\end{Prop}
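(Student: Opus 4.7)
The plan is to verify three things in sequence: (a) the assignment $(X,\scr{O}_X,\beta_X)\mapsto (X,\scr{O}_X)$ lands in $\cat{LRCoh}$, so that each stalk is a local ring; (b) every morphism of $\scr{A}$-schemes descends to a morphism in $\cat{LRCoh}$; (c) the resulting functor is fully faithful. For (a), the underlying space $|X|$ is coherent by the very definition of an $\scr{A}$-scheme, so what needs checking is that each stalk $\scr{O}_{X,x}$ is a local ring. When $\sigma$ is the algebraic system of rings, the parametrization $\beta_X$ presents $\scr{O}_{X,x}$ at each point as a localization of a ring at a prime ideal, and such localizations are local; thus $(X,\scr{O}_X)$ is a locally ringed coherent space.

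For (b), a morphism $f:(X,\scr{O}_X,\beta_X)\to(Y,\scr{O}_Y,\beta_Y)$ of $\scr{A}$-schemes consists of a quasi-compact continuous map $|f|:|X|\to|Y|$ together with a sheaf homomorphism $f^{\#}:\scr{O}_Y\to f_*\scr{O}_X$ subject to a compatibility axiom with $\beta_X$ and $\beta_Y$. The $\beta$-compatibility, read at stalks, forces $\scr{O}_{Y,|f|(x)}\to\scr{O}_{X,x}$ to be compatible with the canonical localization maps that exhibit both sides as local rings; in particular, it sends non-units to non-units, so the stalk homomorphism is local. Hence each $\scr{A}$-scheme morphism yields a morphism in $\cat{LRCoh}$.

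For (c), faithfulness is immediate: the data of an $\scr{A}$-scheme morphism is just the pair $(|f|,f^{\#})$ together with a property (the $\beta$-compatibility), so two morphisms of $\scr{A}$-schemes with the same underlying $\cat{LRCoh}$-morphism coincide. Fullness is the substantive content: given any morphism $(|f|,f^{\#})$ of locally ringed coherent spaces between $\scr{A}$-schemes, one must check that $f^{\#}$ is automatically $\beta$-compatible. Since in the rings setting $\beta_X(x)$ is canonically identified with the stalk $\scr{O}_{X,x}$ (and likewise for $Y$), the locality of the stalk maps — i.e., preservation of maximal ideals — together with the universal property of localization at a prime forces the map $\beta_Y(|f|(x))\to\beta_X(x)$ induced by $f^{\#}$ to coincide with the prescribed one, which is exactly the $\beta$-compatibility.

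The main obstacle will be pinning down the precise form of the $\beta$-compatibility axiom from the definition recalled in the appendix and verifying that, in the rings setting, it is equivalent to the locality of stalk maps; this is where the argument exploits the fact that the parametrization data degenerates to canonically determined stalks when the algebraic system is that of rings. Once this dictionary is in place, fullness reduces to a short application of the universal property of localization at a prime, and faithfulness requires essentially no work.
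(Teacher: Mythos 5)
Your three-step skeleton (stalks are local; $\scr{A}$-morphisms have local stalk maps; fullness is the substantive direction) matches the paper's proof, but both of the steps that carry the actual content have genuine gaps. In step (a) you assert that $\beta_X$ ``presents $\scr{O}_{X,x}$ as a localization of a ring at a prime ideal.'' For a general $\scr{A}$-scheme this is unfounded: $\scr{A}$-schemes are precisely \emph{not} required to be locally spectra of rings (that is the defining property of $\scr{Q}$-schemes, which this category deliberately abandons), and nothing in the definition exhibits the stalk as such a localization --- that the stalk is local is exactly what must be proven. The paper's argument constructs the candidate maximal ideal $\mathfrak{M}_x=\{a \in \scr{O}_{X,x} \mid x \in \beta_X\alpha_2(a)\}$, checks it is an ideal using $\alpha_2(a+b)\leq\alpha_2(a)+\alpha_2(b)$, and then --- this is the step your proposal never touches --- uses the axiom that \emph{restrictions reflect localizations} to show that any germ $\langle U,a\rangle \notin \mathfrak{M}_x$ becomes invertible after restricting to $V = U \setminus \beta_X\alpha_2(a)$, hence is a unit in the stalk. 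Without invoking this axiom there is no way to conclude locality, so your (a) begs the question.

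In step (c), fullness, the argument rests on the phrase ``$\beta_X(x)$ is canonically identified with the stalk $\scr{O}_{X,x}$,'' which is a category error: $\beta_X$ assigns closed subsets to sections of $\alpha_1\scr{O}_X$, not rings to points, so there is no such identification, and ``the universal property of localization at a prime'' is not the operative fact. What fullness actually needs is the consequence of (a) that the support datum is recovered from the maximal ideals of the stalks, namely $\beta_X(\mathfrak{a})=\{x \mid \mathfrak{a}_x \subset \mathfrak{M}_{X,x}\}$ (this follows because $\beta_X$ is additive and addition in $\tau_X$ is intersection of closed sets). Then, for a morphism $(|f|,f^{\#})$ of locally ringed spaces, the two composites in the compatibility square send a section $\mathfrak{a}$ to $\{x\mid \mathfrak{M}_{Y,f(x)}\supset\mathfrak{a}\}$ and $\{x\mid \mathfrak{M}_{X,x}\supset f^{\#}\mathfrak{a}\}$ respectively, and these agree precisely because the stalk maps are local --- this is the paper's computation. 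So your overall plan is the right one, but as written the two pivotal verifications (locality of stalks via the restriction axiom, and recovery of $\beta$ from the maximal ideals of stalks) are replaced by appeals to structure that $\scr{A}$-schemes do not possess.
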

\begin{proof}
Let $X=(X,\scr{O}_{X},\beta_{X})$
be an $\scr{A}$-scheme.
What we have to show first is that
$X$ is a locally ringed space,
i.e. $\scr{O}_{X,x}$ is a local ring for any $x \in X$.
Let $\mathfrak{M}_{x}$ be a subset
of $\scr{O}_{X,x}$, consisting of germs
$a$ such that $\beta_{X}\alpha_{2}(a) \ni x$.
We will show that this is the unique maximal ideal
of $\scr{O}_{X,x}$.

Let $a,b$ be two elements of $\mathfrak{M}_{x}$.
We have $\alpha_{2}(a+b) \leq \alpha_{2}(a)+\alpha_{2}(b)$;
recall that $\alpha_{2}(a)$ is the principal ideal
generated by $a$.
Hence, 
\[
\beta_{X}\alpha_{2}(a+b)
\leq \beta_{X}\alpha_{2}(a)+\beta_{X}\alpha_{2}(b)
\leq x+x=x,
\]
which shows that $a+b \in \mathfrak{M}_{X}$.
It is easier to show that
$ca \in \mathfrak{M}_{x}$ for any $c \in \scr{O}_{X,x}$
and $a \in \mathfrak{M}_{x}$.
Hence, $\mathfrak{M}_{x}$ is an ideal of $\scr{O}_{X,x}$.

Suppose $\langle U,a \rangle \in \scr{O}_{X,x}$ is not contained
in $\mathfrak{M}_{x}$.
Set $Z=\beta_{X}\alpha_{2}(a)$.
This does not contain $x$.
Set $V=U\setminus Z$.
Since restriction morphisms
reflect localizations (see appendix \S 5 for the terminology), $a|_{V}$ is invertible,
hence $a$ is invertible in $\scr{O}_{X,x}$.
This shows that $\scr{O}_{X,x}$ is local,
the maximal ideal of which is $\mathfrak{M}_{x}$.

Let $f:X \to  Y$ be a morphism
of $\scr{A}$-schemes.
It suffices to show that 
$f^{\#}:\scr{O}_{Y, f(x)} \to \scr{O}_{X, x}$
is a local homomorphism for any $x \in X$.
Let $a$ be an element of $\mathfrak{M}_{y}$,
where $y=f(x)$. Then,
\[
\beta_{X}\alpha_{2}(f^{\#}(a))
=\beta_{X}(\alpha_{1}f)(\alpha_{2}(a))
=|f|^{-1}\beta_{Y}\alpha_{2}(a).
\]
Since $\beta_{Y}\alpha_{2}(a) \leq y$,
we have $|f|^{-1}\beta_{X}\alpha_{2}(a)\leq x$,
which shows that
$\mathfrak{M}_{y} \subset (f^{\#})^{-1}\mathfrak{M}_{x}$.

Hence, we have a functor
$U:\cat{$\scr{A}$-Sch} \to \cat{LRCoh}$.
It remains to show that this functor is fully faithful.
Let $X, Y$ be two $\scr{A}$-schemes,
and $f=(|f|,f^{\#}):UX \to UY$ be a morphism
of locally ringed spaces.
It suffices to show that the following diagram
\[
\xymatrix{
\alpha_{1}\scr{O}_{Y}  \ar[d]_{\beta_{Y}} \ar[r]^{\alpha_{1}f^{\#}} 
& |f|_{*}\alpha_{1}\scr{O}_{X} \ar[d]^{|f|_{*}\beta_{X}} \\
\tau_{Y} \ar[r]_{|f|^{-1}} & |f|_{*}\tau_{X}
}
\]
is commutative.
Let $\mathfrak{a}$ be a section of $\alpha_{1}\scr{O}_{Y}$.
Then
\[
|f|^{-1}\circ \beta_{Y}(\mathfrak{a})
=\{ x \mid \mathfrak{M}_{Y,f(x)} \supset \mathfrak{a}\}, \quad
f_{*}\beta_{X}\circ \alpha_{1}f^{\#}(\mathfrak{a})
=\{ x \mid \mathfrak{M}_{X,x} \supset f^{\#}\mathfrak{a}\},
\]
where $\mathfrak{M}_{Y,f(x)}$ and $\mathfrak{M}_{X,x}$
are the maximal ideals of $\scr{O}_{Y,f(x)}$,
$\scr{O}_{X,x}$, respectively.
But the right-hand sides of the both equations coincide,
since $f^{\#}:\scr{O}_{Y,f(x)} \to \scr{O}_{X,x}$
is a local homomorphism for any $x$.
\end{proof}

In the sequel, we fix
a schematizable algebraic type
$\scr{A}=(\sigma,\alpha_{1},\alpha_{2},\gamma)$
(\cite{Takagi}).
\begin{Def}
\begin{enumerate}
\item
An $\scr{A}$-scheme $X$ is a \textit{$\scr{Q}$-scheme},
if it is locally isomorphic to $\Spec^{\scr{A}} R$,
for some $\sigma$-algebra $R$.
\item
Let $\cat{$\scr{Q}$-Sch}$ be the full subcategory
of $\cat{$\scr{A}$-Sch}$, consisting
of $\scr{Q}$-schemes.
\end{enumerate}
\end{Def}
Proposition \ref{prop:asch:loc} tells that,
if $\sigma$ is the algebraic system of rings,
then $\cat{$\scr{Q}$-Sch}$ is the category
of coherent schemes and quasi-compact morphisms,
since a morphism of schemes is a morphism
of locally ringed spaces.
\begin{Prop}
%Suppose the functor
%$\alpha_{1}:\cat{$\sigma $-alg} \to \cat{IIRng}$
%preserves finite products.
The category $\cat{$\scr{Q}$-Sch}$
admits finite patching via quasi-compact opens,
namely:
let $X_{1},\cdots ,X_{n}$ be $\scr{Q}$-schemes,
$U_{ij} \subset X_{i}$ be quasi-compact open subsets,
and $\varphi_{ji}:U_{ij} \to U_{ji}$
be isomorphisms satisfying
$\varphi_{kj}\circ \varphi_{ji}=\varphi_{ki}$
on $U_{ij} \cap U_{jk}$.
Then, there exists a co-equalizer $X$
of  $\amalg_{ij} U_{ij} \rightrightarrows \amalg_{i} X_{i}$,
such that $\amalg_{i} X_{i} \to X$ is a quasi-compact
open covering.
\end{Prop}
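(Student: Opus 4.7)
The plan is to carry out the classical gluing construction, now in the category of $\scr{A}$-schemes. First I would build the underlying coherent space of $X$. Let $|Y| = \amalg_i |X_i|$ be the finite coproduct in coherent spaces (the set-theoretic disjoint union), and let $|X|$ be the quotient of $|Y|$ by the equivalence relation generated by $u \sim \varphi_{ji}(u)$ for $u \in U_{ij}$; the cocycle condition $\varphi_{kj}\varphi_{ji} = \varphi_{ki}$ makes this relation well-defined and transitive. Dually, one identifies $C(|X|)_{\cpt}$ with the equalizer, in the category of complete II-rings, of the two obvious maps $\prod_i C(X_i)_{\cpt} \rightrightarrows \prod_{ij} C(U_{ij})_{\cpt}$ given by restriction to $U_{ij}$ and by pullback along $\varphi_{ji}^{-1}$. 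Since this is a finite limit of spatial II-rings, the equalizer is itself spatial, and its spectrum realizes $|X|$ as a coherent space; moreover, each $|X_i| \to |X|$ is an injection onto a quasi-compact open subset, and these jointly cover $|X|$.

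Next I would glue the structure data. On an open $V \subset |X|$, define $\scr{O}_X(V)$ as the equalizer of $\prod_i \scr{O}_{X_i}(V \cap X_i) \rightrightarrows \prod_{ij} \scr{O}_{X_i}(V \cap U_{ij})$, the two arrows being restriction and pullback along the $\varphi_{ji}^{\#}$'s. The map $\beta_X : \alpha_1 \scr{O}_X \to C(|X|)_{\cpt}$ is assembled componentwise from the $\beta_{X_i}$'s, the required compatibility on overlaps being automatic because each $\varphi_{ji}$ is already a morphism of $\scr{A}$-schemes. Because every point of $|X|$ has an open neighborhood contained in the image of some $X_i$, which is itself locally of the form $\Spec^{\scr{A}} R$, the resulting $\scr{A}$-scheme $X$ is a $\scr{Q}$-scheme and the injections $X_i \hookrightarrow X$ are quasi-compact open immersions.

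For the coequalizer property, a cocone $f_i : X_i \to W$ coequalizing $\amalg_{ij} U_{ij} \rightrightarrows \amalg_i X_i$ induces a unique continuous map $|X| \to |W|$ via the equalizer description of $C(|X|)_{\cpt}$, and the gluing definitions of $\scr{O}_X$ and $\beta_X$ promote this to a unique morphism of $\scr{A}$-schemes. The step I expect to demand the most care is the verification that $|X|$ is genuinely coherent --- sober, quasi-compact, quasi-separated, and endowed with a quasi-compact open basis --- and that each $|X_i| \to |X|$ is open with quasi-compact image. Both properties reduce, through the equalizer description, to the finiteness of the index set combined with the quasi-compactness of the $U_{ij}$, so nothing beyond careful bookkeeping should be required, but this is the one place where the hypotheses are actually used.
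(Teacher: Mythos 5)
Your proposal is correct and follows essentially the same route as the paper: the paper reduces by induction to $n=2$ and forms the amalgamation $X_{1}\amalg_{U}X_{2}$, while you glue all the pieces at once, but both arguments hinge on exactly the same key point, namely that the topological gluing is already coherent (hence is the colimit in $\cat{Coh}$, with each $X_{i}$ landing as a quasi-compact open and the $X_{i}$ jointly covering) precisely because the index set is finite and the $U_{ij}$ are quasi-compact. One small correction: your equalizer of $\prod_{i}C(X_{i})_{\cpt}\rightrightarrows\prod_{ij}C(U_{ij})_{\cpt}$ should be taken in $\cat{IIRng}$, where every object is spatial by Stone duality, rather than in the category of complete II-rings, where spatiality of a limit is not automatic (compare the paper's example of a non-trivial complete II-ring with no points).
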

Note that, when we speak of a covering,
it must be always surjective.
\begin{proof}
By induction on $n$,
it suffices to prove for $n=2$:
let $X_{1}$, $X_{2}$ be two $\scr{Q}$-schemes,
and $X_{1} \hookleftarrow U \hookrightarrow X_{2}$
be the intersection quasi-compact open subscheme of $X_{1}$ and $X_{2}$.
Let $X$ be the amalgamation $X_{1} \amalg_{U} X_{2}$
of $X_{1}$ and $X_{2}$ along $U$.
This is well defined, and coincides
with the usual topology, since $X$
is coherent, thanks to $U$ being quasi-compact.
This also shows that $\{X_{i} \to X\}_{i=1,2}$
is indeed a covering.
\end{proof}

The next lemma is peculiar to
II-rings.
\begin{Lem}
Let $C^{\bullet}=\{C^{\lambda}\}$ be a projective system of
II-rings. Let $C$ be the limit of $C^{\bullet}$,
and $\pi_{\lambda}:C \to C^{\lambda}$ be the natural morphisms.
Then, for any $Z \in C$, the localization $C_{Z}$ along $Z$
is naturally isomorphic to 
$\underleftarrow{\lim}_{\lambda}(C^{\lambda})_{\pi_{\lambda}Z}$.
\end{Lem}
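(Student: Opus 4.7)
The claim is that localization at an element commutes with projective limits in the category of II-rings. I will verify the universal property of the localization $\ell_Z: C \to C_Z$---namely, that it is initial among II-ring homomorphisms $C \to D$ in which $Z$ becomes trivialized (equivalently, equal to the multiplicative unit, since idempotent multiplication in II-rings forces any unit to equal $1$).

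First I would construct the canonical comparison morphism $\alpha: C_Z \to \catlim_\lambda (C^\lambda)_{\pi_\lambda Z}$ by a formal argument. For each $\lambda$, the composite $C \xrightarrow{\pi_\lambda} C^\lambda \to (C^\lambda)_{\pi_\lambda Z}$ sends $Z$ to an invertible element, so by the universal property of $C_Z$ it factors uniquely as $\alpha_\lambda \circ \ell_Z$ for some $\alpha_\lambda: C_Z \to (C^\lambda)_{\pi_\lambda Z}$. The family $\{\alpha_\lambda\}$ is compatible with the transition maps of the system by naturality of localization, so it induces the desired $\alpha$.

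To show $\alpha$ is an isomorphism I would invoke the explicit description of II-ring localization recorded in the Appendix: an element of $A_z$ is the class of some $a \in A$ modulo the (finitely generated) congruence determined by the relation $z \sim 1_A$. Since projective limits of II-rings are constructed componentwise---elements of $C$ are coherent families $(c^\lambda)$ with pointwise operations---this congruence can be tested level by level. The forward implication, that equivalence in $C_Z$ implies equivalence in each $(C^\lambda)_{\pi_\lambda Z}$, is immediate from functoriality. The reverse direction, which is the main obstacle, requires lifting a compatible family of witnesses of equivalence in each $(C^\lambda)_{\pi_\lambda Z}$ to a single witness in $C_Z$. Here I expect to use the finitary nature of the generating relations together with the coherence of the projective system: a witness of equivalence in $(C^\lambda)_{\pi_\lambda Z}$ involves only finitely many moves via $\pi_\lambda Z \sim 1$, and these lift componentwise to moves via $Z \sim 1_C$ in $C$, yielding the required single witness.
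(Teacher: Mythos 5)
Your first step---the comparison morphism $\alpha: C_{Z} \to \underleftarrow{\lim}_{\lambda}(C^{\lambda})_{\pi_{\lambda}Z}$ obtained from the universal property, together with the observation that an invertible idempotent must equal $1$ so that localization is a quotient---is correct and is exactly how the paper's proof begins. The gap is in the second half. Presenting $C_{Z}$ as $C$ modulo the congruence \emph{generated by} $Z \sim 1$ and then ``lifting a compatible family of witnesses'' cannot work as stated: a witness that $\pi_{\lambda}x \sim \pi_{\lambda}y$ in $(C^{\lambda})_{\pi_{\lambda}Z}$ is a finite chain of elementary rewrites whose intermediate terms and coefficients are arbitrary elements of $C^{\lambda}$; these need not lie in the image of $\pi_{\lambda}$ (projections of a limit are not surjective), and the chains at different $\lambda$ have unrelated lengths and shapes, so in the absence of filteredness there is nothing that assembles them into a single witness over $C$. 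This is not a presentational defect: for ordinary rings the statement is simply false for non-filtered limits, e.g. $(\prod_{n}\ZZ)[1/2] \subsetneq \prod_{n}(\ZZ[1/2])$ because the exponent of $2$ required varies with $n$, so any correct proof must invoke idempotency of multiplication at precisely this point---and your sketch never does after the construction of $\alpha$. In addition, you only discuss when two elements of $C_{Z}$ have equal images, i.e. injectivity of $\alpha$; surjectivity (that a compatible family of \emph{classes} comes from a single element of $C_{Z}$) is never addressed, and arbitrary representatives $x^{\lambda}$ of those classes do not form a compatible family in $C^{\bullet}$.

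Both gaps are closed by the one fact your outline is missing, and it is the engine of the paper's proof: in an II-ring the congruence generated by $Z \sim 1$ is already the relation $\{(x,y) \mid Zx = Zy\}$. (It contains $(Z,1)$ because $Z\cdot Z = Z$; it is multiplicative again because $Z^{2}=Z$; and it is contained in any congruence identifying $Z$ with $1$.) Hence $x=y$ in $C_{Z}$ if and only if $Zx=Zy$ in $C$: equality in the localization is a single equation in $C$, with no auxiliary elements and no chains. Since the limit $C=\underleftarrow{\lim}_{\lambda}C^{\lambda}$ is computed componentwise, $Zx=Zy$ holds if and only if $\pi_{\lambda}(Zx)=\pi_{\lambda}(Zy)$ for all $\lambda$; this is literally the displayed equivalence in the paper's proof and gives injectivity of $\alpha$ at once. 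For surjectivity, each class in $(C^{\lambda})_{\pi_{\lambda}Z}$ has the canonical representative $\pi_{\lambda}Z\cdot x^{\lambda}$, and these representatives \emph{are} preserved by the transition maps (again because equality of classes means equality after multiplying by $\pi_{\lambda}Z$), hence define an element $w \in C$ whose class in $C_{Z}$ maps to the given family. So the lemma is true and your construction of $\alpha$ stands, but the witness-lifting step must be replaced by this closed form of the congruence, which is where the hypothesis of idempotent multiplication actually enters.
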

\begin{proof}
Since $Z$ maps to $1$ by the morphism 
$C \to (C_{\lambda})_{\pi_{\lambda}Z}$,
We have a natural morphism $C_{Z} \to (C^{\lambda})_{\pi_{\lambda}Z}$.
This in turn gives a natural morphism
$C_{Z} \to \underleftarrow{\lim}_{\lambda}(C^{\lambda})_{\pi_{\lambda}Z}$.

Next, we see that if $\varphi:C_{Z} \to C_{Z}$
is a endomorphism such that $\pi_{\lambda}\varphi=\pi_{\lambda}$
for any $\lambda$, then $\varphi$ is the identity.
$\pi_{\lambda}\varphi(x)=\pi_{\lambda}x$ in 
$(C^{\lambda})_{\pi_{\lambda}Z}$ is equivalent to
$\pi_{\lambda}(Z\varphi(x))=\pi_{\lambda}(Zx)$ in $C^{\lambda}$.
Since this holds for any $\lambda$,
we have $Z\varphi(x)=Zx$, which is equivalent to $\varphi(x)=x$
in $C_{Z}$.

Finally, we construct 
$\underleftarrow{\lim}_{\lambda}(C^{\lambda})_{\pi_{\lambda}Z}
\to C_{Z}$. The map $f:\underleftarrow{\lim}_{\lambda}C^{\lambda}
\to C_{Z}$ is already defined, hence
we only need to verify that $f(x)=f(y)$ implies $x=y$ in
$\underleftarrow{\lim}_{\lambda}(C^{\lambda})_{\pi_{\lambda}Z}$,
but this is obvious.

Combining all the arguments,
we see that $C_{Z}$ coincides with
$\underleftarrow{\lim}_{\lambda}(C^{\lambda})_{\pi_{\lambda}Z}$.
\end{proof}

\begin{Prop}
\begin{enumerate}
\label{prop:limit:tau}
\item Let $\{X_{\lambda}\}$ be an inductive system
of coherent spaces, and $X=\underrightarrow{\lim}_{\lambda}X_{\lambda}$.
Then, there is a natural isomorphism
$\tau_{X} \simeq \underleftarrow{\lim}_{\lambda}
\iota_{\lambda *} \tau_{X_{\lambda}}$,
where $\iota_{\lambda}:X_{\lambda} \to X$ are the
induced morphisms.
\item Let $\{X^{\lambda}\}$ be a filtered projective system
of coherent spaces, and $X=\underleftarrow{\lim}_{\lambda}X^{\lambda}$.
Then, there is a natural isomorphism
$\tau_{X} \simeq \underrightarrow{\lim}_{\lambda}
\pi_{\lambda}^{-1}\tau_{X^{\lambda}}$,
where $\pi_{\lambda}:X \to X^{\lambda}$ are the 
induced morphisms.
\end{enumerate}
\end{Prop}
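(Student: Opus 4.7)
The plan is to work on the basis of quasi-compact opens, where I expect $\tau_Y(V)$ to coincide with $C(V)_{\cpt}$, equivalently with the localization $(C(Y)_{\cpt})_{Z_V}$ for $Z_V \in C(Y)_{\cpt}$ the characteristic closed element of the complement of $V$. Since quasi-compact opens form a basis of a coherent space and morphisms of sheaves are determined on a basis, it suffices to produce a natural isomorphism at the level of sections over quasi-compact opens. The two main ingredients are (a) the preceding lemma on localizations of projective limits in $\cat{IIRng}^{\dagger}$, and (b) the duality $C(-)_{\cpt}\colon \cat{Coh} \to (\cat{IIRng}^{\dagger})^{\op}$, under which colimits in $\cat{Coh}$ go to limits and filtered limits in $\cat{Coh}$ go to filtered colimits.

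For part (1), since $X = \underrightarrow{\lim}_\lambda X_\lambda$ in $\cat{Coh}$, the duality gives $C(X)_{\cpt} \simeq \underleftarrow{\lim}_\lambda C(X_\lambda)_{\cpt}$ with transition maps $\iota_\lambda^{-1}$. For a quasi-compact open $U \subset X$ with characteristic element $Z_U$, applying the previous lemma yields
\[
\tau_X(U) \simeq C(X)_{\cpt,\,Z_U} \simeq \underleftarrow{\lim}_\lambda C(X_\lambda)_{\cpt,\,\iota_\lambda^{-1}Z_U} \simeq \underleftarrow{\lim}_\lambda \tau_{X_\lambda}(\iota_\lambda^{-1}U),
\]
and the right-hand side is exactly $\bigl(\underleftarrow{\lim}_\lambda \iota_{\lambda*}\tau_{X_\lambda}\bigr)(U)$. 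Naturality in $U$ is automatic from the functoriality of the constructions involved, and this gives the desired isomorphism of sheaves.

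For part (2), filteredness converts the projective limit of coherent spaces into a filtered inductive limit of II-rings, $C(X)_{\cpt} \simeq \underrightarrow{\lim}_\lambda C(X^\lambda)_{\cpt}$. The key observation is that every quasi-compact open $U$ of $X$ is pulled back from some finite level: $U = \pi_\mu^{-1}(U^\mu)$ for some $\mu$ and some quasi-compact open $U^\mu \subset X^\mu$, because quasi-compact opens correspond to compact elements of the II-ring, and compact elements of a filtered colimit of II-rings are represented at some finite stage. Combined with the fact that localization in $\cat{IIRng}^{\dagger}$ commutes with filtered colimits, the sections match up:
\[
\tau_X(U) \simeq \underrightarrow{\lim}_{\lambda \geq \mu} \tau_{X^\lambda}(\pi_\lambda^{-1}U) \simeq \bigl(\underrightarrow{\lim}_\lambda \pi_\lambda^{-1}\tau_{X^\lambda}\bigr)(U),
\]
where the right-hand equality is the standard description of the inverse-image sheaf for a filtered system.

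The main obstacle lies in part (2): verifying that quasi-compact opens of the filtered projective limit come from finite levels, and that filtered colimits in $\cat{IIRng}^{\dagger}$ commute with localization. Neither is deep, but both need to be argued carefully using compactness in the II-rings, and they are exactly what is \emph{not} available in the non-filtered situation of part (1) --- which is why part (1) must go through the previous lemma rather than via filtered colimits. Once these two points are settled, naturality and the basis-lemma for sheaves finish the argument.
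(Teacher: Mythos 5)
Your proof is correct; the interesting comparison is in part (2). For part (1) you follow the paper exactly: its entire proof is ``this follows from the above lemma,'' and your chain
\[
\tau_X(U) \simeq (C(X)_{\cpt})_{Z_U} \simeq \underleftarrow{\lim}_{\lambda} (C(X_{\lambda})_{\cpt})_{\iota_{\lambda}^{-1}Z_U} \simeq \bigl(\underleftarrow{\lim}_{\lambda} \iota_{\lambda *}\tau_{X_{\lambda}}\bigr)(U)
\]
is precisely the omitted unwinding, using that limits of sheaves are computed sectionwise. For part (2) the paper argues abstractly: it builds $f_{\lambda}:\pi_{\lambda}^{-1}\tau_{X^{\lambda}} \to \tau_X$ by adjunction from $\tau_{X^{\lambda}} \to \pi_{\lambda *}\tau_X$, assembles a map $\underrightarrow{\lim}_{\lambda}\pi_{\lambda}^{-1}\tau_{X^{\lambda}} \to \tau_X$, and then appeals to filteredness twice --- once to see that any endomorphism of $\tau_X$ compatible with all $f_{\lambda}$ is the identity, once to construct a map in the other direction --- without ever computing a section. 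You instead identify sections over quasi-compact opens directly, via the duality $C(X)_{\cpt} \simeq \underrightarrow{\lim}_{\lambda} C(X^{\lambda})_{\cpt}$, descent of quasi-compact opens to a finite level, and commutation of localization with filtered colimits (localization is a pushout, and colimits commute with colimits). These are exactly the facts hidden in the paper's two appeals to filteredness, so the ingredients agree; what your version buys is an explicit formula $\tau_X(U) \simeq \underrightarrow{\lim}_{\lambda \geq \mu}\tau_{X^{\lambda}}(U^{\lambda})$, which is the form in which the proposition actually gets applied later (e.g.\ in the proof of Proposition \ref{prop:filter:limit}); what the paper's version buys is brevity, since it never has to justify that filtered colimits of sheaves on a coherent space have sectionwise values on quasi-compact opens. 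Two slips to correct: the Stone duality and the localization lemma concern \cat{IIRng}, not \cat{IIRng$^{\dagger}$} (the latter, complete II-rings, is dual to sober spaces); and in your display for part (2) the expression $\tau_{X^{\lambda}}(\pi_{\lambda}^{-1}U)$ is ill-formed, since $\pi_{\lambda}^{-1}U$ is an open subset of $X$, not of $X^{\lambda}$ --- it should read $\tau_{X^{\lambda}}(U^{\lambda})$, where $U^{\lambda} \subset X^{\lambda}$ is the quasi-compact open with $\pi_{\lambda}^{-1}(U^{\lambda})=U$, obtained from $U^{\mu}$ via the transition maps.
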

\begin{proof}
\begin{enumerate}
\item This follows from the above lemma.
\item First, there is a natural morphism
$\tau_{X^{\lambda}} \to \pi_{\lambda *}\tau_{X}$.
Taking the adjoint, we obtain 
$f_{\lambda}:\pi^{-1}_{\lambda}\tau_{X^{\lambda}}
\to \tau_{X}$.
Taking the limit, we obtain 
$\underrightarrow{\lim}_{\lambda}\pi_{\lambda}^{-1}
\tau_{X^{\lambda}} \to \tau_{X}$.

Next, we show that if $\varphi:\tau_{X} \to \tau_{X}$
is an endomorphism with $\varphi f_{\lambda}=f_{\lambda}$
for any $\lambda$,
then $\varphi$ is the identity.
But this follows from the fact that the projective system is filtered.
We can also construct a natural morphism
$\tau_{X} \to \underrightarrow{\lim}\pi_{\lambda}^{-1}
\tau_{X^{\lambda}}$, using the filteredness.
Combining all these, we obtain the required isomorphism.
\end{enumerate}
\end{proof}

\begin{Prop}
\label{prop:cocomp:patch}
\begin{enumerate}
\item The category $\cat{$\scr{A}$-Sch}$
is small co-complete.
\item %Assume that $\alpha_{1}$
%preserves finite products. Then,
The category $\cat{$\scr{A}$-Sch}$
admits finite patchings via quasi-compact opens.
Moreover, the inclusion functor
$I:\cat{$\scr{Q}$-Sch} \to \cat{$\scr{A}$-Sch}$
preserves finite patchings via quasi-compact opens.
\end{enumerate}
\end{Prop}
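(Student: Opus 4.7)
The plan is to establish (1) by constructing all small coproducts and all coequalizers of $\scr{A}$-schemes, which together give all small colimits; statement (2) then follows, since a finite patching via quasi-compact opens is a specific coequalizer, with preservation by $I$ requiring the extra observation that the patched object of $\scr{Q}$-schemes is again a $\scr{Q}$-scheme.

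For small coproducts of a family $\{X_\lambda\}$ of $\scr{A}$-schemes, first form $X=\amalg_\lambda X_\lambda$ in $\cat{Coh}$ (its existence reducing via a slice argument to the one-point case treated in Lemma \ref{lem:coprod:ultrafilter}); each inclusion $\iota_\lambda:X_\lambda \hookrightarrow X$ is a quasi-compact open embedding. Define $\scr{O}_X := \prod_\lambda \iota_{\lambda*}\scr{O}_{X_\lambda}$, and use the isomorphism $\tau_X \simeq \underleftarrow{\lim}_\lambda \iota_{\lambda*}\tau_{X_\lambda}$ of Proposition \ref{prop:limit:tau}(1) to assemble a marker $\beta_X$ from the $\beta_{X_\lambda}$ by the universal property of the limit. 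For coequalizers of a parallel pair $f,g:X \rightrightarrows Y$, form the coequalizer $\pi:Y \to Z$ in $\cat{Coh}$ (realized via $\Spec$ of the equalizer of $|f|^{-1},|g|^{-1}:C(Y)_{\cpt} \rightrightarrows C(X)_{\cpt}$), and let $\scr{O}_Z$ be the equalizer in the category of $\scr{A}$-sheaves of $\pi_*\scr{O}_Y \rightrightarrows \pi_*f_*\scr{O}_X$, with $\beta_Z$ induced from $\pi_*\beta_Y$ by restricting to this equalizer subsheaf.

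The main obstacle is verifying that these constructions yield genuine $\scr{A}$-schemes rather than mere $\scr{A}$-sheaves on coherent spaces. The key point is that the relevant axioms are local on quasi-compact opens: for the coproduct, each quasi-compact open of $X$ meets only finitely many summands, so $\beta_X$ is locally that of some $X_\lambda$; for the coequalizer, $\scr{O}_Z$ and $\beta_Z$ pull back along $\pi$ to subobjects of what $\scr{O}_Y$ and $\beta_Y$ already provided, so the axioms are inherited. The required universal properties follow from those of the underlying colimits in $\cat{Coh}$ and of the limits/equalizers of $\scr{A}$-sheaves. For (2), existence of the finite patching is then a special case of (1), namely the coequalizer of $\amalg_{ij}U_{ij} \rightrightarrows \amalg_i X_i$. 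Preservation by $I$ reduces to showing that when each $X_i$ is a $\scr{Q}$-scheme, so is the patched $X$: but by construction each $X_i \hookrightarrow X$ is a quasi-compact open embedding covering $X$, so $X$ is locally of the form $\Spec^{\scr{A}} R$ and is therefore a $\scr{Q}$-scheme, and its universal property in $\cat{$\scr{Q}$-Sch}$ coincides with the one established in $\cat{$\scr{A}$-Sch}$.
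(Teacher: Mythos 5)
Your overall strategy (coproducts plus coequalizers, with the underlying space formed in $\cat{Coh}$, the structure sheaf as a limit of pushforwards, and the support morphism assembled via Proposition \ref{prop:limit:tau}) is legitimate, and the constructions you write down agree with the paper's, which handles an arbitrary small diagram in one step by the same recipe. However, your verification of the $\scr{A}$-scheme axioms for the coproduct rests on a false statement: it is \emph{not} true that each quasi-compact open of $X=\amalg_{\lambda}X_{\lambda}$ meets only finitely many summands. The coproduct is taken in $\cat{Coh}$, not in topological spaces, so $X$ is itself a coherent --- in particular quasi-compact --- space, and $X$ is a quasi-compact open of itself meeting every summand. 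Worse, by Lemma \ref{lem:coprod:ultrafilter} an infinite coproduct contains points corresponding to non-principal ultrafilters, which lie in no $X_{\lambda}$ at all; this is precisely the content of the Remark following this proposition in the paper, namely that $\{X_{\lambda}\to X\}$ fails to be a covering. At such points there is no sense in which ``$\beta_{X}$ is locally that of some $X_{\lambda}$''. The axiom that restrictions reflect localizations must instead be checked componentwise, as the paper does: if $a=(a_{\lambda})_{\lambda}$ satisfies $\beta_{X}\alpha_{2}(a)\geq Z$, then $\beta_{X_{\lambda}}\alpha_{2}(a_{\lambda})\geq Z_{\lambda}$ for every $\lambda$, each $a_{\lambda}$ is invertible in $\scr{O}_{X_{\lambda}}(V_{\lambda})$, and the uniqueness of inverses shows that $(a_{\lambda}^{-1})_{\lambda}$ is a section of $\prod_{\lambda}\iota_{\lambda*}\scr{O}_{X_{\lambda}}$ inverting $a$. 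The repair is short, but the misconception it corrects is exactly the subtlety the paper is at pains to emphasize.

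The same misconception undermines your proof of (2). That each $X_{i}\to X$ is a quasi-compact open embedding and that the $X_{i}$ jointly cover $X$ is not ``by construction'': joint surjectivity onto the points of the colimit is the nontrivial assertion, and it is special to \emph{finite} patchings. The paper proves it by reducing to the fact that every prime of a product $R_{1}\times R_{2}$ of II-rings is pulled back from $R_{1}$ or from $R_{2}$ --- true for finite products, false for infinite ones (again by Lemma \ref{lem:coprod:ultrafilter}). Without this step you cannot conclude that the colimit is covered by opens isomorphic to the $X_{i}$, hence locally of the form $\Spec^{\scr{A}}R$, so your claim that the patched object is a $\scr{Q}$-scheme, and with it the preservation statement for $I$, is unsupported as written. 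Supply the finite-product II-ring fact (or cite the paper's argument for it) and your outline goes through.
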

\begin{proof}
\begin{enumerate}
\item Let $\{X_{\lambda}\}$ be a small inductive system
of $\scr{A}$-schemes.
First, we will construct the $\scr{A}$-scheme $X=(|X|,\scr{O}_{X},\beta_{X})$.
The underlying space $|X|$ is given by the colimit of the
underlying spaces $|X_{\lambda}|$.
Set $|\iota_{\lambda}|:|X| \to |X_{\lambda}|$ be the associated morphisms.
The structure sheaf $\scr{O}_{X}$ is defined by the limit
of $|\iota_{\lambda}|_{*}\scr{O}_{X_{\lambda}}$, as a
$\cat{$\sigma $-alg}$-valued sheaf on $X$.

We have a morphism
\[
\alpha_{1}|\iota_{\lambda}|_{*} \scr{O}_{X_{\lambda}}
\simeq |\iota_{\lambda}|_{*}\alpha_{1}\scr{O}_{X_{\lambda}}
\stackrel{|\iota_{\lambda}|_{*}\beta_{X_{\lambda}}}{\longrightarrow}
|\iota_{\lambda}|_{*}\tau_{X_{\lambda}},
\]
which extends to give
$\alpha_{1}\scr{O}_{X} \to |\iota_{\lambda}|_{*}\tau_{X_{\lambda}}$.
Taking the limit and using Proposition \ref{prop:limit:tau}, we obtain
\[
\beta_{X}:\alpha_{1}\scr{O}_{X} \to \underleftarrow{\lim}_{\lambda}
|\iota_{\lambda}|_{*}\tau_{X_{\lambda}}
\simeq \tau_{X}.
\]
We will verify that restriction maps reflect localizations.
Let $\scr{O}_{X}(U) \to \scr{O}_{X}(V)$
be a restriction map, and let $Z=U\setminus V$
be the closed subset of $U$.
Let $Z_{\lambda}$ be the inverse image of
$Z$ by $X_{\lambda} \to X$,
so that we will denote $Z$ by $(Z_{\lambda})_{\lambda}$.
It suffices to show that
if $a=(a_{\lambda})_{\lambda} \in \scr{O}_{X}(U)$
satisfies $\beta_{X}\alpha_{2}(a) \geq Z$,
then $a$ is invertible in $\scr{O}_{X}(V)$.
Since $\beta_{X_{\lambda}}\alpha_{2}(a_{\lambda}) \geq Z_{\lambda}$,
$a_{\lambda}$ is invertible in $\scr{O}_{X_{\lambda}}(V_{\lambda})$,
where $V=(V_{\lambda})_{\lambda}$.
The uniqueness of the inverse element shows that
$a$ is also invertible in $\scr{O}_{X}(V)$.
Thus, we have defined an $\scr{A}$-scheme $X$.
We also have natural morphisms $\iota_{\lambda}:X_{\lambda} \to X$.

We will show that $X$ is actually the colimit.
Let $j_{\lambda}:X_{\lambda} \to Y$ be morphisms,
compatible with the transition morphisms.
There is a unique natural morphism $|j|:|X| \to |Y|$
between the underlying spaces.
The morphisms $\scr{O}_{Y} \to |j_{\lambda}|_{*}\scr{O}_{X_{\lambda}}$
give
\[
j^{\#}:\scr{O}_{Y} \to \underleftarrow{\lim}_{\lambda}
|j_{\lambda}|_{*}
\scr{O}_{X_{\lambda}}
\simeq \underleftarrow{\lim}_{\lambda}|j|_{*}|\iota_{\lambda}|_{*}
\scr{O}_{X_{\lambda}}
\simeq |j|_{*}\underleftarrow{\lim}_{\lambda}|\iota_{\lambda}|_{*}
\scr{O}_{X_{\lambda}}
\simeq |j|_{*}\scr{O}_{X}.
\]
This is the unique morphism which satisfies 
$|j|_{*}\iota_{\lambda}^{\#}\circ j^{\#} =j_{\lambda}^{\#}$.
It is easy to see that $j=(|j|,j^{\#})$
commutes with the support morphisms $\beta_{X}$ and $\beta_{Y}$.
Thus, we obtained a unique morphism $j:X \to Y$
of $\scr{A}$-schemes, hence $X$ is indeed the co-limit.

\item We only have to show that
if $X$ is obtained by patching $X_{1},\cdots,X_{n}$
by quasi-compact opens, then $\{X_{i} \to X\}$
is a covering. By induction,
it suffices to prove for $n=2$.
There is a surjective morphism $X_{1} \amalg X_{2} \to X$,
hence it remains to show that:

If $R_{1}$ and $R_{2}$ are two II-rings,
then any element of the spectrum of $R=R_{1} \times R_{2}$
are in the image of $\Spec R_{1} \to \Spec R$
or that of $\Spec R_{2} \to \Spec R$.

This is easy to prove, so we will skip.

It is clear from the construction
that the functor $I$ preserves finite patching via
quasi-compact opens.
\end{enumerate}
\end{proof}

\begin{Rmk}
Even though small coproducts exist in the category
of $\scr{A}$-schemes, their behavior is somewhat
different from those in schemes.
For example, a point of an infinite coproduct $X=\amalg_{\lambda} X_{\lambda}$
does not necessarily come from a point of some $X_{\lambda}$,
i.e strictly speaking, $\{X_{\lambda} \to X\}$
is not a covering of $X$.
\end{Rmk}

\begin{Exam}
%Note that, even when $\sigma$
%is the algebraic structure of rings,
%$\alpha_{1}$ does not preserve infinite products.
The spectrum functor
is mal-behaved, when we consider
infinite product of rings:
the underlying space $\Spec \prod_{n} R_{n}$
does not coincide with the co-product $\amalg_{n} \Spec R_{n}$,
even in the category of coherent spaces.

Here is a typical counterexample:
Let $k$ be a field, and set
$R=\prod_{n \in \NNN}R_{n}$,
where $R_{n}=k[x]/(x^{n})$.
Then, the spectrum of $R_{n}$ is a point
for any $R_{n}$, hence $\Spec \prod\alpha_{1}R_{n}$
must be the set of all ultrafilters over $\NNN$,
in particular, its Krull dimension is zero.

On the other hand,
the Krull dimension of $\Spec R$ is not zero:
fix a non-principal ultrafilter $\mathfrak{U}$ on $\NNN$,
and define an ideal $\mathfrak{M}$ of $R$ as
\[
f=(f_{n})_{n} \in \mathfrak{M}
\Leftrightarrow f_{n} \notin R_{n}^{\times} \quad\rom{a.e. $\mathfrak{U}$}.
\] 
Here, $P(s)$ a.e $\mathfrak{U}$ for a condition
$P(s)$ of $s$ means that
the set $\{s \mid P(s)\}$ belongs to $\mathfrak{U}$.
This is a maximal ideal of $R$
(in fact, any maximal ideal of $R$ is of this form).
On the other hand,
define an ideal $\mathfrak{p}$ of $R$ as
\[
f=(f_{n})_{n} \in \mathfrak{p}
\Leftrightarrow \rom{For any $c>0$, $f_{n} \in (x^{\lceil cn \rceil})$
a.e. $\mathfrak{U}$}.
\]
This is also a prime ideal, and obviously
smaller than $\mathfrak{M}$.
Hence, the Krull dimension of $R$ is not $0$.
In fact, one can prove similarly
that the Krull dimension of $R$ is infinite.

It is obvious that $\{\Spec R_{n} \to \Spec R\}_{n}$
is not a covering of $\Spec R$.
\end{Exam}

\subsection{Decomposition of morphisms}
In this subsection,
we prove that there is a functorial
decomposition of morphisms in the category
of $\scr{A}$-schemes. This
decomposition plays an important role
in the proof of completeness of $\cat{$\scr{A}$-Sch}$,
since it gives an upper bound of the cardinality
of a family of morphisms.
Also, note that this decomposition
is peculiar to the category of $\scr{A}$-schemes.
\begin{Def}
A morphism $f:X \to Y$ of $\scr{A}$-schemes
is a \textit{P-morphism} if:
\begin{enumerate}
\item $|f|$ is epic, i.e. $f^{-1}:C(Y)_{\cpt} \to C(X)_{\cpt}$ is injective,
\item $f^{\#}:\scr{O}_{Y} \to f_{*}\scr{O}_{X}$
is injective.
\end{enumerate}
\end{Def}
Let us mention some trivial facts:
\begin{Prop}
\begin{enumerate}
\item A P-morphism is epic.
\item P-morphisms are stable under compositions.
\item If $gf$ is a P-morphism, then so is $g$.
\end{enumerate}
\end{Prop}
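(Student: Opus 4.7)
The plan is to unpack each assertion from the two defining conditions of a P-morphism and the functoriality of composition $(gf)^{\#} = |g|_{*}f^{\#} \circ g^{\#}$, together with the fact that $|g|_{*}$ (as a right adjoint) preserves monomorphisms of sheaves.

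For (1), suppose $g_{1}, g_{2}\colon Y \to Z$ satisfy $g_{1}f = g_{2}f$. On underlying spaces, $|g_{1}|\,|f| = |g_{2}|\,|f|$, and since $|f|$ is epic in $\cat{Sob}$ by definition of a P-morphism, $|g_{1}| = |g_{2}|$; call this common map $|g|$. On structure sheaves, the composition formula gives $|g|_{*}f^{\#} \circ g_{1}^{\#} = |g|_{*}f^{\#} \circ g_{2}^{\#}$. Since $f^{\#}$ is injective and $|g|_{*}$ preserves injections, $|g|_{*}f^{\#}$ is injective, hence $g_{1}^{\#} = g_{2}^{\#}$. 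Thus $g_{1} = g_{2}$, i.e. $f$ is epic.

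For (2), let $f\colon X \to Y$ and $g\colon Y \to Z$ be P-morphisms. The underlying map $|gf| = |g| \circ |f|$ is a composition of epimorphisms in $\cat{Sob}$, hence epic. The sheaf map $(gf)^{\#} = |g|_{*}f^{\#} \circ g^{\#}$ is a composition of two injective sheaf morphisms ($g^{\#}$ by hypothesis, and $|g|_{*}f^{\#}$ since $|g|_{*}$ preserves injections), hence injective. So $gf$ is a P-morphism.

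For (3), assume $gf$ is a P-morphism. The epicness of $|gf| = |g| \circ |f|$ forces $|g|$ to be epic (if a composite of maps is epic, then the second factor is). Likewise, injectivity of $(gf)^{\#} = |g|_{*}f^{\#} \circ g^{\#}$ forces $g^{\#}$ to be injective (if a composite is injective, then the first factor is). Hence $g$ satisfies both conditions and is a P-morphism. The only nontrivial ingredient throughout is that $|g|_{*}$ sends injective morphisms of $\cat{\sigma\text{-alg}}$-valued sheaves to injective morphisms, which is standard since the pushforward is a right adjoint and injectivity can be checked stalkwise or sectionwise; this is the main (and still routine) point to verify explicitly.
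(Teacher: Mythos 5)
Your proof is correct, and it is exactly the routine verification the paper omits (the paper states these three facts without proof, declaring them obvious): cancel the underlying maps using epicness, cancel the sheaf maps using that $|g|_{*}f^{\#}$ is sectionwise injective because $(|g|_{*}\scr{F})(U)=\scr{F}(|g|^{-1}U)$, and use the standard facts that the second factor of an epic composite is epic and the first-applied factor of an injective composite is injective. One cosmetic correction: the underlying map of a morphism of $\scr{A}$-schemes is a quasi-compact morphism of coherent spaces, so ``$|f|$ is epic'' should be read in $\cat{Coh}$ --- by the paper's definition it just means $f^{-1}:C(Y)_{\cpt}\to C(X)_{\cpt}$ is injective, which gives epicness in $\cat{Coh}$ via Stone duality and Lemma \ref{lem:eq:inj:monic}, not epicness in $\cat{Sob}$ (a priori a stronger condition tied to the full lattice $C(Y)$ as in Proposition \ref{prop:epic:top}); since every map you cancel against is itself a quasi-compact map of coherent spaces, your argument goes through verbatim with this reading.
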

These are all obvious,
so we will skip the proof.

\begin{Prop}
\label{prop:surj:colimit}
Let $\{f_{\lambda}:X_{\lambda} \to Y_{\lambda}\}_{\lambda}$
be an inductive system of P-morphisms of $\scr{A}$-schemes,
and set $X_{\infty}=\underrightarrow{\lim}_{\lambda}X_{\lambda}$
and $Y_{\infty}=\underrightarrow{\lim}_{\lambda} Y_{\lambda}$.
Then, the natural morphism $f:X_{\infty} \to Y_{\infty}$
is also a P-morphism. 
\end{Prop}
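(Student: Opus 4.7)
The plan is to verify the two defining conditions of a P-morphism for $f$ separately, reducing each to a statement about limits of injective maps.

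For condition (1), I use that the functor $C(-)_{\cpt}$ turns colimits of coherent spaces into limits of II-rings: since this functor is the left inverse and right adjoint of $\Spec$, and the underlying space of $X_{\infty}$ is by construction the colimit of the $|X_{\lambda}|$ in coherent spaces, we have
\[
C(X_{\infty})_{\cpt} \simeq \underleftarrow{\lim}_{\lambda} C(X_{\lambda})_{\cpt}, \qquad
C(Y_{\infty})_{\cpt} \simeq \underleftarrow{\lim}_{\lambda} C(Y_{\lambda})_{\cpt},
\]
and $|f|^{-1}$ is the limit of the maps $|f_{\lambda}|^{-1}$. Each $|f_{\lambda}|^{-1}$ is injective by hypothesis, and a filtered (in fact arbitrary) limit of injective maps of sets is injective, so $|f|^{-1}$ is injective.

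For condition (2), let $\iota_{\lambda}^{X}:X_{\lambda}\to X_{\infty}$ and $\iota_{\lambda}^{Y}:Y_{\lambda}\to Y_{\infty}$ be the canonical morphisms; by construction of $f$ from the universal property of $X_{\infty}$, the diagrams $f\circ\iota_{\lambda}^{X}=\iota_{\lambda}^{Y}\circ f_{\lambda}$ commute. By Proposition \ref{prop:cocomp:patch} the structure sheaves are given by $\scr{O}_{X_{\infty}}\simeq\underleftarrow{\lim}|\iota_{\lambda}^{X}|_{*}\scr{O}_{X_{\lambda}}$ and $\scr{O}_{Y_{\infty}}\simeq\underleftarrow{\lim}|\iota_{\lambda}^{Y}|_{*}\scr{O}_{Y_{\lambda}}$. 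Since the direct image functor $|f|_{*}$ is a right adjoint, it commutes with the limit, and combining with the commutativity of the squares gives
\[
|f|_{*}\scr{O}_{X_{\infty}}\simeq \underleftarrow{\lim}_{\lambda}\,|f|_{*}|\iota_{\lambda}^{X}|_{*}\scr{O}_{X_{\lambda}}\simeq \underleftarrow{\lim}_{\lambda}\,|\iota_{\lambda}^{Y}|_{*}|f_{\lambda}|_{*}\scr{O}_{X_{\lambda}}.
\]
Under these identifications, $f^{\#}$ is (by uniqueness in the universal property built in the proof of Proposition \ref{prop:cocomp:patch}) the limit of the morphisms $|\iota_{\lambda}^{Y}|_{*}f_{\lambda}^{\#}$. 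Each $f_{\lambda}^{\#}$ is injective by hypothesis; injectivity of a morphism of sheaves is checked sectionwise, so direct image preserves it, and limits of injective morphisms of $\sigma$-algebra-valued sheaves remain injective (limits of sheaves are computed sectionwise, and limits of injections in $\cat{Set}$ are injections). Hence $f^{\#}$ is injective.

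The main obstacle is the bookkeeping in step (2): one must carefully identify the morphism $f^{\#}$ coming out of the colimit construction with the limit of the pushed-forward $f_{\lambda}^{\#}$. This is essentially a diagram chase using the explicit construction of $\scr{O}_{X_{\infty}}$ and $\scr{O}_{Y_{\infty}}$ given in Proposition \ref{prop:cocomp:patch} together with the functoriality of direct image with respect to limits; once the identification is in place, injectivity propagates through the limit without difficulty.
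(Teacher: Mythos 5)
Your proof is correct and follows essentially the same route as the paper: both arguments use the explicit construction of colimits from Proposition \ref{prop:cocomp:patch} (underlying space as colimit in $\cat{Coh}$, hence $C(-)_{\cpt}$ as a limit of II-rings; structure sheaf as a limit of pushforwards), identify $f^{-1}$ and $f^{\#}$ as componentwise/limit maps of the $f_{\lambda}^{-1}$ and $f_{\lambda}^{\#}$, and conclude since limits of injections are injections. The paper merely compresses your condition (2) into the phrase ``a similar argument,'' which your bookkeeping with $|f|_{*}$ and the $|\iota_{\lambda}^{Y}|_{*}f_{\lambda}^{\#}$ makes explicit.
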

\begin{proof}
First, we will see that $C(Y_{\infty})_{\cpt} \to C(X_{\infty})_{\cpt}$
is injective. Let $Z=(Z_{\lambda})_{\lambda}$
and $W=(W_{\lambda})_{\lambda}$ be two elements
of $C(Y_{\infty})_{\cpt}$ with $f^{-1}(Z)=f^{-1}(W)$.
Since $f^{-1}$ is defined by $(Z_{\lambda})_{\lambda}
\mapsto (f_{\lambda}^{-1}Z_{\lambda})_{\lambda}$,
this implies that $f_{\lambda}^{-1}Z_{\lambda}
=f_{\lambda}^{-1}W_{\lambda}$.
Since $f_{\lambda}$ is a P-morphism,
$Z_{\lambda}$ and $W_{\lambda}$ must coincide for all $\lambda$,
which is equivalent to $Z=W$.
Hence $C(Y_{\infty})_{\cpt} \to C(X_{\infty})_{\cpt}$ is injective.
A similar argument shows that
$\scr{O}_{Y_{\infty}} \to f_{*}\scr{O}_{X_{\infty}}$ is also injective,
so that $f$ is a P-morphism.
\end{proof}

\begin{Def}
\begin{enumerate}
\item
Fix a small index category $I$.
Let $Y^{\bullet}:I \to \cat{$\scr{A}$-Sch}$
be a small projective system of $\scr{A}$-schemes,
and $f:\Delta(X) \to Y^{\bullet}$ be a morphism
in $\cat{$\scr{A}$-Sch}^{I}$,
where $\Delta: \cat{$\scr{A}$-Sch} \to \cat{$\scr{A}$-Sch}^{I}$
is the diagonal functor.
(In the sequel, we simply denote $\Delta(X)$
by $X$ for brevity.)
Let $\scr{S}$ be the set of isomorphism
classes of the commutative diagram
\[
\xymatrix{
X \ar[r]^{f} \ar[d]_{g} & Y^{\bullet} \\
W \ar[ur] 
}
\]
where $g$ is a P-morphism.
$\scr{S}$ is small, from the property of P-morphisms.
Set $I(X,Y^{\bullet})=\underrightarrow{\lim}_{W \in \scr{S}} W$.
Then, by Proposition \ref{prop:surj:colimit},
$X \to I(X,Y^{\bullet})$ becomes a P-morphism.
\item Let $f:X \to Y^{\bullet}$ be as above.
$f$ is a \textit{Q-morphism},
if $X \to I(X,Y^{\bullet})$ is an isomorphism.
\end{enumerate}
\end{Def}

Roughly speaking, a P-morphism 
can be regarded as a schematic surjection
and a Q-morphism as a schematic immersion.
Thus, if $X \to I(X,Y) \to Y$ is the PQ-decomposition
of a morphism $f:X \to Y$,
$I(X,Y)$ can be regarded as the `image scheme' of $f$.

The next proposition is purely category-theoretical.
\begin{Prop}
\label{prop:decomp:exist}
\begin{enumerate}
\item Let $f:X \to Y^{\bullet}$ be a morphism
from an $\scr{A}$-scheme $X$ to
a projective system $Y^{\bullet}$ of $\scr{A}$-schemes.
Then, the morphism $h:I(X,Y^{\bullet}) \to Y^{\bullet}$
is a Q-morphism.
\item A morphism $f:X \to Y$ of $\scr{A}$-schemes
is an isomorphism if and only if $f$
is a P-morphism and Q-morphism.
\end{enumerate}
\end{Prop}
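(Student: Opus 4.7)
My plan is to address (2) first as a short diagram chase, then to attack (1) by analyzing the indexing category defining $I_2 := I(I(X,Y^\bullet),Y^\bullet)$.

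For (2), the \emph{only if} direction rests on the observation that when $f$ is an isomorphism, $(Y,f,\mathrm{id}_Y)$ is a terminal object of $\scr{S}$ (for any $(W,g,h)$ the unique morphism to it is $h$ itself), so the colimit $I(X,Y)$ equals $Y$ and the canonical P-morphism $X\to I(X,Y)$ identifies with $f$, an isomorphism; clearly $f$ is also P. For the converse, suppose $f$ is both P and Q. Since $f$ is P, $(Y,f,\mathrm{id}_Y)\in\scr{S}$ furnishes a cocone map $j:Y\to I(X,Y)$, and the family $\{h_W:W\to Y\}$ induces $\pi:I(X,Y)\to Y$ with $\pi\circ j=\mathrm{id}_Y$. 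Viewing $f$ as a morphism $(X,\mathrm{id}_X,f)\to(Y,f,\mathrm{id}_Y)$ in $\scr{S}$ yields $\iota = j\circ f$, where $\iota$ is the canonical P-morphism. The Q hypothesis makes $\iota$ invertible, so $\iota^{-1}\circ j$ is a left inverse of $f$; since $f$ is epic (every P-morphism is), this left inverse is automatically two-sided, and $f$ is an isomorphism.

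For (1), write $I_1 = I(X,Y^\bullet)$, let $g_1:X\to I_1$ be the canonical P-morphism, and let $g_2:I_1\to I_2$ be the canonical P-morphism at the next stage; the goal is to show $g_2$ is an isomorphism. The plan is to prove that $(I_1,\mathrm{id}_{I_1},h_1)$ is a terminal object of the indexing category $\scr{S}'$ defining $I_2$, so that the colimit collapses to $I_1$ and $g_2$ becomes the identity. For any $(W',g':I_1\to W',h':W'\to Y^\bullet)\in\scr{S}'$, the composition $g'\circ g_1:X\to W'$ is a composition of P-morphisms, hence P, so $(W',g'\circ g_1,h')\in\scr{S}$, and the colimit cocone for $I_1$ yields a map $\sigma_{W'}:W'\to I_1$ satisfying $\sigma_{W'}\circ g'\circ g_1 = g_1$. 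Cancelling the epic morphism $g_1$ on the right gives $\sigma_{W'}\circ g' = \mathrm{id}_{I_1}$, and a symmetric cancellation of the epic morphism $g'$ (applied to $g'\circ\sigma_{W'}\circ g' = g'$) gives $g'\circ\sigma_{W'} = \mathrm{id}_{W'}$. Hence $g'$ is an isomorphism with two-sided inverse $\sigma_{W'}$, and $\sigma_{W'}$ is the unique morphism $(W',g',h')\to(I_1,\mathrm{id}_{I_1},h_1)$ in $\scr{S}'$.

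Terminality of $(I_1,\mathrm{id}_{I_1},h_1)$ in $\scr{S}'$ forces $I_2 \simeq I_1$ with $g_2 = \mathrm{id}_{I_1}$, completing the proof. The main obstacle is the double epic cancellation: this hinges on the fact that P-morphisms are epic, and once that is invoked for both $g_1$ and $g'$, the reduction of $\scr{S}'$ to a single-object category and the identification of the colimit there are standard category-theoretic formalities.
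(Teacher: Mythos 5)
Your proposal is correct and follows essentially the same route as the paper's proof: in both, the key mechanism is that composites of P-morphisms are P-morphisms, so the colimit cocone defining $I(X,Y^{\bullet})$ supplies a retraction, and two successive cancellations of epimorphisms (P-morphisms being epic) produce the required isomorphisms. The paper applies this double cancellation once, directly to the canonical P-morphism $\pi:I(X,Y^{\bullet}) \to I(I(X,Y^{\bullet}),Y^{\bullet})$, whereas you apply it object-by-object in $\scr{S}'$ and then collapse the colimit via terminality---a repackaging rather than a different argument; likewise your explicit treatment of (2) is exactly the argument the paper leaves as ``similar to (1)''.
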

\begin{proof}
\begin{enumerate}
\item Let $g:X \to I(X,Y^{\bullet})$ be the
induced P-morphism.
Set $W=I(I(X,Y^{\bullet}),Y^{\bullet})$, 
and let $\tilde{h}:W \to Y^{\bullet}$
be the induced morphism.
Since $\pi:I(X,Y) \to W$
is a P-morphism, $X \to W$ is also a P-morphism
by (1). Hence, there is a morphism
$\iota:W \to I(X,Y^{\bullet})$
such that 
$\iota \circ \pi \circ g=g$.
This implies that $\iota \circ \pi$ is the identity,
since $g$ is epic.
Hence, $\pi \circ \iota \circ \pi=\pi$,
and this shows that $\pi \circ \iota$ is the
identity since $\pi$ is also epic.
This shows that $I(X,Y^{\bullet}) \to W$
is an isomorphism.
\item The proof is similar to (1).
\end{enumerate}
\end{proof}
\begin{Cor}
\label{cor:decomp:unique}
\begin{enumerate}
\item Let $X \stackrel{f}{\to} Y \stackrel{g}{\to} Z$
be a series of morphisms of $\scr{A}$-schemes.
If $gf$ is a Q-morphism, then so is $f$.
\item
Let $f:X \to Y^{\bullet}$ be a morphism
of $\scr{A}$-schemes.
Then, the decomposition $f=hg$
of $f$ into a P-morphism $g$ and a Q-morphism $h$
is unique up to unique isomorphism.
\end{enumerate}
\end{Cor}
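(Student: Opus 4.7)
The plan is to deduce both parts from the following equivalent characterization of Q-morphisms: a morphism $f:X\to Y^{\bullet}$ is a Q-morphism if and only if whenever $f=h'\circ p$ with $p:X\to W$ a P-morphism, the map $p$ is an isomorphism. One direction is trivial, since the canonical map $X\to I(X,Y^{\bullet})$ is itself a P-morphism. For the other direction: given such a factorization, $(W,p,h')$ belongs to the system defining $I(X,Y^{\bullet})$, so the colimit produces $\beta:W\to I(X,Y^{\bullet})$ such that $\beta\circ p$ is the canonical map $X\to I(X,Y^{\bullet})$. If $f$ is Q, this canonical map is an isomorphism, hence so is $\beta p$; setting $q=(\beta p)^{-1}\circ\beta$ gives $qp=\mathrm{id}_X$, and since $p$ is epic (P-morphisms are epic), also $pq=\mathrm{id}_W$.

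For (1), suppose $gf$ is Q and write $f=h\circ p$ with $p:X\to W$ a P-morphism. Then $gf=(gh)\circ p$ is a P-factorization of $gf$, so by the characterization applied to $gf$, the map $p$ is an isomorphism. Hence every P-factorization of $f$ has isomorphic P-part, i.e. $f$ is a Q-morphism.

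For (2), let $f=hg=h'g'$ be two decompositions with $g:X\to W$, $g':X\to W'$ P-morphisms and $h,h'$ Q-morphisms. Both triples sit in the system defining $I(X,Y^{\bullet})$, producing canonical morphisms $\alpha:W\to I(X,Y^{\bullet})$ and $\alpha':W'\to I(X,Y^{\bullet})$ with $\alpha g=\alpha'g'$ the canonical P-morphism $X\to I(X,Y^{\bullet})$ and with $\pi\alpha=h$, $\pi\alpha'=h'$, where $\pi:I(X,Y^{\bullet})\to Y^{\bullet}$ is the canonical Q-morphism from Proposition \ref{prop:decomp:exist}(1). Because $g$ and $\alpha g$ are P-morphisms, the cancellation property ``if $gf$ is P then $g$ is P'' forces $\alpha$ to be P; then $h=\pi\alpha$ is a P-factorization of the Q-morphism $h$, so by the characterization $\alpha$ is an isomorphism. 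The same argument makes $\alpha'$ an isomorphism. Setting $\phi=(\alpha')^{-1}\alpha:W\to W'$ yields an isomorphism with $\phi g=g'$ and $h'\phi=h$; its uniqueness follows because $g$ is epic, so any $\phi'$ with $\phi'g=g'$ must equal $\phi$.

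The argument is essentially soft; the only point requiring care is verifying that the colimit construction of $I(X,Y^{\bullet})$ really does produce, for each object $(W,p,h')$ of $\scr{S}$, a canonical morphism $W\to I(X,Y^{\bullet})$ taking $p$ to the canonical map $X\to I(X,Y^{\bullet})$. This is built directly into the universal property of the colimit and should not present a serious obstacle.
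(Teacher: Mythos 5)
Your proposal is correct and is essentially the paper's own argument: both rest on the canonical comparison maps into the colimit $I(X,Y^{\bullet})$ supplied by the universal property, the epicness of P-morphisms, the cancellation property for P-morphisms, and the retraction trick (a one-sided inverse of an epic is two-sided). The only differences are organizational --- you package the retraction argument once as an explicit characterization of Q-morphisms (every factorization through a P-morphism has invertible P-part) and reuse it for both parts, where the paper instead proves (1) directly and then deduces (2) from (1) together with Proposition \ref{prop:decomp:exist}(2); as a minor bonus, your appeal to epicness of $g$ makes the ``unique'' in ``unique isomorphism'' explicit, which the paper leaves tacit.
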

We refer to the decomposition of (2)
as \textit{PQ-decomposition}.
\begin{proof}
\begin{enumerate}
\item Let $h:X \to I(X,Y)$ and $\tilde{f}:X \to I(X,Z)$
be the induced morphisms.
By universality, we have a morphism
$\pi:I(X,Y) \to I(X,Z)$ such that $\pi h=\tilde{f}$.
$\tilde{f}$ is an isomorphism since $gf$ is a Q-morphism.
Hence, $\tilde{f}^{-1}\pi h$ is the identity.
Also, $h\tilde{f}^{-1} \pi h=h$ and
$h$ epic implies that $h \tilde{f}^{-1}\pi$ is also the identity.
Hence, $h$ is an isomorphism.
\item
Let $X \stackrel{g}{\to} W \stackrel{h}{\to} Y^{\bullet}$
be a decomposition of $f$ into a P-morphism and a Q-morphism.
Then, there is a morphism $\pi: W \to I(X,Y^{\bullet})$
by universality.
Since $X \to I(X,Y^{\bullet})$ is a P-morphism,
$\pi$ is also a P-morphism.
Since $h$ is a Q-morphism,
(1) tells that $\pi$ is a Q-morphism,
hence an isomorphism.
\end{enumerate}
\end{proof}

On the other hand,
it seems to be impossible to prove that
Q-morphisms are stable under compositions,
using only categorical operations.
\begin{Lem}
\label{lem:alpha1:filter:cont}
Let $\scr{A}=(\sigma,\alpha_{1},\alpha_{2},\gamma)$
be the schematizable algebraic type. Then:
\begin{enumerate}
\item $\alpha_{1}$ preserves filtered colimits.
\item $\alpha_{1}$ preserves images:
namely, if $f:A \to B$ is a homomorphism
of $\sigma$-algebras, then
$\alpha_{1}(\Imag f)=\Imag (\alpha_{1}f)$.
\end{enumerate}
\end{Lem}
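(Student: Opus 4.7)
The plan rests on a structural fact built into the notion of a schematizable algebraic type: the II-ring $\alpha_{1}(R)$ is generated under its II-ring operations by the image of $\alpha_{2}:R \to \alpha_{1}(R)$. Concretely, every element of $\alpha_{1}(R)$ is a finite join $\sum^{<\infty}\alpha_{2}(r_{i})$, and any identity between two such expressions in $\alpha_{1}(R)$ arises from finitely many identities in $R$. Both claims then reduce to the standard principle that in a filtered colimit of $\sigma$-algebras, any finite configuration of elements and relations already lives in a single term of the system.

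For (1), I would fix a filtered inductive system $\{A_{\lambda}\}$ in $\cat{$\sigma $-alg}$ with colimit $A$ and build the canonical comparison map $\Phi:\catColim_{\lambda}\alpha_{1}(A_{\lambda})\to\alpha_{1}(A)$. For surjectivity, any $x\in\alpha_{1}(A)$ can be written as $\sum^{<\infty}\alpha_{2}(\overline{a}_{i})$, each $\overline{a}_{i}$ lifts to some $A_{\lambda_{i}}$, and filteredness collects these lifts into one $A_{\lambda}$ whose associated element in $\alpha_{1}(A_{\lambda})$ maps to $x$. For injectivity, if two representatives in $\alpha_{1}(A_{\mu})$ have the same $\Phi$-image, the equality in $\alpha_{1}(A)$ is witnessed by a finite chain of defining identities among the $\alpha_{2}$-generators; each identity is an equation in $A$ and so already holds in some $A_{\mu'}$ past $\mu$, forcing the two representatives to agree in $\alpha_{1}(A_{\mu'})$.

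For (2), I would factor $f:A \to B$ in $\cat{$\sigma $-alg}$ as $A \twoheadrightarrow \Imag f \hookrightarrow B$ and apply $\alpha_{1}$. Functoriality gives $\Imag(\alpha_{1}f)\subset\alpha_{1}(\Imag f)$, viewing the right-hand side inside $\alpha_{1}(B)$. Conversely, any element of $\alpha_{1}(\Imag f)$ has the shape $\sum^{<\infty}\alpha_{2}(f(a_{i}))=\alpha_{1}(f)\bigl(\sum^{<\infty}\alpha_{2}(a_{i})\bigr)$, which manifestly lies in $\Imag(\alpha_{1}f)$. The last ingredient is that the map $\alpha_{1}(\Imag f)\to\alpha_{1}(B)$ is injective, so this identification is legitimate; this again follows from the finitary presentation, since any equation between finite $\alpha_{2}$-expressions in elements of $\Imag f$ holding in $\alpha_{1}(B)$ is witnessed by identities among those elements in $B$, and such identities already hold inside the subalgebra $\Imag f$. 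The main obstacle is not calculation but bookkeeping: one must invoke the precise description of the schematizable data from \cite{Takagi} to know that $\alpha_{1}$ is constructed from $\alpha_{2}$ by a finitary algebraic procedure. Once this is in hand, both statements become routine applications of filteredness.
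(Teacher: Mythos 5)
Your part (1) and the two inclusions in part (2) are essentially the paper's own argument: the paper likewise writes an arbitrary element of $\alpha_{1}$ as a finite sum $\sum^{<\infty}\alpha_{2}(a_{i})$, lifts the finitely many generators to a single stage of the filtered system for surjectivity, pushes the (finitely many) witnessing relations to a later stage for injectivity, and proves (2) by the computation $\mathfrak{a}=\sum_{i}\alpha_{2}f(a_{i})=(\alpha_{1}f)\sum_{i}\alpha_{2}(a_{i})$ together with its converse. (One small wording point: the relations witnessing an equality in $\alpha_{1}$ are instances of the defining congruence of $\alpha_{1}$, e.g. $I\cdot I=I$, not literally equations in $R$; but each such instance involves only finitely many elements of $R$, so filteredness applies exactly as you say.)

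The genuine gap is your ``last ingredient'': the claim that $\alpha_{1}(\Imag f)\to\alpha_{1}(B)$ is injective is false, and the justification you give for it breaks down. Take $\sigma$ to be the system of rings (the case the paper actually works with), and let $f:\ZZ\hookrightarrow\QQ$ be the inclusion, so $\Imag f=\ZZ$. In $\alpha_{1}(\ZZ)$ --- finitely generated ideals modulo the congruence generated by $I\cdot I=I$ --- the classes of $(2)$ and $(1)$ are distinct, since that congruence preserves radicals and $(2)\neq(1)$ are both radical; but in $\alpha_{1}(\QQ)$ both map to the class of the unit ideal, because $(2)\QQ=\QQ$. So $\alpha_{1}(\ZZ)\to\alpha_{1}(\QQ)$ identifies all nonzero classes and is far from injective. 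This example also pinpoints the flaw in your witnessing argument: the identity $(2)=(1)$ in $\alpha_{1}(B)$ is witnessed using the element $1/2\in B$, which does not lie in $\Imag f$, so the witnessing data cannot be transported into the subalgebra $\Imag f$. Consequently the lemma cannot be read as asserting that the canonical map $\alpha_{1}(\Imag f)\to\Imag(\alpha_{1}f)$ is an isomorphism of II-rings --- under that reading the statement itself would be false. It must be read the way the paper implicitly reads it (and the way your two inclusions actually prove it): as an equality of subsets of $\alpha_{1}(B)$, i.e.\ the image of the canonical map $\alpha_{1}(\Imag f)\to\alpha_{1}(B)$ coincides with $\Imag(\alpha_{1}f)$, equivalently $\alpha_{1}(\Imag f)\to\Imag(\alpha_{1}f)$ is surjective. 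This weaker form is also all that the paper's application in Proposition \ref{prop:exist:decomp:surj} needs, since there one only has to produce a homomorphism out of $\alpha_{1}\scr{O}_{W}(Z)$, which one gets by precomposing with this surjection. So the fix is simply to delete the injectivity step and state explicitly that the asserted equality is an equality of images inside $\alpha_{1}(B)$.
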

\begin{proof}
\begin{enumerate}
\item Let $\{R_{\lambda}\}_{\lambda}$ be a filtered inductive
system of $\sigma$-algebras,
and set $R_{\infty}=\underrightarrow{\lim}_{\lambda} R_{\lambda}$.
Then we have a natural homomorphism
$\varphi:\underrightarrow{\lim}_{\lambda} \alpha_{1}R_{\lambda}
\to \alpha_{1}(R_{\infty})$.
We will show that $\varphi$ is bijective.

First, we will prove the surjectivity.
For any $\mathfrak{a} \in \alpha_{1}R_{\infty}$,
$\mathfrak{a}$ can be written as
$\sum_{i}^{n} \alpha_{2}(a_{i})$ for some
$a_{i} \in R_{\infty}$, since $\alpha_{2}(R) \subset \alpha_{1}R$
generates $\alpha_{1}R$.
Since the inductive system is filtered, there
exists $\lambda_{0}$ such that $\{a_{i}\}_{i} \subset R_{\lambda_{0}}$.
Then, $\mathfrak{a}$ is contained in the image of
\[
\alpha_{1}R_{\lambda_{0}} \to
\underrightarrow{\lim}_{\lambda} \alpha_{1}R_{\lambda}
\stackrel{\varphi}{\to} \alpha_{1}R_{\infty},
\]
hence in the image of $\varphi$.

Next, we prove the injectivity.
Suppose $\varphi(\mathfrak{a})=\varphi(\mathfrak{b})$
for some $\mathfrak{a}=\sum_{i}\alpha_{2}(a_{i}),
\mathfrak{b}=\sum_{j} \alpha_{2}(b_{j}) \in 
\underrightarrow{\lim}_{\lambda} \alpha_{1}R_{\lambda}$.
Then, $\mathfrak{a}$ and $\mathfrak{b}$ must
coincide in $\alpha_{1}R_{\lambda_{0}}$ for some $\lambda_{0}$,
which shows that $\mathfrak{a}=\mathfrak{b}$
in $\underrightarrow{\lim}_{\lambda} \alpha_{1}R_{\lambda}$.
\item
Let $\mathfrak{a}$ be an element of $\alpha_{1}(\Imag f)$.
Then, 
\[
\mathfrak{a}=\sum_{i} \alpha_{2}f(a_{i})
=(\alpha_{1}f)\sum_{i} \alpha_{2}(a_{i})
\]
for some $a_{i} \in A$.
This shows that $\alpha_{1}(\Imag f) \subset \Imag (\alpha_{1}f)$.
The converse is similar.
\end{enumerate}
\end{proof}

\begin{Prop}
\label{prop:exist:decomp:surj}
Let $f:X \to Y$ be a morphism of $\scr{A}$-schemes.
Then, there exists a decomposition
$X \stackrel{g}{\to} W \stackrel{h}{\to} Y$ of $f$, where
$g$ is a P-morphism, and $h$ satisfies
\begin{enumerate}[(i)]
\item $h^{-1}:C(Y)_{\cpt} \to C(W)_{\cpt}$ is surjective, and
\item $h^{\#}:\scr{O}_{Y} \to h_{*}\scr{O}_{W}$ is 
\textit{stalkwise surjective}, that is,
$\scr{O}_{Y,h(w)} \to \scr{O}_{W,w}$ is surjective for
any $w \in W$.
\end{enumerate}
\end{Prop}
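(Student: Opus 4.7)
The plan is to construct $W$ as the \emph{schematic image} of $f$, defining in turn $|W|$, $\scr{O}_{W}$, and $\beta_{W}$, and then verifying the $\scr{A}$-scheme axioms and the two conditions.

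First, set $|W|:=\Imag|f|$ with the subspace topology inherited from $|Y|$. By the Corollary following Theorem \ref{thm:alg:epi:surj}, $|W|$ is coherent and $|f|$ factors as $|X|\xrightarrow{|g|}|W|\xrightarrow{|h|}|Y|$ with $|g|$ surjective and $|h|$ a homeomorphism onto its image. Surjectivity of $|g|$ makes $g^{-1}:C(W)_{\cpt}\to C(X)_{\cpt}$ injective (the topological half of $g$ being a P-morphism), and Proposition \ref{prop:coh:surj:homeo} applied to $|h|$ yields condition (i).

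Next, because $|h|$ is a homeomorphism onto its image, the counit $h^{-1}h_{*}\simeq\Id$ holds on sheaves over $|W|$, so applying $h^{-1}$ to $f^{\#}$ produces a morphism $\psi:h^{-1}\scr{O}_{Y}\to g_{*}\scr{O}_{X}$ of sheaves of $\sigma$-algebras on $|W|$. Define $\scr{O}_{W}:=\Imag\psi$, giving a factorization $h^{-1}\scr{O}_{Y}\twoheadrightarrow\scr{O}_{W}\hookrightarrow g_{*}\scr{O}_{X}$. The second arrow is $g^{\#}$ and is injective (completing $g$ being a P-morphism); the first is stalkwise $\scr{O}_{Y,h(w)}=(h^{-1}\scr{O}_{Y})_{w}\twoheadrightarrow\scr{O}_{W,w}$, so by adjunction $h^{\#}:\scr{O}_{Y}\to h_{*}\scr{O}_{W}$ is stalkwise surjective, which is condition (ii).

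For the support map, Lemma \ref{lem:alpha1:filter:cont}(2) applied to the surjection above gives a surjection $\alpha_{1}h^{-1}\scr{O}_{Y}\simeq h^{-1}\alpha_{1}\scr{O}_{Y}\twoheadrightarrow\alpha_{1}\scr{O}_{W}$. Composing $h^{-1}\beta_{Y}$ with the natural restriction $h^{-1}\tau_{Y}\to\tau_{W}$ furnishes a candidate $\alpha_{1}h^{-1}\scr{O}_{Y}\to\tau_{W}$, which descends to $\alpha_{1}\scr{O}_{W}\to\tau_{W}$ precisely when $\beta_{Y}(\mathcal{I})\supset|W|$ for $\mathcal{I}:=\ker\psi$. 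Since $\alpha_{1}f^{\#}$ vanishes on $\mathcal{I}$ and the compatibility $|f|^{-1}\beta_{Y}=\beta_{X}\circ\alpha_{1}f^{\#}$ forces $|f|^{-1}\beta_{Y}(\mathcal{I})=|X|$, one obtains $\beta_{Y}(\mathcal{I})\supset\Imag|f|=|W|$, so $\beta_{W}$ is well-defined.

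The main obstacle I anticipate is verifying that restriction morphisms of $\scr{O}_{W}$ reflect localizations, making $W$ a genuine $\scr{A}$-scheme rather than merely a ringed coherent space. Given $a\in\scr{O}_{W}(U)$ with $\beta_{W}(\alpha_{2}(a))\geq U\setminus V$, the strategy is to lift $a$ to a section $\tilde{a}$ of $\scr{O}_{Y}$ over a quasi-compact open of $Y$ whose intersection with $|W|$ is contained in $U$, invoke the localization-reflection axiom on $Y$ to invert $\tilde{a}$ away from $\beta_{Y}(\alpha_{2}(\tilde{a}))$, and push the inverse down to $\scr{O}_{W}(V)$; delicacy arises because $\scr{O}_{W}$ is an image subsheaf and taking images does not a priori commute with forming localizations. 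Once this axiom is established, $g$ and $h$ are morphisms of $\scr{A}$-schemes by construction, $g$ is a P-morphism, and $h$ satisfies (i) and (ii), completing the decomposition.
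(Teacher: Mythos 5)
Your construction is correct in substance, but it follows a genuinely different route from the paper's, and the difference is worth spelling out. The paper also takes $|W|=\Spec R$ with $R=\Imag f^{-1}$, but it does \emph{not} take the bare image as structure sheaf: it defines $\scr{O}_{W}$ as the sheafification of $Z \mapsto \Imag[f^{-1}\scr{O}_{Y}(Z) \to \scr{O}_{X}(Z)]_{S}$, localized along the multiplicative system $S=\{a \mid \beta_{X}\alpha_{2}(a)=1 \text{ in } R_{Z}\}$. That localization is exactly the device that makes ``restrictions reflect localizations'' hold by construction (the needed inverses are formally adjoined), and the price is that stalkwise surjectivity of $h^{\#}$ is no longer free: the paper proves it by writing a germ as $b/c$ with $c \in S$, lifting $c$ to $\scr{O}_{Y}$, and using the axiom on $Y$ to see that the lift is a unit in $\scr{O}_{Y,h(w)}$. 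Your plain-image sheaf distributes the work the opposite way: condition (ii) is automatic (stalks of an image sheaf are images of the stalk maps), but the $\scr{A}$-scheme axiom becomes the crux --- precisely the step you flag and leave as a ``strategy''. So the two proofs hinge on the same mechanism (a section whose support avoids a region has a lift invertible near that region, via the compatibility square for $f$ and the axiom on $Y$), deployed at different places.

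Since you left that step open, here is what is needed to close it; it does close. First, with $\beta_{W}$ defined by descent from $\beta_{Y}$, one has $\beta_{W}\alpha_{2}(a)=\beta_{Y}\alpha_{2}(\tilde{a}) \cap |W|$ for any local lift $\tilde{a} \in \scr{O}_{Y}(V_{U})$ with $V_{U}\cap |W|=U$; this follows from $\beta_{X}\circ \alpha_{1}f^{\#}=|f|^{-1}\circ \beta_{Y}$ together with the injectivity of $g^{-1}$. Hence $\beta_{W}\alpha_{2}(a) \geq U\setminus V$ forces $V \subset (V_{U}\setminus \beta_{Y}\alpha_{2}(\tilde{a}))\cap |W|$, so you should invert $\tilde{a}$ on $V'=V_{U}\setminus \beta_{Y}\alpha_{2}(\tilde{a})$ (legitimate: the axiom on $Y$ for the pair $V' \subset V_{U}$ makes $\tilde{a}|_{V'}$ a unit, since homomorphisms carry units to units) and push the inverse down by $f^{\#}$; its image inverts $a$ over an open of $W$ whose trace contains $V$. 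Second, since $\scr{O}_{W}$ is a sheafification, sections lift only locally, so this must be run over a finite quasi-compact cover of $U$ and the resulting inverses glued --- harmless, by uniqueness of inverses. Two smaller corrections: your well-definedness criterion for $\beta_{W}$ in terms of $\mathcal{I}=\ker\psi$ is the wrong invariant, because quotients of II-rings are governed by congruences, not kernels; the correct statement (equal images under $\alpha_{1}\psi$ imply equal images in $\tau_{W}$) follows from the same compatibility square. Finally, note that once your argument is complete it shows every element of the paper's system $S$ is already invertible in the sheafified image, so the paper's localization is redundant after sheafification and the two constructions produce the same $W$; the paper's $S$ is best understood as the presheaf-level shortcut that trades your hard step for its stalkwise-surjectivity argument.
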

\begin{proof}
Let $R$ be the image of $f^{-1}:C(Y)_{\cpt} \to C(X)_{\cpt}$,
and set $|W|=\Spec R^{\dagger}$.
The structure sheaf $\scr{O}_{W}:R \simeq C(W)_{\cpt}
\to \cat{$\sigma $-alg}$ is defined by the sheafification of 
\[
R \ni Z \mapsto \Imag[ f^{-1}\scr{O}_{Y}(Z) \to \scr{O}_{X}(Z)]_{S},
\]
where $S=\{ a \mid \text{$\beta_{X}\alpha_{2}(a)=1$ in $R_{Z}$} \}$
is a multiplicative system of
$\Imag[ f^{-1}\scr{O}_{Y}(Z) \to \scr{O}_{X}(Z)]$.

The support morphism $\beta_{W}:\alpha_{1}\scr{O}_{W} \to
\tau_{W}$ is defined as follows:
for any $Z \in C(W)_{\cpt}$,
$\alpha_{1}\scr{O}_{W}(Z)$ is locally isomorphic to
\begin{eqnarray*}
\alpha_{1}\Imag [f^{-1}\scr{O}_{Y}(Z) \to \scr{O}_{X}(Z)]_{S}
\simeq \alpha_{1}\underrightarrow{\lim}_{f^{-1}V=Z}
\Imag [\scr{O}_{Y}(V) \to \scr{O}_{X}(Z)]_{S} \\
\simeq \underrightarrow{\lim}_{f^{-1}V=Z}
\Imag [\alpha_{1}\scr{O}_{Y}(V) \to \alpha_{1}\scr{O}_{X}(Z)]_{S}
\end{eqnarray*}
by Lemma \ref{lem:alpha1:filter:cont}.
Since we have a commutative square
\[
\xymatrix{
\alpha_{1}\scr{O}_{Y}(V) \ar[r] \ar[d]_{\beta_{Y}}
 & \alpha_{1}\scr{O}_{X}(Z) \ar[d]^{\beta_{X}} \\
(C(Y)_{\cpt})_{V} \ar[r] & (C(X)_{\cpt})_{Z}
}
\]
and the lower horizontal arrow factors
through $R_{Z}=\tau_{W}(Z)$,
we obtain a homomorphism
$\alpha_{1}\scr{O}_{W}(Z) \to \tau_{W}(Z)$.
Note that the localization by $S$ does not affect.
It is obvious that the restrictions
reflect localizations, hence
$W=(|W|,\scr{O}_{W},\beta_{W})$ is well defined
as an $\scr{A}$-scheme.
$g:X \to W$ is defined by the injections
$g^{-1}:R \to C(X)_{\cpt}$ and
\[
g^{\#}:\scr{O}_{W}(Z)
=\Imag [f^{-1}\scr{O}_{Y}(Z) \to \scr{O}_{X}(Z)]_{S} \to \scr{O}_{X}(Z).
\]
It is obvious that $g$ is a P-morphism.
$h:W \to Y$ is defined by
$h^{-1}:C(Y)_{\cpt} \to R$ and
\[
h^{\#}:
\scr{O}_{Y}(Z) \to
\scr{O}_{W}(g^{-1}Z)
=\Imag[\scr{O}_{Y}(Z) \to \scr{O}_{X}(f^{-1}Z)]_{S}.
\]
Let us verify that $h^{\#}$ is stalkwise surjective, namely,
$\scr{O}_{Y,h(w)} \to \scr{O}_{W,w}$
is surjective for any $w \in W$.
Let $\langle U,a \rangle$ be any element
of $\scr{O}_{W,w}$.
Since $h^{-1}$ is surjective, $U=h^{-1}V$
for some quasi-compact open $V \subset Y$.
The germ $a$ can be expressed as $b/c$,
where $\beta_{X}\alpha_{2}(c)=1$,
and $b,c$ is in the image of
$\scr{O}_{Y}(V) \to \scr{O}_{W}(U)$.
This implies that $c$ is a unit in $\scr{O}_{W,w}$,
hence also a unit in $\scr{O}_{Y,f(w)}$.
Hence $\scr{O}_{Y,h(w)} \to \scr{O}_{W,w}$
is surjective.
\end{proof}

\begin{Cor}
\begin{enumerate}
\item
Let $f:X \to Y$ be a morphism of $\scr{A}$-schemes.
Then, the $W$ constructed in \ref{prop:exist:decomp:surj}
is naturally isomorphic to $I(X,Y)$.
In particular, the followings are equivalent:
\begin{enumerate}[(i)]
\item $f$ is a Q-morphism.
\item $f^{-1}:C(Y)_{\cpt} \to C(X)_{\cpt}$ is surjective,
and $f^{\#}:\scr{O}_{Y} \to h_{*}\scr{O}_{X}$ is 
stalkwise surjective.
\end{enumerate}
\item Q-morphisms are stable under compositions.
\item Q-morphisms are monic.
\end{enumerate}
\end{Cor}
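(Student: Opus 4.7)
The strategy for part (1) is to invoke the uniqueness of the PQ-decomposition (Corollary \ref{cor:decomp:unique}(2)). Since $g : X \to W$ built in Proposition \ref{prop:exist:decomp:surj} is already a P-morphism, it suffices to show that the accompanying $h : W \to Y$ is a Q-morphism; the uniqueness will then identify $W$ with $I(X,Y)$. To verify $h$ is a Q-morphism, I would take an arbitrary factorization $W \xrightarrow{p} W' \xrightarrow{h'} Y$ in which $p$ is a P-morphism and argue $p$ is an isomorphism. On the lattice side, $h^{-1} = p^{-1} \circ (h')^{-1}$ is surjective by condition (i) of that proposition, while $p^{-1}$ is injective because $p$ is a P-morphism; hence $p^{-1}$ is bijective. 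On the sheaf side, one combines the stalkwise surjectivity of $h^{\#}$ (condition (ii)) with the injectivity of $p^{\#}$ to force $p^{\#}$ itself to be an isomorphism.

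With (1) in hand, the equivalence (i)$\Leftrightarrow$(ii) reduces to inspecting when the canonical P-morphism $g : X \to I(X,Y)$ is an isomorphism. By construction $g^{-1}$ is the inclusion $\Imag f^{-1} \hookrightarrow C(X)_{\cpt}$, which is bijective precisely when $f^{-1}$ is surjective, and $g^{\#}$ is built from the sheaf inclusion $\Imag[f^{-1}\scr{O}_Y \to \scr{O}_X]_S \hookrightarrow \scr{O}_X$, which becomes a stalkwise isomorphism exactly when $f^{\#}$ is stalkwise surjective.

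For (2), I use the characterization just obtained. Given Q-morphisms $X \xrightarrow{f} Y \xrightarrow{g} Z$, the composition $(gf)^{-1} = f^{-1}g^{-1}$ is surjective as a composition of surjections, and stalkwise surjectivity of the structure sheaf map is likewise closed under composition; hence $gf$ satisfies condition (ii), so is a Q-morphism. For (3), to show a Q-morphism $f$ is monic, I would suppose $f\varphi_1 = f\varphi_2$ with $\varphi_i : Z \to X$. Surjectivity of $f^{-1}$ implies that $|f|$ is a homeomorphism onto its image (Proposition \ref{prop:coh:surj:homeo}), which forces $|\varphi_1| = |\varphi_2|$; the stalkwise surjectivity of $f^{\#}$ then uniquely determines each $\varphi_i^{\#}$ from $(f\varphi_i)^{\#} = \varphi_i^{\#} \circ f^{\#}$.

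The main obstacle is the sheaf-theoretic step in (1): passing from the combination of stalkwise surjectivity and injectivity of $p^{\#}$ to an actual isomorphism of sheaves on $W'$. Because the pushforward $p_{*}$ does not commute with stalks in general, one cannot argue stalkwise and be done; one has to work on a quasi-compact open basis, exploiting the explicit description of $\scr{O}_W$ in Proposition \ref{prop:exist:decomp:surj} and the fact that the multiplicative system $S$ already inverts precisely the right elements so that localizations on both sides match up.
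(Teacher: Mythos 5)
Your proposal is correct, and its engine --- cancelling a surjective composite against the injectivity supplied by a P-morphism, on both the lattice side and the sheaf side --- is exactly the computation in the paper's proof. The packaging differs. The paper never considers arbitrary P-factorizations of $h$ and never invokes the uniqueness statement of Corollary \ref{cor:decomp:unique}; instead it takes the single canonical map $u:W \to I(X,Y)$ given by the colimit cone (a P-morphism, since $X \to I(X,Y)$ is one and a right factor of a P-morphism is a P-morphism), factors $h$ as $W \to I(X,Y) \to Y$, and applies the cancellation directly to $u$: surjectivity of $h^{-1}$ forces $u^{-1}$ to be surjective, hence bijective, and stalkwise surjectivity of $h^{\#}$ forces $u^{\#}$ to be an isomorphism. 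Your detour buys something: the paper's written proof interpolates ``$I(X,Y)$ is isomorphic to $X$, since $f$ is a Q-morphism'', i.e.\ it argues under hypothesis (i), whereas you establish $W \simeq I(X,Y)$ for an arbitrary $f$ first and then read off the equivalence (i)$\Leftrightarrow$(ii); that is what the statement actually demands, so your organization is the more complete one. One step you should make explicit: knowing that every P-factorization $W \to W' \to Y$ of $h$ has $p$ invertible yields that $h$ is a Q-morphism because the canonical map $W \to I(W,Y)$ is itself such a $p$ (it is a P-morphism by Proposition \ref{prop:surj:colimit} and the definition of $I$), so the definition of Q-morphism is met directly. Your (2) and (3) are correct fillings-in of the paper's ``clear from (1)'' and ``obvious''; in (3), perform the cancellation on the induced stalk maps $\scr{O}_{X,\varphi_i(z)} \to \scr{O}_{Z,z}$ for $z \in Z$, where stalkwise surjectivity of $f^{\#}$ applies, rather than on sheaf maps over $Y$, where $f_{*}\scr{O}_{X}$ has uncontrolled stalks away from the image of $f$.

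The ``main obstacle'' you flag at the end is not a real one, and it dissolves if you use your lattice-side conclusion before touching the sheaves. Once $p^{-1}:C(W')_{\cpt} \to C(W)_{\cpt}$ is known to be bijective, Stone duality ($\cat{Coh}^{\op} \simeq \cat{IIRng}$) makes $|p|$ a homeomorphism; then $p_{*}$ \emph{does} commute with stalks, i.e.\ $(p_{*}\scr{O}_{W})_{p(w)} \simeq \scr{O}_{W,w}$. Injectivity of $p^{\#}$ on sections passes to stalks because a filtered colimit of injections is injective, and surjectivity of the stalk maps is inherited from the stalkwise surjectivity of $h^{\#}$; hence $p^{\#}$ is bijective on stalks, therefore an isomorphism of sheaves, with no appeal to the explicit description of $\scr{O}_{W}$ or to the multiplicative system $S$ from Proposition \ref{prop:exist:decomp:surj}. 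This is also the implicit justification behind the paper's one-line assertion that stalkwise surjectivity of $h^{\#}$ implies $u^{\#}$ is an isomorphism.
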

\begin{proof}
\begin{enumerate}
\item
By Proposition \ref{prop:exist:decomp:surj},
there exists a decomposition
$X \stackrel{g}{\to} W \stackrel{h}{\to} Y$ of $f$, where
$g$ is a P-morphism, 
$h^{-1}:C(Y)_{\cpt} \to C(W)_{\cpt}$ is surjective,
and $h^{\#}:\scr{O}_{Y} \to h_{*}\scr{O}_{W}$ is 
stalkwise surjective.
By the universal property,
there is a P-morphism $u:W \to I(X,Y)$.
Note that $I(X,Y)$ is isomorphic to $X$,
since $f$ is a Q-morphism.
Since $h^{-1}$ is surjective,
$u^{-1}:C(X)_{\cpt} \to C(W)_{\cpt}$
is an isomorphism.
Also, the stalkwise surjectivity of $h^{\#}$ implies
that $u^{\#}$ is also an isomorphism.
\item It is clear from (1).
\item Obvious.
\end{enumerate}
\end{proof}

\begin{Lem}
\label{lem:decomp:comp}
If $X \to Y \to Z$ is a series of morphisms
of $\scr{A}$-schemes, then we have a natural isomorphism
\[
I(X,Z) \simeq I(I(X,Y),I(Y,Z)).
\]
\end{Lem}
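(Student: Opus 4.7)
The plan is to construct a canonical PQ-decomposition of the composite $X \to Z$ by splicing together the decompositions of $X \to Y$ and $Y \to Z$, and then to invoke uniqueness of PQ-decompositions (Corollary \ref{cor:decomp:unique}(2)) in order to identify $I(X,Z)$ with $I(I(X,Y), I(Y,Z))$.

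First, I would write down the individual PQ-decompositions $X \to I(X,Y) \to Y$ and $Y \to I(Y,Z) \to Z$ and string them together to obtain the chain
\[
X \to I(X,Y) \to Y \to I(Y,Z) \to Z.
\]
The middle composite $I(X,Y) \to Y \to I(Y,Z)$ is itself a morphism of $\scr{A}$-schemes, and by definition it admits a PQ-decomposition through $I(I(X,Y), I(Y,Z))$. Substituting this in yields the longer chain
\[
X \to I(X,Y) \to I(I(X,Y), I(Y,Z)) \to I(Y,Z) \to Z.
\]

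Next, I would observe that the first two arrows are both P-morphisms (the first by construction, the second as the P-part of the PQ-decomposition of $I(X,Y) \to I(Y,Z)$), so by the stability of P-morphisms under composition the composite $X \to I(I(X,Y), I(Y,Z))$ is a P-morphism. Dually, the last two arrows are Q-morphisms (the third as the Q-part of the same decomposition, the fourth by construction), and Q-morphisms are stable under composition by the Corollary immediately following Proposition \ref{prop:exist:decomp:surj}; hence $I(I(X,Y), I(Y,Z)) \to Z$ is a Q-morphism.

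Finally, I would invoke the uniqueness clause of Corollary \ref{cor:decomp:unique}(2): any factorization of $X \to Z$ into a P-morphism followed by a Q-morphism must agree with the canonical one through $I(X,Z)$, up to unique isomorphism. This yields the natural isomorphism $I(X,Z) \simeq I(I(X,Y), I(Y,Z))$. I do not foresee any serious obstacle: the argument is purely formal, depending only on the stability of P- and Q-morphisms under composition and on the uniqueness of the PQ-decomposition, all of which are already in hand. The only mildly delicate point is keeping straight which arrows in the five-term chain are P and which are Q, but this is immediate from the definitions.
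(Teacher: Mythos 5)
Your proposal is correct and follows essentially the same route as the paper: the paper likewise composes the Q-morphisms $I(I(X,Y),I(Y,Z)) \to I(Y,Z) \to Z$ (using stability of Q-morphisms under composition) and, implicitly, the P-morphisms $X \to I(X,Y) \to I(I(X,Y),I(Y,Z))$, then identifies the result with $I(X,Z)$ by uniqueness of the PQ-decomposition. Your write-up just makes explicit the P-side bookkeeping and the appeal to Corollary \ref{cor:decomp:unique}(2) that the paper leaves tacit.
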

\begin{proof}
Since $W=I(I(X,Y),I(Y,Z)) \to I(Y,Z)$ and
$I(Y,Z) \to Z$ are Q-morphisms,
the composition $W \to Z$ is also a Q-morphism.
Hence, $X \to W \to Z$ is the
PQ-decomposition of $X \to Z$.
\end{proof}

\begin{Prop}
The PQ-decomposition
is functorial.
\end{Prop}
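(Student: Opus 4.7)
The plan is to construct, for every commutative square
\[
\xymatrix{
X \ar[r]^{f} \ar[d]_{u} & Y \ar[d]^{v} \\
X' \ar[r]_{f'} & Y'
}
\]
a canonical morphism $I(u,v): I(X,Y) \to I(X',Y')$ compatible with the two legs of the PQ-decomposition, and then to verify that the assignment $(f \mapsto I(X,Y))$ together with these induced morphisms satisfies the two functor axioms. The construction and the verification will both be reduced to the uniqueness of the PQ-decomposition (Corollary \ref{cor:decomp:unique}) together with the fact that Q-morphisms are stable under composition.

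First I would construct the induced arrow. Let $p:X\to I(X,Y)$, $q:I(X,Y)\to Y$, $p':X'\to I(X',Y')$, $q':I(X',Y')\to Y'$ be the PQ-decompositions of $f$ and $f'$. The composition $q'\circ p'\circ u:X\to Y'$ coincides with $v\circ f = v\circ q\circ p$. I would PQ-decompose the intermediate morphism $v\circ q:I(X,Y)\to Y'$ as $I(X,Y)\xrightarrow{p_1} N\xrightarrow{q_1} Y'$; since P-morphisms compose, $q_1\circ p_1\circ p$ is then a PQ-decomposition of $v\circ f$. Likewise I would PQ-decompose $p'\circ u:X\to I(X',Y')$ as $X\xrightarrow{p_2} M\xrightarrow{q_2} I(X',Y')$; since Q-morphisms compose, $q'\circ q_2\circ p_2$ is also a PQ-decomposition of $v\circ f$. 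Uniqueness (Corollary \ref{cor:decomp:unique}(2)) yields a unique isomorphism $\varphi:N\xrightarrow{\sim} M$ intertwining both decompositions. The desired morphism is then $I(u,v):=q_2\circ\varphi\circ p_1:I(X,Y)\to I(X',Y')$. Its two compatibilities $I(u,v)\circ p = p'\circ u$ and $q'\circ I(u,v) = v\circ q$ are built into the construction.

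Next I would check the two functor axioms. For identities, when $u=\mathrm{id}_X$ and $v=\mathrm{id}_Y$ the two PQ-decompositions above both simplify to the original one $p,q$, so the intermediate iso $\varphi$ is the identity and $I(\mathrm{id},\mathrm{id})=\mathrm{id}_{I(X,Y)}$. For composition, consider two vertically stacked squares with vertical arrows $(u,u';v,v')$ producing $f,f',f''$ through intermediate $(X',Y')$ and $(X'',Y'')$. Applying the construction to each square yields $I(u,v)$ and $I(u',v')$, whose composition $I(u',v')\circ I(u,v)$ is a morphism $I(X,Y)\to I(X'',Y'')$ satisfying the two compatibilities required for $I(u'\circ u,v'\circ v)$. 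By the uniqueness built into the PQ-decomposition of $v'\circ v\circ f$ (any morphism $I(X,Y)\to I(X'',Y'')$ making the relevant triangles commute is determined, as $I(X,Y)\to Y\to Y''$ and $X\to I(X,Y)$ are a P- and a Q-compatible pair of morphisms that pin it down by the universal property used to define $I(X,Y'')\simeq I(I(X,Y),Y'')$), the two morphisms must agree.

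The main obstacle I anticipate is establishing the well-definedness of $I(u,v)$ independently of the choice of PQ-decompositions used as auxiliary data; for this I would invoke Lemma \ref{lem:decomp:comp} (which already encodes the associativity/transitivity we need) to identify both $N$ and $M$ canonically with $I(X,Y')$, so that $\varphi$ is literally the identity on $I(X,Y')$. Once this identification is made, the construction depends only on $f,f',u,v$, and both functor axioms become direct applications of uniqueness of PQ-decomposition.
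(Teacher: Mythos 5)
Your proposal is correct, but it reaches the induced morphism by a different mechanism than the paper. The paper's proof is a two-step composite: first a morphism $I(X,Y)\to I(X,Y')$ obtained directly from the universal (colimit) property of $I(X,Y')$, since $X\to I(X,Y)\to Y\to Y'$ factors $X\to Y'$ through a P-morphism; then a morphism $I(X,Y')\to I(X',Y')$ obtained from Lemma \ref{lem:decomp:comp}; the functor axioms themselves are not checked. You instead compare two PQ-decompositions of the single diagonal composite $v\circ f$ --- one built from the decomposition of $v\circ q$ using that P-morphisms compose, the other from the decomposition of $p'\circ u$ using that Q-morphisms compose --- and let the uniqueness statement of Corollary \ref{cor:decomp:unique} supply the comparison isomorphism $\varphi$. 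Unwinding your construction, your $N$ and $M$ are both canonically isomorphic to $I(X,Y')$, so your arrow agrees with the paper's; the difference is that your argument uses nothing beyond existence, uniqueness, and composition-stability of the two classes (it is the standard proof that a unique factorization system is automatically functorial), whereas the paper leans on the specific colimit definition of $I(X,Y^{\bullet})$. What your route buys is the explicit verification of the identity and composition axioms --- which is, after all, the content of the word ``functorial,'' and which the paper leaves implicit; note that the clean way to finish your composition axiom is simply that $p$ is epic (being a P-morphism), so any two candidate maps agreeing after precomposition with $p$ coincide --- your parenthetical appeal to $I(X,Y'')\simeq I(I(X,Y),Y'')$ is a more roundabout way of saying the same thing. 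What the paper's route buys is brevity, together with a direct exhibition of the canonical factorization of the induced map through $I(X,Y')$.
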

\begin{proof}
Suppose given a commutative square
\[
\xymatrix{
X_{1} \ar[r] \ar[d] & Y_{1} \ar[d] \\
X_{2} \ar[r] & Y_{2}
}
\]
Then, we will show that there is a natural morphism
$I(X_{1},Y_{1}) \to I(X_{2},Y_{2})$.
By the universality, we have
a unique morphism $I(X_{1},Y_{1}) \to I(X_{1},Y_{2})$.
On the other hand,
we have again a unique morphism $I(X_{1},Y_{2}) \to I(X_{2},Y_{2})$
by Lemma \ref{lem:decomp:comp},
hence combining them gives the required morphism
$I(X_{1},Y_{1}) \to I(X_{2},Y_{2})$.
\end{proof}
Let us summarize what we have obtained
in this subsection:
\begin{Thm}
\label{thm:PQ:decomp}
For any morphism $f:X \to Y$ of $\scr{A}$-schemes,
we have a functorial decomposition
$X \to I(X,Y) \to Y$ of $f$,
where
\begin{enumerate}
\item $X \to I(X,Y)$ is a P-morphism
(in particular, epic), and
\item $I(X,Y) \to Y$ is a Q-morphism
(in particular, monic).
\end{enumerate}
Moreover, the decomposition of the given morphism
$f$ into a P-morphism and a Q-morphism is unique
up to unique isomorphism.
Also, this decomposition is universal:
if $f$ is factors as $X \to W \to Y$
where $X \to W$ is a P-morphism (resp. $W \to Y$
is a Q-morphism), then there is a unique
morphism $W \to I(X,Y)$ (resp. $I(X,Y) \to W$)
making the whole diagram
commutative.
\end{Thm}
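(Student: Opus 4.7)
The plan is to assemble results already established in this subsection; this theorem is essentially a summary statement. For \emph{existence} of the decomposition, I would first invoke Proposition \ref{prop:exist:decomp:surj} to obtain a factorization $X \xrightarrow{g} W \xrightarrow{h} Y$ with $g$ a P-morphism and $h$ satisfying that $h^{-1}$ is surjective on II-rings and $h^{\#}$ is stalkwise surjective. The corollary immediately following that proposition identifies $W$ canonically with $I(X,Y)$ and shows that $h$ is a Q-morphism. That $X \to I(X,Y)$ is epic is already listed among the trivial facts about P-morphisms, and $I(X,Y) \to Y$ is monic because every Q-morphism is (from the same corollary, part (3)).

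For \emph{uniqueness} up to unique isomorphism, Corollary \ref{cor:decomp:unique}(2) applies verbatim. For \emph{functoriality}, I would cite the Proposition immediately preceding the theorem statement, which constructs the natural arrow $I(X_1,Y_1)\to I(X_2,Y_2)$ by combining the universal property of $I(X_1,Y_2)$ with Lemma \ref{lem:decomp:comp}.

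The \emph{universal property} splits into two symmetric cases. If $f$ factors as $X \xrightarrow{p} W \to Y$ with $p$ a P-morphism, then the triple $(W,p,W\to Y)$ is an object of the small indexing category $\scr{S}$ whose colimit defines $I(X,Y)$, so the universal morphism into the colimit provides a unique arrow $W \to I(X,Y)$ with the required compatibility. If instead $f$ factors as $X \to W \xrightarrow{q} Y$ with $q$ a Q-morphism, I apply the PQ-decomposition to $X \to W$ obtaining $X \to I(X,W) \to W$, and compose to get a factorization $X \to I(X,W) \to Y$ whose right-hand map is a composition of Q-morphisms and therefore a Q-morphism itself. Uniqueness of PQ-decomposition then supplies a canonical isomorphism $I(X,W) \simeq I(X,Y)$, and composing its inverse with $I(X,W) \to W$ produces the required arrow $I(X,Y) \to W$; its uniqueness is forced because $X \to I(X,Y)$ is epic.

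Since every ingredient has already been proven, I anticipate no substantial obstacle here; the genuine technical content lives in Proposition \ref{prop:exist:decomp:surj} (which required Lemma \ref{lem:alpha1:filter:cont} on the preservation of filtered colimits by $\alpha_1$) and in the subsequent identification $W \simeq I(X,Y)$. The only mildly delicate point in the present assembly is keeping the two halves of the universal property symmetric: one half is read directly off the defining colimit of $I(X,Y)$, while the other requires detouring through the PQ-decomposition of $X\to W$ and invoking the stability of Q-morphisms under composition.
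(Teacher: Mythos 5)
Your proposal is correct and takes essentially the same approach as the paper: Theorem \ref{thm:PQ:decomp} is stated there as a pure summary of the subsection with no separate proof, and the ingredients you assemble (the colimit definition of $I(X,Y)$, Proposition \ref{prop:exist:decomp:surj} and its corollary, Corollary \ref{cor:decomp:unique}, stability of Q-morphisms under composition, and the functoriality proposition) are exactly what it rests on. The only microscopic imprecision is in the P-morphism half of the universal property: uniqueness of the arrow $W \to I(X,Y)$ follows from $X \to W$ being epic (as you correctly note in the symmetric case), not from the colimit's universal property, which only governs maps out of $I(X,Y)$.
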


\begin{Rmk}
We know that this decomposition
is impossible in the category of schemes.
For example, let $k$ be a field, $X=\Spec k[x, y/x]$,
$Y=\Spec k[x,y]$ where $x$ and $y$ are indeterminants.
The image of the natural morphism $f:X \to Y$
cannot be a scheme: the origin has no
affine neighborhood in the image.
\end{Rmk}

\subsection{Completeness}

\begin{Prop}
The category $\cat{$\scr{Q}$-Sch}$
is finite complete, i.e. there are fiber products.
\end{Prop}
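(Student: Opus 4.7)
The plan is to mimic the classical scheme-theoretic construction: first solve the affine case, then glue using the finite patching result (Proposition \ref{prop:cocomp:patch}).

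Step 1 (Affine case). Given morphisms $f:\Spec^{\scr{A}}\!A\to \Spec^{\scr{A}}\!C$ and $g:\Spec^{\scr{A}}\!B\to \Spec^{\scr{A}}\!C$ of affine $\scr{Q}$-schemes, I claim that $\Spec^{\scr{A}}(A\amalg_{C}B)$, where $A\amalg_{C}B$ is the pushout of $\sigma$-algebras, together with its two natural projections, is the fiber product. The argument is purely formal: the spectrum functor is the left adjoint of $\Gamma:\cat{$\scr{A}$-Sch}^{\op}\to \cat{$\sigma$-alg}$, hence $\Spec^{\scr{A}}:\cat{$\sigma$-alg}\to \cat{$\scr{A}$-Sch}^{\op}$ preserves colimits, which is to say that $\Spec^{\scr{A}}:\cat{$\sigma$-alg}\to\cat{$\scr{A}$-Sch}$ sends pushouts of $\sigma$-algebras to fiber products of $\scr{A}$-schemes. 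Since $\Spec^{\scr{A}}(A\amalg_{C}B)$ is by definition a $\scr{Q}$-scheme and $\cat{$\scr{Q}$-Sch}$ is a full subcategory of $\cat{$\scr{A}$-Sch}$, it is also the fiber product within $\cat{$\scr{Q}$-Sch}$.

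Step 2 (Local construction). For general $\scr{Q}$-schemes $f:X\to S$ and $g:Y\to S$, exploit coherence of the underlying spaces and quasi-compactness of morphisms: cover $S$ by finitely many affine opens $S_{i}=\Spec^{\scr{A}}C_{i}$; each $f^{-1}(S_{i})$ and $g^{-1}(S_{i})$ is then coherent and can be covered by finitely many affine opens $X_{ij}=\Spec^{\scr{A}}A_{ij}$ and $Y_{ik}=\Spec^{\scr{A}}B_{ik}$, respectively. By Step 1, each local fiber product
\[
P_{ijk}:=X_{ij}\times_{S_{i}}Y_{ik}\simeq \Spec^{\scr{A}}\!(A_{ij}\amalg_{C_{i}}B_{ik})
\]
exists as an affine $\scr{Q}$-scheme.

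Step 3 (Gluing and universal property). On overlaps, the $P_{ijk}$ are canonically isomorphic via the universal property of Step 1 applied to the intersections $X_{ij}\cap X_{i'j'}$ and $Y_{ik}\cap Y_{i'k'}$ (covered again by finitely many affine opens), and these canonical isomorphisms automatically satisfy the cocycle condition. Apply Proposition \ref{prop:cocomp:patch}(2) to patch the $P_{ijk}$ along their quasi-compact open overlaps; this yields a $\scr{Q}$-scheme $P$ with projections to $X$ and $Y$ over $S$. To check the universal property, given any $\scr{Q}$-scheme $T$ with compatible maps $u:T\to X$, $v:T\to Y$, cover $T$ by finitely many affine opens small enough to land inside some $X_{ij}$ and $Y_{ik}$ over some $S_{i}$; Step 1 produces the lift on each piece, and uniqueness lets these local lifts glue to a unique morphism $T\to P$.

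The main obstacle is verifying the cocycle condition and, more generally, checking that the gluing data on triple overlaps is compatible. This is the same bookkeeping as for ordinary schemes; the only substantive point is that Proposition \ref{prop:cocomp:patch} guarantees the patched object stays in $\cat{$\scr{Q}$-Sch}$, so we do not have to separately verify affineness-locally of the result.
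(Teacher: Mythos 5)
Your overall strategy is the paper's own: the published proof is a one-line sketch (``the construction of fiber products is similar to that of general schemes''), and your Step 1 --- the affine case via the adjunction $\Spec^{\scr{A}}\dashv\Gamma$, then descending to $\cat{$\scr{Q}$-Sch}$ by fullness --- is exactly how the paper later argues in Step 1 of the proof of Corollary \ref{cor:preserve:fiber:prod}. That part is correct.

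The gap is in Step 3, and it sits precisely at the one point the paper bothers to flag. You identify overlaps by invoking ``the universal property of Step 1 applied to the intersections $X_{ij}\cap X_{i'j'}$ and $Y_{ik}\cap Y_{i'k'}$,'' but Step 1 only provides a universal property for \emph{affine} pieces, and these intersections are merely quasi-compact opens. What is actually needed --- both to define the canonical isomorphisms on overlaps, to get the cocycle condition, and to glue the local lifts in the universal-property check --- is the lemma that open pieces of a fiber product are fiber products: if $P=X\times_{S}Y$ exists with projections $p_{X},p_{Y}$ and $U\subset X$, $V\subset Y$ are quasi-compact opens, then $p_{X}^{-1}(U)\cap p_{Y}^{-1}(V)$ is $U\times_{S}V$. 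In this category the lemma is not free, because every morphism is required to be quasi-compact: one must know that quasi-compact open immersions are stable under base change by quasi-compact morphisms (applied to $U\hookrightarrow X$ along the projection $p_{X}$), which is exactly the fact the paper's proof singles out. Covering the intersections by further affines, as you propose, does not avoid this: to match the corresponding open pieces of $P_{ijk}$ and $P_{i'j'k'}$ you must still know that those open pieces \emph{are} the fiber products of the smaller affines, which is the same lemma. Once that lemma is stated and proved (it also guarantees that all the overlaps involved are quasi-compact opens, so patching applies), your bookkeeping goes through. A secondary point: for the patched object to remain a $\scr{Q}$-scheme, the relevant statement is the proposition that $\cat{$\scr{Q}$-Sch}$ admits finite patchings via quasi-compact opens, not Proposition \ref{prop:cocomp:patch}, which concerns patchings in $\cat{$\scr{A}$-Sch}$ and their preservation under the inclusion.
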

\begin{proof}
The construction of fiber products
are similar to that of general schemes.
Note that we use the fact that
quasi-compact open immersions
are stable under base changes by
quasi-compact morphisms.
\end{proof}

\begin{Rmk}
Let $\sigma$ be the algebraic system of rings.
Then, the natural inclusion
functor $\cat{$\scr{Q}$-Sch} \to \cat{Sch}$
preserves fiber products.
This is clear from the construction.
\end{Rmk}

We already know that
the category of ordinary schemes is not complete.
However:
\begin{Prop}
\label{prop:asch:comp}
The category $\cat{$\scr{A}$-Sch}$
is small complete.
\end{Prop}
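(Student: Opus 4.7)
The plan is to apply Freyd's General Adjoint Functor Theorem to the diagonal functor $\Delta:\cat{$\scr{A}$-Sch} \to \cat{$\scr{A}$-Sch}^{I}$ for each small index category $I$. By Proposition \ref{prop:cocomp:patch}, the source is small co-complete, and $\Delta$ trivially preserves small colimits, since colimits in a functor category are computed pointwise and $\Delta$ of a colimit is the colimit of the constant diagrams. Hence $\Delta$ admits a right adjoint --- which assigns to $Y^{\bullet}$ its limit --- as soon as the solution set condition is verified at every $Y^{\bullet}$. If $I$ is empty, the claim reduces to exhibiting a terminal object; one takes $\Spec^{\scr{A}}$ applied to the initial $\sigma$-algebra, which is terminal by the spectrum-global-sections adjunction (\cite{Takagi}).

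For the solution set at $Y^{\bullet}$, I exploit Theorem \ref{thm:PQ:decomp}: every morphism $f:\Delta(X) \to Y^{\bullet}$ factors canonically as $\Delta(X) \to \Delta(I(X,Y^{\bullet})) \to Y^{\bullet}$, with the first arrow a P-morphism and the second a Q-morphism. I therefore take as solution set a system of representatives for the iso classes of Q-morphisms $h:W \to Y^{\bullet}$ in $\cat{$\scr{A}$-Sch}^{I}$. By construction every cone into $Y^{\bullet}$ factors through some member of this collection, so the factorization part of the solution set condition holds.

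What remains is to bound the iso classes of Q-morphisms into $Y^{\bullet}$ by a set. For a single target, the Corollary following Proposition \ref{prop:exist:decomp:surj} characterizes a Q-morphism $h:W \to Y$ by the surjectivity of $h^{-1}:C(Y)_{\cpt} \to C(W)_{\cpt}$ together with the stalkwise surjectivity of $h^{\#}$; combined with monicity of $h$ (Theorem \ref{thm:PQ:decomp}), this identifies $|W|$ with a coherent subspace of $|Y|$ and $\scr{O}_{W}$ with a stalkwise quotient of $h^{-1}\scr{O}_{Y}$, yielding only set-many iso classes. For a diagram, I fix any $i_{0} \in I$ and PQ-decompose $h_{i_{0}}:W \to Y_{i_{0}}$ as $W \to I(W,Y_{i_{0}}) \to Y_{i_{0}}$: the Q-part embeds into the (set-sized) collection of Q-subobjects of $Y_{i_{0}}$, and by Lemma \ref{lem:decomp:comp} the various pointwise decompositions are linked through $I(W,Y^{\bullet})$, pinning down the remaining cone data $(h_{i})_{i}$ up to isomorphism once the image at $i_{0}$ and the P-morphism out of $W$ are fixed. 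The P-morphism smallness built into the definition of $\scr{S}$ then closes the bound.

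The main obstacle is precisely this last bookkeeping step: a Q-morphism in the functor category need not have Q-morphisms as its pointwise components, so one cannot simply quote the single-target characterization componentwise. I expect to circumvent this by running the pointwise PQ-decomposition at a chosen index and combining it with the universal property of $I(\cdot,\cdot)$ from Lemma \ref{lem:decomp:comp}, packaging the result as a set-valued bound. Once the solution set condition is in hand, Freyd's theorem supplies the right adjoint to $\Delta$, establishing small completeness of $\cat{$\scr{A}$-Sch}$.
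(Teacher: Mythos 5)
Your overall architecture---Freyd's adjoint functor theorem applied to the diagonal functor, with the solution set at $Y^{\bullet}$ supplied by the PQ-decomposition---is sound, and it is essentially the paper's own mechanism: the paper takes the set $\scr{S}$ of isomorphism classes of Q-morphisms into the diagram and constructs the limit directly as the colimit of $\scr{S}$, reserving explicit appeals to Freyd's theorem for Proposition \ref{prop:sep:functor} and Theorem \ref{thm:ZR:functor}. Your single-target smallness argument (via the characterization of Q-morphisms following Proposition \ref{prop:exist:decomp:surj}) is also correct. The proof fails, however, exactly at the step you yourself flag as the main obstacle: the smallness of the class of Q-morphisms $h:\Delta(W) \to Y^{\bullet}$ into a genuine diagram, which is the entire mathematical content of the proposition.

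Your proposed circumvention is circular. You decompose the chosen component as $W \to I(W,Y_{i_{0}}) \to Y_{i_{0}}$ and bound the Q-part by a set, which is fine; but the remaining datum you fix is ``the P-morphism out of $W$'', and that is not set-bounded data unless $W$ has already been bounded---which is the very thing to be proved. The smallness ``built into the definition of $\scr{S}$'' goes the other way: for a P-morphism $W \to V$, the \emph{target} is bounded by the \emph{source} ($C(V)_{\cpt}$ injects into $C(W)_{\cpt}$ and $\scr{O}_{V}$ into the pushforward of $\scr{O}_{W}$), so P-morphisms with fixed source form a set, whereas P-morphisms with fixed target form a proper class (e.g.\ any nonempty $\scr{A}$-scheme $W$ over a field $k$ gives a P-morphism $W \to \Spec^{\scr{A}}k$). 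Moreover, Lemma \ref{lem:decomp:comp} concerns composites $X \to Y \to Z$ and gives no link between the components of a cone. What is actually needed is to exploit the Q-condition on the whole cone at once: since $W \to I(W,Y^{\bullet})$ is an isomorphism, every factorization $W \to V \to Y^{\bullet}$ of the cone with $W \to V$ a P-morphism forces $W \to V$ to be an isomorphism (it is epic, and composing with the cocone map $V \to I(W,Y^{\bullet}) \simeq W$ shows it is split monic). Applying this to the diagram-version of the construction of Proposition \ref{prop:exist:decomp:surj}---take the sub-II-ring of $C(W)_{\cpt}$ generated by $\cup_{i}\Imag h_{i}^{-1}$ and the structure sheaf generated by the images of all the $h_{i}^{\#}$, up to the relevant localizations---one concludes that the underlying space and structure sheaf of $W$ are \emph{jointly} generated by the data of $Y^{\bullet}$, and structures generated by a fixed set-sized family form a set. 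This is precisely what the paper's phrase ``generated by those of $X^{\bullet}$'' encodes; without this step (or a substitute), the solution set condition, and hence the application of Freyd's theorem, is unproven.
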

\begin{proof}
Let $X^{\bullet}$
be a small projective system of $\scr{A}$-schemes.
Let $\scr{S}$ be the set of all isomorphism classes
of Q-morphisms $Y \to X^{\bullet}$.
$\scr{S}$ is indeed a small set: if $Y \to X^{\bullet}$
is a Q-morphism, then the underlying space of $Y$
and its structure sheaf is generated by those
of $X^{\bullet}$, hence $\scr{S}$ is small.

Let $X$ be the co-limit of $\scr{S}$.
Then, $X$ is the limit of $X^{\bullet}$.
\end{proof}

\begin{Cor}
\label{cor:preserve:fiber:prod}
The natural inclusion functor
$\cat{$\scr{Q}$-Sch} \to \cat{$\scr{A}$-Sch}$
preserves fiber products.
\end{Cor}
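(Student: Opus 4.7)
The plan is to reduce to the affine case by a finite patching argument, and handle the affine case using the $\Spec \dashv \Gamma$ adjunction recorded as property (2) of the introduction. Given $X \to Z \leftarrow Y$ in $\cat{$\scr{Q}$-Sch}$, let $P$ denote the fiber product computed in $\cat{$\scr{Q}$-Sch}$ (which exists by the preceding proposition) and $P'$ the one computed in $\cat{$\scr{A}$-Sch}$ (which exists by Proposition \ref{prop:asch:comp}). The universal property of $P'$ yields a canonical morphism $P \to P'$; the task is to show it is an isomorphism.

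First I would dispose of the affine case. Take $X = \Spec R$, $Y = \Spec S$, $Z = \Spec T$, so $P = \Spec(R\otimes_{T}S)$. The adjunction $\Hom_{\cat{$\scr{A}$-Sch}}(X,\Spec R) \simeq \Hom_{\cat{Rng}}(R,\Gamma(X))$ exhibits $\Spec$ as a right adjoint when regarded as a functor $\cat{Rng}^{\op} \to \cat{$\scr{A}$-Sch}$; right adjoints preserve limits, and pushouts in $\cat{Rng}$ are precisely limits in $\cat{Rng}^{\op}$. Hence $\Spec(R\otimes_{T}S)$ represents the pullback of $\Spec R$ and $\Spec S$ over $\Spec T$ in $\cat{$\scr{A}$-Sch}$ as well, so the canonical map is an isomorphism in this case.

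For the general case, using that $\scr{Q}$-schemes are coherent (hence quasi-compact and quasi-separated), I would cover $Z$ by finitely many affine opens $\{V_{k}\}$ and cover each preimage in $X$ and $Y$ by finitely many affine opens $\{U_{i}^{k}\}$ and $\{W_{j}^{k}\}$ respectively. The fiber product $P$ in $\cat{$\scr{Q}$-Sch}$ is then obtained by patching the affine pieces $U_{i}^{k}\times_{V_{k}}W_{j}^{k}$ along their quasi-compact open intersections. By Proposition \ref{prop:cocomp:patch}, the inclusion functor preserves finite patchings via quasi-compact opens, and by the affine case each local piece already represents the corresponding fiber product in $\cat{$\scr{A}$-Sch}$. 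The universal property of the resulting patching colimit then identifies the glued object with $P'$.

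The hard step, I expect, will be the last gluing argument: one must verify that a cocone of affine fiber products, glued along quasi-compact overlaps, actually represents the fiber product of the whole. Concretely, given a test morphism $W \to X$ and $W \to Y$ over $Z$ in $\cat{$\scr{A}$-Sch}$, one needs the induced finite cover of $W$ by preimages of the $V_{k}$ to be compatible with the patching data, so as to obtain a unique $W \to P$ by gluing. This is exactly where the restriction to \emph{finite} patchings, together with the coherence of $\scr{Q}$-schemes, keeps the argument within the scope of Proposition \ref{prop:cocomp:patch} and avoids the pathologies flagged in the preceding remark and example on infinite coproducts.
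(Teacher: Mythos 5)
Your proposal is correct and follows essentially the same route as the paper: the affine case via the $\Spec^{\scr{A}} \dashv \Gamma$ adjunction (right adjoints preserve limits), then reduction to it by finite affine covers, with the universal property verified by pulling the cover back to the test object and gluing local morphisms, exactly within the scope of Proposition \ref{prop:cocomp:patch}. The only differences are organizational: the paper de-affinizes one vertex at a time (its Steps 1--4) rather than covering all three at once, and in your final gluing step the cover of the test object $W$ should be taken by the intersections $f^{-1}(U_{i}^{k}) \cap g^{-1}(W_{j}^{k})$, not merely by preimages of the $V_{k}$, so that each piece actually lands in an affine fiber product.
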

\begin{proof}
Let $X, Y$ be $\scr{Q}$-schemes over a $\scr{Q}$-scheme $S$.
We will show that the fiber product $V=X \times_{S} Y$ 
in the category of $\scr{Q}$-schemes is indeed
that in the category of $\scr{A}$-schemes.
\begin{itemize}
\item[Step 1:] If $X,Y,S$ are all affine,
then $V$ is the fiber product in $\cat{$\scr{A}$-Sch}$,
by the adjunction
$\Spec^{\scr{A}}:\cat{$\sigma $-alg} \leftrightarrows
\cat{$\scr{A}$-Sch}: \Gamma$.
\item[Step 2:] Suppose $Y, S$ is affine.
Let $X=\cup_{i} X_{i}$ be an open affine cover of $X$.
Suppose given the following commutative square:
\[
\xymatrix{
Z \ar[r]^{f} \ar[d]_{g} & X \ar[d] \\
Y \ar[r] & S
}
\]
Then, there is a unique morphism $f^{-1}(X_{i}) \to X_{i} \times_{S} Y$
by Step 1 for each $i$, which patches up to give the morphism
$Z \to X \times_{S} Y$.
\item[Step 3:] Same arguments as in Step 2
shows that if $S$ is affine, then $V$ is the fiber product
in $\cat{$\scr{A}$-Sch}$.
\item[Step 4:] Suppose $S=\cup_{i} S_{i}$
is an open affine cover of $S$.
Set $X_{i}=X \times_{S} S_{i}$,
$Y_{i}=Y \times_{S} S_{i}$.
Then, $V_{i}=X_{i} \times_{S_{i}} Y_{i}$
is also the fiber product in $\cat{$\scr{A}$-Sch}$ by Step 3.
Note that $V=\cup_{i} V_{i}$ is an quasi-compact open
covering. Suppose given a commutative diagram as in Step 2,
and set $Z_{i}=f^{-1}(X_{i})$. This coincides with $g^{-1}(Y_{i})$.
Then we have a unique morphism $Z_{i} \to V_{i}$ for each $i$,
which patches up to give a morphism $Z \to V$.
\end{itemize}
\end{proof}
The next proposition is purely category-theoretical.
\begin{Prop}
\begin{enumerate}
\item Q-morphisms are stable under pullbacks.
\item P-morphisms are stable under pushouts.
\end{enumerate}
\end{Prop}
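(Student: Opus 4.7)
The plan is to first derive a general orthogonality property between P-morphisms and Q-morphisms from the universality of PQ-decomposition (Theorem \ref{thm:PQ:decomp}), and then deduce both (1) and (2) by combining this orthogonality with the universal properties of pullbacks and pushouts respectively; the two claims are formally dual once orthogonality is in hand.

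The orthogonality claim I would establish first is: given any commutative square
\[
\xymatrix{
A \ar[r]^{a} \ar[d]_{u} & C \ar[d]^{h} \\
B \ar[r]_{b} & D
}
\]
with $u$ a P-morphism and $h$ a Q-morphism, there is a unique diagonal $d:B\to C$ with $du=a$ and $hd=b$. For existence, take the PQ-decomposition $A \to I(A,D) \to D$ of $ha=bu$. The factorization $A \xrightarrow{a} C \xrightarrow{h} D$, whose second leg is the Q-morphism $h$, yields by Theorem \ref{thm:PQ:decomp} a canonical $\mu:I(A,D)\to C$; dually, the factorization $A \xrightarrow{u} B \xrightarrow{b} D$ yields $\nu:B\to I(A,D)$. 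Setting $d=\mu\nu$ and chasing the universal properties gives $du=a$ and $hd=b$, and uniqueness of $d$ follows because $u$ is epic.

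For (1), let $h:W\to Y$ be a Q-morphism, $k:Y'\to Y$ any morphism, and form the pullback $W'=W\times_Y Y'$ with projections $\pi:W'\to W$ and $h':W'\to Y'$. Given any factorization $h'=vu$ with $u:W'\to V$ a P-morphism, the square with edges $\pi,h,u,kv$ commutes since $h\pi=kh'=kvu$, so orthogonality produces $d:V\to W$ with $du=\pi$ and $hd=kv$. The universal property of the pullback then gives $s:V\to W'$ with $\pi s=d$ and $h's=v$. One checks $su=\mathrm{id}_{W'}$ by pullback uniqueness (since $\pi\cdot su=du=\pi$ and $h'\cdot su=vu=h'$), and $us=\mathrm{id}_V$ because $(us)u=u(su)=u$ and $u$ is epic. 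Thus $u$ is an isomorphism, so by uniqueness of PQ-decomposition (Proposition \ref{prop:decomp:exist}) $h'$ is a Q-morphism.

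Claim (2) is formally dual. Given a P-morphism $u:X\to Y$ and any $f:X\to Z$, form the pushout with $u':Z\to Y\amalg_X Z$ and $f':Y\to Y\amalg_X Z$. For any factorization $u'=hv$ with $h:V\to Y\amalg_X Z$ a Q-morphism, apply orthogonality to the square with top $vf$, bottom $f'$, left $u$, right $h$ (commuting since $hvf=u'f=f'u$) to obtain $d:Y\to V$ with $du=vf$ and $hd=f'$; the pushout property then yields $\tilde h:Y\amalg_X Z\to V$ with $\tilde h u'=v$ and $\tilde h f'=d$. Then $h\tilde h=\mathrm{id}$ by pushout uniqueness, and $\tilde h h=\mathrm{id}$ since $h$ is monic, so $h$ is an isomorphism and $u'$ is a P-morphism. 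The only delicate step throughout is the orthogonality lemma itself, where one must carefully apply \emph{both} halves of the universality in Theorem \ref{thm:PQ:decomp}; once orthogonality is secured, (1) and (2) reduce to formal diagram chases.
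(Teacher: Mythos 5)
Your proof is correct and takes essentially the same route as the paper's: the paper likewise forms the PQ-decomposition of the pulled-back morphism, obtains a diagonal lift (your orthogonality lemma is exactly the mechanism behind the paper's otherwise-unexplained morphism $w:\Imag\tilde{f} \to T$, coming from both halves of the universality in Theorem \ref{thm:PQ:decomp}), and then uses the pullback's universal property together with epicness of P-morphisms to produce a two-sided inverse. The differences are purely presentational: you isolate the orthogonality as an explicit lemma, check the inverse directly instead of invoking ``P and Q implies isomorphism'' (Proposition \ref{prop:decomp:exist}), and write out the dual argument for (2), which the paper leaves to the reader.
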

\begin{proof}
We will only prove (1):
the proof of (2) can be proven by the dual argument.
Consider the following pullback diagram:
\[
\xymatrix{
X_{T} \ar[r]^{\tilde{f}} \ar[d]_{\tilde{g}} & X \ar[d]^{g} \\
T \ar[r]_{f} & S
}
\]
Suppose $f$ is a Q-morphism.
Then the PQ-decomposition 
\[
X_{T} \stackrel{\alpha(\tilde{f})}{\to}
\Imag\tilde{f}=I(X_{T}, X)  \stackrel{\beta(\tilde{f})}{\to} X
\]
of $\tilde{f}$ gives
a morphism $w:\Imag \tilde{f} \to T$
such that $w\circ \alpha(\tilde{f})=\tilde{g}$
and $f \circ w=g \circ \beta(\tilde{f})$.
By the universal property of the pullback,
there exists a unique morphism $u:\Imag \tilde{f} \to X_{T}$
such that $\tilde{g}\circ u=w$ and $\tilde{f} \circ u=\beta(\tilde{f})$.
The second equality shows that $u$ is a Q-morphism,
hence it suffices to show that $u$ is a P-morphism.
To prove this, we will show that $u \circ \alpha(\tilde{f})$
is the identity. Since
\[
\tilde{g} \circ u \circ \alpha(\tilde{f})=w \circ \alpha(\tilde{f})
=\tilde{g}, \quad \text{and} \quad
\tilde{f}\circ u \circ \alpha(\tilde{f})
=\beta(\tilde{f}) \circ \alpha(\tilde{f})=\tilde{f},
\]
the universal property of the pullback shows that $u \circ \alpha(\tilde{f})$
is the identity.
\end{proof}

\subsection{Filtered limits}

\begin{Prop}
\label{prop:filter:limit}
%Assume that $\alpha_{1}:\cat{$\sigma $-alg} \to \cat{IIRng}$
%is filtered co-continuous.
Let $X^{\bullet}=\{X^{\lambda}\}$ be a small filtered projective system
of $\scr{A}$-schemes, and $Y$ be the limit
of $X^{\bullet}$.
Then, the underlying space of $Y$
coincides with the limit $\underleftarrow{\lim} X^{\lambda}$
in the category of coherent spaces.
The structure sheaf $\scr{O}_{Y}$
coincides with the colimit 
$\underrightarrow{\lim} p_{\lambda}^{-1}\scr{O}_{X^{\lambda}}$,
where $p_{\lambda}:Y \to X^{\lambda}$
are the natural morphisms.
\end{Prop}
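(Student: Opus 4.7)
The plan is to define an explicit $\scr{A}$-scheme $Z$ whose underlying space is $|Z|=\underleftarrow{\lim}|X^{\lambda}|$ (taken in $\cat{Coh}$) and whose structure sheaf is $\scr{O}_{Z}=\underrightarrow{\lim}_{\lambda}q_{\lambda}^{-1}\scr{O}_{X^{\lambda}}$, where $q_{\lambda}:|Z|\to|X^{\lambda}|$ are the canonical projections, and then to verify that $Z$ satisfies the universal property of the limit of $X^{\bullet}$ in $\cat{$\scr{A}$-Sch}$. Since limits are unique, this identifies $Z$ with $Y$ and gives both claims simultaneously.

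First I would construct the support morphism $\beta_{Z}$ on $Z$. By Lemma \ref{lem:alpha1:filter:cont}, $\alpha_{1}$ preserves filtered colimits, and since $\alpha_{1}$ acts sectionwise it commutes with each $q_{\lambda}^{-1}$, yielding $\alpha_{1}\scr{O}_{Z}\simeq\underrightarrow{\lim}_{\lambda}q_{\lambda}^{-1}\alpha_{1}\scr{O}_{X^{\lambda}}$. Applying $q_{\lambda}^{-1}$ to $\beta_{X^{\lambda}}$ and then taking the filtered colimit produces a map into $\underrightarrow{\lim}_{\lambda}q_{\lambda}^{-1}\tau_{X^{\lambda}}$, which by Proposition \ref{prop:limit:tau}(2) is canonically isomorphic to $\tau_{Z}$. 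This defines $\beta_{Z}$. I would then check that restriction maps in $\scr{O}_{Z}$ reflect localizations: thanks to filteredness together with quasi-compactness of the opens and closed subsets involved, any section of $\scr{O}_{Z}(U)$ and any closed $Z_{0}=U\setminus V$ descend to level $\lambda$ for some $\lambda$, so the property is inherited from the corresponding property on $X^{\lambda}$.

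Next, the defining cone of $|Z|$ in $\cat{Coh}$ and the canonical sheaf maps $q_{\lambda}^{-1}\scr{O}_{X^{\lambda}}\to\scr{O}_{Z}$ give morphisms $p_{\lambda}:Z\to X^{\lambda}$ of $\scr{A}$-schemes, compatible with the transition morphisms by construction. For universality, let $W$ be an $\scr{A}$-scheme equipped with a compatible cone $f^{\lambda}:W\to X^{\lambda}$. The universal property of $|Z|$ in $\cat{Coh}$ produces a unique continuous map $|f|:|W|\to|Z|$ with $q_{\lambda}\circ|f|=|f^{\lambda}|$. Adjoint to $f^{\lambda,\#}$ we have $(f^{\lambda})^{-1}\scr{O}_{X^{\lambda}}=|f|^{-1}q_{\lambda}^{-1}\scr{O}_{X^{\lambda}}\to\scr{O}_{W}$, and since $|f|^{-1}$ commutes with filtered colimits these assemble into $|f|^{-1}\scr{O}_{Z}\to\scr{O}_{W}$; passing back through adjunction gives $f^{\#}:\scr{O}_{Z}\to|f|_{*}\scr{O}_{W}$. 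Compatibility of $(|f|,f^{\#})$ with $\beta_{Z}$ and $\beta_{W}$ follows from naturality of the colimit, and uniqueness is automatic from the two universal properties it was built from.

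The main obstacle I anticipate is the bookkeeping needed to interchange the filtered colimit forming $\scr{O}_{Z}$ with the various functors in play ($\alpha_{1}$, inverse images, taking sections on quasi-compact opens), so that the support morphism is well defined and restrictions reflect localizations. The key tools that make this work are Lemma \ref{lem:alpha1:filter:cont} on $\alpha_{1}$ and filtered colimits, Proposition \ref{prop:limit:tau}(2) identifying $\tau_{Z}$ with the colimit of $q_{\lambda}^{-1}\tau_{X^{\lambda}}$, and the quasi-compactness of the opens in a coherent space, which is what lets every local datum on $Z$ be realized at some finite stage $\lambda$.
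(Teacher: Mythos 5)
Your proposal is correct and takes essentially the same approach as the paper: you build the identical explicit model (underlying space the limit in $\cat{Coh}$, structure sheaf the filtered colimit of the inverse-image sheaves), construct the support morphism from Lemma \ref{lem:alpha1:filter:cont} together with Proposition \ref{prop:limit:tau}(2), and handle ``restrictions reflect localizations'' by descending data to a finite stage. The only (inessential) difference is the finishing step: you verify the universal property of your model directly against an arbitrary cone, whereas the paper compares the model with the limit $Y$ already supplied by Proposition \ref{prop:asch:comp}, checking that cone-compatible endomorphisms are the identity and producing a morphism $Y \to X^{\infty}$; the two arguments are equivalent in substance.
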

\begin{proof}
Let $X^{\infty}$ be the limit of the
$X^{\bullet}$ in the category of coherent spaces,
and set $\scr{O}_{X^{\infty}}=
\underrightarrow{\lim}\pi_{\lambda}^{-1}\scr{O}_{X^{\lambda}}$,
where $\pi_{\lambda}:X^{\infty} \to X^{\lambda}$
is the natural morphism of coherent spaces.
First, we will construct the support morphism $\beta_{X^{\infty}}$.
By Proposition \ref{prop:limit:tau}, we have a natural map
\[
\alpha_{1}\pi_{\lambda}^{-1}\scr{O}_{X^{\lambda}}
\to \pi^{-1}_{\lambda}\tau_{X^{\lambda}}
\to \underrightarrow{\lim}_{\lambda} 
\pi^{-1}_{\lambda}\tau_{X^{\lambda}}
 \simeq \tau_{X^{\infty}}.
\]
This gives a natural map
$\underrightarrow{\lim}_{\lambda}
(\alpha_{1}\pi^{-1}_{\lambda}\scr{O}_{X^{\lambda}})
\to \tau_{X^{\infty}}$.
The left-hand side is isomorphic to $\alpha_{1}\scr{O}_{X^{\infty}}$
since $\alpha_{1}$ is filtered co-continuous
by Lemma \ref{lem:alpha1:filter:cont}.
Therefore, we obtain the required
morphism 
$\beta_{X^{\infty}}:\alpha_{1}\scr{O}_{X^{\infty}}
\to \tau_{X^{\infty}}$.
It is obvious that restrictions reflects localizations.
Hence, we have constructed an $\scr{A}$-scheme
$X^{\infty}=(X^{\infty},\scr{O}_{X^{\infty}},\beta_{X^{\infty}})$.
There are also natural morphisms
$\pi_{\lambda}:X^{\infty} \to X^{\lambda}$
of $\scr{A}$-schemes, compatible with the transitions.

We will show that $X^{\infty}$ is naturally isomorphic to $Y$.
Since we already have a morphism $X^{\infty} \to Y$
by the universal property of $Y$,
it suffices to show that:
\begin{enumerate}[(i)]
\item If $\varphi:X^{\infty} \to X^{\infty}$
is a endomorphism with $\pi_{\lambda}\varphi=\pi_{\lambda}$
for any $\lambda$, then $\varphi$ is the identity.
\item There exists a morphism
$\psi:Y \to X^{\infty}$ with $\pi_{\lambda}\psi=p_{\lambda}$
for any $\lambda$.
\end{enumerate}
First, we prove (i).
It is obvious that $\varphi$ is the identity on the
underlying space. For the structure sheaves, we have the
commutative diagram:
\[
\xymatrix{
\pi^{-1}_{\lambda}\scr{O}_{X^{\lambda}} \ar[d]_{\pi_{\lambda}^{\#}}
\ar[rd]^{\pi_{\lambda}^{\#}} \\
\scr{O}_{X^{\infty}} \ar[r]_{\varphi^{\#}} &
\scr{O}_{X^{\infty}}
}
\]
which shows that $\varphi^{\#}$ is the identity,
since $\scr{O}_{X^{\infty}}=
\underrightarrow{\lim}_{\lambda}\pi^{-1}_{\lambda}\scr{O}_{X^{\lambda}}$.

It remains to prove (ii).
There is a natural morphism $|\psi|:|Y| \to |X^{\infty}|$
between the underlying spaces.
Since there are morphisms
\[
p_{\lambda}^{\#}:\scr{O}_{X^{\lambda}} \to
p_{\lambda *}\scr{O}_{Y} \simeq \pi_{\lambda *}|\psi|_{*}\scr{O}_{Y},
\]
these give morphisms $\pi_{\lambda}^{-1}\scr{O}_{X^{\lambda}} \to
|\psi|_{*}\scr{O}_{Y}$.
It is obvious that these are compatible with the
transition morphisms, hence
we obtain $\scr{O}_{X^{\infty}} \to \psi_{*}\scr{O}_{Y}$.
Also, this morphism commutes with
$\beta_{X^{\infty}}$ and $\beta_{Y}$,
hence we have a morphism of $\scr{A}$-schemes.
It is obvious that $\pi_{\lambda}\psi=p_{\lambda}$.
\end{proof}
%\begin{Rmk}
%The condition that $\alpha_{1}$ is filtered co-continuous
%holds for any cases we are currently interested in:
%when the algebraic system is that of
%rings, monoids, semirings, etc.
%\end{Rmk}

\begin{Prop}
\label{prop:imm:filter:limit}
Let $\{g_{\lambda}:X^{\lambda} \to Y^{\lambda}\}_{\lambda}$
be a filtered projective system of Q-morphisms of $\scr{A}$-schemes,
and set $X^{\infty}=\underleftarrow{\lim}_{\lambda}X^{\lambda}$
and $Y^{\infty}=\underleftarrow{\lim}_{\lambda} Y^{\lambda}$.
Then, the natural morphism $g:X^{\infty} \to Y^{\infty}$
is also a Q-morphism. 
\end{Prop}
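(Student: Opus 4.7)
The plan is to use the characterization of Q-morphisms furnished by the corollary to Proposition \ref{prop:exist:decomp:surj}: it suffices to verify that
\begin{enumerate}[(i)]
\item $g^{-1}:C(Y^{\infty})_{\cpt} \to C(X^{\infty})_{\cpt}$ is surjective, and
\item $g^{\#}:\scr{O}_{Y^{\infty}} \to g_{*}\scr{O}_{X^{\infty}}$ is stalkwise surjective.
\end{enumerate}
Both conditions should follow by passing the corresponding properties of each $g_{\lambda}$ through a filtered colimit. The filteredness of the indexing category is essential because surjectivity (of sets, and of homomorphisms of $\sigma$-algebras) is preserved by filtered colimits but not by general ones.

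For (i), I would invoke Proposition \ref{prop:filter:limit} to identify $|X^{\infty}|$ and $|Y^{\infty}|$ with the filtered limits of $|X^{\lambda}|$ and $|Y^{\lambda}|$ in $\cat{Coh}$. By the equivalence $C(-)_{\cpt}: \cat{Coh}^{\op} \to \cat{IIRng}$ (restricted to quasi-compact closed subsets), this filtered limit of coherent spaces corresponds to a filtered colimit of II-rings, so
\[
C(X^{\infty})_{\cpt} \simeq \underrightarrow{\lim}_{\lambda} C(X^{\lambda})_{\cpt}, \quad
C(Y^{\infty})_{\cpt} \simeq \underrightarrow{\lim}_{\lambda} C(Y^{\lambda})_{\cpt},
\]
and $g^{-1}$ is the filtered colimit of the homomorphisms $g_{\lambda}^{-1}$. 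Each $g_{\lambda}^{-1}$ is surjective since $g_{\lambda}$ is a Q-morphism, so the colimit is surjective as well.

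For (ii), Proposition \ref{prop:filter:limit} gives
\[
\scr{O}_{X^{\infty}} \simeq \underrightarrow{\lim}_{\lambda} p_{\lambda}^{-1}\scr{O}_{X^{\lambda}},
\qquad
\scr{O}_{Y^{\infty}} \simeq \underrightarrow{\lim}_{\lambda} q_{\lambda}^{-1}\scr{O}_{Y^{\lambda}},
\]
where $p_{\lambda}:X^{\infty} \to X^{\lambda}$ and $q_{\lambda}:Y^{\infty} \to Y^{\lambda}$ are the projections. Taking the stalk at $x \in X^{\infty}$ commutes with filtered colimits and with inverse image, so
\[
\scr{O}_{X^{\infty},x} \simeq \underrightarrow{\lim}_{\lambda} \scr{O}_{X^{\lambda},p_{\lambda}(x)},
\quad
\scr{O}_{Y^{\infty},g(x)} \simeq \underrightarrow{\lim}_{\lambda} \scr{O}_{Y^{\lambda},q_{\lambda}g(x)},
\]
and $g^{\#}$ at the stalk level is the filtered colimit of the stalk maps $g_{\lambda}^{\#}$. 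Each of these is surjective by assumption, hence so is the colimit.

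The main technical point to verify is the commutation of stalks with the filtered colimits describing the structure sheaves of $X^{\infty}$ and $Y^{\infty}$; once this is in place the argument is routine. This commutation is the sheaf-theoretic analogue of the filtered co-continuity of $\alpha_{1}$ used in Lemma \ref{lem:alpha1:filter:cont} and is what makes the filteredness hypothesis indispensable.
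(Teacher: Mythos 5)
Your proposal is correct and follows essentially the same route as the paper: both reduce to the characterization of Q-morphisms (surjectivity of $g^{-1}$ plus stalkwise surjectivity of $g^{\#}$), invoke Proposition \ref{prop:filter:limit} to identify $C(X^{\infty})_{\cpt}$ and $C(Y^{\infty})_{\cpt}$ with the filtered colimits $\underrightarrow{\lim}_{\lambda}C(X^{\lambda})_{\cpt}$ and $\underrightarrow{\lim}_{\lambda}C(Y^{\lambda})_{\cpt}$, and then pass surjectivity through the colimit, your stalk computation being exactly the ``similar argument'' the paper leaves implicit. One small caveat: your parenthetical claim that filteredness matters because surjectivity survives only filtered colimits is not the right justification (colimits of pointwise surjections are always surjective); filteredness is really needed for Proposition \ref{prop:filter:limit} itself, i.e.\ for the concrete description of the limit $\scr{A}$-scheme, which your proof does use correctly.
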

\begin{proof}
First, we will see that
$g^{-1}:C(Y^{\infty})_{\cpt} \to C(X^{\infty})_{\cpt}$
is surjective.
Since $C(X^{\infty})_{\cpt}=\underrightarrow{\lim}_{\lambda}
C(X^{\lambda})$ is a filtered colimit,
any element $Z$ of $C(X^{\infty})_{\cpt}$
is in the image of $C(X^{\lambda})_{\cpt}$ for some $\lambda$.
Since $C(Y^{\lambda})_{\cpt} \to C(X^{\lambda})_{\cpt}$ is surjective,
there is an element $W \in C(Y^{\lambda})_{\cpt}$
such that $g_{\lambda}^{-1}W=Z$.
Hence, $g^{-1}\pi_{\lambda}^{-1}W=g_{\lambda}^{-1}W=Z$.
This shows that $g^{-1}$ is surjective.
A similar argument shows that
$\scr{O}_{Y^{\infty}} \to g_{*}\scr{O}_{X^{\infty}}$
is also stalkwise surjective.
\end{proof}

%\subsection{Approximation of $\scr{A}$-schemes}
\section{Separated and Proper morphisms}

\subsection{Reduced schemes}
In the sequel, the algebraic system
is that of rings.
\begin{Def}
an $\scr{A}$-scheme $X$ is \textit{reduced},
if $\beta_{X}\alpha_{2}(a)=0$ implies
$a=0$ for any section $a \in \scr{O}_{X}$.
\end{Def}
Note that if $X$ is reduced, then
the radical of any ring of sections
become $0$.
\begin{Prop}
\label{prop:red:closed:str}
Let $X$ be an $\scr{A}$-scheme
and $Z$ be a closed subset of the underlying space of $X$.
Then, there is a reduced $\scr{A}$-scheme structure
$(Z,\scr{O}_{Z},\beta_{Z})$ on $Z$,
referred to as the \textit{reduced induced subscheme structure of $Z$}.
Also, there is a Q-morphism $Z \to X$,
satisfying the following universal property:

If $Y \to X$ is a morphism of $\scr{A}$-schemes,
with $Y$ reduced and the set-theoretic image contained in $Z$,
then it factors through $Z$.
\end{Prop}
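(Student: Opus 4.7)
The plan is to equip $Z$ with the subspace topology (coherent, since $Z$ is closed in $|X|$) and a quotient structure sheaf built from $\scr{O}_{X}$. Let $\iota: Z \hookrightarrow |X|$ be the inclusion and define the ideal subsheaf $\scr{I}_{Z} \subset \scr{O}_{X}$ by $\scr{I}_{Z}(U) = \{ a \in \scr{O}_{X}(U) \mid \beta_{X}\alpha_{2}(a) \leq U \cap Z\}$, i.e.\ sections whose vanishing locus swallows $U \cap Z$. The inequalities governing $\beta_{X} \circ \alpha_{2}$ recalled in the proof of Proposition \ref{prop:asch:loc} show $\scr{I}_{Z}(U)$ is closed under addition and under multiplication by $\scr{O}_{X}(U)$, so it is a genuine ideal subsheaf. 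Set $\scr{O}_{Z} := \iota^{-1}(\scr{O}_{X}/\scr{I}_{Z})$, and define $\beta_{Z}\alpha_{2}(\bar a)$ on a germ $\bar a$ lifted to $a \in \scr{O}_{X}(U)$ by $\beta_{X}\alpha_{2}(a) \cap Z$; this is independent of the lift, since two lifts differ by a section of $\scr{I}_{Z}$, whose vanishing locus already contains $Z$. Restrictions reflect localizations by lifting units from $\scr{O}_{X}$ as in the proof of Proposition \ref{prop:cocomp:patch}.

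Reducedness is immediate: if $\beta_{Z}\alpha_{2}(\bar a) = 0$ in $\tau_{Z}$, then for any lift $a$ we have $\beta_{X}\alpha_{2}(a) \cap Z = Z$, i.e.\ $a \in \scr{I}_{Z}(U)$, so $\bar a = 0$. To see $\iota: Z \to X$ is a Q-morphism, apply the corollary following Proposition \ref{prop:exist:decomp:surj}: the map $|\iota|^{-1}: C(X)_{\cpt} \to C(Z)_{\cpt}$ sends $W \mapsto W \cap Z$ and is surjective because every quasi-compact closed subset of $Z$ has this form, while $\iota^{\#}$ is stalkwise surjective by construction, being induced by the quotient $\scr{O}_{X} \to \scr{O}_{X}/\scr{I}_{Z}$.

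For the universal property, suppose $f: Y \to X$ has $Y$ reduced and $|f|(|Y|) \subseteq Z$. The underlying map factors uniquely through $Z$, yielding $|g|: |Y| \to Z$. The crux is that $f^{\#}$ annihilates $\scr{I}_{Z}$: for $a \in \scr{I}_{Z}(U)$, compatibility of $\beta$ with morphisms gives
\[
\beta_{Y}\alpha_{2}(f^{\#}a) = |f|^{-1}\beta_{X}\alpha_{2}(a) \leq |f|^{-1}(Z) = |Y| = 0
\]
in $\tau_{Y}$, so reducedness of $Y$ forces $f^{\#}a = 0$. Hence $f^{\#}$ descends to $\scr{O}_{X}/\scr{I}_{Z} \to f_{*}\scr{O}_{Y}$, inducing $g^{\#}: \scr{O}_{Z} \to |g|_{*}\scr{O}_{Y}$ automatically compatible with the support morphisms. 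The resulting $g: Y \to Z$ satisfies $\iota \circ g = f$, and is unique since $\iota$ is monic as a Q-morphism. The step I expect to be the main obstacle is the bookkeeping around the quotient sheaf $\scr{O}_{X}/\scr{I}_{Z}$: verifying well-definedness of $\beta_{Z}$ on arbitrary lifts and checking that restrictions in $\scr{O}_{Z}$ reflect localizations (lifting units through the quotient) requires some care; once that is in place, the rest is formal manipulation.
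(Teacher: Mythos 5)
Your proposal is correct and follows essentially the same route as the paper's own proof: the paper likewise builds $\scr{O}_{Z}$ as (the sheafification of) the quotient of $\scr{O}_{X}$ near $Z$ by the ideal of sections vanishing on the relevant part of $Z$, induces $\beta_{Z}$ from $\beta_{X}$, observes that $Z \to X$ is a Q-morphism via stalkwise surjectivity together with surjectivity on $C(\cdot)_{\cpt}$, and obtains the universal property exactly as you do, by using reducedness of $Y$ to show $f^{\#}$ annihilates that ideal so that $f^{\#}$ descends. The only difference is notational: the paper writes the construction as a colimit indexed by closed sets $V \in C(X)_{\cpt}$ with $V+Z \geq W$, whereas you phrase the same thing as $\iota^{-1}(\scr{O}_{X}/\scr{I}_{Z})$.
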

\begin{proof}
The structure sheaf $\scr{O}_{Z}$ of $Z$ is defined by
the sheafification of the presheaf
\[
W \mapsto \underrightarrow{\lim}_{V+Z \geq W}
\scr{O}_{X}(V)/\{ a\mid \beta_{X}\alpha_{2}(a)\cdot W \leq Z\},
\]
where the colimit runs through all $V \in C(X)_{\cpt}$ in $X$
such that $V+Z \geq W$.
For any closed $W$ in $Z$,
and closed $V$ in $X$ satisfying $V+Z \geq W$,
the morphism $\alpha_{1}\scr{O}_{X}(V) \to \tau_{X}(W)/Z$
induced from $\beta_{X}$
factors through 
$\alpha_{1}\scr{O}_{X}(V)/\{ a\mid \beta_{X}\alpha_{2}(a)\cdot W \leq Z\}$.
Hence, we can define 
the support morphism 
$\beta_{Z}:\alpha_{1}\scr{O}_{Z} \to \tau_{Z}$.
These give the reduced $\scr{A}$-scheme structure $(Z,\scr{O}_{Z},\beta_{Z})$
on $Z$.

We will give a morphism $\iota:Z \to X$ of $\scr{A}$-schemes.
The map between the underlying spaces is obvious.
For any closed $W$ in $X$, we have a natural morphism
$\scr{O}_{X}(W) \to \iota_{*}\scr{O}_{Z}(W)=\scr{O}_{Z}(W+Z)$,
which gives a stalkwise surjective morphism
$\scr{O}_{X} \to \iota_{*}\scr{O}_{Z}$.
It is clear that this gives a Q-morphism.

Suppose we are given a morphism $f:Y \to X$
with $Y$ reduced and $\Imag f \subset Z$.
Then, the morphism $f^{\#}:\scr{O}_{X} \to f_{*}\scr{O}_{Y}$
factors through $\iota_{*}\scr{O}_{Z}$, since $Y$
is reduced. Thus, $f$ factors through $Z$.
\end{proof}

\begin{Thm}
Let $\cat{red.$\scr{A}$-Sch}$ be the full subcategory
of $\scr{A}$-schemes, which consists of reduced $\scr{A}$-schemes.
Then, the underlying functor
$U:\cat{red.$\scr{A}$-Sch} \to \cat{$\scr{A}$-Sch}$
has a right adjoint, and the counit morphism is a Q-morphism.
\end{Thm}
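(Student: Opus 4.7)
The plan is to obtain the right adjoint by applying Proposition \ref{prop:red:closed:str} in the special case $Z=|X|$, i.e.\ taking the whole underlying space of $X$ as the closed subset. This produces a reduced $\scr{A}$-scheme $X_{\red}=(|X|,\scr{O}_{Z},\beta_{Z})$ together with a Q-morphism $\varepsilon_{X}:X_{\red}\to X$. I propose to define $R:\cat{$\scr{A}$-Sch}\to \cat{red.$\scr{A}$-Sch}$ by $R(X)=X_{\red}$, and to show that $\varepsilon$ is the counit of an adjunction $U\dashv R$.

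Functoriality of $R$ follows from the universal property stated in Proposition \ref{prop:red:closed:str}. Given $f:X\to Y$, the composite $f\circ \varepsilon_{X}:X_{\red}\to Y$ has reduced source, so it factors uniquely through $\varepsilon_{Y}:Y_{\red}\to Y$, yielding $R(f):X_{\red}\to Y_{\red}$ characterized by $\varepsilon_{Y}\circ R(f)=f\circ \varepsilon_{X}$. Uniqueness in this factorization immediately gives $R(g\circ f)=R(g)\circ R(f)$ and $R(\id)=\id$, so $R$ is a functor, and naturality of $\varepsilon$ is built in.

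For the adjunction, let $Y$ be a reduced $\scr{A}$-scheme and $X$ an arbitrary $\scr{A}$-scheme. I would define the unit of the adjunction on $Y$ to be the identity $\eta_{Y}:Y\to U R(Y)=Y_{\red}=Y$ (note that $R(Y)\simeq Y$ since the universal property of Proposition \ref{prop:red:closed:str} applied to the identity $Y\to Y$ identifies $Y$ with $Y_{\red}$ when $Y$ is already reduced), and check the adjunction bijection
\[
\Hom_{\cat{red.$\scr{A}$-Sch}}(Y,R(X)) \;\longrightarrow\; \Hom_{\cat{$\scr{A}$-Sch}}(UY,X),\qquad g\mapsto \varepsilon_{X}\circ U(g).
\]
Surjectivity is exactly the existence part of the universal property in Proposition \ref{prop:red:closed:str} (any $f:Y\to X$ with $Y$ reduced factors through $Z=|X|$), and injectivity is the uniqueness part, together with the fact that $\varepsilon_{X}$ is monic (being a Q-morphism). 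The triangle identities then follow formally from the construction. The counit $\varepsilon_{X}$ is a Q-morphism by Proposition \ref{prop:red:closed:str}.

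There is no real obstacle here; the whole content of the theorem is essentially a repackaging of Proposition \ref{prop:red:closed:str} in the case $Z=|X|$. The only subtle point worth double-checking is that $R(Y)$ is canonically isomorphic to $Y$ when $Y$ is reduced, so that the unit makes sense as an identity (rather than merely an isomorphism); this is immediate because applying Proposition \ref{prop:red:closed:str} with $Z=|Y|$ and $Y$ already reduced gives a Q-morphism $Y_{\red}\to Y$ which is simultaneously a P-morphism (surjective on underlying spaces, with isomorphic structure sheaves by the reduced hypothesis), hence an isomorphism by Proposition \ref{prop:decomp:exist}(2).
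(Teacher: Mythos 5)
Your proposal is correct and follows essentially the same route as the paper: both apply Proposition \ref{prop:red:closed:str} with $Z=|X|$ to get the reduction functor, derive functoriality from the universal property, take the counit to be the given Q-morphism, and take the unit to be the identity. The only difference is that you spell out details the paper leaves implicit, notably the verification that $Y_{\red}\to Y$ is an isomorphism when $Y$ is already reduced (via P-morphism plus Q-morphism and Proposition \ref{prop:decomp:exist}(2)), which is a correct and worthwhile addition.
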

\begin{proof}
Proposition \ref{prop:red:closed:str}
tells that for any $\scr{A}$-scheme $X$,
there exists a Q-morphism $\eta:X^{\red} \to X$
from a reduced $\scr{A}$-scheme $X^{\red}$,
the underlying space of which coincides with $X$.
Any morphism $X \to Y$ of $\scr{A}$-schemes
gives rise to a morphism $X^{\red} \to Y^{\red}$
of reduced $\scr{A}$-schemes, by 
the universal property.
Hence, we have a functor 
$\red:\cat{$\scr{A}$-Sch} \to \cat{red.$\scr{A}$-Sch}$.
We see that this is the right adjoint of $U$.
The unit $\epsilon:\Id \to \red\circ U$
is the identity. The counit $\eta$ is already given.
\end{proof}

\begin{Prop}
Let $f:X \to Y$ be a morphism of $\scr{A}$-schemes,
with $X$ reduced.
Let $X \to I(X,Y) \to Y$
be the PQ-decomposition.
Then, $I(X,Y)$ is also reduced.
\end{Prop}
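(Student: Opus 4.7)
The plan is to exploit the fact that the first factor $g\colon X \to W := I(X,Y)$ in the PQ-decomposition is a P-morphism, so $g^{\#}\colon \scr{O}_W \to g_*\scr{O}_X$ is injective as a morphism of sheaves (and hence on sections over every $Z \in C(W)_{\cpt}$). Combined with the compatibility between $g^{\#}$ and the support morphisms $\beta_W,\beta_X$ that is built into the definition of a morphism of $\scr{A}$-schemes, this will let me transport the reducedness of $X$ back to $W$.

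Explicitly, I would take an arbitrary section $b \in \scr{O}_W(Z)$ with $\beta_W \alpha_2(b)=0$ and deduce $b=0$. The two identities I need are: (i) for any homomorphism $\varphi$ of $\sigma$-algebras one has $\alpha_1(\varphi)\circ \alpha_2 = \alpha_2\circ \varphi$, since $\alpha_1(\varphi)$ sends a principal ideal to the principal ideal generated by the image; and (ii) the commutative square
\[
\xymatrix{
\alpha_1 \scr{O}_W \ar[r]^{\alpha_1 g^{\#}} \ar[d]_{\beta_W} & g_*\alpha_1 \scr{O}_X \ar[d]^{g_* \beta_X} \\
\tau_W \ar[r]_{g^{-1}} & g_*\tau_X
}
\]
which is part of the data of the morphism $g$. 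Applying (i) and (ii) to $b$ yields
\[
\beta_X \alpha_2\bigl(g^{\#}(b)\bigr) \;=\; g^{-1}\bigl(\beta_W \alpha_2(b)\bigr) \;=\; g^{-1}(0) \;=\; 0.
\]
Since $X$ is reduced, this forces $g^{\#}(b)=0$. Since $g$ is a P-morphism, $g^{\#}$ is injective on sections, whence $b=0$, which is precisely the reducedness of $W$.

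There is no real obstacle in this argument: once one observes that reducedness can be checked after applying $g^{\#}$ thanks to its injectivity, the rest is just chasing the two standing functorial identities above. The hypothesis $X$ reduced enters only at the last step, and it is the P-morphism half of the decomposition (not the Q-morphism half $W\to Y$) that carries the argument—no explicit use of the rather technical construction of $I(X,Y)$ in Proposition~\ref{prop:exist:decomp:surj} is needed.
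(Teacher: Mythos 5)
Your proof is correct, but it takes a genuinely different route from the paper's. The paper argues categorically through the reduction functor constructed just before this proposition: since $X$ is reduced, the universal property of the reduced induced structure (Proposition \ref{prop:red:closed:str}) makes the P-morphism $X \to I(X,Y)$ factor through the counit $I(X,Y)^{\red} \to I(X,Y)$; that counit is then a P-morphism (being the second factor of a P-morphism) as well as a Q-morphism, hence an isomorphism by Proposition \ref{prop:decomp:exist}, so $I(X,Y)$ is reduced. You instead verify the definition of reducedness directly on $W = I(X,Y)$, using only the injectivity of $g^{\#}$ from the definition of a P-morphism, the naturality of $\alpha_{2}$, and the compatibility square relating $g^{\#}$, $g^{-1}$ and the support morphisms --- essentially the same computation $\beta_{X}\alpha_{2}(g^{\#}(b)) = g^{-1}\beta_{W}\alpha_{2}(b)$ that the paper performs inside the proof of Proposition \ref{prop:asch:loc}, so both of your ingredients are already certified by the text. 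What the paper's proof buys is brevity given the machinery in place, and it exhibits how the reduction adjunction interacts with PQ-decompositions. What yours buys is self-containedness and slightly greater generality: you never invoke the reduction functor, the ``P and Q implies isomorphism'' fact, or the explicit construction of Proposition \ref{prop:exist:decomp:surj}, and in fact you do not even use condition (1) of the definition of P-morphism --- injectivity of $g^{-1}$ plays no role, only that $g^{-1}$ is a homomorphism of II-rings and so sends $0$ to $0$. Your argument therefore proves the sharper statement that the target of \emph{any} morphism from a reduced $\scr{A}$-scheme whose structure-sheaf map $g^{\#}$ is injective is again reduced, of which the proposition is the special case $g = (X \to I(X,Y))$.
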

\begin{proof}
Since $X$ is reduced, the P-morphism $X \to I(X,Y)$ factors
through $I(X,Y)^{\red}$.
Therefore $I(X,Y)^{\red} \to I(X,Y)$ is also a P-morphism,
hence an isomorphism.
\end{proof}

\begin{Def}
an $\scr{A}$-scheme $X$ is \textit{integral},
if $\scr{O}_{X}(Z)$ is integral for any $Z$.
\end{Def}
Note that, in the category of $\scr{A}$-schemes,
integrality is a weaker condition than
`irreducible and reduced'.
\begin{Prop}
\begin{enumerate}
\item Let $X$ be a reduced irreducible $\scr{A}$-scheme,
and $x_{0} \rightsquigarrow x_{1}$ a specialization.
Then, the restriction map $\scr{O}_{X,x_{1}} \to \scr{O}_{X,x_{0}}$
is an injection.
Also,
$\scr{O}_{X,\xi}$ is a field, where $\xi$ is the generic point of $X$.
\item An $\scr{A}$-scheme $X$ is integral
if $X$ is reduced and irreducible.

\item Let $f:X \to Y$ be a dominant morphism
of $\scr{A}$-schemes, with $Y$ reduced.
Then, $f^{\#}:\scr{O}_{Y,f(x)} \to \scr{O}_{X,x}$
is injective for any $x \in X$.
\end{enumerate}
\end{Prop}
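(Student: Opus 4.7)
The plan is to handle (2) first as a direct algebraic computation, use it to get (1), and then reduce (3) to a computation at generic points via (1).

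For (2): given $a,b\in\scr{O}_{X}(Z)$ with $ab=0$, multiplicativity of $\alpha_{2}$ on principal ideals gives $\alpha_{2}(a)\alpha_{2}(b)=\alpha_{2}(ab)=0$, and applying $\beta_{X}$ yields $\beta_{X}\alpha_{2}(a)\cdot\beta_{X}\alpha_{2}(b)=0$ in the II-ring of closed subsets---i.e.\ the union of the two vanishing loci fills the underlying space. Irreducibility of $X$ forces one of the loci to be the whole space, and reducedness kills the corresponding element, so $\scr{O}_{X}(Z)$ is an integral domain.

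For (1): a non-zero germ $a\in\scr{O}_{X,\xi}$ lifts to a non-zero section $\tilde a\in\scr{O}_{X}(V)$; reducedness makes $\beta_{X}\alpha_{2}(\tilde a)$ a proper closed subset of $V$, and the axiom that restriction maps reflect localizations makes $\tilde a$ a unit on the complement $V'$. Irreducibility ensures $V'\ni\xi$, so $a$ is a unit in $\scr{O}_{X,\xi}$, which is therefore a field. For the specialization injection, (2) makes each $\scr{O}_{X}(V)$ a domain, so every stalk is a localization that embeds into the common fraction field $\scr{O}_{X,\xi}$; the composition $\scr{O}_{X,x_{1}}\to\scr{O}_{X,x_{0}}\to\scr{O}_{X,\xi}$ is the inclusion of a localization of a domain into its fraction field, hence injective, and so is the first factor.

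For (3), I read the hypothesis as inheriting ``$X$ reduced and irreducible'' from the surrounding proposition. Then $f(X)$ is irreducible as the continuous image of an irreducible space, and by dominance $\overline{f(X)}=Y$ is itself irreducible with generic point $\eta=f(\xi)$. Part (1) applied to the reduced irreducible $Y$ makes $\scr{O}_{Y,\eta}$ a field and $\scr{O}_{Y,f(x)}\to\scr{O}_{Y,\eta}$ injective, while the generic stalk map $\scr{O}_{Y,\eta}\to\scr{O}_{X,\xi}$ is a non-zero ring map from a field, hence injective. By naturality of $f^{\#}$ with respect to specialization, the composition $\scr{O}_{Y,f(x)}\to\scr{O}_{X,x}\to\scr{O}_{X,\xi}$ equals $\scr{O}_{Y,f(x)}\to\scr{O}_{Y,\eta}\to\scr{O}_{X,\xi}$, which is injective; hence the first factor $\scr{O}_{Y,f(x)}\to\scr{O}_{X,x}$ is injective.

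The main subtlety is (3): as an isolated statement it would be false without $X$ reduced and irreducible---for instance $X=\Spec k\sqcup\Spec k(t)\to\Spec k[t]$ is dominant with $Y$ reduced but the stalk map at the $\Spec k$-point has non-zero kernel---so one should verify that the proposition indeed intends to carry this hypothesis through all three parts. Granting this, the argument above is a clean reduction of (3) to the generic-stalk computations of (1).
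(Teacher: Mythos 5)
Parts (2) and the field half of (1) are correct and essentially identical to the paper's proof: for (2), $\beta_{X}\alpha_{2}(a)\cdot\beta_{X}\alpha_{2}(b)=\beta_{X}\alpha_{2}(ab)=0$, irreducibility forces one factor to be $0$, and reducedness kills it; for the field claim, reducedness makes the support of a non-zero germ proper, its complement is a non-empty open which contains $\xi$, and the reflect-localizations axiom inverts the section there.

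The injectivity half of (1) has a genuine gap. You justify injectivity of $\scr{O}_{X,x_{1}}\to\scr{O}_{X,\xi}$ by declaring it to be ``the inclusion of a localization of a domain into its fraction field.'' For $\scr{A}$-schemes this structure is simply not available: stalks are nothing but filtered colimits of section rings, there is no affine local model, and nothing identifies $\scr{O}_{X,\xi}$ as a localization of $\scr{O}_{X,x_{1}}$, let alone its fraction field. Worse, the fact you are invoking --- injectivity of the map from any stalk to the generic stalk --- is precisely the case $x_{0}=\xi$ of the statement being proved, so the appeal is circular. Knowing from (2) that every $\scr{O}_{X}(V)$ is a domain does not suffice either: a sheaf of domains on an irreducible space can have non-injective generization maps (on the two-point Sierpinski space take $k[t]$ on the whole space and $k$ on the open point, with restriction $t\mapsto 0$); what rules this out for reduced $\scr{A}$-schemes is the support morphism. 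The paper's argument is the unavoidable content here: if $\langle U,a\rangle$ dies in $\scr{O}_{X,x_{0}}$, then $a|_{V}=0$ on a neighborhood $V$ of $x_{0}$, $V$ is dense in $U$ by irreducibility, hence $\beta_{X}\alpha_{2}(a)=0$, and reducedness gives $a=0$. This one-line argument must replace your fraction-field claim.

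On (3), by contrast, your skepticism is well placed, and this is where you genuinely diverge from --- and in effect correct --- the paper. As literally stated, (3) assumes only that $f$ is dominant and $Y$ reduced, and your counterexample $\Spec k\amalg\Spec k(t)\to\Spec k[t]$ refutes that version: $t$ lies in the kernel of the stalk map $k[t]_{(t)}\to k$. The paper's proof passes from ``$\beta_{X}\alpha_{2}(f^{\#}a)=0$ as a germ at $x$'' to ``$\beta_{Y}\alpha_{2}(a)=0$'' by dominance; but a kernel germ only gives vanishing of $f^{\#}a$ on some neighborhood $W$ of $x$, and dominance upgrades this to global vanishing only when $W$ is dense in $X$ --- that is, exactly when $X$ is irreducible, which is what your example exploits. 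Under your reading (in fact irreducibility of $X$ alone suffices; reducedness of $X$ is never used), your reduction of (3) to (1) --- $f(\xi)$ is the generic point $\eta$ of $Y$, and the composite $\scr{O}_{Y,f(x)}\to\scr{O}_{X,x}\to\scr{O}_{X,\xi}$ equals the injective $\scr{O}_{Y,f(x)}\to\scr{O}_{Y,\eta}\to\scr{O}_{X,\xi}$ --- is correct, whereas the paper argues directly with supports and never invokes (1). Do note, however, that your (3) rests on (1), so the gap above must be closed before this part stands.
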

\begin{proof}
\begin{enumerate}
\item 
Let $\langle U,a \rangle$ be a germ of $\scr{O}_{X,x_{1}}$
which is in the kernel of $\scr{O}_{X,x_{1}} \to \scr{O}_{X,x_{0}}$.
Then, $a|_{V}=0$ for some neighborhood $V$ of $x_{0}$.
Since $X$ is irreducible, $V$ is dense in $X$,
hence also in $U$.
This implies that $\beta_{X}\alpha_{2}(a)=0$.
Since $X$ is reduced, $a$ must be $0$.
Therefore,
the map $\scr{O}_{X,x_{1}} \to \scr{O}_{X,x_{0}}$
is injective.

Let $a$ be a non-zero element of $\scr{O}_{X,\xi}$.
Then, $a$ is invertible, since $\beta_{X}\alpha_{2}(a) \neq 0$ and
the restriction maps reflect localizations.
\item Let $a,b \in \scr{O}_{X}(Z)$ be two sections
with $ab=0$. Then, 
$\beta_{X}\alpha_{2}(a)\cdot\beta_{X}\alpha_{2}(b)
=\beta_{X}\alpha_{2}(ab)=0$.
Since $X$ is irreducible, we may assume that $\beta_{X}\alpha_{2}(a)=0$.
Since $X$ is reduced, $a$ must be $0$.
\item Suppose $a \in \scr{O}_{Y,f(x)}$
is in the kernel of $f^{\#}$.
Then,
\[
|f^{-1}|\beta_{Y}\alpha_{2}(a)=\beta_{X}f^{\#}\alpha_{2}(a)=0.
\]
Since $f$ is dominant, we have $\beta_{Y}\alpha_{2}(a)=0$.
$Y$ is reduced, hence $a=0$.
\end{enumerate}
\end{proof}

\begin{Exam}
\label{exam:formal:sch}
Let $A$ be a noetherian ring,
and $I \subset A$ be a non-trivial ideal.
We can consider a colimit 
$\mathfrak{X}=\underrightarrow{\lim}_{n} \Spec^{\scr{A}} A/I^{n}$
in the category of $\scr{A}$-schemes.
This becomes an integral $\scr{A}$-scheme
if $\hat{A}=\underleftarrow{\lim}_{n}A/I^{n}$ is a domain.
On the other hand, the underlying space
of $\mathfrak{X}$ coincides with the support of $I$,
hence $\mathfrak{X}$ is not reduced.
In fact, $\mathfrak{X}$ can be regarded as a noetherian formal scheme.
\end{Exam}

\subsection{Right lifting properties}
\begin{Def}
Let $\scr{C}$ be a category,
and $\mathcal{I}$ be a non-empty family
of morphisms in $\scr{C}$.
Fix a morphism $f:X \to Y$ of $\scr{C}$.
Given a morphism $g:A \to B$ in $\mathcal{I}$,
we have a natural map
$\varphi_{f,g}:\Hom_{\scr{C}}(B,X) \to \Hom_{\Mor(\scr{C})}(g,f)$,
where $\Mor(\scr{C})$ is the category of morphisms
in $\scr{C}$.
 We say that $f$ is
\textit{$\mathcal{I}$-separated}
(resp. \textit{$\mathcal{I}$-universally closed,
$\mathcal{I}$-proper})
if $\varphi_{f,g}$ is injective (resp. surjective, bijective)
for any $g \in \mathcal{I}$.
\end{Def}
\begin{Rmk}
The conventional definition of properness
includes the condition `of finite type'.
However, we dropped this condition here,
since it does not seem to be essential
when we discuss about valuative criteria.
Moreover, note that morphisms of finite type are not stable
under taking limits, while the other conditions do.
\end{Rmk}
Here, we list up some properties of
$\mathcal{I}$-separated morphisms, etc.
The proofs are all straightforward.
\begin{Prop}
Let $\scr{C}$ be a category,
and $\mathcal{I}$ be a non-empty family of morphisms in $\scr{C}$.
\begin{enumerate}
\item Isomorphisms are $\mathcal{I}$-proper.
\item Monics are $\mathcal{I}$-separated.
\item The class of $\mathcal{I}$-separated
(resp. $\mathcal{I}$-universally closed, $\mathcal{I}$-proper)
morphisms are stable under compositions.
Thus, we can think of the subcategory $\scr{C}(\mathcal{I})_{i}$
(resp. $\scr{C}(\mathcal{I})_{s}$, $\scr{C}(\mathcal{I})_{b}$)
of $\scr{C}$ consisting of $\mathcal{I}$-separated 
(resp. $\mathcal{I}$-universally closed, $\mathcal{I}$-proper)
morphisms.
\item If $\scr{C}$ has fiber products,
then $\scr{C}(\mathcal{I})_{i}$,
$\scr{C}(\mathcal{I})_{s}$ and $\scr{C}(\mathcal{I})_{b}$
are stable under pull backs.
\item If $\scr{C}$ is small complete,
then so is $\scr{C}(\mathcal{I})_{i}$,
$\scr{C}(\mathcal{I})_{s}$ and $\scr{C}(\mathcal{I})_{b}$.
Also, the inclusion functor $\scr{C}(\mathcal{I})_{*} \to \scr{C}$
is small continuous for $*=i,s,b$.
\item If $gf$ is $\mathcal{I}$-separated,
then $f$ is $\mathcal{I}$-separated.
\item If $gf$ is $\mathcal{I}$-universally closed
(resp. $\mathcal{I}$-proper)
and $g$ is $\mathcal{I}$-separated,
then $f$ is $\mathcal{I}$-universally closed
(resp. $\mathcal{I}$-proper).
\end{enumerate}
\end{Prop}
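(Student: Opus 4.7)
Every clause in the proposition is a purely formal statement about right-lifting classes, so the plan is to chase each item on the hom-set $\Hom_{\Mor(\scr{C})}(g,f)$ of commutative squares
\[
\xymatrix{
A \ar[r]^{u} \ar[d]_{g} & X \ar[d]^{f} \\
B \ar[r]_{v} & Y
}
\]
together with the evaluation map $\varphi_{f,g}\colon \Hom(B,X) \to \Hom_{\Mor(\scr{C})}(g,f)$ defined by $h \mapsto (hg, fh)$.

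For (1), when $f$ is an isomorphism the assignment $(u,v) \mapsto f^{-1}v$ is a two-sided inverse of $\varphi_{f,g}$ for every $g \in \mathcal{I}$. For (2), monicity of $f$ cancels off the identity $fh_{1}=v=fh_{2}$ and forces $h_{1}=h_{2}$, giving injectivity of $\varphi_{f,g}$. For (3), composition stability is a two-step chase: given $f_{1}\colon X \to Y$ and $f_{2}\colon Y \to Z$ both carrying the relevant lifting property and a square $(u,v)$ over $(g, f_{2}f_{1})$, first apply the property of $f_{2}$ to the square $(f_{1}u, v)$ to obtain a lift $w\colon B \to Y$, then apply the property of $f_{1}$ to $(u,w)$ to descend to $B \to X$; the injective, surjective, and bijective versions of $\varphi$ all propagate through both steps. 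For (4), the universal property of the pullback turns a square over the base change $\tilde{f}$ into a square over $f$, and the lift for $f$ supplied by the right-lifting property together with the original map into the base induces the required unique lift into the fibre product.

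For the cancellation clauses (6) and (7): in (6), any two fillers of a square over $f$ post-compose with $g$ to two fillers of the outer square over $gf$, which coincide by hypothesis; in (7), the lift of the outer square produced by universal closedness of $gf$ is shown to solve the given $f$-square by testing against the $\mathcal{I}$-separatedness of $g$, since both $v$ and $fh$ fill the induced square over $g$ and must therefore coincide.

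The clause requiring the most care is (5). The plan is to exploit that $\Hom(B,-)$ preserves small limits in $\scr{C}$ and $\Hom_{\Mor(\scr{C})}(g,-)$ preserves small limits in $\Mor(\scr{C})$, so that for a limit morphism $f$ of a diagram $\{f_{\lambda}\}$ the map $\varphi_{f,g}$ is canonically identified with the limit of the family of maps $\varphi_{f_{\lambda}, g}$. Injectivity of set maps (the $i$ case) is preserved by arbitrary small limits, so $\mathcal{I}$-separatedness of each $f_{\lambda}$ directly yields that of $f$; for the $s$ and $b$ cases, one assembles a global lift from level-wise lifts, invoking the universal property of the target limit to obtain the component maps and then the cancellation clause (6) to guarantee that the resulting morphism into the ambient limit actually lies in $\scr{C}(\mathcal{I})_{*}$. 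The delicate point here, which I expect to require the most effort, is to verify that the projections from the limit back to the diagram objects themselves inherit the right-lifting property coherently, so that the limit constructed in $\scr{C}$ genuinely represents the limit in the subcategory $\scr{C}(\mathcal{I})_{*}$ and the inclusion functor is small continuous.
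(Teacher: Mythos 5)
Your arguments for (1)--(4), (6) and (7) are correct, and they are precisely the chases the paper has in mind (the paper prints no argument at all, only the declaration that the proofs are straightforward, so correctness is the only thing to check): the inverse $(u,v)\mapsto f^{-1}v$ in (1), cancellation of monics in (2), the two-step chase in (3), the pullback chase in (4), and in (7) the key observation that $v$ and $fh$ fill the same square over $g$, so that $\mathcal{I}$-separatedness of $g$ forces $fh=v$. Your starting point for (5) --- that $\Hom_{\scr{C}}(B,-)$ and $\Hom_{\Mor(\scr{C})}(g,-)$ preserve small limits, so that $\varphi_{f,g}$ for a limit morphism $f=\lim_{\lambda}f_{\lambda}$ is identified with $\lim_{\lambda}\varphi_{f_{\lambda},g}$ --- is also the right tool, and it does settle the $i$ case, a small limit of injections of sets being injective.

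The gap is in the rest of (5), at exactly the two points you leave open. First, the ``delicate point'' you defer, that the projections from a $\scr{C}$-limit of a diagram in $\scr{C}(\mathcal{I})_{*}$ again lie in the class, is not delicate but false: take $\scr{C}=\cat{Set}$ and $\mathcal{I}=\{\emptyset\to\{*\}\}$, so that $\mathcal{I}$-separated, $\mathcal{I}$-universally closed and $\mathcal{I}$-proper mean injective, surjective and bijective respectively; product projections are not injective, and $\cat{Set}$ with only injective maps does not even have a product of $\{0,1\}$ with itself. So (5) is only tenable in the arrow-category sense that your identification actually computes: a small limit, taken in $\Mor(\scr{C})$, of morphisms in the class lies in the class. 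Second, under that reading your gluing of level-wise lifts does not go through as written: a morphism $B\to\lim_{\lambda}X_{\lambda}$ requires the level-wise lifts $h_{\lambda}$ to commute with the transition maps of the diagram, and clause (6) does not supply this. In the $b$ case compatibility holds because each level-wise lift is \emph{unique}, hence is carried to the unique lift at the next level --- this, not (6), is the point to make explicit. In the $s$ case compatibility genuinely fails, and so does the statement: with $\scr{C}$ and $\mathcal{I}$ as above, the equalizer in $\Mor(\cat{Set})$ of the two squares from $(\{x\}\to\{y\})$ to $(\{0,1\}\to\{0\})$ given on sources by $x\mapsto 0$ and $x\mapsto 1$ is $\emptyset\to\{y\}$, a non-surjective (finite) limit of surjections. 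Hence no repair of the gluing can prove the $\scr{C}(\mathcal{I})_{s}$ clause at this level of generality; only the $i$ and $b$ clauses survive, and fortunately those are the only ones the paper relies on later (e.g. in Proposition \ref{prop:closure:str} and Theorem \ref{thm:ZR:functor}).
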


\begin{Def}
Let $\scr{C}$ be a category,
and $\mathcal{I}$ be a non-empty family of morphisms in $\scr{C}$.
Let $X$ be an object of $\scr{C}$.
\begin{enumerate}
\item
A family $\{U_{\lambda} \to X\}_{\lambda}$ of morphisms 
with target $X$ is an \textit{$\mathcal{I}$-covering} of $X$,
if for any morphism $f:A \to B$ in $\mathcal{I}$
and any morphism $g:B \to X$,
$g$ lifts to $B \to U_{\lambda}$ for some $\lambda$:
\[
\xymatrix{
A \ar[d]_{f} & U_{\lambda} \ar[d] \\
B \ar[r]_{g} \ar@{.>}[ur] & X
}
\]
\item Suppose $\scr{C}$ has fiber products.
Let $\mathcal{J}$ be a family of morphisms in $\scr{C}$.
We say $\mathcal{J}$ is \textit{local on the base}
with respect to $\mathcal{I}$,
if the following holds:

let $f:X \to Y$ be a morphism,
and $\{U_{\lambda} \to Y\}_{\lambda}$ be an $\mathcal{I}$-covering of $Y$.
Set $X_{\lambda}=X \times_{Y} U_{\lambda}$.
Then, $f$ is contained in $\mathcal{J}$
if $f_{\lambda}: X_{\lambda} \to U_{\lambda}$
is contained in $\mathcal{J}$ for any $\lambda$.
\end{enumerate}
\end{Def}
Then, we also have:
\begin{Prop}
Let $\scr{C}$ be a category,
and $\mathcal{I}$ be a non-empty family of morphisms in $\scr{C}$.
Then, $\scr{C}(\mathcal{I})_{i}$
 (resp. $\scr{C}(\mathcal{I})_{s}$, $\scr{C}(\mathcal{I})_{b}$)
is local on the base.
\end{Prop}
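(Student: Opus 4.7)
The plan is to reduce any lifting problem for $f:X\to Y$ against a morphism $g\in\mathcal{I}$ to a lifting problem over some $U_\lambda$, by first pulling the base data back along the covering and then invoking the local hypothesis on $f_\lambda$. Suppose we are given a commutative square
\[
\xymatrix{
A \ar[r]^{\alpha} \ar[d]_{g} & X \ar[d]^{f} \\
B \ar[r]_{\beta} & Y
}
\]
with $g\in\mathcal{I}$. Since $\{U_\lambda\to Y\}$ is an $\mathcal{I}$-covering and $g\in\mathcal{I}$, the morphism $\beta$ lifts to some $\tilde\beta:B\to U_\lambda$ with $(U_\lambda\to Y)\circ\tilde\beta=\beta$. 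Because $f\alpha=\beta g=(U_\lambda\to Y)\circ\tilde\beta g$, the universal property of the pullback $X_\lambda=X\times_Y U_\lambda$ produces a unique $\alpha':A\to X_\lambda$ whose projection to $X$ is $\alpha$ and whose projection to $U_\lambda$ is $\tilde\beta g$. In particular $f_\lambda\alpha'=\tilde\beta g$, so we obtain a commutative square over $f_\lambda$ with top arrow $\alpha'$ and bottom arrow $\tilde\beta$.

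For the $\mathcal{I}$-universally closed case, apply the hypothesis on $f_\lambda$ to this local square to obtain $\tilde h:B\to X_\lambda$ with $\tilde h g=\alpha'$ and $f_\lambda\tilde h=\tilde\beta$. Setting $h=(X_\lambda\to X)\circ\tilde h$, a short diagram chase using $(X_\lambda\to X)\alpha'=\alpha$ and $(U_\lambda\to Y)\tilde\beta=\beta$ gives $hg=\alpha$ and $fh=\beta$, so $h$ solves the original lifting problem. For the $\mathcal{I}$-separated case, suppose $h_1,h_2:B\to X$ both satisfy $h_i g=\alpha$ and $fh_i=\beta$. The universal property of the pullback applied to $h_i$ and $\tilde\beta$ (whose compositions to $Y$ agree because $fh_i=\beta=(U_\lambda\to Y)\tilde\beta$) yields unique morphisms $\tilde h_i:B\to X_\lambda$ with $(X_\lambda\to X)\tilde h_i=h_i$ and $f_\lambda\tilde h_i=\tilde\beta$; the equalities $h_i g=\alpha$ and $f_\lambda\tilde h_i g=\tilde\beta g=f_\lambda\alpha'$ together with pullback uniqueness force $\tilde h_i g=\alpha'$. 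Thus both $\tilde h_i$ fill the local square, so $\mathcal{I}$-separatedness of $f_\lambda$ gives $\tilde h_1=\tilde h_2$, and projecting to $X$ gives $h_1=h_2$. The $\mathcal{I}$-proper case is just the combination of these two arguments.

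There is no real obstacle; the only subtlety is keeping track of the pullback universal property in both directions---once to build $\alpha'$ from $(\alpha,\tilde\beta g)$ and once to build $\tilde h_i$ from $(h_i,\tilde\beta)$---and checking that the projection $X_\lambda\to X$ translates the local lift back into a lift for the original square. Everything else reduces to a routine diagram chase, and the same structural argument handles all three classes $\scr{C}(\mathcal{I})_i$, $\scr{C}(\mathcal{I})_s$, $\scr{C}(\mathcal{I})_b$ uniformly, since they correspond exactly to the injectivity, surjectivity, and bijectivity of the comparison map being transported from $f_\lambda$ back to $f$.
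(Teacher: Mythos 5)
Your proof is correct: lifting $\beta$ through the $\mathcal{I}$-covering, transporting the square to $f_\lambda$ via the pullback universal property, and projecting the local lift (resp. local uniqueness) back along $X_\lambda \to X$ is exactly the routine diagram chase this statement calls for. The paper states this proposition without proof (treating it, like the preceding list of properties, as straightforward), and your argument is the intended one, handling the separated, universally closed, and proper cases uniformly.
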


\subsection{Separatedness}
Throughout this subsection, we fix a base $\scr{A}$-scheme $S$.
The category of $\scr{A}$-schemes
over $S$ is denoted by $\cat{$\scr{A}$-Sch/$S$}$.
Also, fix a family of morphisms $\mathcal{I}$.
\begin{Def}
Let $\cat{$\mathcal{I}$-sep.$\scr{A}$-Sch/$S$}$ be a full subcategory
of $\cat{$\scr{A}$-Sch/$S$}$ consisting of 
$\scr{A}$-schemes, which is $\mathcal{I}$-separated
over $S$.
\end{Def}
\begin{Prop}
\label{prop:sep:functor}
The underlying functor
\[
U:\cat{$\mathcal{I}$-sep.$\scr{A}$-Sch/$S$} \to \cat{$\scr{A}$-Sch/$S$}
\]
has a left adjoint, and the unit morphism is a P-morphism.
\end{Prop}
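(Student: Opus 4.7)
The plan is to invoke the general adjoint functor theorem. The previous proposition already gives us that $\cat{$\mathcal{I}$-sep.$\scr{A}$-Sch/$S$}$ is small complete and that $U$ is small continuous, so it remains to verify the solution set condition and then identify the unit as a P-morphism.

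For a fixed $X \in \cat{$\scr{A}$-Sch/$S$}$, I would take as solution set a set of representatives $\scr{S}(X)$ of the isomorphism classes of P-morphisms $g:X \to W$ over $S$ such that $W$ is $\mathcal{I}$-separated over $S$. This is small, by the same argument used in the construction of $I(X,Y^{\bullet})$ in \S 2.2 (the set of isomorphism classes of P-morphisms with fixed source is small). To check the solution set property, given an arbitrary morphism $f:X \to Z$ with $Z$ $\mathcal{I}$-separated over $S$, I apply the PQ-decomposition (Theorem \ref{thm:PQ:decomp}) to factor $f$ as
\[
X \stackrel{p}{\to} I(X,Z) \stackrel{q}{\to} Z,
\]
where $p$ is a P-morphism and $q$ is a Q-morphism, hence monic, hence $\mathcal{I}$-separated. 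Composing with $Z \to S$, the structural morphism $I(X,Z) \to S$ is $\mathcal{I}$-separated, so $p$ lies in $\scr{S}(X)$ and factors $f$. The adjoint functor theorem then produces a left adjoint $L$ with unit $\eta_{X}:X \to L(X)$.

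To see that $\eta_{X}$ is a P-morphism, I would apply the PQ-decomposition once more to $\eta_{X}$ itself:
\[
X \stackrel{p}{\to} I(X,L(X)) \stackrel{q}{\to} L(X).
\]
Since $q$ is a Q-morphism, it is monic, hence $\mathcal{I}$-separated, and composing with $L(X) \to S$ shows $I(X,L(X))$ is $\mathcal{I}$-separated over $S$. The universal property of $\eta_{X}$ then yields a unique $s:L(X) \to I(X,L(X))$ with $s\circ \eta_{X}=p$. Computing,
\[
q\circ s\circ \eta_{X}=q\circ p=\eta_{X}=\Id_{L(X)}\circ \eta_{X},
\]
and uniqueness of the universal lift forces $q\circ s=\Id_{L(X)}$. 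Since $q$ is monic, the identity $q\circ s\circ q=q=q\circ \Id$ also forces $s\circ q=\Id$, so $q$ is an isomorphism and $\eta_{X}\cong p$ is a P-morphism.

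The main obstacle is the solution set condition, i.e.\ ensuring that $\scr{S}(X)$ is genuinely small; but this is precisely the content of the decomposition machinery of \S 2.2, which was introduced for exactly this kind of cardinality bound. The remaining steps---producing the factorization from PQ-decomposition and upgrading the unit to a P-morphism---are then formal consequences of Theorem \ref{thm:PQ:decomp}, Corollary \ref{cor:decomp:unique}, and the fact that monics (in particular Q-morphisms) are $\mathcal{I}$-separated.
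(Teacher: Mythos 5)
Your proposal is correct and takes essentially the same route as the paper: the solution set of isomorphism classes of P-morphisms $X \to W$ over $S$ with $W$ being $\mathcal{I}$-separated, smallness because the source is fixed, the PQ-decomposition (with Q-morphisms being monic, hence $\mathcal{I}$-separated) to verify the solution-set condition, and Freyd's adjoint functor theorem. The only difference is that you spell out explicitly why the unit is a P-morphism---the paper leaves this implicit after invoking the adjoint functor theorem---and your formal argument for this (decomposing $\eta_{X}$, retracting the Q-part via the universal property, and using that Q-morphisms are monic) is sound.
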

\begin{proof}
Let $X$ be an $\scr{A}$-scheme over $S$.
Let $\scr{S}$ be the set of isomorphism classes
of P-morphisms $X \to Y$, where $Y$ is an $\scr{A}$-scheme
which is $\mathcal{I}$-separated over $S$.
We see that $\scr{S}$ is a small set,
since the elements are represented by P-morphisms
with the source fixed.
Suppose given a morphism $f:X \to Z$,
where $Z$ is $\mathcal{I}$-separated over $S$.
The PQ-decomposition $X \to Z^{\prime} \to Z$
gives a P-morphism $X \to Z^{\prime}$,
where $Z^{\prime}$ is $\mathcal{I}$-separated over $S$,
since the Q-morphism $Z^{\prime} \to Z$ is monic.
Therefore, $f$ factors through a morphism
in $\scr{S}$.
Using Freyd's adjoint functor theorem (\cite{CWM}, p121),
we obtain the result.
\end{proof}

\subsection{Valuative criteria}
\begin{Def}
A Q-morphism is a \textit{closed immersion}
if its image is closed.
\end{Def}

\begin{Def}
\label{def:morph:par:spe}
Let $\mathcal{I}$ be a family of morphisms
in the category of $\scr{A}$-schemes.
We say that $\mathcal{I}$ \textit{parametrizes specializations}
if the following conditions hold:
\begin{enumerate}
\item For any morphism $f:U \to V$ in $\mathcal{I}$,
$V$ is irreducible, reduced and local:
the generic point will be denoted by $\xi$,
and the closed point by $\eta$.
$U$ is a one-point reduced $\scr{A}$-scheme
(hence, a spectrum of a field)
with its image onto $\xi$.
\item Let $f:U \to V$ be a morphism in $\mathcal{I}$,
and $g,h:V \to X$ a pair of morphisms with
$gf=hf$ and $g(\eta)=h(\eta)$.
Then, $g=h$. 
\item Let $g:X \to Y$ be a dominant morphism
between two reduced irreducible $\scr{A}$-schemes,
and $y \in Y$.
Then, there exists a morphism $f:U \to V$
in $\mathcal{I}$ and a commutative square
\[
\xymatrix{
U \ar[r] \ar[d]_{f} & X \ar[d]^{g} \\
V \ar[r] & Y
}
\]
such that $U$ maps onto the generic point of $X$,
and $\eta \in V$ maps onto $y$.
\item Let $f:U \to V$ be a morphism in $\mathcal{I}$,
and $Z$ be a reduced irreducible $\scr{A}$-scheme.
If $f$ factors as
$U \stackrel{g}{\to} Z \stackrel{h}{\to} V$ where
$g:U \to Z$ is dominant, and $h:Z \to V$ is surjective
on the underlying space,
then there is a section $V \to Z$ of $h$.
\end{enumerate}
\end{Def}

\begin{Prop}
\label{prop:val:crit:sep}
Suppose $\mathcal{I}$ is a family of morphisms,
parametrizing specializations.
Let $X$ be an $\scr{A}$-schemes over $S$.
Then, the followings are equivalent:
\begin{enumerate}[(i)]
\item $X$ is separated over $S$, i.e.
the diagonal morphism $\Delta:X \to X \times_{S} X$
is a closed immersion.
\item $X$ is $\mathcal{I}$-separated over $S$.
\end{enumerate}
\end{Prop}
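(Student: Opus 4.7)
The plan is to establish the two implications separately, routing both through the PQ-decomposition of $\Delta$ and the reduced induced subscheme construction from Proposition \ref{prop:red:closed:str}.

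\emph{Direction (i) $\Rightarrow$ (ii).} Assume $\Delta: X \to X\times_{S} X$ is a closed immersion and take $f: U \to V$ in $\mathcal{I}$ together with two $S$-morphisms $g, h: V \to X$ satisfying $gf = hf$; the goal is $g = h$. By condition (2) of Definition \ref{def:morph:par:spe}, it suffices to prove the equality at the closed point $\eta$. Form $(g, h): V \to X \times_{S} X$. The hypothesis $gf = hf$ says precisely that $(g, h) \circ f$ factors through $\Delta$, so the closed subset $(g, h)^{-1} \Delta(X) \subseteq V$ contains the generic point $\xi = f(U)$. Because $V$ is irreducible, this closed subset must be all of $V$; in particular $(g, h)(\eta) \in \Delta(X)$, which gives $g(\eta) = h(\eta)$.

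\emph{Direction (ii) $\Rightarrow$ (i).} This is the substantial direction and splits into showing that $\Delta$ is a Q-morphism and that its image is closed. For the first claim, note that $\Delta$ is a split monomorphism with retraction $\pi_{1}$. If $\Delta = h \circ g$ is its PQ-decomposition, then $(\pi_{1} h) \circ g = \mathrm{id}_{X}$ exhibits $g$ as split monic, and since $g$ is a P-morphism it is also epic; an epic split monic is an isomorphism, so $\Delta$ agrees up to unique isomorphism with its Q-part. For the second claim, by Corollary \ref{cor:image:close:spe} it is enough to show that $\Delta(X)$ is stable under specializations. Given $(y, y) \rightsquigarrow (x_{1}, x_{2})$ in $X \times_{S} X$ with $(y, y) \in \Delta(X)$, equip $\overline{\{y\}} \subseteq X$ and $Z := \overline{\{(y, y)\}} \subseteq X \times_{S} X$ with their reduced induced $\scr{A}$-subscheme structures; both are reduced and irreducible, and $\Delta$ restricts to a dominant morphism $\Delta': \overline{\{y\}} \to Z$ because $(y, y)$ is the generic point of $Z$. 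Applying condition (3) of Definition \ref{def:morph:par:spe} to $\Delta'$ and the point $(x_{1}, x_{2}) \in Z$ produces $f: U \to V$ in $\mathcal{I}$ and a commutative square which, after composing with the inclusions into $X$ and $X \times_{S} X$, exhibits $V \to X \times_{S} X$ as a pair $(g, h)$ of $S$-morphisms with $gf = hf$ and $(g, h)(\eta) = (x_{1}, x_{2})$. The hypothesis of $\mathcal{I}$-separatedness forces $g = h$, so evaluating at $\eta$ yields $x_{1} = x_{2}$.

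\emph{Main obstacle.} The delicate part is the direction (ii) $\Rightarrow$ (i). The most conceptually clean step is recognizing that $\Delta$ is automatically a Q-morphism: this relies on the category-theoretic fact that an epic split monic is an isomorphism together with the property that P-morphisms are epic, and bypasses the valuative criterion entirely for the Q-part. The more technical step is reducing specialization-stability to condition (3): one must first pass to reduced irreducible substructures via Proposition \ref{prop:red:closed:str} before condition (3) is applicable, since the raw set-theoretic closures in $X$ and in $X \times_{S} X$ are not a priori of the form demanded by that condition.
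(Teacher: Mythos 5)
Your direction (i)$\Rightarrow$(ii) is correct and is essentially the paper's own argument: closedness of $\Delta(X)$, the fact that $(g,h)\circ f$ factors through $\Delta$, and irreducibility of $V$ (equivalently, stability of closed sets under specialization) force $(g,h)(\eta)$ into $\Delta(X)$, and then condition (2) of Definition \ref{def:morph:par:spe} upgrades the pointwise equality at $\eta$ to $g=h$. Your preliminary step in (ii)$\Rightarrow$(i) showing that $\Delta$ is automatically a Q-morphism (its P-part is an epic split monic, hence an isomorphism) is also correct, and is in fact \emph{more} explicit than the paper, which tacitly assumes this when it reduces ``closed immersion'' to ``image stable under specializations'' via Corollary \ref{cor:image:close:spe}. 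The setup of the specialization argument (passing to the reduced induced structures on $\overline{\{y\}}$ and on $Z=\overline{\{\Delta(y)\}}$, checking that $\Delta$ restricts to a dominant morphism between them, and invoking condition (3)) likewise matches the paper.

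The gap is in the very last inference of (ii)$\Rightarrow$(i). Writing $\Phi\colon V \to Z \hookrightarrow X\times_{S}X$ for the morphism produced by condition (3), with $g=\pi_{1}\Phi$ and $h=\pi_{2}\Phi$, what must be shown is that the \emph{point} $\Phi(\eta)$ lies in the \emph{set} $\Delta(X)$. From $\mathcal{I}$-separatedness you correctly get $g=h$ as morphisms, but you then conclude only ``$x_{1}=x_{2}$'', i.e.\ $\pi_{1}(\Phi(\eta))=\pi_{2}(\Phi(\eta))$, and stop. This is strictly weaker than $\Phi(\eta)\in\Delta(X)$: a point of a fiber product is not determined by its pair of projections, and the locus where the two projections agree can be strictly larger than the image of the diagonal. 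Already for ordinary schemes, $\Spec \CC\times_{\Spec\RR}\Spec\CC$ has two points, both with equal projections, and only one of them lies on the diagonal; your pair notation $(x_{1},x_{2})$ for points of $X\times_{S}X$ is precisely what invites this slip. The repair is one line, and it is exactly what the paper does: from $\pi_{1}\Phi=\pi_{2}\Phi$ as \emph{morphisms}, use that $\Delta$ is the equalizer of $\pi_{1},\pi_{2}$ (noted right after the statement of the proposition) to factor $\Phi$ through $\Delta$; then $\Phi(\eta)$ lies in the image of $\Delta$, which is the desired specialization-stability.
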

Note that the diagonal morphism is monic,
since it is the equalizer of 
$\pi_{1},\pi_{2}:X \times_{S} X \rightrightarrows X$,
where $\pi_{i}$ is the $i$-th projection for $i=1,2$.
\begin{proof}
(i)$\Rightarrow$(ii):
Suppose there is a commutative diagram
\[
\xymatrix{
U \ar[r] \ar[d] & X \ar[d] \\
V \ar[r] \ar@<.5ex>[ur]^{f} \ar@<-.5ex>[ur]_{g} & S
}
\]
with $U \to V$ a morphism in $\mathcal{I}$.
Then we obtain a commutative diagram
\[
\xymatrix{
U \ar[r] \ar[d] & X \ar[d]^{\Delta}\\
V \ar[r]_{(f,g)\quad} & X \times_{S} X
}
\]
Since $\Delta$ is a closed immersion and 
$(f,g)(\xi) \rightsquigarrow (f,g)(\eta)$ is a specialization,
$(f,g)(\eta)$ is in the image of $\Delta$.
This shows that $\pi_{1}\circ (f,g)=\pi_{2} \circ (f,g)$
from condition (2) of \ref{def:morph:par:spe}.
Hence, $(f,g)$ lifts to give a morphism $h:V \to X$
since $\Delta:X \to X \times_{S} X$ is the equalizer
of $\pi_{1},\pi_{2}$,
and $h$ coincides with $f=\pi_{1} \circ (f,g)$
and $g=\pi_{2} \circ (f,g)$.
Therefore, $f$ and $g$ must coincide.

(ii)$\Rightarrow$(i):
It suffices to show that the image of $\Delta$
is stable under specializations, by 
Corollary \ref{cor:image:close:spe}.

So let $\Delta(x)\rightsquigarrow y$ be a specialization,
and $Z=\overline{\{x\}}$ be the closed subset of $X$,
with the reduced induced subscheme structure.
Also, let $W$ be the closure of $\Delta(Z)$ in $Y$,
with the reduced induced subscheme structure.
Then, by condition (3) of \ref{def:morph:par:spe},
we have a commutative diagram
\[
\xymatrix{
U \ar[r] \ar[d] & Z \ar[r] \ar[d] & X \ar[d]^{\Delta} \\
V \ar[r] & W \ar[r] & X \times_{S} X
}
\]
where $U \to V$ is a morphism in $\mathcal{I}$.
Let $u$ be the image of $U \to X$, and
$y$ be the image of the closed point
$\eta$ of $V$ by the morphism $h:V \to X \times_{S} X$.
Set $f=\pi_{1}h$ and $g=\pi_{2}h$,
where $\pi_{i}:X\times_{S} X \to X$ is the $i$-th projection.
Since $X$ is $\mathcal{I}$-separated over $S$, $f$ and $g$ must
coincide. Therefore, $h$ factors through
$X$, since $\Delta:X \to X \times_{S} X$ is the equalizer
of $\pi_{1},\pi_{2}$. This shows that $y$ is in the image of $\Delta$.
\end{proof}

\begin{Prop}
\label{prop:val:crit:proper}
Suppose $\mathcal{I}$ is a family of morphisms,
parametrizing specializations.
Let $g:X \to S$ be a morphism of $\scr{A}$-schemes.
Then, the followings are equivalent:
\begin{enumerate}[(i)]
\item $X$ is universally closed, i.e.
$X \times_{S} T \to T$ is closed for any
$\scr{A}$-scheme $T$ over $S$.
\item $X$ is $\mathcal{I}$-universally closed over $S$.
\end{enumerate}
\end{Prop}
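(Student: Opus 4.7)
The plan is to mirror the structure of the proof of Proposition \ref{prop:val:crit:sep}, replacing the diagonal morphism argument with a base-changed universal-closedness argument together with condition (4) of Definition \ref{def:morph:par:spe}.

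For (ii)$\Rightarrow$(i), I would fix any $T \to S$ and show that $X \times_S T \to T$ is closed. By Corollary \ref{cor:image:close:spe} it suffices to show its image is stable under specializations. Given $t \rightsquigarrow t'$ with $t$ in the image, pick $x \in X \times_S T$ above $t$ and apply condition (3) to the dominant morphism $\overline{\{x\}} \to \overline{\{t\}}$ (endowed with reduced induced subscheme structures via Proposition \ref{prop:red:closed:str}) and the point $t'$. This yields $f\colon U \to V$ in $\mathcal{I}$ together with a commutative square in which $U$ maps to the generic point of $\overline{\{x\}}$ and $\eta \in V$ maps to $t'$. Composing horizontally with $\overline{\{x\}} \hookrightarrow X \times_S T \to X$ and $\overline{\{t\}} \hookrightarrow T \to S$ produces a test square for the $\mathcal{I}$-universally closed hypothesis; its lift $V \to X$, combined with $V \to T$, assembles via the pullback into $V \to X \times_S T$, and the image of $\eta$ then lies over $t'$.

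For (i)$\Rightarrow$(ii), given a test square with $f\colon U \to V$ in $\mathcal{I}$, $a\colon U \to X$ and $b\colon V \to S$, I would form $\pi\colon X \times_S V \to V$, which is closed by hypothesis. Let $w$ be the image of the unique point of $U$ under $(a,f)$, and let $Z$ be the closure of $\{w\}$ with the reduced induced subscheme structure. Then $Z$ is reduced and irreducible with generic point $w$, $U \to Z$ is dominant, and $\pi(Z) \subset V$ is closed and contains $\xi$, hence equals $V$. Condition (4) supplies a section $s\colon V \to Z$ of $\pi|_{Z}$, and setting $h = \mathrm{proj}_{X} \circ \iota \circ s$ (with $\iota\colon Z \hookrightarrow X \times_S V$) gives the lower triangle $g h = b$ at once, using $\mathrm{proj}_{V} \circ \iota \circ s = \mathrm{id}_{V}$ together with the pullback identity $g \circ \mathrm{proj}_{X} = b \circ \mathrm{proj}_{V}$ on $X \times_S V$.

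The main obstacle is the upper triangle $h f = a$, since condition (4) produces a section $s$ without a priori guaranteeing $s \circ f = g$, where $g\colon U \to Z$ is the canonical factorization of $(a,f)$ through $Z$. My plan to handle this is to replace $Z$ by the image scheme $I(U, X \times_S V)$ coming from the PQ-decomposition of $(a,f)$: since $U$ is reduced this subscheme is reduced and irreducible with underlying space $\overline{\{w\}}$, $U \to I(U, X \times_S V)$ is a P-morphism (hence epic) whose image is the generic point $w$, and $\iota$ is a monic Q-morphism. Checking $\iota \circ s \circ f = (a,f)$ then reduces, by monicness of $\iota$, to an equality of morphisms over $V$ that already agree on projection to $V$; the remaining agreement of projections to $X$ follows from applying condition (2) of Definition \ref{def:morph:par:spe} to the two candidate lifts in $X \times_S V$ obtained from $\iota \circ s$ and from $(a,f)$ using the P-morphism property, with agreement at $\eta$ coming from $g h = b$ and agreement after pullback by $f$ coming from epicness.
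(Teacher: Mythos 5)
Your overall skeleton is the paper's own: for (i)$\Rightarrow$(ii) the paper likewise forms the fiber product with $V$, takes the closure $Z$ of the image point of $U$ with its reduced induced structure, uses closedness plus irreducibility of $V$ to see $Z \to V$ is surjective, applies condition (4) to get a section, and composes with the projection to $X$; your (ii)$\Rightarrow$(i) is the paper's argument, merely assembling the given lift into the fiber product instead of first invoking pullback-stability of $\mathcal{I}$-universal closedness. You have also put your finger on a real subtlety that the paper passes over in silence: condition (4) of Definition \ref{def:morph:par:spe} produces \emph{some} section $s$ of $Z \to V$, and nothing in that definition forces $s \circ f$ to agree with the canonical dominant morphism $g\colon U \to Z$, which is exactly what the upper triangle $hf=a$ requires. (For the concrete family $\mathcal{I}_{0}$ this is harmless: the section built in the paper's verification of (4) passes through the generic point $z_{0}$ with $\scr{O}_{Z,z_{0}}\simeq K$, so $s\circ f=g$ holds automatically; in effect (4) should be read as producing a section compatible with $U \to Z$.)

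However, your proposed repair of that gap does not work. The object $I(U, X\times_S V)$ is \emph{not} $\overline{\{w\}}$: by Proposition \ref{prop:exist:decomp:surj} and its corollary, the underlying space of the PQ-image is the set-theoretic image of $|U|$, i.e.\ the single point $\{w\}$. Your description is internally inconsistent: $U \to I(U,X\times_S V)$ is a P-morphism, hence epic, and epimorphisms of coherent spaces are surjective (Theorem \ref{thm:alg:epi:surj}), so its image cannot be a non-closed generic point of a strictly larger space. Consequently condition (4) cannot be applied to this $Z$ at all, since it requires $Z \to V$ to be surjective; and if you instead keep $Z=\overline{\{w\}}$ (the closure, as in Propositions \ref{prop:red:closed:str} and \ref{prop:closure:str}), then $U \to Z$ is dominant but not epic --- gluing two copies of $\tilde{\Spec}\, R$ along $\Spec K$ gives two distinct morphisms out of $\tilde{\Spec}\, R$ agreeing on $\Spec K$ --- so the cancellation you invoke ``coming from epicness'' is unavailable. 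Finally, condition (2) compares two morphisms \emph{defined on $V$} that agree after precomposition with $f$ and at $\eta$; what must be compared here are $s\circ f$ and $g$, both defined on $U$, and there is only one candidate lift $V \to X$ (the one being constructed), so condition (2) cannot be applied as you describe. The upper triangle therefore remains unproved in your write-up; it needs either the strengthened reading of condition (4) indicated above, or an argument special to $\mathcal{I}_{0}$ as in the paper's verification of that condition.
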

\begin{proof}
(i)$\Rightarrow$(ii):
Suppose the following commutative square is given:
\[
\xymatrix{
U \ar[r] \ar[d] & X \ar[d]^{f} \\
V \ar[r] & S
}
\]
where $U \to V$ is a morphism in $\mathcal{I}$.
Let $U \to V \times_{S} X$ be the induced morphism,
$p_{0} \in V \times_{S} X$ be the image of $U$, and $Z=\overline{\{p_{0}\}}$
be the closed subset with the reduced induced subscheme
structure. Since $X$ is universally closed,
the image of $Z \to V$ is closed, hence
there is a section $\iota:V \to Z$ by condition (4) of \ref{def:morph:par:spe}.
Composing $\iota$ with $Z \to X$ gives the required morphism.

(ii)$\Rightarrow$(i):
Since universally-closedness is stable under
pullbacks, it suffices to show that $f:X \to S$ is closed,
i.e. the image of $f$ is stable under specializations.
Let $f(x) \rightsquigarrow s$ be a specialization
on $S$. Set $Z=\overline{\{x\}} \subset X$
and $W=\overline{\{f(x)\}} \subset S$
be closed subsets, with the reduced induced subscheme
structures.
Then, condition (3) of \ref{def:morph:par:spe}
implies that there exists a morphism $U \to V$ in $\mathcal{I}$
and a commutative diagram
\[
\xymatrix{
U \ar[r] \ar[d] & Z \ar[r] \ar[d] & X \ar[d] \\
V \ar[r] & W \ar[r] & S
}
\]
with $s$ in the image of $V \to S$.
Since $X$ is $\mathcal{I}$-universally closed,
there exists a morphism $V \to X$
making the whole diagram commutative.
Since the generic point $\xi$ of $V$ is contained in $Z$ and $V$
is reduced, this morphism factors through $Z$.
This shows that $s$ is in the image of $Z \to S$.
\end{proof}
Now, we will give a family of morphisms
which parametrizes specializations.
\begin{Def}
\label{def:morph:i_0}
Let $\mathcal{I}_{0}$ be a family of morphisms
$f:U \to V$ such that:
\begin{enumerate}
\item There is a valuation ring $R$,
and $V=\tilde{\Spec} R=\{\xi, \eta\} \subset \Spec R$ with the
induced topology,
where $\xi$ and $\eta$ are the generic point
and the closed point of $\Spec R$, respectively.
\item The structure sheaf $\scr{O}_{V}$ of $V$ is defined
as follows: the ring of global sections is $R$,
and $\scr{O}_{V,\xi}=K$, where $K$ is the
fractional field of $R$.
$\beta_{V}:\alpha_{1}\scr{O}_{V} \to \tau_{V}$
is defined by 
\[
\mathfrak{a} \mapsto \begin{cases}
1 & (\mathfrak{a}=R) \\
\{\eta \} & (0 \neq \mathfrak{a} \leq \mathfrak{M}_{\eta}) \\
0 & (0=\mathfrak{a})
\end{cases}
\]
\item $U$ is the spectrum of $K$, and $f:U \to V$
is the canonical inclusion.
\end{enumerate}
\end{Def}

It is obvious that
quasi-compact open coverings are $\mathcal{I}_{0}$-coverings.
\begin{Prop}
The above $\mathcal{I}_{0}$ parametrizes
specializations.
\end{Prop}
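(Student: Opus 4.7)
The plan is to verify the four axioms of Definition \ref{def:morph:par:spe} for $\mathcal{I}_{0}$. Condition (1) is immediate from Definition \ref{def:morph:i_0}: $V=\tilde{\Spec} R$ is irreducible, reduced (since $R$ is a domain) and local with generic point $\xi$ and closed point $\eta$, while $U=\Spec K$ is a one-pointed reduced $\scr{A}$-scheme, and $f$ sends $U$ onto $\xi$. For condition (2), given $g,h:V\to X$ with $gf=hf$ and $g(\eta)=h(\eta)$, the underlying maps agree on both points, so $|g|=|h|$. At the stalk $\xi$ the maps $\scr{O}_{X,g(\xi)}\to K$ are pinned down by $gf=hf$; at $\eta$ the two candidate stalk maps $\scr{O}_{X,g(\eta)}\to R$ become equal after post-composing with the restriction $R\hookrightarrow K$, so injectivity of $R\hookrightarrow K$ forces them to coincide. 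Since $V$ has only the opens $\emptyset,\{\xi\},V$, stalkwise agreement of the sheaf morphisms propagates to $g^{\#}=h^{\#}$.

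Condition (3) is the geometric core, and the key input is the classical existence of dominating valuation rings. Let $\xi_{X},\xi_{Y}$ denote the generic points of $X,Y$. Because $X,Y$ are reduced irreducible, the preceding propositions give that $K_{X}:=\scr{O}_{X,\xi_{X}}$ and $K_{Y}:=\scr{O}_{Y,\xi_{Y}}$ are fields and that $\scr{O}_{Y,y}\hookrightarrow K_{Y}$. Dominance of $g$ forces $g(\xi_{X})=\xi_{Y}$ by Theorem \ref{thm:dominant:image:minimal}, and the induced stalk map $K_{Y}\to K_{X}$ is injective; thus $\scr{O}_{Y,y}$ sits inside $K_{X}$ as a local subring. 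Invoking the classical result that every local subring of a field is dominated by a valuation ring of that field, choose $R\subset K_{X}$ with $\mathrm{Frac}(R)=K_{X}$ dominating $\scr{O}_{Y,y}$. Set $V:=\tilde{\Spec} R$, $U:=\Spec K_{X}$; define $U\to X$ by the identity $K_{X}\to K_{X}$ at the generic-point stalk, and $V\to Y$ on points by $\xi\mapsto\xi_{Y}$, $\eta\mapsto y$, with stalk maps $K_{Y}\hookrightarrow K_{X}$ at $\xi$ and $\scr{O}_{Y,y}\to R$ (from domination) at $\eta$.

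For condition (4), given a factorization $U\stackrel{g}{\to} Z\stackrel{h}{\to} V$ with $g$ dominant and $h$ surjective on underlying spaces, dominance of $g$ together with $U$ being a single point sends that point to the generic point $\zeta$ of $Z$. The chain of stalk maps $K\to\scr{O}_{Z,\zeta}\to K$ extracted from $hg=f$ is the identity on $K$, so $\scr{O}_{Z,\zeta}=K$ canonically. Pick $\eta'\in Z$ with $h(\eta')=\eta$. Since $Z$ is reduced irreducible, $\scr{O}_{Z,\eta'}\hookrightarrow\scr{O}_{Z,\zeta}=K$ is injective; combined with the local ring map $R=\scr{O}_{V,\eta}\to\scr{O}_{Z,\eta'}$ (which exists by Proposition \ref{prop:asch:loc}), this exhibits $\scr{O}_{Z,\eta'}$ as a local subring of $K$ dominating $R$. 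The standard maximality of valuation rings (a local subring of $\mathrm{Frac}(R)$ dominating $R$ equals $R$) then forces $\scr{O}_{Z,\eta'}=R$. The required section $s:V\to Z$ is defined on points by $\xi\mapsto\zeta$, $\eta\mapsto\eta'$ with identity stalk maps; $hs=\mathrm{id}_{V}$ then follows.

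The main obstacle is not mathematical but organizational: at each step one must verify that the stalk-level data assembles into an honest morphism of $\scr{A}$-schemes compatible with the support morphisms $\beta$; this is routine given the simple topology of $V$ but tedious. The only nontrivial external inputs are the classical existence and maximality of dominating valuation rings.
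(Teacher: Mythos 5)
Your proposal is correct and follows essentially the same route as the paper's proof: condition (1) read off from the definition, condition (2) via injectivity of $R=\scr{O}_{V,\eta}\hookrightarrow\scr{O}_{V,\xi}\simeq K$, condition (3) via the chain $\scr{O}_{Y,y}\hookrightarrow\scr{O}_{Y,\xi_{Y}}\hookrightarrow\scr{O}_{X,\xi_{X}}=K$ and the existence of a dominating valuation ring, and condition (4) via maximality of valuation rings among dominating local subrings of $K$. The only difference is presentational — you make explicit the stalk-to-morphism assembly step that the paper leaves implicit.
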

\begin{proof}
We will verify the condition of Definition
\ref{def:morph:par:spe}.
\begin{enumerate}
\item Obvious from the definition.
\item The maps between the underlying spaces
obviously coincide, hence we only have
to show that the two maps
$g^{\#},h^{\#}:\scr{O}_{X,x} \rightrightarrows \scr{O}_{V,\eta}$
coincide, where $x$ is the image of $\eta$.
Set $\iota:\scr{O}_{V,\eta} \to \scr{O}_{V,\xi} \simeq K$.
This is injective, hence $\iota g^{\#}=\iota h^{\#}$
shows that $g^{\#}=h^{\#}$.
\item Let $g:X \to Y$ be a dominant morphism
of two reduced irreducible $\scr{A}$-schemes,
and $x_{0},y_{0}$ be the generic points of $X,Y$,
respectively. $x_{0}$ maps to $y_{0}$
by this morphism, since it is dominant.
Let $y_{1}$ be any point of $Y$.
We have a injective morphism
\[
\scr{O}_{Y,y_{1}} \hookrightarrow \scr{O}_{Y,y_{0}}
\hookrightarrow \scr{O}_{X,x_{0}}.
\]
Set $K=\scr{O}_{X,x_{0}}$.
Then, there is a valuation ring $R$ of $K$,
dominating $\scr{O}_{Y,y_{1}}$.
This gives a morphism 
$u:U=\Spec K \to X$ and $v:V=\tilde{\Spec} R \to Y$
making the following diagram commutative:
\[
\xymatrix{
U \ar[r]^{u} \ar[d] & X \ar[d] \\
V \ar[r]_{v} & Y
}
\]
satisfying $u(\xi)=x_{0}$ and $v(\eta)=y_{1}$.
\item Let $V=\tilde{\Spec} R$, and suppose $U \to V$
factors through a reduced irreducible $\scr{A}$-scheme $Z$,
with $U \to Z$ dominant and $g:Z \to V$ surjective.
Let $z_{0}$ be the generic point of $Z$,
and $z_{1}$ be a point in $Z$ such that $g(z_{1})=\eta$.
Then, we have a commutative diagram of dominating morphisms
\[
\xymatrix{
\scr{O}_{V,\eta} \ar[d] \ar[r] & \scr{O}_{Z,z_{1}} \ar[d] \\
\scr{O}_{V,\xi} \ar[r] & \scr{O}_{Z,z_{0}} \ar[r] & \scr{O}_{V,\xi}
}
\]
All arrows are injective.
Since the composition of the second row is the identity,
we have $\scr{O}_{V,\xi} \simeq \scr{O}_{Z,z_{0}}$.
Since a valuation ring is maximal among dominating morphisms,
the arrow of the upper row must be an isomorphism.
This gives a section $V \to Z$ of $g$.
\end{enumerate}
\end{proof}

\begin{Rmk}
\label{rmk:val:crit:integral}
Suppose $X$ is irreducible and reduced,
and $K$ is the function field of $X$.
In this case, we can strengthen the
valuative criteria as follows:
let $\mathcal{I}_{1}=\{\Spec K \to V\}$
be the subfamily of $\mathcal{I}_{0}$,
consisting of all morphisms the sources of which
are $\Spec K$.
Then,
\begin{enumerate}
\item $X$ is separated over $S$
if and only if $X$ is $\mathcal{I}_{1}$-separated over $S$.
\item $X$ is universally closed over $S$
if and only if $X$ is $\mathcal{I}_{1}$-universally closed over $S$.
\end{enumerate}
This is easily seen, by taking $x$ in the proofs
of Proposition \ref{prop:val:crit:sep} and Proposition \ref{prop:val:crit:proper}
 as the generic point of $X$.
We will make use of this observation in Subsection \ref{subsec:classical:ZR}.
\end{Rmk}

\begin{Rmk}
In this article, we only use $\mathcal{I}$-separatedness
and $\mathcal{I}$-properness for
describing the valuative criteria.
However, the reader may know that
other properties of morphisms can also be formulated
by the right lifting properties with respect to other
families $\mathcal{I}$ of morphisms,
even in the classical algebraic geometry.
For example, let $\mathcal{I}$ be a family of 
morphisms $X_{0} \to X$ of affine $S$-schemes,
where $X_{0}$ is a closed subscheme of $X$
defined by a nilpotent ideal.
Then, a $S$-scheme $Y$ is \textit{formally unramified}
(resp. \textit{formally smooth}, \textit{formally \'{e}tale}
if it is $\mathcal{I}$-separated 
(resp. \textit{$\mathcal{I}$-universally closed},
\textit{$\mathcal{I}$-proper}).
$Y$ is \textit{unramified} if it is formally unramified
and of finite type over $S$.
$Y$ is \textit{smooth} (resp. \textit{\'{e}tale}) if it is formally smooth 
(resp. formally \'{e}tale) and of finite
presentation over $S$ (\cite{EGA4}, \S 17).
We will treat these subjects in the future,
and the category-theoretical arguments in this paper will be its base.
\end{Rmk}

\section{Zariski-Riemann spaces}
In this section, we will construct
a universal proper $\scr{A}$-scheme
for a given $\scr{A}$-scheme,
which is known as the Riemann-Zariski space.
The construction is somewhat difficult
than the universal separated scheme constructed
previously, since we cannot use
the PQ-decomposition to bound the cardinality
of the morphisms.

\subsection{Zariski-Riemann spaces}

\begin{Prop}
\label{prop:closure:str}
Let $f:X \to Y$ be a morphism
of $\scr{A}$-schemes.
Then there exists a decomposition
$X\to Z \to Y$ of $f$, such that:
\begin{enumerate}
\item $Z \to Y$ is a closed immersion.
\item \textit{Universality}:
if $X \to \tilde{Z} \to Y$ is another decomposition of $f$
with $\tilde{Z} \to Y$ a closed immersion,
then there is a unique morphism $Z \to  \tilde{Z}$
making the whole diagram commutative:
\[
\xymatrix{
X \ar[r] \ar[d] \ar[dr] & Y \\
Z \ar[ur] \ar@{.>}[r] & \tilde{Z} \ar[u]
}
\]
\end{enumerate}
Further, the image of $f$ is dense in $Z$,
and $\scr{O}_{Z} \to \scr{O}_{X}$ is injective.
\end{Prop}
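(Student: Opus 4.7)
The plan is to construct $Z$ as the scheme-theoretic closure of the image of $f$. I would begin by applying the PQ-decomposition of Theorem \ref{thm:PQ:decomp} to write $f=i_{0}\circ g_{0}$ with $g_{0}:X \to I(X,Y)$ a P-morphism and $i_{0}:I(X,Y) \to Y$ a Q-morphism; the only obstruction to $i_{0}$ being a closed immersion is that its image need not be closed in $|Y|$, and the desired $Z$ should be obtained by closing it up in $Y$.

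Concretely, I would take $|Z|:=\overline{|f|(|X|)} \subset |Y|$ (which by Corollary \ref{cor:image:close:spe} equals the set of specializations of points of $|f|(|X|)$), and define the structure sheaf as the image sheaf $\scr{O}':=\Imag(f^{\#}:\scr{O}_{Y}\to f_{*}\scr{O}_{X})$ viewed as supported on $|Z|$. Support on $|Z|$ is automatic: if $y\notin |Z|$, a quasi-compact open neighborhood $U$ of $y$ meets $|f|(|X|)$ trivially, so $|f|^{-1}U=\emptyset$ and $(f_{*}\scr{O}_{X})(U)=0$, hence $\scr{O}'_{y}=0$. Writing $i:|Z|\hookrightarrow |Y|$ for the inclusion, I would set $\scr{O}_{Z}:=i^{-1}\scr{O}'$, so that $i_{*}\scr{O}_{Z}=\scr{O}'$. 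The support morphism $\beta_{Z}$ would be induced by $\beta_{Y}$ via the surjection $\scr{O}_{Y}\twoheadrightarrow i_{*}\scr{O}_{Z}$ together with the restriction $\tau_{Y}\twoheadrightarrow i_{*}\tau_{Z}$ coming from $C(Y)_{\cpt}\twoheadrightarrow C(Z)_{\cpt}$. Checking that restriction maps reflect localizations on $Z$ reduces to the same property on $Y$ combined with the surjectivity of $\scr{O}_{Y}\to i_{*}\scr{O}_{Z}$.

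By construction, $i:Z\to Y$ is a Q-morphism (the inverse-image map $C(Y)_{\cpt}\to C(Z)_{\cpt}$ is surjective because $|Z|$ is closed, and $\scr{O}_{Y}\to i_{*}\scr{O}_{Z}$ is stalkwise surjective as the image of a sheaf morphism) with closed image, hence a closed immersion. The map $f$ factors as $X\xrightarrow{g} Z\xrightarrow{i} Y$: topologically, $|g|$ is the corestriction of $|f|$ to $|Z|$, and on structure sheaves, the inclusion $\scr{O}'=i_{*}\scr{O}_{Z}\hookrightarrow f_{*}\scr{O}_{X}=i_{*}g_{*}\scr{O}_{X}$ yields an injection $g^{\#}:\scr{O}_{Z}\hookrightarrow g_{*}\scr{O}_{X}$. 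This simultaneously provides the factorization, the injectivity of $\scr{O}_{Z}\to \scr{O}_{X}$, and (by the very definition of $|Z|$) the density of the image of $g$.

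For universality, given a second factorization $X\to \tilde{Z}\xrightarrow{\tilde{i}} Y$ with $\tilde{i}$ a closed immersion, the underlying space $|\tilde{Z}|$ is closed in $|Y|$ and contains $|f|(|X|)$, forcing $|Z|\subseteq |\tilde{Z}|$. Since $\tilde{i}^{\#}$ is stalkwise surjective, the image of $\scr{O}_{Y}\to f_{*}\scr{O}_{X}$ coincides with the image of $\tilde{i}_{*}\scr{O}_{\tilde{Z}}\to f_{*}\scr{O}_{X}$, which yields a surjection $\tilde{i}_{*}\scr{O}_{\tilde{Z}}\twoheadrightarrow i_{*}\scr{O}_{Z}$ and hence, by restriction along $|Z|\hookrightarrow |\tilde{Z}|$, the required unique morphism $Z\to \tilde{Z}$ making the diagram commute. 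The principal technical obstacle will be the careful verification that $\scr{O}_{Z}$ genuinely defines an $\scr{A}$-scheme --- that restriction maps reflect localizations and that $\beta_{Z}$ satisfies the support axiom --- since these conditions must be transported through both the sheafified image and the restriction to the closed subset $|Z|$, relying on the surjectivity of $\scr{O}_{Y}\to i_{*}\scr{O}_{Z}$ to lift invertibility from $Y$ to $Z$.
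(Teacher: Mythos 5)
Your proposal is correct, and its key object --- the image sheaf $\Imag(f^{\#}:\scr{O}_{Y}\to f_{*}\scr{O}_{X})$ supported on the closure $\overline{|f|(|X|)}$ --- is precisely the $\scr{A}$-scheme $\overline{X}$ that the paper also constructs (there written as the sheafification of $W \mapsto \underrightarrow{\lim}_{V}\scr{O}_{Y}(V)/\ker f^{\#}$); but the overall architecture differs. The paper defines $Z$ abstractly as the limit of the small family of \emph{all} factorizations $X \to Z_{\lambda} \to Y$ through closed immersions, so that universality and the closed-immersion property come for free from the stability of Q-morphisms and proper morphisms under limits; the explicit closure enters only afterwards, to prove the density and injectivity claims by identifying $Z \simeq \overline{X}$ (injectivity of $\scr{O}_{\overline{X}}\to\scr{O}_{Z}$ combined with stalkwise surjectivity of $\scr{O}_{Y}\to\scr{O}_{Z}$). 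You instead take the explicit closure as the \emph{definition} of $Z$ and prove universality by hand: given $X\to\tilde{Z}\xrightarrow{\tilde{i}} Y$ with $\tilde{i}$ a closed immersion, stalkwise surjectivity of $\tilde{i}^{\#}$ identifies the image of $\tilde{i}_{*}\scr{O}_{\tilde{Z}}\to f_{*}\scr{O}_{X}$ with $i_{*}\scr{O}_{Z}$, which yields the comparison map; the uniqueness you assert but do not argue is immediate because Q-morphisms are monic. Your route is more elementary and self-contained --- it avoids the smallness argument and the limit-stability results --- and it produces the concrete model of $Z$ at once, whereas the paper's route fits the adjoint-functor-theorem pattern reused in Theorem \ref{thm:ZR:functor} but must build the explicit model anyway to get the final claims. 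The verifications you defer (well-definedness of $\beta_{Z}$ on the quotient, reflection of localizations, compatibility with the support morphisms) are routine and are equally left implicit in the paper; for instance, $\beta_{Z}$ is well defined because $f^{\#}(a)=0$ forces $|f|^{-1}\beta_{Y}\alpha_{2}(a)=|X|$, hence $\beta_{Y}\alpha_{2}(a)\supseteq\overline{\Imag f}$, so $\beta_{Y}$ descends to the image sheaf restricted to $|Z|$.
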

\begin{proof}
Let $\scr{S}$ be the set of all isomorphism classes
of series of morphisms $\{X \to Z_{\lambda} \to Y\}$
of $\scr{A}$-schemes, where
$Z_{\lambda} \to Y$ is a closed immersion.
Then $\scr{S}$ is small, since closed immersions
are Q-morphisms. Let $Z$ be the limit of $\{Z_{\lambda}\}$.
Then $Z \to Y$ is also a closed immersion,
since proper morphisms and Q-morphisms are stable
under taking limits.
The universality is clear from the construction.
It remains to show that $X \to Z$ is dominant.
We may assume that $X \to Y$ is a Q-morphism.
To see this, it is enough to show
that there is an $\scr{A}$-scheme structure on the closure $\overline{X}$
of $X$ in $Y$.
We define the structure sheaf $\scr{O}_{\overline{X}}$ by
the sheafification of 
\[
W \mapsto \underrightarrow{\lim}_{V+\overline{X} \geq W}
\scr{O}_{Y}(V)/
\ker[f^{\#}:\scr{O}_{Y}(V)\to \scr{O}_{X}(f^{-1}V)].
\]
Since $\alpha_{1}\scr{O}_{Y}(V) \to \tau_{Y}(W)/\overline{X}
=\tau_{\overline{X}}(W)$ factors through
$\alpha_{1}\scr{O}_{Y}(V)/\ker f^{\#}$,
we obtain the support morphism 
$\beta_{\overline{X}}:\alpha_{1}\scr{O}_{\overline{X}}
\to \tau_{\overline{X}}$.
We also have the natural morphisms
$X \to \overline{X}$ and $\overline{X} \to Y$,
which shows that $X \to Z$ is indeed dominant.
We also see that $\scr{O}_{\overline{X}} \to \scr{O}_{X}$
is injective, hence $\scr{O}_{\overline{X}} \to \scr{O}_{Z}$
is injective. On the other hand, 
$\scr{O}_{Y} \to \scr{O}_{Z}$ is stalkwise surjective,
which shows that $Z$ is actually isomorphic to $\overline{X}$
as an $\scr{A}$-scheme.
\end{proof}

\begin{Thm}
\label{thm:ZR:functor}
Fix an $\scr{A}$-scheme $S$, and
Let $\cat{prop.$\scr{A}$-Sch/S}$
be the full subcategory of $\cat{$\scr{A}$-Sch/S}$,
consisting of $\scr{A}$-schemes proper over $S$.
Then, the underlying functor
$\cat{prop.$\scr{A}$-Sch/S}\to \cat{$\scr{A}$-Sch/S}$
has a left adjoint.
\end{Thm}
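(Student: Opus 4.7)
The plan is to invoke Freyd's adjoint functor theorem, just as was done for the separation functor in Proposition \ref{prop:sep:functor}. The three things to check are: (a) $\cat{prop.$\scr{A}$-Sch/S}$ is small complete, (b) the inclusion $U:\cat{prop.$\scr{A}$-Sch/S}\to \cat{$\scr{A}$-Sch/S}$ preserves small limits, and (c) the solution set condition holds at each object.

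For (a) and (b), I would cite the general machinery already established: by Proposition \ref{prop:asch:comp} the ambient category $\cat{$\scr{A}$-Sch/S}$ is small complete, and the emphasis earlier in \S 3.2 that the classes of $\mathcal{I}$-separated and $\mathcal{I}$-universally closed morphisms are both stable under arbitrary limits (together with the fact that $\mathcal{I}$-proper $=$ $\mathcal{I}$-separated $\cap$ $\mathcal{I}$-universally closed) gives that the full subcategory $\cat{prop.$\scr{A}$-Sch/S}$ is closed under the limits computed in $\cat{$\scr{A}$-Sch/S}$. Hence $U$ preserves them and the subcategory is itself small complete.

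The real work, and the only nontrivial step, is the solution set condition at a fixed $X\to S$: one needs a small collection of morphisms $\{X\to Y_i\}$ with $Y_i$ proper over $S$ through which every morphism from $X$ into a proper $S$-scheme factors. Here I would use Proposition \ref{prop:closure:str} in place of the PQ-decomposition: given any $f:X\to Z$ with $Z$ proper over $S$, form the canonical factorization $X\to \overline{X}\to Z$ where $\overline{X}\to Z$ is a closed immersion. Since closed immersions are proper and properness is stable under composition, $\overline{X}$ is again proper over $S$, and the factorization shows that $f$ factors through $\overline{X}$. Moreover, Proposition \ref{prop:closure:str} guarantees that $X\to \overline{X}$ has dense image and that $\scr{O}_{\overline{X}}\hookrightarrow (X\to\overline{X})_{*}\scr{O}_{X}$ is injective.

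This last property is precisely what cuts the class down to a small set, which is the main obstacle and the key insight. Dominance gives an injection $C(\overline{X})_{\cpt}\hookrightarrow C(X)_{\cpt}$ by Proposition \ref{prop:epic:top}, so the underlying coherent space of $\overline{X}$ is bounded (in cardinality) by a function of $|X|$; the injection of structure sheaves bounds $\scr{O}_{\overline{X}}$ as a subsheaf of $(X\to \overline{X})_{*}\scr{O}_{X}$; and the support morphism $\beta_{\overline{X}}$ is then determined uniquely by these two pieces of data together with $\beta_{X}$. Hence the set $\scr{S}$ of isomorphism classes of such factorizations $X\to \overline{X}\to S$ with $\overline{X}$ proper over $S$, $X\to \overline{X}$ dominant, and $\scr{O}_{\overline{X}}\hookrightarrow (X\to \overline{X})_{*}\scr{O}_{X}$ is small, and it is a solution set by construction. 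Freyd's theorem then supplies the desired left adjoint.
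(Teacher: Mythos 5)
Your overall plan (Freyd's adjoint functor theorem, with Proposition \ref{prop:closure:str} replacing the PQ-decomposition so that every morphism to a proper $S$-scheme factors through a closed, hence proper, $S$-scheme receiving a dominant map with injective sheaf map) is exactly the paper's strategy, and your parts (a) and (b) are fine. But the solution-set argument fails at its crucial step. You claim that dominance of $X \to \overline{X}$ gives an injection $C(\overline{X})_{\cpt} \hookrightarrow C(X)_{\cpt}$ ``by Proposition \ref{prop:epic:top}.'' That proposition characterizes \emph{epimorphisms}: $f^{-1}$ is injective if and only if $\Imag f \cap z$ is dense in $z$ for \emph{every} closed subset $z$ of the target. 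Dominance is only the case $z =$ the whole space, and is strictly weaker: $\Spec \QQ \to \Spec \ZZ$ is dominant, yet pullback kills every proper closed subset of $\Spec \ZZ$. Worse, the very object the theorem is meant to produce is a counterexample to your bound: $X = \Spec K \to \ZR^{f}(K,S)$ is dominant with $\scr{O}_{\ZR^{f}(K,S)} \to f_{*}\scr{O}_{X}$ injective (every stalk is a valuation ring inside $K$), yet its underlying space --- essentially all valuation rings of $K$ dominating local rings of $S$ --- is enormous, and $C(\ZR^{f}(K,S))_{\cpt}$ certainly does not inject into the two-element II-ring $C(\Spec K)_{\cpt}$. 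So dominance plus injectivity of structure sheaves cannot bound $|\overline{X}|$ in terms of $|X|$ at all; this is precisely why the paper warns that the compactification is harder than the separation functor of Proposition \ref{prop:sep:functor}, where the unit is a P-morphism (genuinely epic) and your style of counting does work.

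The paper closes this gap by bounding the \emph{points} of a proper $Y$ receiving a dominant $f:X \to Y$ through valuative data drawn from both $X$ and $S$. Since $Y$ proper over $S$ means $\mathcal{I}_{0}$-proper (Propositions \ref{prop:val:crit:sep} and \ref{prop:val:crit:proper}), every commutative square with top row $\Spec \kappa(x) \to X \to Y$ ($x \in X$) and bottom row $V \to S$, where $\Spec \kappa(x) \to V$ lies in $\mathcal{I}_{0}$ (Definition \ref{def:morph:i_0}), admits a \emph{unique} lift $V \to Y$; the set of points of $Y$ obtained as images of the closed points of such $V$'s contains $\Imag f$ (take the trivial valuation) and is stable under specialization, hence closed by Corollary \ref{cor:image:close:spe}, hence --- $f$ being dominant --- is all of $Y$. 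The collection of such squares is small because the residue fields $\kappa(x)$, the valuation rings of each $\kappa(x)$, and the homomorphisms $\scr{O}_{S,s} \to \scr{O}_{V,\eta}$ form small sets; this bounds the possible underlying spaces of $Y$. Only after that does your (correct) remaining observation --- that injectivity of $\scr{O}_{Y} \to f_{*}\scr{O}_{X}$ bounds the structure sheaf, and that the compatible support morphisms then form a small set --- finish the count. As written, your cardinality step is based on a false claim and the solution set condition remains unproven; it must be replaced by this valuative parametrization of points.
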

\begin{proof}
Let $X$ be an $\scr{A}$-scheme over $S$,
and $\scr{S}$ be a set of isomorphism classes
of dominant $S$-morphisms $f:X \to Y$,
with $Y$ proper over $S$, and $\scr{O}_{Y} \to f_{*}\scr{O}_{X}$
injective.
From Proposition \ref{prop:closure:str}
and Freyd's adjoint functor theorem (\cite{CWM}, p121),
it suffices to show $\scr{S}$ is small.

Let $f:X \to Y$ be a dominant $S$-morphism,
with $Y$ proper over $S$. 
\begin{itemize}
\item[Step 1:]
The points $y$ of $Y$ are parametrized by the commutative squares
\[
\xymatrix{
\Spec \kappa(x) \ar[r] \ar[d] & X \ar[d] \\
V \ar[r]_{\pi} & S
}
\]
where $\Spec \kappa(x) \to V$ is a morphism in $\mathcal{I}_{0}$
which is described in Definition \ref{def:morph:i_0}.
$y$ is given by the image of the closed point
by the unique map $V \to Y$:
this is true, since the set of points in $Y$ given by the above
diagram is stable under specialization, hence closed.
On the other hand, $f$ is a dominant map, hence
any point of $Y$ must be given by the above diagram.
\item[Step 2:]
Note that the set $\{\kappa(x)\}_{x \in X}$ is a small
set. This implies that the set of isomorphism classes of
morphisms in $\mathcal{I}_{0}$
of the form $\kappa(x) \to V$, where $x \in X$,
is also small. 
Let $\eta$ be the closed point of $V$.
Then, $\scr{O}_{V,\eta}$ is also a small set,
and the morphism $V \to S$ is determined by
the map $\scr{O}_{S,\pi(x)} \to \scr{O}_{V,\eta}$.
Summing up, we see that the set of isomorphism classes 
of the above commutative squares are small. 
Since the points of $Y$ are
parametrized by these morphisms,
the set of isomorphism classes of the underlying
spaces of $Y$'s are small.
\item[Step 3:] Since $\scr{O}_{Y} \to f_{*}\scr{O}_{X}$
is injective, the set of isomorphism classes
of $Y$'s (as $\scr{A}$-schemes) is also small,
ditto for the set of morphisms $X \to Y$.
This shows that $\scr{S}$ is a small set,
and we have finished the proof.
\end{itemize}
\end{proof}

We will denote the above left adjoint functor by $\ZR_{S}$.
\begin{Rmk}
The above functor 
and its construction is known as the Stone-\v{C}ech compactification.
%Furthermore, from Proposition \ref{prop:filter:limit}
%and its following remark, we see that
%this construction coincides with that of Temkin (\cite{Tem})?
\end{Rmk}

\subsection{Embedding into the Zariski-Riemann spaces}
In the sequel, fix a base $\scr{A}$-scheme $S$.

First, we confirm basic facts.
\begin{Prop}
Let $X,Y$ be a scheme over $S$.
\begin{enumerate}
\item If $X \to \ZR_{S}(X)$ is a Q-morphism,
then $X$ is separated.
\item If there is a Q-morphism $X \to Y$,
with $Y$ proper over $S$,
then $X \to \ZR_{S}(X)$ is a Q-morphism.
\item If $X \to \ZR_{S}(X)$ is a Q-morphism
and $Y \to X$ is a Q-morphism, then
$Y \to \ZR_{S}(Y)$ is a Q-morphism.
\item Let $\{X^{\lambda}\}$ be a filtered projective system
of $\scr{A}$-schemes over $S$ such that 
$X^{\lambda} \to \ZR_{S}(X^{\lambda})$ is a Q-morphism
for any $\lambda$.
If $X=\underleftarrow{\lim}_{\lambda}X^{\lambda}$,
then $X \to \ZR_{S}(X)$ is also a Q-morphism.
\end{enumerate}
\end{Prop}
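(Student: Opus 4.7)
The plan is to address each part in turn, exploiting the universal property of $\ZR_S$ together with stability properties of Q-morphisms that have been established earlier: Q-morphisms are monic (Theorem \ref{thm:PQ:decomp}), stable under compositions, stable under pullbacks, and closed under filtered projective limits (Proposition \ref{prop:imm:filter:limit}); and whenever $gf$ is a Q-morphism, so is $f$ (Corollary \ref{cor:decomp:unique}). The other ingredients are that proper over $S$ implies separated over $S$ (so its diagonal is a closed immersion), and that proper morphisms are stable under small limits.

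For (1), the idea is to leverage the separatedness of $\ZR_S(X)$ over $S$. Since $\ZR_S(X)$ is proper over $S$, its diagonal $\Delta:\ZR_S(X)\to \ZR_S(X)\times_S \ZR_S(X)$ is a closed immersion. Forming the pullback square
\[
\xymatrix{
X\times_{\ZR_S(X)} X \ar[r]\ar[d] & X\times_S X \ar[d]^{\eta\times_S\eta} \\
\ZR_S(X) \ar[r]_{\Delta} & \ZR_S(X)\times_S \ZR_S(X)
}
\]
and invoking pullback-stability of closed immersions (Q-morphisms with closed image) exhibits the top arrow as a closed immersion. Because $\eta:X\to \ZR_S(X)$ is assumed to be a Q-morphism, hence monic, the canonical map $X\to X\times_{\ZR_S(X)} X$ is an isomorphism; composing with the top arrow identifies $\Delta_{X/S}$ with a closed immersion, so $X$ is separated over $S$.

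For (2), I would use the adjunction to factor the given Q-morphism $X\to Y$ uniquely as $X\to \ZR_S(X)\to Y$; since the composite is a Q-morphism, Corollary \ref{cor:decomp:unique}(1) forces $X\to \ZR_S(X)$ itself to be a Q-morphism. Part (3) is then immediate: apply (2) to the composition $Y\to X\to \ZR_S(X)$, which is a Q-morphism by stability under compositions, and whose target is proper over $S$. For (4), set $Y=\underleftarrow{\lim}_\lambda \ZR_S(X^\lambda)$; since each $\ZR_S(X^\lambda)$ is proper over $S$ and proper morphisms are stable under small limits, $Y$ is proper over $S$. By Proposition \ref{prop:imm:filter:limit}, the morphism $X\to Y$, being the filtered projective limit of the Q-morphisms $X^\lambda\to \ZR_S(X^\lambda)$, is itself a Q-morphism; applying (2) then yields the conclusion.

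The main obstacle is really (1), where three ingredients must be combined cleanly: the fact that properness over $S$ yields a closed-immersion diagonal, pullback-stability of closed immersions, and the collapse $X\times_{\ZR_S(X)}X\simeq X$ coming from the monicity of the Q-morphism $\eta$. Once (1) is in hand, the remaining three parts reduce to essentially formal manipulations with the adjunction defining $\ZR_S$ and the already-proven closure properties of the class of Q-morphisms.
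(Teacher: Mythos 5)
Your parts (2), (3) and (4) are correct and coincide with the paper's own route: the paper declares (1)--(3) straightforward and writes out only (4), and its proof of (4) is exactly yours --- form $Y=\underleftarrow{\lim}_{\lambda}\ZR_{S}(X^{\lambda})$, note it is proper over $S$ because proper morphisms are closed under small limits, apply Proposition \ref{prop:imm:filter:limit} to see that $X\to Y$ is a Q-morphism, and then factor this morphism through $\ZR_{S}(X)$ by the adjunction, concluding with the fact that if a composite $gf$ is a Q-morphism then so is $f$ (Corollary \ref{cor:decomp:unique}). Your (2) and (3) are likewise the intended formal arguments.

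Part (1), however, has a genuine gap. You invoke ``pullback-stability of closed immersions (Q-morphisms with closed image)'', but this is proved nowhere in the paper, and in this framework it is not a formality. The paper does prove that Q-morphisms are stable under pullbacks, so your arrow $X\times_{\ZR_{S}(X)}X \to X\times_{S}X$ is a Q-morphism; what is missing is that its image is \emph{closed}. Since the underlying space of a fiber product of $\scr{A}$-schemes is not the topological fiber product of the underlying spaces, one cannot chase points: the natural repair is to first show that a closed immersion is $\mathcal{I}_{0}$-universally closed (so that pullbacks of it again have closed image), and that lifting argument in turn needs the fact that a morphism from a reduced $V$ whose set-theoretic image lies in $|Z|$ factors through the given closed immersion $Z\to Y$ --- the paper proves this only for the reduced induced subscheme structure (Proposition \ref{prop:red:closed:str}), not for an arbitrary closed immersion. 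All of this can be done, but it is extra work your proof does not supply. Within the paper's toolkit there is a much shorter argument for (1) that avoids the diagonal entirely: $X\to\ZR_{S}(X)$ is a Q-morphism, hence monic, hence $\mathcal{I}_{0}$-separated; $\ZR_{S}(X)\to S$ is proper, hence $\mathcal{I}_{0}$-separated; $\mathcal{I}_{0}$-separated morphisms are stable under composition, so $X\to S$ is $\mathcal{I}_{0}$-separated; and Proposition \ref{prop:val:crit:sep} then says exactly that $X$ is separated over $S$.
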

\begin{proof}
(1)-(3) are straightforward.
We will only prove (4).
Since $X^{\lambda} \to \ZR_{S}(X^{\lambda})$
is a Q-morphism,
$X \to \underleftarrow{\lim}_{\lambda}\ZR_{S}(X^{\lambda})$
is also a Q-morphism, by Proposition
\ref{prop:imm:filter:limit}.
This morphism factors through $\ZR_{S}(X)$
since $\underleftarrow{\lim}_{\lambda}\ZR_{S}(X^{\lambda})$
is proper, hence $X \to \ZR_{S}(X)$
is a Q-morphism.
\end{proof}

\begin{Prop}
\label{prop:open:imm:ZR}
Let $X, Y$ be an $\scr{A}$-scheme over $S$.
Let $f:X \to Y$ be a dominant Q-morphism of $\scr{A}$-schemes over $S$,
with $Y$ proper.
Then, $f$ is an open immersion if and only if 
$\iota: X \to \ZR_{S}(X)$ is.
\end{Prop}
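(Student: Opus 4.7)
Both directions proceed via the unique $S$-morphism $g\colon\ZR_{S}(X)\to Y$ supplied by the adjunction of Theorem~\ref{thm:ZR:functor} and satisfying $g\circ\iota=f$. Since $g$ is a morphism between $\scr{A}$-schemes both proper over $S$, it is itself proper, hence universally closed; combined with the dominance of $f$ (which transfers to $g$), this makes $g$ surjective. Throughout I use that $\iota(X)$ is dense in $\ZR_{S}(X)$, which is immediate from the construction of $\ZR_{S}$ in Theorem~\ref{thm:ZR:functor} out of dominant morphisms with proper targets.

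\textbf{Only if.} Assume $f$ is an open immersion, so $f(X)\subseteq Y$ is open. Put $V:=g^{-1}(f(X))$, open in $\ZR_{S}(X)$ and containing $\iota(X)$. Because $f\colon X\to f(X)$ is an isomorphism, $h:=f^{-1}\circ g|_{V}\colon V\to X$ is a well-defined $\scr{A}$-scheme morphism, and $h\circ\iota=\mathrm{id}_{X}$ is immediate. Conversely, $\iota\circ h$ and the open inclusion $V\hookrightarrow\ZR_{S}(X)$ are two morphisms into the separated (because proper) $\ZR_{S}(X)$ that agree on $\iota(X)$, which is dense in $V$; their equalizer is therefore closed in $V$ and contains a dense subset, so it equals $V$. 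Hence $\iota\circ h=\mathrm{id}_{V}$, and $\iota$ factors as the isomorphism $X\xrightarrow{\sim}V$ followed by the open inclusion $V\hookrightarrow\ZR_{S}(X)$, i.e.\ $\iota$ is an open immersion.

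\textbf{If.} Assume $\iota$ is an open immersion; set $U:=\iota(X)$ and $C:=\ZR_{S}(X)\setminus U$, closed. I will show $f(X)$ is stable under generalization in $Y$. Given $y\leadsto f(x')$ in $Y$, apply the valuative criterion of Proposition~\ref{prop:val:crit:proper} to the proper morphism $g$ using an $\mathcal{I}_{0}$-test datum encoding the specialization $y\leadsto f(x')$ and prescribing the closed point to lift to $\iota(x')$ in $\ZR_{S}(X)$; this is arranged by choosing a valuation ring dominating $\scr{O}_{\ZR_{S}(X),\iota(x')}$ whose generic point parametrizes $y$. One obtains $z\in\ZR_{S}(X)$ with $g(z)=y$ and $z\leadsto\iota(x')$; the alternative $z\in C$ would force $\overline{\{z\}}\subseteq C$ and hence $\iota(x')\in C$, which contradicts $\iota(x')\in U$. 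So $z\in U=\iota(X)$ and $y=g(z)\in f(X)$. Combining generalization-stability with the fact that $f(X)$ is a coherent subspace of the coherent $Y$ (the corollary following Theorem~\ref{thm:alg:epi:surj}), standard coherent-space arguments give that $f(X)$ is open in $Y$. The Q-morphism property of $f$ provides the isomorphism $X\cong I(X,Y)$ with underlying set $f(X)$, and openness of $f(X)$ combined with $\scr{O}_{Y}\to f_{*}\scr{O}_{X}$ being stalkwise surjective (Q-morphism) and injective on $f(X)$ (because $f$ is dominant) promotes this to an isomorphism of $X$ with the open subscheme $f(X)\subseteq Y$, so $f$ is an open immersion.

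\textbf{Main obstacle.} The delicate part is the ``if'' direction, with two technical points: first, producing the valuative lift through $g$ with prescribed endpoint $\iota(x')$ requires unwinding Definition~\ref{def:morph:i_0} and building a suitable valuation ring dominating $\scr{O}_{\ZR_{S}(X),\iota(x')}$ whose generic point parametrizes $y$; second, the passage from ``generalization-stable coherent subspace'' to ``open subscheme'' rests on a coherent-space-level analog of the spectral-space fact that pro-constructible sets stable under generalization are open.
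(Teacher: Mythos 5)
Your ``if'' direction contains the two genuine gaps, and they sit exactly where the paper's proof does its real work. First, the final step rests on a false topological principle: in a coherent (spectral) space, a pro-constructible subset stable under generalization need \emph{not} be open; the true statement is the dual one (pro-constructible and stable under \emph{specialization} implies closed). The standing counterexample is the image of $\Spec \QQ \to \Spec \ZZ$: a single generic point, a coherent subspace, pro-constructible and generalization-stable, yet not open --- and this is precisely the example the paper itself gives right after this proposition (a dominant Q-morphism into a proper scheme which is not an open immersion). So generalization-stability of $f(X)$ can never, by itself, yield openness. Second, your valuative step is circular: to apply Proposition \ref{prop:val:crit:proper} to $g$ with the closed point ``prescribed to lift to $\iota(x')$'', you must exhibit a valuation ring dominating $\scr{O}_{\ZR_{S}(X),\iota(x')}$ whose generic point already sits over a point of $\ZR_{S}(X)$ lying above $y$ and generizing $\iota(x')$ --- which is the very point $z$ you are trying to construct. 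The criterion only supplies \emph{some} lift whose closed point lies in the fiber $g^{-1}(f(x'))$; nothing forces that point to be $\iota(x')$. The paper's proof supplies the missing idea: it establishes the set-theoretic identity $\ZR_{S}(X)\setminus\iota(X)=\pi^{-1}(Y\setminus f(X))$ by contradiction --- if $u\notin\iota(X)$ had $\pi(u)=f(x)$, the Q-morphism hypothesis on $f$ makes the local rings $\scr{O}_{W'',f(x)}\to\scr{O}_{W',\iota(x)}\to\scr{O}_{W,x}$ isomorphisms, so $\scr{O}_{W',u}$ dominates $\scr{O}_{W',\iota(x)}$; choosing a valuation ring $R$ of $\scr{O}_{W',\iota(\xi)}$ dominating $\scr{O}_{W',u}$ then produces \emph{two} distinct lifts $\tilde{\Spec}\,R\to\ZR_{S}(X)$ (closed point to $\iota(x)$, respectively to $u$), contradicting separatedness of $\ZR_{S}(X)$ over $S$. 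With this identity in hand, ``$\iota$ open immersion $\Rightarrow f$ open immersion'' is immediate: $Y\setminus f(X)=\pi\bigl(\ZR_{S}(X)\setminus\iota(X)\bigr)$ is the image of a closed set under the proper, surjective $\pi$, hence closed.

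Your ``only if'' direction also has a gap, though a more repairable one. From ``the equalizer of $\iota\circ h$ and the inclusion is closed and contains the dense set $\iota(X)$'' you may conclude the equalizer is topologically all of $V$, but not that the two \emph{morphisms} of $\scr{A}$-schemes are equal: on a non-reduced source, morphisms can agree on a dense (even the full) underlying set without coinciding, exactly as $\Spec k \hookrightarrow \Spec k[\epsilon]/(\epsilon^{2})$ equalizes two distinct maps to $\Aff^{1}$. Promoting topological equality to equality of morphisms needs reducedness of $\ZR_{S}(X)$ (not assumed, and not established) or a sheaf-level argument you do not give. The paper sidesteps this entirely: once the identity above is known, $f(X)$ open makes $\ZR_{S}(X)\setminus\iota(X)=\pi^{-1}(Y\setminus f(X))$ closed, so $\iota(X)$ is open; and $\iota$ is a Q-morphism because $f=\pi\circ\iota$ is one (Corollary \ref{cor:decomp:unique}), whence $\iota$ is an open immersion.
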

\begin{proof}
Let $\pi: \ZR_{S}(X) \to Y$ be the canonical morphism.
Note that $\pi$ is proper, since $Y$ is proper over $S$.
Also, since $f$ is dominant, $\pi$ must be surjective.

First, we will show (*):
$\ZR_{S}(X)\setminus \iota(X)=\pi^{-1}(Y \setminus f(X))$.
It is obvious that the left-hand side contains the right-hand side,
so we will show the converse.
Assume that there exists $u \in \ZR_{S}(X)\setminus \iota(X)$
such that $\pi(u)$ is in the image of $f$,
say $\pi(u)=f(x)$.
Since $\iota$ is dominant, there is a point $\xi \in X$
such that $\iota(\xi)$ specializes to $u$.
Also, since $f$ is a Q-morphism and
\[
f(\xi)=\pi\iota(\xi) \rightsquigarrow \pi(u)=f(x),
\]
we see that $\xi$ specializes to $x$.
Let $W$, $W^{\prime}$, $W^{\prime\prime}$
be the closure of $\{\xi\}$, $\{\iota(\xi)\}$,
$\{f(\xi)\}$, respectively,
with induced reduced subscheme structures.
We have a series of local homomorphisms
\[
\scr{O}_{W^{\prime\prime},f(x)}
\to \scr{O}_{W^{\prime},\iota(x)}
\to \scr{O}_{W,x},
\]
and these are injective since $W \to W^{\prime\prime}$
is dominant.
Also, these are surjective since $f$ is a Q-morphism,
hence isomorphisms.
Therefore, the homomorphism
\[
\scr{O}_{W^{\prime},\iota(x)}
\simeq \scr{O}_{W^{\prime\prime},\pi(u)}
\to \scr{O}_{W^{\prime},u} 
\]
implies that $\scr{O}_{W^{\prime},u}$
dominates $\scr{O}_{W^{\prime},\iota(x)}$.
Let $R$ be a valuation ring of $K=\scr{O}_{W^{\prime},\iota(\xi)}$
dominating $\scr{O}_{W^{\prime},u}$.
Consider the following commutative square:
\[
\xymatrix{
\Spec K \ar[r] \ar[d] & \ZR_{S}(X) \ar[d] \\
\tilde{\Spec} R \ar[r] & S
}
\]
Then, there are two morphisms $\tilde{\Spec} R \to \ZR_{S}(X)$
which send the closed point of $\tilde{\Spec} R$
to $\iota(x)$ and $u$, respectively.
This contradicts to the fact that $\ZR_{S}(X)$ is separated over $S$.

Now, suppose $f$ is an open immersion.
Since $f$ is a Q-morphism, $\iota$ is also a Q-morphism.
Moreover, the right-hand side of (*) is closed,
hence $\iota(X)$ is open.
This implies that $\iota$ is an open immersion.

Conversely, suppose $\iota$ is an open immersion.
Then, the left-hand side of (*) is closed,
and $\pi$ being proper and surjective implies that
\[
\pi(\ZR_{S}(X) \setminus \iota(X))=Y \setminus f(X)
\]
is also closed. Hence, $f$ is an open immersion.
\end{proof}

\begin{Cor}
\label{cor:ZR:open:imm}
If $Y \to X$ is a closed (resp. open) immersion,
and $X \to \ZR_{S}(X)$ is an open immersion, 
then $Y \to \ZR_{S}(Y)$ is an open immersion.
\end{Cor}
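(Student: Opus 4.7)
The plan is to apply Proposition \ref{prop:open:imm:ZR} to $Y$ itself. To that end, I will exhibit a proper $\scr{A}$-scheme $Z$ over $S$ together with a dominant Q-morphism $Y \to Z$ that is an open immersion; then that proposition will give the conclusion.

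Let $Z$ be the schematic closure of the composed morphism $Y \to X \to \ZR_S(X)$, constructed via Proposition \ref{prop:closure:str}. Since $\ZR_S(X)$ is proper over $S$ by hypothesis and the inclusion $Z \to \ZR_S(X)$ is a closed immersion, the composite $Z \to \ZR_S(X) \to S$ is proper. The morphism $Y \to \ZR_S(X)$ is a Q-morphism in both cases of the hypothesis (it is the composition of the Q-morphism $Y \to X$ with the open immersion $X \to \ZR_S(X)$, and Q-morphisms are stable under composition); since it factors through the closed immersion $Z \to \ZR_S(X)$, Corollary \ref{cor:decomp:unique}(1) shows that $Y \to Z$ is a Q-morphism. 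It is dominant by Proposition \ref{prop:closure:str}.

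It remains to prove that $Y \to Z$ is an open immersion. On the topological side, I will show that $Y = Z \cap X$ as subsets of $\ZR_S(X)$, which, since $X$ is open in $\ZR_S(X)$, forces $Y$ to be open in $Z$. In the case $Y \to X$ is an open immersion, $Y$ is already open in $\ZR_S(X)$, so $Y \subseteq Z \cap X \subseteq Y$ follows from density. In the case $Y \to X$ is a closed immersion, I use Corollary \ref{cor:image:close:spe}: any point of $Z$ is a specialization of a point of $Y$, and specializations inside $X$ of points of the closed subset $Y$ stay in $Y$. For the sheaves, the Q-morphism $Y \to Z$ already guarantees stalkwise surjectivity of $\scr{O}_Z \to \scr{O}_Y$; one needs stalkwise injectivity on the open $Y \subseteq Z$. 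Reading off the construction of $\scr{O}_Z$ from Proposition \ref{prop:closure:str}, its stalks on $Y$ are the images of the stalks of $\scr{O}_{\ZR_S(X)} = \scr{O}_X|_{Y}$ in $\scr{O}_Y$; in the open-immersion case this image is just $\scr{O}_X|_Y = \scr{O}_Y$, and in the closed-immersion case the map $\scr{O}_X \to \scr{O}_Y$ is stalkwise surjective, so again the image equals $\scr{O}_Y$. Either way, $\scr{O}_Z|_Y \simeq \scr{O}_Y$, proving that $Y \to Z$ is an open immersion.

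With $Y \to Z$ identified as a dominant Q-morphism and open immersion into a proper $Z$ over $S$, Proposition \ref{prop:open:imm:ZR} gives that $Y \to \ZR_S(Y)$ is an open immersion. The main obstacle I anticipate is the sheaf-theoretic verification in the closed-immersion case: one has to trace through the sheafification in the construction of the schematic closure (Proposition \ref{prop:closure:str}) to see that restricting $\scr{O}_Z$ to the open locus $Y = Z \cap X$ recovers exactly the quotient sheaf defining $\scr{O}_Y$ inside $\scr{O}_X$, and this is where the surjectivity of $\scr{O}_X \to \scr{O}_Y$ coming from the closed immersion is crucial.
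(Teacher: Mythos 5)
Your proposal is correct and is essentially the paper's own proof: the paper likewise takes the closure $\overline{Y}$ of $Y$ in $\ZR_{S}(X)$ (with its $\scr{A}$-scheme structure from Proposition \ref{prop:closure:str}), asserts that $Y \to \overline{Y}$ is an open immersion into a proper $\scr{A}$-scheme over $S$, and applies Proposition \ref{prop:open:imm:ZR}; your write-up simply supplies the topological and sheaf-theoretic verifications that the paper leaves implicit. One local error worth fixing: in the open-immersion case the inclusion $Z \cap X \subseteq Y$ is false in general and does not ``follow from density'' (if $Y$ is open and dense in $X$, then $Z \cap X$ is the closure of $Y$ in $X$, which strictly contains $Y$), but this is harmless, since there you only need the first half of your own sentence --- $Y$ is open in $\ZR_{S}(X)$, hence $Y = Y \cap Z$ is already open in $Z$.
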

\begin{proof}
Let $\overline{Y}$ be the closure of $Y$
in $\ZR_{S}(X)$.
Then, $Y \to \overline{Y}$ is an open immersion
to a proper $\scr{A}$-scheme over $S$.
Then, Proposition \ref{prop:open:imm:ZR}
tells that $Y \to \ZR_{S}(Y)$ is also an open immersion.
\end{proof}

We want to know when
$X \to \ZR_{S}(X)$ is an open immersion
for a morphism $X \to S$ of $\scr{Q}$-schemes.
Note that the condition `of finite type' is crucial for
the open embedding. We will see from now on,
what happens if drop off the condition.

\begin{Prop}
Let $X \to S$ be a morphism between affine schemes.
Then, $X \to \ZR_{S}(X)$ is a Q-morphism.
\end{Prop}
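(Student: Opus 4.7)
Write $X = \Spec A$ and $S = \Spec R$ with $A$ an $R$-algebra. The plan is to exhibit a Q-morphism from $X$ into some proper $\scr{A}$-scheme over $S$; part (2) of the proposition stated just before Proposition \ref{prop:open:imm:ZR} then yields that $X \to \ZR_{S}(X)$ is itself a Q-morphism.

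To build such a proper cover, each $a \in A$ furnishes an $S$-morphism $X \to \Aff^{1}_{S}$, which composes with the canonical open immersion $\Aff^{1}_{S} \hookrightarrow \PP^{1}_{S}$. By the small completeness of \cat{$\scr{A}$-Sch/$S$} (Proposition \ref{prop:asch:comp}), we may form
\[
P := \prod_{a \in A}\PP^{1}_{S},
\]
and since $\PP^{1}_{S}$ is proper over $S$ while proper morphisms are stable under small limits, $P$ is proper over $S$. The universal property of the product produces an $S$-morphism $\varphi: X \to P$ that factors as $\varphi = \iota \circ \psi$, with
\[
\psi : X \longrightarrow P^{\circ} := \prod_{a \in A}\Aff^{1}_{S}, \qquad \iota : P^{\circ} \longrightarrow P.
\]

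The map $\psi$ is a closed immersion: since $\Spec$ is left adjoint to $\Gamma$ it sends the $R$-algebra coproduct $R[\{t_{a}\}_{a\in A}] = \bigotimes_{a\in A,\, R} R[t_{a}]$ to the product $P^{\circ}$ in \cat{$\scr{A}$-Sch/$S$}, and $\psi$ corresponds to the surjective $R$-algebra homomorphism $R[\{t_{a}\}_{a\in A}] \twoheadrightarrow A$ sending $t_{a} \mapsto a$. For $\iota$, I express both products as filtered projective limits indexed by the poset of finite subsets $F \subset A$; for each such $F$ the map $\iota_{F}: \prod_{a\in F}\Aff^{1}_{S} \to \prod_{a\in F}\PP^{1}_{S}$ is an iterated fiber product over $S$ of the open immersion $\Aff^{1}_{S} \hookrightarrow \PP^{1}_{S}$, which is a Q-morphism. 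Since Q-morphisms are stable under pullback, each $\iota_{F}$ is a Q-morphism, and as the $\iota_{F}$'s assemble into a morphism of filtered projective systems, Proposition \ref{prop:imm:filter:limit} implies that $\iota$ is itself a Q-morphism. Consequently $\varphi = \iota\circ\psi$ is a Q-morphism from $X$ into a proper $\scr{A}$-scheme over $S$, and the cited proposition finishes the proof.

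The main obstacle is the treatment of $\iota: P^{\circ} \to P$: one has to combine the stability of Q-morphisms under pullback (to promote the single open immersion $\Aff^{1}_{S} \hookrightarrow \PP^{1}_{S}$ to the finite-product maps $\iota_{F}$) with their stability under filtered projective limits (Proposition \ref{prop:imm:filter:limit}) in order to conclude for the infinite product. It is essential here that \cat{$\scr{A}$-Sch} be small complete, since the infinite product $P$ does not exist in \cat{$\scr{Q}$-Sch}.
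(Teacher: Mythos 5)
Your proof is correct, and it takes a noticeably different (and in one respect cleaner) route than the paper's. The paper first reduces to the case $X=\Spec A[x_{\lambda}]_{\lambda\in\Lambda}$ of an infinite polynomial ring, then compactifies each finite-dimensional piece $X^{\Lambda'}=\Spec A[x_{\lambda}]_{\Lambda'}$ into an \emph{arbitrary} proper scheme $Y^{\Lambda'}$ over $S$; since an inclusion $\Lambda_{1}\supset\Lambda_{2}$ then only induces a rational map $Y^{\Lambda_{1}}\dashrightarrow Y^{\Lambda_{2}}$, the paper must blow up all indeterminacy loci to force the compactifications into a filtered projective system $\{\tilde{Y}^{\Lambda'}\}$ before applying Proposition \ref{prop:imm:filter:limit} and stability of properness under limits. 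Your choice of the product compactification $\prod_{a\in F}\PP^{1}_{S}$ of $\prod_{a\in F}\Aff^{1}_{S}$ dissolves exactly this difficulty: the transition maps of both systems are honest projections, so the system of Q-morphisms $\iota_{F}$ is compatible by construction and no blow-ups (and no verification of compatibilities among them, which the paper leaves implicit) are needed. You also skip the paper's preliminary reduction, since the surjection $R[\{t_{a}\}_{a\in A}]\twoheadrightarrow A$ embeds the general affine $X$ directly; the paper instead invokes part (3) of the same proposition you cite to pass from the polynomial case to the general one. Both arguments ultimately rest on the same three pillars --- Proposition \ref{prop:imm:filter:limit}, closure of properness under small limits, and part (2) of the proposition preceding Proposition \ref{prop:open:imm:ZR} --- and both implicitly use that an ordinary proper scheme such as $\PP^{1}_{S}$ (resp.\ $Y^{\Lambda'}$) is proper as an $\scr{A}$-scheme, so you are on equal footing there. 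One small point worth making explicit in your write-up: the claim that $\psi$ is a closed immersion (only Q-morphism-ness is actually needed) follows from the characterization of Q-morphisms as those $f$ with $f^{-1}$ surjective on $C(\cdot)_{\cpt}$ and $f^{\#}$ stalkwise surjective, both of which are immediate for the spectrum of a surjective ring homomorphism.
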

\begin{proof}
Let $S=\Spec^{\scr{A}} A$.
It suffices to show when $X=\Spec A[x_{\lambda}]_{\lambda \in \Lambda}$,
the spectrum of the polynomial ring of coefficient ring $A$
with infinitely many variables.

For any finite subset $\Lambda^{\prime}$ of $\Lambda$,
set $X^{\Lambda^{\prime}}=\Spec A[x_{\lambda}]_{\Lambda^{\prime}}$.
These can be embedded into a proper
scheme $Y^{\Lambda^{\prime}}$ over $S$.
Even if $\Lambda_{1} \supset \Lambda_{2}$ is an inclusion
between finite subsets of $\Lambda$,
We need not have a morphism $Y^{\Lambda_{1}} \to Y^{\Lambda_{2}}$
extending $X^{\Lambda_{1}} \to X^{\Lambda_{2}}$:
we only obtain rational maps.
However, when given a fixed $\Lambda_{1}$,
blow up all the indeterminancy locus of
$Y^{\Lambda_{1}} \to Y^{\Lambda_{2}}$,
where $\Lambda_{2}$ runs through all the subset of $\Lambda_{1}$
and we obtain another proper scheme $\tilde{Y}^{\Lambda_{1}}$.
Replacing $Y^{\Lambda_{1}}$ by $\tilde{Y}^{\Lambda_{1}}$ 
for each $\Lambda_{1}$
gives a filtered projective system $\{\tilde{Y}^{\Lambda^{\prime}}\}$
of proper schemes over $S$,
extending the projective system $\{X^{\Lambda^{\prime}}\}$.
The morphisms $X^{\Lambda^{\prime}} \to \tilde{Y}^{\Lambda^{\prime}}$
are Q-morphisms, hence
\[
X=\underleftarrow{\lim}_{\Lambda^{\prime}} X^{\Lambda^{\prime}}
\to Y=\underleftarrow{\lim}_{\Lambda^{\prime}}
\tilde{Y}^{\Lambda^{\prime}}
\]
is also a Q-morphism, and $Y$ is proper.
\end{proof}

\begin{Exam}
Let $R=\ZZ[x_{n}]_{n \in \NNN}$ be a polynomial ring
with infinitely many variables, and set $\Aff^{\infty}=\Spec R$.
We will see that $\Aff^{\infty}$ cannot be embedded
as an open subscheme of a proper $\scr{A}$-scheme.
We have a surjection $R \to \QQ$,
hence there is a closed immersion $\Spec \QQ \to \Aff^{\infty}$.
We have a natural dominant immersion $\Spec \QQ \to \Spec \ZZ$,
which is not an open immersion.
This shows that $\Aff^{\infty} \to \ZR_{\ZZ}(\Aff^{\infty})$
cannot be an open immersion
by Corollary \ref{cor:ZR:open:imm}, although it is a Q-morphism.
This tells that, we may not be able to obtain an open embedding
if we drop the `of finite type' condition.
The decomposition which Temkin gave 
does not give the embedding (\cite{Tem}).
\end{Exam}
As a corollary, we obtain
\begin{Cor}
The infinite-dimensional projective space $\PP^{\infty}=\Proj R$
is not proper.
\end{Cor}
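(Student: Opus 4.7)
The plan is to argue by contradiction, exploiting the standard affine open chart of $\PP^{\infty}$ together with the Zariski-Riemann machinery already developed. Specifically, write $R = \ZZ[x_{n}]_{n \in \NNN}$ as a graded ring with each $x_{n}$ in degree one. Then the basic open $D_{+}(x_{0}) \subset \PP^{\infty}$ is isomorphic to $\Spec \ZZ[x_{n}/x_{0}]_{n \geq 1}$, which is again a polynomial ring in countably many variables over $\ZZ$; in other words, $D_{+}(x_{0}) \cong \Aff^{\infty}$, the very scheme considered in the preceding example.

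The key observation is that the open immersion $j : \Aff^{\infty} \hookrightarrow \PP^{\infty}$ is dominant (since $\Aff^{\infty}$ is the complement of the hyperplane $x_{0} = 0$, hence dense) and is a Q-morphism: on quasi-compact closed subsets it is surjective via $W \mapsto W \cap \Aff^{\infty}$ (any quasi-compact closed in $\Aff^{\infty}$ is the intersection of its closure in $\PP^{\infty}$ with $\Aff^{\infty}$), and on structure sheaves it is a stalkwise isomorphism at points of $\Aff^{\infty}$.

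Now suppose, for contradiction, that $\PP^{\infty}$ were proper over $\Spec \ZZ$. Then Proposition \ref{prop:open:imm:ZR}, applied to $f = j$ with target $Y = \PP^{\infty}$ proper, would force the canonical morphism $\Aff^{\infty} \to \ZR_{\ZZ}(\Aff^{\infty})$ to be an open immersion (since $j$ itself is one). But this is precisely what the preceding example rules out: there the closed immersion $\Spec \QQ \hookrightarrow \Aff^{\infty}$ combined with Corollary \ref{cor:ZR:open:imm} showed that $\Aff^{\infty} \to \ZR_{\ZZ}(\Aff^{\infty})$ cannot be an open immersion. This contradiction shows $\PP^{\infty}$ is not proper.

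The only real step requiring care is confirming the hypotheses of Proposition \ref{prop:open:imm:ZR} for $j$, and in particular that $j$ is a Q-morphism in the sense of Section 2.2; this is a routine unpacking of the definitions for a quasi-compact open immersion between $\scr{Q}$-schemes and should not require any new input. Everything else is formal: properness of $\PP^{\infty}$ would propagate to the Zariski-Riemann compactification of its affine chart, and we have already seen this is impossible.
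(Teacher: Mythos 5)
Your proof is correct and takes essentially the same route as the paper: the paper's one-line argument likewise combines the natural open immersion $\Aff^{\infty} \to \PP^{\infty}$ with the preceding example, which shows $\Aff^{\infty}$ cannot be an open subscheme of any proper $\scr{A}$-scheme. You have simply unpacked the details the paper leaves implicit --- the chart identification $D_{+}(x_{0}) \cong \Aff^{\infty}$, dominance, the Q-morphism check, and the explicit appeal to Proposition \ref{prop:open:imm:ZR}.
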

\begin{proof}
We have a natural open immersion $\Aff^{\infty} \to \PP^{\infty}$,
which shows that $\PP^{\infty}$ cannot be proper.
\end{proof}

%\section{Reformulation of the classical Zariski-Riemann space}

\subsection{Classical Zariski-Riemann space as an $\scr{A}$-scheme}

So far, we have constructed
a universal compactification $\ZR_{S}(X)$
of a given scheme $X$. However, since we constructed
it by the adjoint functor theorem, it is difficult to
understand its structure. Also, the topology may
be very different from what we expect;
we already have the notion of Zariski-Riemann spaces
of a given field $K$ containing a base ring $A$,
but $\ZR_{\Spec A}(\Spec K)$ may not coincide with this
conventional one.

Therefore, we would like to construct a more
accessible $\scr{A}$-object; its topology should
be more `algebraic', so that it coincides
with the conventional one in simple cases.
These will give the class of $\scr{A}$-schemes `of profinite type',
which describe the pro-category
of ordinary schemes.

In the sequel,
we fix a field $K$,
and any $\scr{A}$-scheme $X$ is reduced
and has a dominant morphism $\Spec K \to X$.
This implies that $X$ is irreducible.
Moreover, we consider only dominant morphisms,
unless otherwise noticed.

\begin{Def}
Let $S$ be an $\scr{A}$-scheme
with a dominant morphism $\Spec K \to S$.
\begin{enumerate}
\item Set 
\[
\scr{M}_{0}^{S}=\scr{P}^{f}(C(S)_{\cpt} \times
(\scr{P}^{f}(K \setminus \{0\})\setminus \emptyset )).
\]
The addition on $\scr{M}^{S}_{0}$ is defined by taking the union.
The multiplication on $\scr{M}^{S}_{0}$
is defined by
\[
\{ (Z_{1i},\alpha_{1i})\}_{i} \cdot
\{ (Z_{2j},\alpha_{2j})\}_{j}
=\{ (Z_{1i}\cdot Z_{2j},\alpha_{1i}\cup \alpha_{2j})\}_{i,j}.
\]
Both two operations are associative and commutative,
and the addition is idempotent.
The distribution law holds,
and there is the additive unit $\mathbf{0}=\emptyset$.
This is also the absorbing element with respect to the multiplication.
However, there is no multiplicative unit,
hence $\scr{M}^{S}_{0}$ fails to be an idempotent semiring.
\item For any $(Z,\alpha) \in 
C(S)_{\cpt} \times (\scr{P}^{f}(K \setminus \{0\})\setminus \emptyset )$,
a set $Z[\alpha]$ is defined by
the subset of $S$, consisting of all points $s \in S$
which satisfies either
\begin{enumerate}[(i)]
\item $s \in Z$, or
\item The maximal ideal $\mathfrak{M}_{S,s}$
is not in the image of $\Spec \scr{O}_{S,s}[\alpha] \to \Spec \scr{O}_{S,s}$.
\end{enumerate}
%The set $Z[\alpha]$ is not a closed set in general.
\item Let $\mathfrak{a}=\{(Z_{i},\alpha_{i})\}_{i}$
and $\mathfrak{b}=\{(W_{j},\beta_{j})\}_{j}$ be 
two elements of $\scr{M}^{S}_{0}$.
We write $\mathfrak{a} \prec \mathfrak{b}$ if:
\begin{enumerate}
\item $\cap_{i} Z_{i}[\alpha_{i}] \supset \cap_{j}W_{j}[\beta_{j}]$, and
\item For any $i$ and any $s \in S\setminus Z_{i}[\alpha_{i}]$,
set $J_{s}=\{j \mid s \in S\setminus W_{j}[\beta_{j}]\}$.
Then for any map $\sigma:J_{s} \to \cup_{j \in J_{s}} \beta_{j}$
such that $\sigma_{j} \in \beta_{j}$,
$(\sigma^{-1}_{j})_{j}$ generates the unit ideal
in $\scr{O}_{S,s}[\alpha_{i}][\sigma^{-1}_{j}]_{j}$.
\end{enumerate}
This relation $\prec$ is reflective.
It is also true that $\prec$ is transitive,
but this seems to be difficult to prove it at this moment,
so we will not use this fact.
\item Define $\approx$ to be the equivalence relation
generated be the relation $\prec$,
namely: $\mathfrak{a} \approx \mathfrak{b}$ if and only if there is
a sequence $\mathfrak{a}=\mathfrak{a}_{0},\mathfrak{a}_{1},\cdots,
\mathfrak{a}_{n}=\mathfrak{b}$ of elements
of $\scr{M}^{S}_{0}$ such that $\mathfrak{a}_{i} \prec \mathfrak{a}_{i+1}$
and $\mathfrak{a}_{i} \succ \mathfrak{a}_{i+1}$ for each $i$.
Let $\scr{M}^{S}=\scr{M}^{S}_{0}/\approx$ be the
quotient set.
\end{enumerate}
\end{Def}
\begin{Prop}
The addition and the multiplication on $\scr{M}^{S}_{0}$
descend to $\scr{M}^{S}$, and $\scr{M}^{S}$
becomes a II-ring with these operations.
\end{Prop}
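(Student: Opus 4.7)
The plan is to split the proof into three steps: compatibility of the operations with the generating relation $\prec$ (so that they descend to $\approx$), verification that the commutative-semiring axioms pass to the quotient, and construction of a multiplicative unit together with idempotence of multiplication in $\scr{M}^S$.

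First I would verify that $+$ and $\cdot$ respect $\prec$. Suppose $\mathfrak{a}\prec \mathfrak{a}'$ and let $\mathfrak{b}\in \scr{M}^S_0$ be arbitrary. For addition, the inclusion condition $\bigcap_i Z_i[\alpha_i]\supset \bigcap_{i'}Z'_{i'}[\alpha'_{i'}]$ and the unit-ideal condition in clause (2) behave well under taking disjoint unions of index sets, since the indexing families of $\mathfrak{a}+\mathfrak{b}$ and $\mathfrak{a}'+\mathfrak{b}$ differ only on the $\mathfrak{a}$-part. For multiplication, the key observation is that $(Z\cdot W)[\alpha\cup\beta]\supset Z[\alpha]\cap W[\beta]$, together with the fact that $\scr{O}_{S,s}[\alpha\cup\beta]=\scr{O}_{S,s}[\alpha][\beta]$ is a double localization, so the condition on the maps $\sigma$ for $\mathfrak{a}\cdot \mathfrak{b}\prec \mathfrak{a}'\cdot \mathfrak{b}$ reduces to the corresponding condition for $\mathfrak{a}\prec \mathfrak{a}'$ after adjoining the generators coming from $\mathfrak{b}$. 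Since $\approx$ is generated by $\prec$ via symmetrization, the operations then descend to $\scr{M}^S$.

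The commutative semiring axioms (associativity, commutativity, distributivity, idempotence of addition, and existence of the additive unit $\mathbf{0}=\emptyset$) already hold on $\scr{M}^S_0$ by construction, so they pass to the quotient trivially. What remains is the existence of a multiplicative unit and idempotence of multiplication in $\scr{M}^S$. For the unit, I would take $\mathbf{1}:=\{(S,\{1\})\}$: since $S\cdot Z=Z$ in $C(S)_{\cpt}$ and $\scr{O}_{S,s}[\{1\}\cup\alpha]=\scr{O}_{S,s}[\alpha]$, the set $Z[\{1\}\cup\alpha]$ coincides with $Z[\alpha]$, which gives $\mathbf{1}\cdot(Z,\alpha)\approx(Z,\alpha)$ via a direct $\prec$-comparison in both directions. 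For idempotence, one compares $\mathfrak{a}\cdot\mathfrak{a}$ with $\mathfrak{a}$ for $\mathfrak{a}=\{(Z_i,\alpha_i)\}_i$: the diagonal terms give back $(Z_i,\alpha_i)$ using idempotence of multiplication in $C(S)_{\cpt}$ and of union, while the cross terms $(Z_iZ_j,\alpha_i\cup\alpha_j)$ with $i\neq j$ are absorbed by the two $\prec$-inequalities, since $(Z_iZ_j)[\alpha_i\cup\alpha_j]\supset Z_i[\alpha_i]\cap Z_j[\alpha_j]$ and the unit-ideal clause propagates through further adjoining $\sigma_k^{-1}$.

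The main obstacle will be the unit-ideal clause (2) in the definition of $\prec$: verifying that for any point $s$ outside the relevant $Z[\alpha]$, the family $(\sigma_j^{-1})_j$ continues to generate the unit ideal of the appropriately further localized stalk after multiplying by $\mathfrak{b}$ or after adjoining $\{1\}$. Because transitivity of $\prec$ has not yet been established in the paper, this bookkeeping must be performed along individual $\prec$-steps on both sides and then closed up under the symmetric-transitive closure defining $\approx$, rather than being short-circuited through any chain-collapsing argument.
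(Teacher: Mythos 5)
Your overall architecture does match the paper's (descend the operations by checking compatibility with $\prec$, then produce a unit and prove idempotence of multiplication), and your insistence on arguing along individual $\prec$-steps rather than invoking the unproved transitivity of $\prec$ is exactly the right precaution. But two steps contain genuine errors. The most clear-cut one is your multiplicative unit. In this paper the II-ring structure on $C(S)_{\cpt}$ has addition given by intersection and multiplication given by \emph{union} (see the appendix), so the multiplicative identity of $C(S)_{\cpt}$ is the \emph{empty} closed set, denoted $1$, and the unit of $\scr{M}^{S}$ is $\mathbf{1}=\{(1,\{1\})\}$. For your candidate $\{(S,\{1\})\}$ one has $S\cdot Z=S\cup Z=S$, not $Z$, and $S[\{1\}\cup\alpha]=S$ for every $\alpha$; consequently both $\prec$-conditions between $\{(S,\{1\})\}$ and $\mathbf{0}=\emptyset$ are vacuous, so $\{(S,\{1\})\}\approx\mathbf{0}$ and $\{(S,\{1\})\}\cdot\mathfrak{a}\approx\mathbf{0}$ for every $\mathfrak{a}$: you have proposed the \emph{zero} element of $\scr{M}^{S}$ as the unit, and your verification collapses. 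Your identity ``$S\cdot Z=Z$'' tacitly assumes multiplication in $C(S)_{\cpt}$ is intersection, which contradicts the paper's conventions and is also inconsistent with your own key inclusion $(Z\cdot W)[\alpha\cup\beta]\supset Z[\alpha]\cap W[\beta]$, which is valid precisely because $Z\cdot W=Z\cup W$. (You also never check that $\mathbf{1}$ is absorbing for addition, which is part of the paper's definition of an idealic semiring; the paper's Step 3 verifies this.)

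The second problem is that your multiplication-compatibility step is not actually a ``reduction.'' Condition (2) in the definition of $\prec$ for $\mathfrak{a}\cdot\mathfrak{b}\prec\mathfrak{a}'\cdot\mathfrak{b}$ quantifies over choice functions $\sigma$ on \emph{pair} indices, with $\sigma_{kj}\in\alpha'_{k}\cup\beta_{j}$; such a $\sigma$ need not restrict to a choice function valued in the $\alpha'_{k}$'s, so one cannot ``apply the condition for $\mathfrak{a}\prec\mathfrak{a}'$ after adjoining the generators coming from $\mathfrak{b}$.'' What the paper's Step 2 actually uses is a combinatorial dichotomy: either for every index $k$ there exists $j(k)$ with $\sigma_{kj(k)}\in\alpha'_{k}$, in which case the hypothesis $\mathfrak{a}\prec\mathfrak{a}'$ applies to the family $(\sigma_{kj(k)})_{k}$ and the resulting unit ideal survives adjoining the remaining $\sigma^{-1}$'s; or there is a single $k_{1}$ with $\sigma_{k_{1}j}\in\beta_{j}$ for all $j$, in which case one of the $\sigma_{k_{1}j}$ already lies in the base ring $\scr{O}_{S,s}[\alpha_{i_{0}}\cup\beta_{j_{0}}]$, so its inverse is a unit and the ideal is trivially the unit ideal. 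This case split is the substance of the step, and your sketch omits it; the auxiliary remark that $\scr{O}_{S,s}[\alpha\cup\beta]$ is a ``double localization'' does not supply it (and is itself a misnomer: $\scr{O}_{S,s}[\alpha]$ is the subring of $K$ generated by $\scr{O}_{S,s}$ and $\alpha$, not a localization).
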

\begin{proof}
We will divide the proof in several steps.
\begin{itemize}
\item[Step 1:] We will show that the addition descends to $\scr{M}^{S}$.
To show this, it suffices to show that
if $\mathfrak{a}_{1} \prec \mathfrak{b}_{1}$
and $\mathfrak{a}_{2} \prec \mathfrak{b}_{2}$,
then $\mathfrak{a}_{1}+\mathfrak{a}_{2} \prec
\mathfrak{b}_{1}+\mathfrak{b}_{2}$.
Set $\mathfrak{a}_{1}=\{(Z_{i},\alpha_{i})\}_{i \leq m}$,
$\mathfrak{a}_{2}=\{(Z_{i},\alpha_{i})\}_{i > m}$,
$\mathfrak{b}_{1}=\{(W_{j},\beta_{j})\}_{j \leq n}$,
and $\mathfrak{b}_{2}=\{(W_{j},\beta_{j})\}_{j > n}$.
It is obvious that $\cap_{i}Z_{i}[\alpha_{i}] \supset \cap_{j}W_{j}[\beta_{j}]$.
Take arbitrary $i$ and $s \in S\setminus Z[\alpha_{i}]$.
We may assume $i \leq m$.
For any $\sigma:J_{s} \to \cup_{j \in J_{s}} \beta_{j}$
such that $\sigma_{j} \in \beta_{j}$,
$(\sigma_{j}^{-1})_{j \leq n}$ generates the unit ideal of
$\scr{O}_{S,s}[\sigma_{j}^{-1}]_{j \leq  n}$, since
$\mathfrak{a}_{1} \prec \mathfrak{b}_{1}$. Hence
$(\sigma_{j}^{-1})_{j}$ generates the unit ideal of
$\scr{O}_{S,s}[\sigma_{j}^{-1}]_{j}$.
This shows that $\mathfrak{a}_{1}+\mathfrak{a}_{2}
\prec \mathfrak{b}_{1} +\mathfrak{b}_{2}$.
\item[Step 2:] We will show that the addition descends to $\scr{M}^{S}$.
To show this, it suffices to show that
if $\mathfrak{a}_{1} \prec \mathfrak{a}_{2}$
and $\mathfrak{b}_{1} \prec \mathfrak{b}_{2}$,
then $\mathfrak{a}_{1}\cdot\mathfrak{b}_{1} \prec
\mathfrak{a}_{2}\cdot\mathfrak{b}_{2}$.
Set $\mathfrak{a}_{1}=\{(Z_{i},\alpha_{i})\}_{i \leq m}$,
$\mathfrak{a}_{2}=\{(Z_{i},\alpha_{i})\}_{i > m}$,
$\mathfrak{b}_{1}=\{(W_{j},\beta_{j})\}_{j \leq n}$,
and $\mathfrak{b}_{2}=\{(W_{j},\beta_{j})\}_{j > n}$.
Since $\cap_{i\leq m} Z_{i}[\alpha_{i}] \supset \cap_{i> m} Z_{i}[\alpha_{i}]$
and $\cap_{j \leq n} W_{j}[\beta_{j}] \supset \cap_{j >n} W_{j}[\beta_{j}]$,
we have
\[
\cap^{i \leq m}_{j \leq n}
Z_{i}\cdot W_{j}[\alpha_{i} \cup \beta_{j}] \supset
 \cap^{i > m}_{j > n}
Z_{i}\cdot W_{j}[\alpha_{i} \cup \beta_{j}]
\]
For any $i_{0} \leq m,j_{0} \leq n$ and any
$s \in Z_{i_{0}}\cdot W_{j_{0}}[\alpha_{i_{0}} \cup \beta_{j_{0}}]$,
set
\[
J_{s}=\{ (i,j) \mid i >m, j >n, s\in S\setminus Z_{i}\cdot W_{j}[\alpha_{i}
\cup \beta_{j}]\}.
\]
Let $\sigma:J_{s} \to \cup^{i>m}_{j>n}
(\alpha_{i} \cup \beta_{j})$
be a map such that $\sigma_{ij} \in \alpha_{i} \cup \beta_{j}$.
Suppose for any $i>m$, there exists $j=j(i)>n$ such that
$\sigma_{ij(i)} \in \alpha_{i}$.
Then, $(\sigma^{-1}_{ij(i)})_{i}$ generates the unit ideal
in $\scr{O}_{S,s}[\alpha_{i_{0}}]
[\sigma^{-1}_{ij(i)}]_{i}$, hence
$(\sigma^{-1}_{ij})_{ij}$ generates the unit ideal
in $\scr{O}_{S,s}[\alpha_{i_{0}} \cup \beta_{j_{0}}]
[\sigma^{-1}_{ij}]_{ij}$.
On the other hand, if there is a $i_{1}>m$ such that
$\sigma_{i_{1}j} \in \beta_{j}$ for all $j >n$,
then $(\sigma^{-1}_{i_{1}j})_{j}$
generates the unit ideal in
$\scr{O}_{S,s}[\beta_{j_{0}}]
[\sigma^{-1}_{i_{1}j}]_{j}$, which leads us to the same
conclusion as above.
This shows that $\mathfrak{a}_{1}\cdot\mathfrak{b}_{1}
\prec \mathfrak{a}_{2} \cdot\mathfrak{b}_{2}$.
\item[Step 3:] Set $\mathbf{1}=\{(1,\{1\})\}$.
It is obvious that $\mathfrak{a} \prec \mathbf{1}$
for any $\mathfrak{a} \in \scr{M}^{S}_{0}$.
This shows that $\mathbf{1}$ is the absorbing
element with respect to the addition.

We will show that $\mathbf{1}$ is the multiplicative
unit. It suffices to show that $\mathfrak{a}\prec \mathbf{1}\cdot \mathfrak{a}$.
Set $\mathfrak{a}=\{(Z_{i},\alpha_{i})\}_{i}$.
Then, $\mathbf{1}\cdot\mathfrak{a}=\{(Z_{i},\alpha_{i} \cup \{1\})\}_{i}$.
Note that $Z_{i}[\alpha_{i} \cup \{1\}]
=Z_{i}[\alpha_{i}]$.
For any $i$ and $s \in S\setminus Z_{i}[\alpha_{i}]$,
set $J_{s}=\{j \mid s \in S\setminus Z_{j}[\alpha_{j}]\}$,
and let $\sigma:J_{s} \to \cup_{j \in J_{s}} (\alpha_{j} \cup \{1\})$
be any map with $\sigma_{j} \in \alpha_{j} \cup \{1\}$.
Here, we see that $(\sigma^{-1}_{j})_{j}$
generates the unit ideal of $\scr{O}_{S,s}[\alpha_{i}][\sigma^{-1}_{j}]_{j}$
in any case.
\item[Step 4:]
It remains to prove that the multiplication on $\scr{M}^{S}$
is idempotent.
To see this, it suffices to show that $\mathfrak{a} \prec\mathfrak{a}^{2}$
for any $\mathfrak{a} \in \scr{M}^{S}_{0}$.
Set $\mathfrak{a}=\{(Z_{i},\alpha_{i})\}_{i}$. Then
\[
\mathfrak{a}^{2}=\{(Z_{i} \cdot Z_{j},\alpha_{i} \cup \alpha_{j})\}_{i,j}
\supset \{(Z_{i},\alpha_{i})\}_{i}=\mathfrak{a}.
\]
This shows that $\mathfrak{a} \prec\mathfrak{a}^{2}$.
\end{itemize}
\end{proof}
Note that, $\prec$ and $\leq$ coincide in $\scr{M}^{S}$.
From now on, we just write $(Z,\alpha)$
instead of $\{(Z,\alpha)\}$ for brevity.
\begin{Def}
\begin{enumerate}
\item There is a natural homomorphism
\[
C(S)_{\cpt} \to \scr{M}^{S} \quad (Z \mapsto (Z,\{1\}))
\]
of II-rings. This induces a morphism
$|\pi|:\Spec \scr{M}^{S} \to |S|$ of coherent spaces.
\item Let $p$ be an element of $\Spec \scr{M}^{S}$,
and $s=|\pi|(p)$. Set
\[
R_{p}=\scr{O}_{S,s}[a \in K \mid (1,\{a\}) \nleq p].
\]
\end{enumerate}
\end{Def}
\begin{Prop}
\label{prop:ZR:isom1}
\begin{enumerate}
\item $R_{p}$ is a valuation ring of $K$.
\item For any $a \in K\setminus \{0\}$,
$(1,\{a\}) \leq p$ if and only if $a \notin R_{p}$.
\item $R_{p}$ dominates $\scr{O}_{S,s}$.
\end{enumerate}
\end{Prop}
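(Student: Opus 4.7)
My plan is to prove parts (2), (1), (3) in that order, so that (1) and (3) can both invoke the characterization of $R_p$ obtained in (2). For part (2), the direction $a \notin R_p \Rightarrow (1, \{a\}) \leq p$ is the contrapositive of the definition of $R_p$ as the subring generated by $\scr{O}_{S,s}$ and $\{b : (1, \{b\}) \nleq p\}$. The converse reduces to showing that the set $G := \{a \in K : (1, \{a\}) \nleq p\}$ contains $\scr{O}_{S,s}$ and is closed under sums and products, so that $R_p \subseteq G$. The engine consists of the two inequalities
\[
(1, \{b\}) \cdot (1, \{c\}) = (1, \{b, c\}) \leq (1, \{b+c\}), \qquad (1, \{b\}) \cdot (1, \{c\}) \leq (1, \{bc\})
\]
in $\scr{M}^S$, each verified by direct check of $\prec$: condition (2) of $\prec$ reduces to $\sigma \cdot \sigma^{-1} = 1$ because $b+c$ and $bc$ lie in $\scr{O}_{S,s'}[b, c]$ for any $s'$. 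Combined with the primality of $p$, these give closure of $G$ under sum and product. For the inclusion $\scr{O}_{S,s} \subseteq G$, pick $k \in \scr{O}_{S,s}$ and a quasi-compact open $U \ni s$ on which $k$ is a section, and set $Z := S \setminus U$; a direct $\prec$-check yields $(Z, \{1\}) \leq (1, \{k\})$ in $\scr{M}^S$. Since $|\pi|(p) = s$ translates, via the spectrum map of $\iota\colon C(S)_\cpt \to \scr{M}^S$, into the characterization $(Z', \{1\}) \leq p \iff s \in Z'$, and $s \notin Z$, one has $(Z, \{1\}) \nleq p$, so transitivity of $\leq$ forces $(1, \{k\}) \nleq p$.

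For part (1), $R_p \subset K$ is a subring containing $\scr{O}_{S,s}$ by construction, so what remains is the valuation property; by (2) this is the assertion that $(1, \{a\}) \leq p$ and $(1, \{a^{-1}\}) \leq p$ cannot both hold. I will prove the stronger statement $\mathbf{1} \leq (1, \{a\}) + (1, \{a^{-1}\})$ in $\scr{M}^S$, which together with $\mathbf{1} \nleq p$ (for $p$ prime) gives the conclusion. The key step in verifying $\mathbf{1} \prec (1, \{a\}) + (1, \{a^{-1}\})$ is condition (1), which amounts to $1[\{a\}] \cap 1[\{a^{-1}\}] = \emptyset$; this is the main geometric input and follows because at every $s' \in S$ there exists a valuation ring $V \subset K$ dominating $\scr{O}_{S,s'}$, and since $V$ contains at least one of $a, a^{-1}$, the corresponding $\mathfrak{M}_{S,s'}\scr{O}_{S,s'}[a^{\pm 1}]$ stays proper on that side. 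Condition (2) is again handled by $a \cdot a^{-1} = 1$ in the appropriate localization (the case where $J_{s'}$ contains only one index requires the small observation that $s' \in 1[\{a\}]$ forces $a$ to be a unit in $\scr{O}_{S,s'}[a]$).

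For part (3), $\scr{O}_{S,s} \subset R_p$ is immediate. For domination, take nonzero $k \in \mathfrak{M}_{S,s}$; since $R_p$ is a valuation ring by (1), $k \in \mathfrak{M}_{R_p}$ is equivalent to $k^{-1} \notin R_p$, equivalently (by (2)) to $(1, \{k^{-1}\}) \leq p$. The identity $k \cdot k^{-1} = 1$ with $k \in \mathfrak{M}_{S,s}$ yields $s \in 1[\{k^{-1}\}]$, and a direct $\prec$-comparison with the elements of $p$, in the same style as for part (2) and using the description of primes above $s$, then gives $(1, \{k^{-1}\}) \leq p$. The main obstacle throughout is the careful bookkeeping for $\prec$: condition (2) invariably reduces to an instance of $\sigma \cdot \sigma^{-1} = 1$ in a specific localization, but identifying the correct ring in each case requires care. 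The essential geometric input, used in part (1), is the classical existence of a dominating valuation ring for any local subring of $K$.
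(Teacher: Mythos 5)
Your parts (2) and (1) are correct, and they are in fact more careful than the paper's own argument. For (1) the paper merely asserts the identity $\mathbf{1}=(1,\{a\})+(1,\{a^{-1}\})$; you actually verify it, correctly isolating the two inputs (a valuation ring dominating $\scr{O}_{S,s'}$ for condition (a) of $\prec$, and the observation that $\mathfrak{M}_{S,s'}\scr{O}_{S,s'}[a]=(1)$ forces $a$ to be a unit in $\scr{O}_{S,s'}[a]$ for condition (b)). For (2) the paper deduces everything from the single inequality $(1,\{x_{i}\}_{i})\leq(1,\{a\})$, whose condition (b) has to be checked at \emph{every} point $s'$ although $a\in\scr{O}_{S,s}[x_{i}]_{i}$ carries information only near $s$; your operation-by-operation inequalities $(1,\{b,c\})\leq(1,\{b+c\})$ and $(1,\{b,c\})\leq(1,\{bc\})$ hold at all points because $b+c$ and $bc$ lie in $\scr{O}_{S,s'}[b,c]$ for every $s'$, and your insertion of $Z=S\setminus U$ in the step $\scr{O}_{S,s}\subseteq G$ supplies exactly the device the paper's version is missing.

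Part (3), however, has a genuine gap. The reduction to proving $(1,\{k^{-1}\})\leq p$ for nonzero $k\in\mathfrak{M}_{S,s}$ is fine, but a ``direct $\prec$-comparison with the elements of $p$'' cannot close it. The elements of $p$ you can exhibit are the $(Z',\{1\})$ with $s\in Z'$ and whatever lies below them, and every such element has vanishing locus containing $\overline{\{s\}}$; so any comparison $(1,\{k^{-1}\})\leq\mathfrak{c}$ with $\mathfrak{c}\in p$ known would force, via condition (a) of $\prec$, the inclusion $\overline{\{s\}}\subseteq 1[\{k^{-1}\}]$. This inclusion can fail, because $k$ is only a section over a quasi-compact open $U\ni s$ and $\overline{\{s\}}$ may leave $U$, where nothing makes $\mathfrak{M}_{S,s'}$ blow up in $\scr{O}_{S,s'}[k^{-1}]$. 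Concretely, take $S=\Spec\CC[x,y]$, $U=S\setminus\{x=0\}$, $k=y/x$, and $s$ the generic point of $\{y=0\}$: then $1[\{k^{-1}\}]=\{y=0\}\cap U$, while $\overline{\{s\}}=\{y=0\}$ contains the origin, where $\mathfrak{M}\cdot\scr{O}_{S,(0,0)}[x/y]$ is proper (use the valuation with $v(y)=1$, $v(x)=2$, whose ring contains $\scr{O}_{S,(0,0)}[x/y]$ and dominates $\scr{O}_{S,(0,0)}$). What rescues the argument is one more use of primality rather than a pure order comparison: check $(1,\{k^{-1}\})\cdot(S\setminus U,\{1\})=(S\setminus U,\{k^{-1}\})\prec\bigl(\beta_{S}\alpha_{2}(k)\cup(S\setminus U),\{1\}\bigr)$, which lies in $p$ since $s\in\beta_{S}\alpha_{2}(k)$; as $(S\setminus U,\{1\})\nleq p$ (because $s\in U$), primality of $p$ then yields $(1,\{k^{-1}\})\leq p$. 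The same localization away from $S\setminus U$ is tacitly needed to make rigorous the identity $(1,\{a^{-1}\})=(\beta_{S}(a),\{1\})$ on which the paper's own proof of (3) rests, and which, as stated, fails in the example above.
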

\begin{proof}
\begin{enumerate}
\item Assume that there is an element $a \in K\setminus \{0\}$
such that neither $a$ nor $a^{-1}$ is in $R_{p}$.
Then this implies $(1,\{a\}), (1,\{a^{-1}\}) \leq p$.
Hence, 
\[
\mathbf{1}=(1,\{a\})+(1,\{a^{-1}\}) \leq p
\]
which contradicts to $p$ being prime.
\item It suffices to show the `only if' part.
Assume that there is a $a \in R_{p}$ such that
$(1,\{a\}) \leq p$.
Then, there are a finite number of $x_{i}$'s such that
$(1,\{x_{i}\}) \nleq p$ and $a \in \scr{O}_{S,s}[x_{i}]_{i}$.
This is equivalent to saying that $a^{-1}$ is invertible
in $\scr{O}_{S,s}[x_{i}]_{i}[a^{-1}]$, hence
\[
\prod_{i}(1,\{x_{i}\} )=(1,\{x_{i}\}_{i} ) \leq (1,\{a\} ) \leq p.
\]
Since $p$ is prime, at least one of the $(1, \{x_{i}\})$'s
must be in $p$, but this is a contradiction.
\item It suffices to show
$\mathfrak{M}_{S,s} \subset \scr{O}_{S,s} \cap \mathfrak{M}_{p}$,
where $\mathfrak{M}_{S,s}$ and $\mathfrak{M}_{p}$
are maximal ideals of $\scr{O}_{S,s}$ and $R_{p}$,
respectively.
Assume there is an element $a \in \mathfrak{M}_{S,s} \setminus
\mathfrak{M}_{p}$.
Then, $a^{-1} \in R_{p}$, hence $(1,\{a^{-1}\}) \nleq p$.
On the other hand, $(1,\{a^{-1}\})=(\beta_{S}(a),\{1\})$
from the proof of Proposition \ref{prop:asch:loc}.
Also, $\beta_{S}(a) \leq s$, since $a \in \mathfrak{M}_{S,s}$.
Combining these, we have $(1,\{a^{-1}\}) \leq p$,
a contradiction.
\end{enumerate}
\end{proof}
\begin{Def}
\begin{enumerate}
\item
Let $\ZR^{f}(K,S)$ be a set of triples $(s,R,\phi)$,
where $s \in S$, $R$ is a valuation ring of $K$,
and $\phi:\scr{O}_{S,s} \to R$ is a dominant homomorphism.
\item The above proposition gives a map
$\varphi:\Spec \scr{M}^{S} \to \ZR^{f}(K,S)$
defined by $p \mapsto (\pi(p),R_{p},\phi)$,
where $\phi:\scr{O}_{S,\pi(p)} \to R_{p}$
is the natural homomorphism.
\item Conversely,
if we are given an element $R=(s,R,\phi)$ of $\ZR^{f}(K,S)$,
then set $p_{R} \in (\scr{M}^{S})^{\dagger}$
as the ideal generated by $\{(Z,\{1\})\}_{Z \leq s}$
and $\{(1,\{x\})\}_{x\notin R}$.
\end{enumerate}
\end{Def}
\begin{Prop}
\label{prop:ZR:isom2}
\begin{enumerate}
\item Let $Z$ be a closed subset of $S$, with a quasi-compact open complement,
and $\alpha$ be a non-empty subset of $K\setminus \{0\}$.
Then, $(Z, \alpha) \leq p_{R}$ if and only if $Z \leq s$, or $\alpha \not\subset R$.
\item The ideal $p_{R}$ is prime.
Thus, we have a map $\psi:\ZR^{f}(K,S) \to \Spec (\scr{M}^{S})^{\dagger}$.
\item $\varphi$ is bijective, and the inverse is $\psi$.
\end{enumerate}
\end{Prop}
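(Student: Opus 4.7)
The plan is to handle the three parts in sequence, with the hard work concentrated in part (1).

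For (1), I would treat the two directions separately.  The ``if'' direction rests on the identity $(Z,\alpha) \approx (Z,\{1\})\cdot\prod_{y\in\alpha}(1,\{y\})$ in $\scr{M}^S$, which follows from $\mathbf{1} = (1,\{1\})$ being the multiplicative identity together with the monomial product formula $(Z_1,\alpha_1)\cdot(Z_2,\alpha_2)=(Z_1Z_2,\alpha_1\cup\alpha_2)$.  If $Z\le s$, the factor $(Z,\{1\})$ is one of the prescribed generators of $p_R$, so the product (and hence $(Z,\alpha)$) lies in $p_R$ by closure under multiplication.  If $\alpha\not\subset R$, pick $x\in\alpha\setminus R$; then $(1,\{x\})$ is a generator appearing as one of the factors, with the same conclusion.

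For the ``only if'' direction I would construct a II-ring homomorphism $\phi_R\colon\scr{M}^S\to\FF_1$ whose kernel is exactly $p_R$.  Define $\phi_R((Z,\alpha))=1$ iff $Z\not\le s$ and $\alpha\subset R$, and extend to sums of monomials by taking the supremum.  Additivity is immediate from the union-of-pairs definition of $+$, and multiplicativity follows from the identity $\phi_R((Z_iW_j,\alpha_i\cup\beta_j))=1$ iff $\phi_R((Z_i,\alpha_i))=1$ and $\phi_R((W_j,\beta_j))=1$, using the product formula.  The critical step is $\approx$-invariance: assume $\mathfrak{c}_1\prec\mathfrak{c}_2$ and some pair $(W_{j_0},\beta_{j_0})$ of $\mathfrak{c}_2$ satisfies $s\notin W_{j_0}$ and $\beta_{j_0}\subset R$.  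Because $R$ dominates $\scr{O}_{S,s}$ and $\beta_{j_0}\subset R$, the maximal ideal $\mathfrak{M}_{S,s}$ lies in the image of $\Spec\scr{O}_{S,s}[\beta_{j_0}]\to\Spec\scr{O}_{S,s}$, so $s\notin W_{j_0}[\beta_{j_0}]$ and $j_0\in J_s$.  I would then apply condition~(b) of $\prec$ at this $j_0$: choosing $\sigma_{j_0}\in\beta_{j_0}\subset R$ and $\sigma_j\in\beta_j$ judiciously for other $j\in J_s$, the resulting unit-ideal condition in $\scr{O}_{S,s}[\alpha_i][\sigma_j^{-1}]_j$ cannot hold unless some index $i_0$ of $\mathfrak{c}_1$ already has $s\notin Z_{i_0}$ and $\alpha_{i_0}\subset R$ (the other options are ruled out by the domination inequality $\mathfrak{M}_{S,s}\cdot R\subset\mathfrak{M}_R$).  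Hence $\phi_R(\mathfrak{c}_1)=1$, proving invariance.  Once $\phi_R$ descends to $\scr{M}^S$, every generator of $p_R$ maps to $0$, so $p_R\subset\ker\phi_R$; conversely, any monomial in $\ker\phi_R$ satisfies the ``if'' hypothesis, giving the reverse inclusion.

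Part~(2) is then immediate: $p_R=\ker\phi_R$ and $\FF_1$ has no non-trivial zero-divisors in the II-ring sense, so $p_R$ is prime.  This yields the well-defined map $\psi\colon\ZR^f(K,S)\to\Spec(\scr{M}^S)^\dagger$.

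For part~(3) I would verify both composites are the identity.  Given $R=(s,R,\phi)\in\ZR^f(K,S)$, the composite $\varphi\circ\psi$ sends $R$ to $(\pi(p_R),R_{p_R},\cdot)$.  Applying part~(1) to $(Z,\{1\})$ shows $p_R\cap C(S)_{\cpt}=\{Z:Z\le s\}$, the prime of $C(S)_{\cpt}$ corresponding to~$s$, so $\pi(p_R)=s$.  Applying part~(1) to $(1,\{a\})$ gives $(1,\{a\})\le p_R$ iff $a\notin R$, hence $R_{p_R}=\scr{O}_{S,s}[a\in K\mid a\in R]=R$.  Conversely, given $p\in\Spec\scr{M}^S$ with $(s,R_p)=\varphi(p)$, parts~(1) and Proposition~\ref{prop:ZR:isom1}(2) show that $p_{R_p}$ and $p$ contain exactly the same generators of both types $(Z,\{1\})$ and $(1,\{a\})$; since every element of $\scr{M}^S$ is a finite sum of products of such monomials, $p=p_{R_p}$.

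\textbf{Main obstacle.}  The only non-routine step is $\approx$-invariance of $\phi_R$ in part~(1).  The reason is that $\approx$ is generated by $\prec$ in both directions, so monotonicity of $\phi_R$ under $\prec$ (which is automatic from $\prec\,=\,\leq$) is not enough; one must show that a $\prec$-step cannot create a monomial witnessing $\phi_R=1$ out of nothing.  This is exactly where condition~(b) of $\prec$, combined with the domination hypothesis on $R$, is essential.  Every other piece of the argument reduces to manipulating the product formula and invoking the already-proven statements.
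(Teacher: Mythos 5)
You have the monotonicity of $\phi_R$ along $\prec$ backwards, and the implication you try to prove in your critical invariance step is false as stated. You claim: if $\mathfrak{c}_1 \prec \mathfrak{c}_2$ and some monomial $(W_{j_0},\beta_{j_0})$ of $\mathfrak{c}_2$ satisfies $s \notin W_{j_0}$ and $\beta_{j_0} \subset R$, then $\mathfrak{c}_1$ already contains such a monomial. Counterexample: take $\mathfrak{c}_1 = \mathbf{0} = \emptyset$ and $\mathfrak{c}_2 = \mathbf{1} = \{(1,\{1\})\}$. Both conditions in the definition of $\prec$ are quantified over the monomials of the \emph{left-hand} side, so $\mathbf{0} \prec \mathfrak{c}$ holds vacuously for every $\mathfrak{c}$; the monomial $(1,\{1\})$ is ``good'' ($1 \nleq s$ and $\{1\} \subset R$), yet $\mathbf{0}$ contains no monomial at all, so $\phi_R(\mathbf{0}) = 0 \neq 1 = \phi_R(\mathbf{1})$. (This does not contradict well-definedness itself, since $\mathbf{1} \prec \mathbf{0}$ fails and hence $\mathbf{0} \not\approx \mathbf{1}$; it only refutes your proposed proof of it.) The same quantifier structure is why your sketch cannot be executed: condition (b) of $\prec$ is indexed by a monomial $i$ of the left-hand side together with a point $s \in S \setminus Z_i[\alpha_i]$, so you cannot ``apply condition (b) at $j_0$'' --- you must first exhibit a suitable monomial of $\mathfrak{c}_1$, which is exactly what you are trying to conclude.

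What is true, and what suffices, is the opposite direction: if $\mathfrak{c}_1 \prec \mathfrak{c}_2$ and $\mathfrak{c}_1$ has a monomial $(Z_{i_0},\alpha_{i_0})$ with $s \notin Z_{i_0}$ and $\alpha_{i_0} \subset R$, then so does $\mathfrak{c}_2$. Indeed, since $\alpha_{i_0} \subset R$ and $R$ dominates $\scr{O}_{S,s}$, the prime $\mathfrak{M}_R \cap \scr{O}_{S,s}[\alpha_{i_0}]$ lies over $\mathfrak{M}_{S,s}$, so $s \notin Z_{i_0}[\alpha_{i_0}]$; now apply condition (b) at $(i_0,s)$: if every $j \in J_s$ had $\beta_j \not\subset R$, then choosing $\sigma_j = b_j \in \beta_j \setminus R$ would give $\sigma_j^{-1} \in \mathfrak{M}_R$, and $(\sigma_j^{-1})_j$ would generate a \emph{proper} ideal of $\scr{O}_{S,s}[\alpha_{i_0}][\sigma_j^{-1}]_j \subset R$, contradicting (b); hence some $j \in J_s$ (which automatically satisfies $s \notin W_j$) has $\beta_j \subset R$. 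This one-sided monotonicity is enough for $\approx$-invariance, because $\approx$ is generated by pairs related by $\prec$ in \emph{both} directions, and applying the monotonicity both ways gives $\phi_R(\mathfrak{c}_1) = \phi_R(\mathfrak{c}_2)$. Once this is corrected, your character $\phi_R$ is a legitimate (slightly repackaged) version of the paper's argument: the paper proves part (1) directly by unwinding $(Z,\alpha) \leq p_R$ into $(Z,\alpha) \prec (\text{a finite sum of generators of } p_R)$, none of whose monomials is good, and then runs exactly the computation above with $\mathfrak{c}_1$ the single monomial $(Z,\alpha)$ to reach a contradiction. Your parts (2) and (3) and the ``if'' half of (1) are fine.
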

\begin{proof}
\begin{enumerate}
\item The `if' part is obvious.
Suppose $(Z,\alpha) \leq p_{R}$ with $Z \nleq s$
and $\alpha \subset R$.
Then, $(Z,\alpha) \prec \{(Z_{i},1)\}^{i\leq m}_{Z_{i} \leq s}
\cup\{(1,\{b_{i}\})\}^{i>m}_{b_{i} \notin R}$.
Since $\scr{O}_{S,s}[\alpha] \subset R$,
we have $s \notin Z[\alpha]$.
This implies that
$J_{s}\subset \{i \mid i>m\}$.
Hence, $(b_{i}^{-1})_{i}$ generate the unit ideal of
$\scr{O}_{S,s}[\alpha][b_{i}^{-1}]_{i}$.
But since $b_{i} \notin R$ for any $i$, $b_{i}^{-1}$ must be in the
maximal ideal $\mathfrak{M}_{R}$ of $R$,
a contradiction.
\item Suppose $(Z,\alpha),(W,\beta) \notin p_{R}$.
Then (1) tells that $Z \nleq s$ and $W \nleq s$.
Since $s$ is a prime ideal of $C(S)_{\cpt}$,
we have $Z \cdot W \nleq s$.
Also, $\alpha \subset R$ and $\beta \subset R$
implies that $\alpha \cup \beta \subset R$.
Combining these, we have $(Z,\alpha)\cdot(W,\beta) \notin p_{R}$.
It is obvious that $\mathbf{1} \nleq p_{R}$,
hence $p_{R}$ is a prime ideal.
\item First, we show that $\psi \circ \varphi$ is the identity.
Let $p$ be any element of $\Spec (\scr{M}^{S})^{\dagger}$.
Then,
\[
(Z,\alpha) \in p \Leftrightarrow
\text{$Z \leq s$ or $\alpha \not\subset \varphi(p)$}\Leftrightarrow
(Z,\alpha) \in \psi\varphi(p).
\]
Next, we show that $\varphi \circ \psi$ is the identity.
Let $(s,R, \phi)$ be any element of $\ZR^{f}(K,S)$.
Then,
\[
a \in R \Leftrightarrow (1,\{a\}) \notin \psi(R)
\Leftrightarrow a \in R_{\psi(R)}.
\]
Also, It is obvious that $\pi(\psi(R))=s$,
so that $\varphi\circ \psi (R)=R$.
\end{enumerate}
\end{proof}
\begin{Rmk}
By Proposition \ref{prop:ZR:isom1} and Proposition \ref{prop:ZR:isom2},
we can give a topology on $\ZR^{f}(K,S)$
induced from $\Spec \scr{M}^{S}$.
We can see that the topology has an open basis
of the form $U(Z,\alpha)$, where
$Z \in C(S)_{\cpt}$, $\alpha \in \scr{P}^{f}(K\setminus \{0\})\setminus
\emptyset$, and
\[
U(Z,\alpha)=\{(s,R,\phi)\in \ZR^{f}(K,S) \mid
Z \nleq s, \alpha \subset R\}.
\]
By this topology, $\ZR^{f}(K,S)$ becomes a coherent space.
From this, we can also see that $\prec$ in $\scr{M}^{S}_{0}$
is in fact transitive.
Note also that this definition of Zariski-Riemann space
coincides with the usual definition
when $S$ is an affine $\scr{Q}$-scheme.
\end{Rmk}
\begin{Def}
Set $X=\ZR^{f}(K,S)$.
\begin{enumerate}
\item The structure sheaf $\scr{O}_{X}$ on $X$
is defined by
\[
U \mapsto \{ a \in K \mid \text{$a \in R_{p}$ for any $p \in U$.}\}
\] 
It is obvious that this is in fact a sheaf.
\item The support morphism
$\beta_{X}:\alpha_{1}\scr{O}_{X} \to \tau_{X}$
is defined by
\[
\Gamma(U,\alpha_{1}\scr{O}_{X}) \ni (f_{i})_{i}
\mapsto \{(1,\{f_{i}^{-1}\})\}_{i},
\]
where $f_{i}$'s are non-zero generators. 
\end{enumerate}
\end{Def}
We will verify that $(X,\scr{O}_{X},\beta_{X})$
is an $\scr{A}$-scheme.
\begin{Prop}
\begin{enumerate}
\item The support morphism $\beta_{X}$ is well defined.
\item The restriction maps reflect localizations.
\item For any $p \in X$, $\scr{O}_{X,p}=R_{p}$.
\end{enumerate}
\end{Prop}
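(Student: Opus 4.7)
The plan is to verify (1)--(3) using the identification $X \simeq \Spec\scr{M}^{S}$ established in Propositions \ref{prop:ZR:isom1} and \ref{prop:ZR:isom2}, together with the standard spectral correspondence, under which each element $r$ of the II-ring $\scr{M}^{S}$ corresponds to the closed subset $V(r) = \{p \mid r \leq p\}$ of $X$.

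For (1), I would first interpret the formula geometrically. By Proposition \ref{prop:ZR:isom2}, each atom $(1,\{f^{-1}\}) \in \scr{M}^{S}$ has $V((1,\{f^{-1}\})) = \{p \mid f^{-1} \notin R_{p}\} = \{p \mid f \in \mathfrak{M}_{R_{p}}\}$, that is, $\Supp(f)$. Since $+$ in a II-ring of closed subsets corresponds to intersection (as one reads off Proposition \ref{prop:asch:loc}), the element $\{(1,\{f_{i}^{-1}\})\}_{i} = \sum_{i}(1,\{f_{i}^{-1}\})$, restricted to $U$, represents $\bigcap_{i}\Supp(f_{i}) \cap U = \Supp((f_{i})_{i}) \cap U$, i.e., the support of the ideal generated by the $f_{i}$'s. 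This depends only on the ideal, not on its chosen generators, so $\beta_{X}$ is well-defined on compactly generated ideals of $\scr{O}_{X}$. The II-ring homomorphism properties then follow from the identities $\Supp(I+J) = \Supp(I) \cap \Supp(J)$ and $\Supp(I\cdot J) = \Supp(I) \cup \Supp(J)$, which match the operations of $\tau_{X}$; compatibility with restrictions is local and automatic.

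For (2), suppose $a \in \scr{O}_{X}(U)$ satisfies $\beta_{X}\alpha_{2}(a) \geq Z$ in $\tau_{X}(U)$, where $Z = U \setminus V$. Unwinding the convention that $\leq$ in $C(U)_{\cpt}$ corresponds to reverse inclusion of closed subsets, this says $\Supp(a) \subseteq Z$. Hence for every $p \in V$ we have $a \notin \mathfrak{M}_{R_{p}}$; since $R_{p}$ is a valuation ring, $a$ is a unit there, so $a^{-1} \in R_{p}$. Therefore $a^{-1}$ defines a section of $\scr{O}_{X}$ over $V$ by the very definition of $\scr{O}_{X}$, showing that $a$ is invertible in $\scr{O}_{X}(V)$.

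For (3), the stalk $\scr{O}_{X,p} = \underrightarrow{\lim}_{U \ni p}\scr{O}_{X}(U)$ is a filtered colimit of subrings of $K$, and every section over a neighborhood of $p$ lies in $R_{p}$ by definition. This gives a natural injective ring homomorphism $\scr{O}_{X,p} \hookrightarrow R_{p}$. For surjectivity, given $a \in R_{p}$, consider the basic open $U(\mathbf{1},\{a\}) = \{p' \in X \mid a \in R_{p'}\}$, where $\mathbf{1}$ is the multiplicative unit of $C(S)_{\cpt}$ (corresponding to the empty closed subset of $S$). This is an open neighborhood of $p$, and by construction $a \in \scr{O}_{X}(U(\mathbf{1},\{a\}))$; the germ of $a$ at $p$ then provides the desired preimage in $\scr{O}_{X,p}$. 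The main obstacle is part (1): the formula defining $\beta_{X}$ is expressed in $\scr{M}^{S}$, while its codomain $\tau_{X}$ is the sheaf of closed subsets, so the identification $(1,\{f^{-1}\}) \leftrightarrow \Supp(f)$ is precisely what forces the assignment to be both generator-independent and an II-ring homomorphism; once that translation is secured, parts (2) and (3) are routine consequences of $R_{p}$ being a valuation ring dominating $\scr{O}_{S,\pi(p)}$.
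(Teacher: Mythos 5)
Your proofs of (2) and (3) are essentially the paper's own: for (2) the paper likewise observes that $\beta_{X}\alpha_{2}(a)\geq Z$ forces $a^{-1}\in R_{p}$ for every $p\in V$, hence invertibility of $a$ in $\Gamma(V,\scr{O}_{X})$; for (3) it likewise notes $\scr{O}_{X,p}\subset R_{p}$ is clear and gets the converse from the basic open set $U(1,\{a\})$, on which $a$ is a section by the very definition of $\scr{O}_{X}$. Part (1) is where you genuinely diverge, and your route is valid. The paper works \emph{syntactically} in the presentation $\scr{M}^{S}_{0}$: it shows directly that $(f_{i})_{i}\leq (g_{j})_{j}$ in $\alpha_{1}\scr{O}_{X}(U)$ implies $\{(1,\{f_{i}^{-1}\})\}_{i}\prec\{(1,\{g_{j}^{-1}\})\}_{j}$, arguing by contradiction: a failure of $\prec$ yields a valuation ring $R$ of $K$ with $f_{i}^{-1}\in R$ for some $i$ and $g_{j}\in\mathfrak{M}_{R}$ for all $j$, which is incompatible with an equation $f_{i}^{m}=\sum_{j}a_{ij}g_{j}$ (coming from $(f_{i})_{i}\leq(g_{j})_{j}$), since then $(g_{j})_{j}$ would generate the unit ideal in a subring of $R$. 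You instead work \emph{semantically}: via Stone duality $\scr{M}^{S}\simeq C(X)_{\cpt}$ and the point description of Propositions \ref{prop:ZR:isom1} and \ref{prop:ZR:isom2}, you identify $\{(1,\{f_{i}^{-1}\})\}_{i}$ with the closed set $\{p\mid (f_{i})_{i}\subseteq\mathfrak{M}_{R_{p}}\}$, which manifestly depends only on the ideal and is insensitive to the congruence $I\cdot I=I$ because $\mathfrak{M}_{R_{p}}$ is prime. Both arguments rest on the same underlying fact (points of $X$ are valuation rings dominating local rings of $S$), but they buy different things: the paper's argument certifies the relation $\prec$ itself, before passage to the quotient $\scr{M}^{S}$, whereas yours is shorter, makes generator-independence transparent, and — a point the paper leaves implicit — simultaneously checks that $\beta_{X}$ is a homomorphism of II-rings, since the identities $\Supp(I+J)=\Supp(I)\cap\Supp(J)$ and $\Supp(IJ)=\Supp(I)\cup\Supp(J)$ again follow from primality of $\mathfrak{M}_{R_{p}}$. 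The only dependency to flag in your version is the injectivity of $r\mapsto V(r)$ on $\scr{M}^{S}$, which requires $\scr{M}^{S}$ to be an II-ring (the preceding proposition) together with Stone duality; both are in place at this point of the paper, so there is no circularity.
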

\begin{proof}
\begin{enumerate}
\item It suffices to show that
$\{(1,\{f_{i}^{-1}\})\}_{i} \prec \{(1,\{g_{j}^{-1}\})\}_{j}$
if $(f_{i})_{i} \leq (g_{j})_{j}$.
Assume $\{(1,\{f_{i}^{-1}\})\}_{i} \not\prec \{(1,\{g_{j}^{-1}\})\}_{j}$.
This implies that there is a valuation ring $R$ of $K$
with $f_{i}^{-1} \in R$ for some $i$,
and $g_{j}^{-1} \notin R$ for any $j$.
This is equivalent to $g_{j} \in \mathfrak{M}_{R}$.
$(f_{i})_{i} \leq (g_{j})_{j}$ tells that
$f_{i}^{m}=\sum_{j}a_{ij}g_{j}$ for
some $m$ and some $a_{ij} \in \Gamma(U,\scr{O}_{X})$.
Let $s \in S$ be a point corresponding to $R$.
Then $(g_{j})_{j}$ generates the unit ideal
in $\scr{O}_{S,s}[f_{i}^{-1}][g_{j}]_{j}$,
but this cannot happen since $g_{j} \in \mathfrak{M}_{R}$.
\item Let $V \subset U$ be an inclusion
of quasi-open subsets of $X$,
and $Z=U \setminus V$ be the closed subset of $U$.
Let $f \in \Gamma(U,\scr{O}_{X})$
be a section with $\beta_{X}(f) \geq Z$.
This implies that $f^{-1} \in R_{p}$ for any $p \in V$,
hence $f$ is invertible in $\Gamma(V,\scr{O}_{X})$.
\item It is obvious that $\scr{O}_{X,p} \subset R_{p}$.
For the converse, let $a \in R_{p}$ be any element.
Then, the closed set $Z$ corresponding to $\{(1, \{a\})\}$
does not contain $p$.
Let $U$ be the complement of $Z$.
Then $a \in \Gamma(U,\scr{O}_{X}) \subset \scr{O}_{X,p}$.
\end{enumerate}
\end{proof}
\begin{Rmk}
\begin{enumerate}
\item
There is an alternative way of defining the structure sheaf
$\scr{O}_{X}$: namely, $\scr{O}_{X}:\scr{M}^{S} \to \cat{Rng}$
is defined by
\[
\{(Z_{i},\alpha_{i})\}_{i}
\mapsto \cap_{i}\cap_{s\in S\setminus Z_{i}}
\ICL(K;\scr{O}_{S,s}[\alpha_{i}]),
\]
where $\ICL(K;\scr{O}_{S,s}[\alpha_{i}])$
is the integral closure of $\scr{O}_{S,s}[\alpha_{i}]$ in $K$.
We can easily see that this definition is equivalent
to the previous one, once we know that
the integral closure of a given domain is the intersection
of all valuation rings containing it.
This implies that we can characterize
the Zariski-Riemann space \textit{without using the notion
of valuation rings}.
However, the arguments get longer when we try to
prove other properties, if we start from this definition.
\item
Note that $\ZR^{f}(\Spec K,\Spec A)$ coincides
with the conventional Zariski-Riemann space,
if $A$ is a subring of $K$;
there is a 1-1 correspondence between
points of $\ZR^{f}(\Spec K,\Spec A)$
and valuations rings of $K$ containing $A$.
Its open basis is given by the form
$U(a_{1},\cdots,a_{n})$,
where $a_{1},\cdots a_{n}$ are elements of $K$
and $U(a_{1},\cdots,a_{n})$ is the set of
valuation rings containing $A[a_{1},\cdots,a_{n}]$.
See \cite{Matsumura} for example.
\end{enumerate}
\end{Rmk}

\subsection{Zariski-Riemann space as a functor}
\label{subsec:classical:ZR}
Now, we focus on the map
$\pi:\ZR^{f}(K,S) \to S$.
We have already seen that $|\pi|:|\ZR^{f}(K,S)| \to |S|$ is well defined
as a morphism of coherent spaces.
We will see here that $\pi$ is well defined
as a morphism of $\scr{A}$-schemes.
\begin{Prop}
\begin{enumerate}
\item
The canonical inclusion
\[
\pi^{\#}:\Gamma(U,\scr{O}_{S}) \ni a \mapsto
a \in \Gamma(\pi^{-1}U,\scr{O}_{X})
\]
gives a morphism
$\pi:\ZR^{f}(K,S) \to S$ of $\scr{A}$-schemes.
\item $\pi$ is a P-morphism:
in particular, $\pi$ is surjective.
\item $\pi$ is proper.
\end{enumerate}
\end{Prop}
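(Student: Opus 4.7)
The plan is to address (1), (2), (3) in order, using the identification $\ZR^{f}(K,S)\simeq\Spec\scr{M}^{S}$ from Proposition \ref{prop:ZR:isom2}. For (1), I first verify that $\pi^{\#}$ is well defined on sections: given $a\in\Gamma(U,\scr{O}_{S})$ and $p=(s,R_{p},\phi_{p})\in\pi^{-1}(U)$, the natural homomorphism $\phi_{p}$ is by construction the factorization of the inclusion $\scr{O}_{S,s}\hookrightarrow K$ through $R_{p}\subset K$, so the image of $a$ in $K$ lies in $R_{p}$, and hence $a\in\Gamma(\pi^{-1}U,\scr{O}_{X})$. To promote this to a morphism of $\scr{A}$-schemes I check compatibility with the support morphisms, namely $\beta_{X}\alpha_{2}(\pi^{\#}a)=|\pi|^{-1}\beta_{S}\alpha_{2}(a)$. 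Under the identification $\Spec\scr{M}^{S}\simeq\ZR^{f}(K,S)$, the left side corresponds to $(1,\{a^{-1}\})\in\scr{M}^{S}$ and the right to $(\beta_{S}\alpha_{2}(a),\{1\})$; Proposition \ref{prop:ZR:isom2}(1) then shows each defines the subset of triples $(s,R,\phi)$ with $a\in\mathfrak{M}_{R}$, which by dominance of $\phi$ coincides with $\{(s,R,\phi):s\in\beta_{S}\alpha_{2}(a)\}$.

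For (2), surjectivity of $|\pi|$ comes first: given $s\in S$, the stalk $\scr{O}_{S,s}$ is a local subring of $K=\scr{O}_{S,\xi}$ (since $S$ is reduced and irreducible), so standard valuation theory provides a dominating valuation ring $R\subset K$, and the triple $(s,R,\iota)$ lies above $s$. Consequently $\pi^{-1}:C(S)_{\cpt}\to C(\ZR^{f}(K,S))_{\cpt}$ is injective, since any pair $Z\neq Z'$ of closed subsets of $S$ differ at some $s$ whose preimage in $\ZR^{f}(K,S)$ is nonempty. Injectivity of $\pi^{\#}$ at the sheaf level reduces to injectivity of $\scr{O}_{S}(U)\to K$ for nonempty $U$, which is immediate from reducedness and irreducibility of $S$. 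Hence $\pi$ is a P-morphism, and in particular surjective.

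For (3), I apply Remark \ref{rmk:val:crit:integral}: because $\ZR^{f}(K,S)$ is reduced, irreducible, and has $K$ as function field (the generic prime of $\scr{M}^{S}$ yields the triple $(\xi,K,\Id)$ with stalk $K$), properness over $S$ reduces to uniquely solving the lifting problem against the family $\mathcal{I}_{1}=\{\Spec K\to\tilde{\Spec}R'\mid R'\text{ a valuation ring of }K\}$. Given such a square with bottom arrow $v:\tilde{\Spec}R'\to S$ sending the closed point $\eta'$ to $s_{0}\in S$, Proposition \ref{prop:asch:loc} ensures the induced stalk map $v^{\#}:\scr{O}_{S,s_{0}}\to R'$ is dominating, so $(s_{0},R',v^{\#})$ is a point of $\ZR^{f}(K,S)$, and together with the generic-point data defines a lift $\tilde{\Spec}R'\to\ZR^{f}(K,S)$. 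For uniqueness: any lift sends $\eta'$ to some $(s_{0},R_{p},\phi_{p})$ with $R_{p}$ a valuation ring of $K$ and the stalk map $R_{p}\to R'$ local; maximality of valuation rings in $K$ under the domination order forces $R_{p}=R'$, and $\phi_{p}=v^{\#}$ is then determined. The main obstacle is really the identification of elements in $\scr{M}^{S}$ in (1); parts (2) and (3) then follow cleanly from surjectivity and the valuative-criterion machinery already developed.
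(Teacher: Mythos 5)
Your proposal is correct, and on parts (1) and (3) it runs essentially parallel to the paper's proof: for (1) you verify the same pointwise identity $\{(1,\{a^{-1}\})\}=\{(\beta_{S}\alpha_{2}(a),\{1\})\}$ in $\scr{M}^{S}$ via domination of stalks (the paper argues both inclusions by contradiction rather than routing through Proposition \ref{prop:ZR:isom2}, but the content is the same), and for (3) you make the same appeal to Remark \ref{rmk:val:crit:integral}, constructing the lift from the dominating homomorphism $\scr{O}_{S,s_{0}}\to R'$; your explicit uniqueness argument via maximality of valuation rings under domination is a detail the paper leaves implicit, and is a welcome addition. The genuine divergence is in part (2), where you reverse the logical order. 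The paper first proves that $\pi$ is a P-morphism --- injectivity of $C(S)_{\cpt}\to\scr{M}^{S}$, $Z\mapsto (Z,\{1\})$, is read off directly from the definition of $\prec$ (since $Z[\{1\}]=Z$), and injectivity of $\pi^{\#}$ is built into its construction --- and then gets surjectivity for free, because a P-morphism is epic on the underlying coherent spaces and epimorphisms of coherent spaces are surjective by Theorem \ref{thm:alg:epi:surj} (the ultrafilter argument). You instead prove surjectivity directly, by dominating each local subring $\scr{O}_{S,s}\subset K$ by a valuation ring of $K$, and then deduce injectivity of $\pi^{-1}$ from surjectivity of $|\pi|$. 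Both routes are valid, and neither is circular: in particular you do not derive (2) from (3), which is exactly the trap the Remark following this Proposition warns against, since the valuative criterion already has the content of (2) baked into it. Your route is more elementary in that it bypasses Theorem \ref{thm:alg:epi:surj} entirely, at the price of invoking the valuation-existence fact twice (in (2) and again inside (3)); the paper's route keeps (2) purely formal. One small inaccuracy: you write $K=\scr{O}_{S,\xi}$, but the standing hypotheses only provide a dominant morphism $\Spec K\to S$, hence an inclusion $\scr{O}_{S,\xi}\hookrightarrow K$ that need not be an equality; this is harmless, as your argument only needs $\scr{O}_{S,s}$ to be a local subring of $K$.
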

\begin{proof}
\begin{enumerate}
\item
In order to see that $(\pi,\pi^{\#})$ is a morphism
of $\scr{A}$-schemes, it suffices to see that the diagram
\[
\xymatrix{
\alpha_{1}\scr{O}_{S,s} \ar[r]^{\pi^{\#}} \ar[d]_{\beta_{S}} &
\pi_{*}\alpha_{1}\scr{O}_{X} \ar[d]^{\beta_{X}} \\
\tau_{S} \ar[r] &
\pi_{*}\tau_{X}
}
\]
is commutative: namely,
we must see that
$\{(1,\{f_{i}^{-1}\})\}_{i}=\{(\beta_{S}(f_{i})_{i},1)\}$
for $(f_{i})_{i} \in \alpha_{1}\scr{O}_{S}$.
Let $p$ be any element in $\cup_{i}U(1,\{f_{i}^{-1}\})$.
Then there is a dominant morphism
$\scr{O}_{S,\pi(p)} \to R_{p}$ with $f_{i}^{-1} \in R_{p}$
for some $i$. Assume that $s=\pi(p) \in \beta_{S}(f_{i})_{i}$.
This is equivalent to saying that $f_{i}$' are
in the maximal ideal of $\scr{O}_{S,s}$.
But this contradicts to $\scr{O}_{S,\pi(s)} \to R_{p}$
being dominant.
The converse can be proven similarly.
\item It suffices to show that
$C(S)_{\cpt} \to \Spec \scr{M}^{S}$
is injective, but this is obvious from the definition.
\item From Remark \ref{rmk:val:crit:integral},
it suffices to show that when we are given a commutative
square
\[
\xymatrix{
\Spec K \ar[r] \ar[d] & X\ar[d]^{\pi} \\
\tilde{\Spec} R \ar[r]_{h} & S
}
\]
we have a unique morphism
$\tilde{\Spec} R \to X$ making the whole diagram commutative.
Set $s=h(\eta)$, where $\eta$ is the closed point of
$\tilde{\Spec} R$.
Then, the dominant morphism
$\scr{O}_{S,s} \to R$ determines a point $x$ of $X=\ZR^{f}(K,S)$.
The isomorphism $\scr{O}_{X,x} \to R$ gives
the required morphism $\tilde{\Spec} R \to X$.
\end{enumerate}
\end{proof}
\begin{Rmk}
The reader may notice that
(2) follows immediately from (3) and $\pi$ being dominant.
However, we gave a different proof here,
since the valuative criterion already uses the fact of (2).
\end{Rmk}
From now on, we refer to this proper morphism
$\pi_{S}:\ZR^{f}(K,S) \to S$ as the
\textit{classical Zariski-Riemann space associated to $S$}.
\begin{Def}
Let $T$ and $S$ be $\scr{A}$-schemes,
and $\pi_{Y}:Y=\ZR^{f}(K,T) \to T$,
$\pi_{X}:X=\ZR^{f}(K,S) \to S$ be the
associated classical Zariski-Riemann spaces.
A morphism $f:T\to S$ of $\scr{A}$-schemes induces a morphism
$\tilde{f}:Y \to X$
of $\scr{A}$-schemes as follows:
\begin{enumerate}
\item The morphism $|\tilde{f}|:|Y| \to |X|$ of the
underlying spaces are defined by
\[
(t,R,\phi) \mapsto (f(t),R,\phi\circ f^{\#}),
\]
where $f^{\#}:\scr{O}_{S,s} \to \scr{O}_{T,t}$
is the dominant morphism.
In terms of II-rings, this can be expressed as
\[
\scr{M}^{S} \ni \{(Z,\alpha)\} \mapsto 
\{(f^{-1}Z,\alpha)\} \in \scr{M}^{T},
\]
which shows that $|\tilde{f}|$ is indeed a quasi-compact morphism.
\item The morphism 
$\tilde{f}^{\#}:\scr{O}_{X} \to |\tilde{f}|_{*}\scr{O}_{Y}$
is defined by the canonical inclusion.
It is easy to see that $|\tilde{f}|_{*}\beta_{Y}\circ \tilde{f}^{\#}
= \tilde{f}^{-1}\circ \beta_{X}$.
\end{enumerate}
Hence, the map $S \mapsto \ZR^{f}(K,S)$ induces a functor
\[
\ZR^{f}(K,\cdot ):\cat{$\scr{A}$-Sch} \to
\cat{proper morphism of $\scr{A}$-schemes}.
\]
We will call this functor the \textit{ZR functor}.
\end{Def}

\subsection{Morphisms of profinite type}
Our next aim is to express
the ZR functor as a left adjoint,
namely to clarify the universal property
of the classical Zariski-Riemann space.

\begin{Prop}
Let $f:T \to S$ be a morphism of $\scr{A}$-schemes,
and $\tilde{f}:\ZR^{f}(K,T) \to \ZR^{f}(K,S)$
be the induced morphism.
Then, $f$ is separated (resp. universally closed, proper)
if and only if $\tilde{f}$ is injective (resp. surjective, bijective)
on the underlying spaces.
\end{Prop}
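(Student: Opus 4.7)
The plan is to invoke the valuative criteria established in Propositions \ref{prop:val:crit:sep} and \ref{prop:val:crit:proper}, applied in the strengthened form of Remark \ref{rmk:val:crit:integral}: since $T$ is (by the standing hypotheses of Section 4.3) irreducible, reduced, and of function field $K$, it suffices to test separatedness and universal closedness of $f$ against the restricted family $\mathcal{I}_1$ whose members are inclusions $\Spec K \hookrightarrow \tilde{\Spec} R$ for valuation rings $R \subset K$. The core observation I will first establish is a bijective dictionary between points of $\ZR^f(K, X)$ and commutative squares
\[
\xymatrix{
\Spec K \ar[r] \ar[d] & X \ar[d] \\
\tilde{\Spec} R \ar[r] & S
}
\]
in which the top horizontal arrow is the canonical one (sending $\Spec K$ to the generic point of $X$). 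In one direction, the image of the closed point $\eta$ under a lift $\tilde{\Spec} R \to X$ gives a point $x \in X$, and its stalk map at $\eta$ gives a dominant homomorphism $\phi : \scr{O}_{X,x} \to R$; in the other, any triple $(x, R, \phi)$ reconstructs such a square via Proposition \ref{prop:ZR:isom1}. Under this dictionary the map $\tilde{f}$ corresponds to post-composition by $f^{\#}$ on stalk maps.

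Once this dictionary is in hand, the three implications follow mechanically. For separatedness: two lifts of a given square to $T$ correspond to two points of $\ZR^f(K,T)$ with common image in $\ZR^f(K,S)$; hence $f$ is $\mathcal{I}_1$-separated exactly when $\tilde{f}$ is injective on the underlying set. For universal closedness: the existence of a lift of an arbitrary square is exactly the existence of a preimage under $\tilde{f}$ of an arbitrary point of $\ZR^f(K,S)$, so $f$ is $\mathcal{I}_1$-universally closed exactly when $\tilde{f}$ is surjective. Properness, being the conjunction of the two, corresponds to bijectivity. In all three cases the conclusion is obtained by combining Propositions \ref{prop:val:crit:sep} and \ref{prop:val:crit:proper} with Remark \ref{rmk:val:crit:integral}.

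The step that I expect to require the most care, and which carries the weight of the argument, is pinning down this dictionary rigorously in both directions. Specifically I must verify that commutativity of the square over $S$ (which encodes compatibility with the canonical $\Spec K \to S$) translates precisely to the stalk identity $\phi \circ f^{\#} = \phi_S$, where $\phi_S$ is the dominant homomorphism determined by the bottom row, and conversely that any point $(t, R, \phi')$ of $\ZR^f(K,T)$ with $\phi' \circ f^{\#} = \phi_S$ reconstructs an honest morphism $\tilde{\Spec} R \to T$ making the whole square commute. These verifications hinge on the explicit structure sheaf of $\tilde{\Spec} R$ (whose stalks at $\eta$ and $\xi$ are $R$ and $K$), on the dominance assumption running through all of Section 4.3, and on the full faithfulness of the underlying functor $\cat{$\scr{A}$-Sch} \to \cat{LRCoh}$ from Proposition \ref{prop:asch:loc}, which reduces the matching of $\scr{A}$-morphisms to the matching of local ring homomorphisms at each point.
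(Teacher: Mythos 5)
Your strategy is the one the paper intends (the paper itself omits the proof, calling the statement ``just the translation of the valuative criteria''), and your dictionary is the right translation device: by Propositions \ref{prop:ZR:isom1} and \ref{prop:ZR:isom2}, a point of $\ZR^{f}(K,X)$ is a triple $(x,R,\phi)$, equivalently a morphism $\tilde{\Spec}\,R \to X$ whose composite with $\Spec K \to \tilde{\Spec}\,R$ is the canonical dominant morphism, and under this identification $\tilde{f}$ is post-composition with $f$. The forward implications are then sound: if $f$ is separated (resp.\ universally closed), Propositions \ref{prop:val:crit:sep} and \ref{prop:val:crit:proper} give uniqueness (resp.\ existence) of lifts for \emph{all} test squares, in particular for those with canonical top arrow, which is exactly injectivity (resp.\ surjectivity) of $\tilde{f}$.

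The gap is in the backward implications, precisely in the step you declare mechanical: ``$f$ is $\mathcal{I}_{1}$-separated exactly when $\tilde{f}$ is injective''. With the paper's definition of the right lifting properties, $\mathcal{I}_{1}$-separatedness (resp.\ $\mathcal{I}_{1}$-universal closedness) quantifies over \emph{all} commutative squares whose left edge is $\Spec K \to \tilde{\Spec}\,R$: the top arrow $\Spec K \to T$ of such a square is an arbitrary morphism, which may hit a non-generic point of $T$, or hit the generic point through a non-canonical embedding of the function field into $K$. Your dictionary parametrizes only the squares with canonical top arrow, so injectivity (resp.\ surjectivity) of $\tilde{f}$ yields the lifting condition only for that restricted class, which is a priori strictly weaker than the hypothesis of Remark \ref{rmk:val:crit:integral}. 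Hence ``$\tilde{f}$ injective $\Rightarrow$ $f$ separated'' and ``$\tilde{f}$ surjective $\Rightarrow$ $f$ universally closed'' do not follow from the dictionary together with the \emph{statements} you cite. The repair is to use the \emph{proofs} rather than the statements: taking $x$ to be the generic point of $T$, as Remark \ref{rmk:val:crit:integral} prescribes, the only test squares invoked in the proofs of Propositions \ref{prop:val:crit:sep} and \ref{prop:val:crit:proper} are those produced by condition (3) of Definition \ref{def:morph:par:spe}, and in the verification of that condition for $\mathcal{I}_{0}$ the top arrow is the canonical inclusion of the function field; so lifts for canonical-top squares already suffice to run those arguments, and that is exactly what $\tilde{f}$ provides. (Equivalently, one may invoke the standing convention of \S 4.3 that all morphisms are dominant and under $\Spec K$, so that the canonical morphism is the only admissible top arrow, but then the validity of the criteria under this restricted reading still rests on the same inspection of their proofs. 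You should also note that Remark \ref{rmk:val:crit:integral} assumes $K$ \emph{equals} the function field of $T$, whereas the standing hypotheses only give a dominant morphism $\Spec K \to T$.)
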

This is just the translation of the valuative criteria,
so we will omit  the proof.

\begin{Def}
Let $f:T \to S$ be a morphism of $\scr{A}$-schemes,
and $\tilde{f}:\ZR^{f}(K,T) \to \ZR^{f}(K,S)$
be the induced morphism.
\begin{enumerate}
\item $f$ is \textit{of profinite type}, if $\tilde{f}$ is a Q-morphism.
\item $f$ is \textit{strongly of profinite type},
if $\tilde{f}$ is an open immersion.
\end{enumerate}
\end{Def}
Of course, $f$ is separated if $f$ is of profinite type.
The next characterization of morphism of profinite type
is obvious. 
\begin{Prop}
\label{cond:profinite:eq}
Let $f:T \to S$ be a morphism of $\scr{A}$-schemes.
The followings are equivalent:
\begin{enumerate}[(i)]
\item $f$ is of profinite type.
\item For every $Z \in C(T)_{\cpt}$,
there exists $\{(Z_{i},\alpha_{i})\}_{i} \in 
\scr{P}^{f}(C(S)_{\cpt} \times (\scr{P}^{f}(K \setminus \{0\}) \setminus
\emptyset))$ such that:
\begin{enumerate}
\item $\cap_{i}(f^{-1}Z_{i})[\alpha_{i}]=Z$.
\item For every $t \in Z$,
set $I_{t}=\{i \mid t \in f^{-1}Z_{i}\}$.
Then for any map $\sigma:I_{t} \to \cup_{i \in I_{t}} \alpha_{i}$
such that $\sigma_{i} \in \alpha_{i}$,
$(\sigma^{-1}_{i})_{i}$ generates the unit ideal of
$\scr{O}_{T,t}[\sigma^{-1}_{i}]_{i}$.
\end{enumerate}
\end{enumerate}
\end{Prop}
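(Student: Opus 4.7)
My plan is to apply the characterization of Q-morphisms given by the corollary following Proposition \ref{prop:exist:decomp:surj}: $\tilde{f}$ is a Q-morphism if and only if $\tilde{f}^{-1}: C(\ZR^{f}(K,S))_{\cpt} \to C(\ZR^{f}(K,T))_{\cpt}$ is surjective and $\tilde{f}^{\#}$ is stalkwise surjective on structure sheaves. I first dispose of the stalkwise condition: at any point $q = (t, R, \phi)$ of $\ZR^{f}(K,T)$, the image under $\tilde{f}$ is $(f(t), R, \phi \circ f^{\#})$, whose stalk is the same valuation ring $R$; the map on stalks induced by $\tilde{f}^{\#}$ is the canonical inclusion $R \to R$, hence the identity. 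So stalkwise surjectivity is automatic in this setting.

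The problem therefore reduces to showing that the II-ring homomorphism $\tilde{f}^{-1}: \scr{M}^{S} \to \scr{M}^{T}$ sending $(Z, \alpha) \mapsto (f^{-1}Z, \alpha)$ is surjective iff condition (ii) holds. To reduce further, I observe the decomposition $(W, \beta) = (W, \{1\}) \cdot (1, \beta)$ in $\scr{M}^{T}$, which follows from $\mathbf{1} = (1, \{1\})$ being the multiplicative unit. Since $(1, \beta)_{T} = \tilde{f}^{-1}(1, \beta)_{S}$ is trivially in the image, surjectivity is equivalent to the assertion that, for every $Z \in C(T)_{\cpt}$, the generator $(Z, \{1\}) \in \scr{M}^{T}$ lies in the image of $\tilde{f}^{-1}$; equivalently, that there exists a family $\{(Z_{i}, \alpha_{i})\}_{i}$ in $\scr{M}^{S}_{0}$ with
\[
(Z, \{1\}) \approx \{(f^{-1}Z_{i}, \alpha_{i})\}_{i} \quad \text{in } \scr{M}^{T}.
\]

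Finally, I unpack this equivalence relation. Since $\prec$ is transitive on $\scr{M}^{T}$ by the remark after Proposition \ref{prop:ZR:isom2}, $\approx$ coincides with mutual $\prec$. The two containment conditions of $\prec$ combine to yield the set-theoretic equality $\cap_{i}(f^{-1}Z_{i})[\alpha_{i}] = Z$, which is condition (ii)(a). The technical clause of $\{(f^{-1}Z_{i}, \alpha_{i})\}_{i} \prec (Z, \{1\})$ is vacuous, because the sole element of $\{1\}$ is $1$ and $(1^{-1})$ trivially generates the unit ideal. The technical clause of the reverse relation $(Z, \{1\}) \prec \{(f^{-1}Z_{i}, \alpha_{i})\}_{i}$ yields the unit-ideal condition on maps $\sigma$ in condition (ii)(b), using the simplification $\scr{O}_{T,t}[\{1\}][\sigma^{-1}_{i}]_{i} = \scr{O}_{T,t}[\sigma^{-1}_{i}]_{i}$.

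The main obstacle I foresee is purely organizational bookkeeping around the index set $I_{t}$ appearing in (ii)(b): the set $J_{s}$ emerging from the definition of $\prec$ is not literally phrased the same way as the $I_{t}$ in the statement, so I must check carefully that the two formulations describe equivalent unit-ideal conditions once the equality $\cap_{i}(f^{-1}Z_{i})[\alpha_{i}] = Z$ from (ii)(a) is imposed. Once this matching is done, no further substantive argument is required.
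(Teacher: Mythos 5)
Your first three steps are correct, and since the paper offers no argument at all for this proposition (it is introduced with ``The next characterization of morphism of profinite type is obvious''), your derivation is in effect the only proof available; it is also clearly the intended one. Specifically: the reduction to the criterion that $\tilde{f}^{-1}$ be surjective and $\tilde{f}^{\#}$ stalkwise surjective is legitimate; stalkwise surjectivity is indeed automatic, because $\scr{O}_{\ZR^{f}(K,T),q}$ and $\scr{O}_{\ZR^{f}(K,S),\tilde{f}(q)}$ are the \emph{same} valuation ring $R \subset K$ and $\tilde{f}^{\#}$ is the inclusion; the reduction of surjectivity to the generators $(Z,\{1\})$ via $(W,\beta)=(W,\{1\})\cdot(1,\beta)$ and $\tilde{f}^{-1}(1,\beta)=(1,\beta)$ is sound; and identifying $\approx$ with mutual $\prec$, using the transitivity recorded after Proposition \ref{prop:ZR:isom2}, is fine.

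The gap is precisely the step you defer as ``organizational bookkeeping'': it is not bookkeeping, and it cannot be carried out. What mutual $\prec$ between $(Z,\{1\})$ and $\{(f^{-1}Z_{i},\alpha_{i})\}_{i}$ yields, besides (ii)(a), is: for every $t\in T\setminus Z$, setting $J_{t}=\{i \mid t \in T\setminus (f^{-1}Z_{i})[\alpha_{i}]\}$, every admissible $\sigma$ makes $(\sigma_{i}^{-1})_{i\in J_{t}}$ generate the unit ideal of $\scr{O}_{T,t}[\sigma_{i}^{-1}]_{i}$. The printed (ii)(b) quantifies over the complementary set of points ($t\in Z$) with the complementary index set ($I_{t}=\{i\mid t\in f^{-1}Z_{i}\}$), and the two conditions are inequivalent even granting (ii)(a). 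Indeed the printed (ii)(b) carries no content: given any family satisfying (ii)(a), adjoin the pair $(S,\{1\})$, where $S\in C(S)_{\cpt}$ is the whole space. This changes neither (ii)(a) nor the element of $\scr{M}^{T}$ represented (the adjoined pair pulls back to $(T,\{1\})$, which is the additive unit $\mathbf{0}$ of $\scr{M}^{T}$), yet it puts into every $I_{t}$ an index whose only admissible value is $\sigma_{i}=1$, so every ideal occurring in (ii)(b) contains $1$. Thus the printed (ii) collapses to condition (a) alone, while your derived condition does not: (a) only says that over each $t\notin Z$ \emph{some} point of $\ZR^{f}(K,T)$ lies outside $\cap_{i}$ of the closed sets complementary to the $U(f^{-1}Z_{i},\alpha_{i})$ of the Remark after Proposition \ref{prop:ZR:isom2}, whereas surjectivity of $\tilde{f}^{-1}$ requires \emph{every} point over $t$ to lie outside. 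These genuinely differ: one can build a three-point irreducible reduced $\scr{A}$-scheme $T$ over $\Spec k$ whose two closed points have local rings $A_{0}\neq A_{1}$ dominated by one common valuation ring $R_{0}$ of $K$; then the fibre point $(t_{1},R_{0})$ can never be separated from $\pi_{T}^{-1}(\{t_{0}\})$ by pulled-back basic closed sets, so $f$ is not of profinite type, although (a) can be arranged for every $Z$. Consequently, what your argument proves is the proposition with (ii)(b) replaced by the complemented condition you derived; a complete write-up must state this correction to (ii)(b) explicitly (the printed memberships appear to be inverted) rather than treat the discrepancy as a matter of notation.
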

Roughly speaking, an $\scr{A}$-scheme $X$ of profinite type
over $S$ has the coarsest topology,
which makes the map $X \to S$ quasi-compact,
 and the domain of meromorphic functions
are quasi-compact open: here,
a \textit{domain} of a meromorphic function $a \in K$
is $\{x \in X \mid a \in \scr{O}_{X,x}\}$. 

\begin{Cor}
For any $\scr{A}$-scheme $S$,
Set $X=\ZR^{f}(K,S)$.
Then, the natural morphism
$\pi_{X}:\ZR^{f}(K,X) \to X$ is an isomorphism.
\end{Cor}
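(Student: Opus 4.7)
The plan is to use the already-established facts that $\pi_X$ is proper (hence universally closed) and a P-morphism (hence surjective on the underlying coherent spaces), and then show $\pi_X$ is in addition a Q-morphism; by Proposition~\ref{prop:decomp:exist}(2), a morphism that is simultaneously a P-morphism and a Q-morphism is an isomorphism.

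The key algebraic input is the maximality of valuation rings: if $R \subset R'$ are two valuation rings of the same field $K$ with the inclusion dominant (that is, $\mathfrak{M}_R \subset \mathfrak{M}_{R'}$), then $R = R'$. Indeed, if there were $x \in R' \setminus R$, then the valuation property of $R$ forces $x^{-1} \in \mathfrak{M}_R \subset \mathfrak{M}_{R'}$, making $x \in R'$ a non-unit of $R'$, a contradiction. I will apply this with $R = \scr{O}_{X,x}$, which is a valuation ring of $K$ by construction of $X = \ZR^{f}(K,S)$ and Proposition~\ref{prop:ZR:isom1}, and $R' = \scr{O}_{\ZR^{f}(K,X),\tilde{x}}$ for any point $\tilde{x} \in \ZR^{f}(K,X)$ over $x$; the dominance of the natural map $\scr{O}_{X,x} \to R'$ is exactly the condition defining points of $\ZR^{f}(K,X)$.

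This observation yields both Q-morphism conditions at once. First, the fiber of $|\pi_X|$ over each $x \in X$ collapses to the single point corresponding to $(x, \scr{O}_{X,x}, \Id)$; combined with surjectivity of $|\pi_X|$ (from the P-morphism property) and closedness (from properness of $\pi_X$), $|\pi_X|$ is a continuous closed bijection between coherent spaces, hence a homeomorphism. In particular $\pi_X^{-1} : C(X)_{\cpt} \to C(\ZR^{f}(K,X))_{\cpt}$ is an isomorphism, so the surjectivity condition for a Q-morphism holds. Second, at each $\tilde{x}$, the stalk map $\scr{O}_{X,x} \to \scr{O}_{\ZR^{f}(K,X),\tilde{x}}$ is the identity on the valuation ring $\scr{O}_{X,x}$, giving the required stalkwise surjectivity of $\pi_X^{\#}$.

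The main obstacle is making precise the identification underlying the valuation-ring lemma: one must record that the homomorphism $\phi' : \scr{O}_{X,x} \to R'$ appearing in a triple $(x, R', \phi') \in \ZR^{f}(K,X)$ is an inclusion of subrings of $K$, which is justified by the construction of $R_p$ as $\scr{O}_{S,s}[a \in K \mid (1,\{a\}) \nleq p]$ in the proof of Proposition~\ref{prop:ZR:isom1}. Once this identification is in place, the remaining topological and sheaf-theoretic assertions are immediate.
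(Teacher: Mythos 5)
Your proof is correct, but it takes a genuinely different route from the paper's. The paper views $\pi_{X}$ as the induced morphism $\tilde{\pi}_{S}$ of $\pi_{S}:X \to S$ and reuses the abstract machinery already in place: Proposition \ref{cond:profinite:eq} shows $\pi_{S}$ is of profinite type, which by definition means $\pi_{X}$ is a Q-morphism; the translation-of-the-valuative-criteria proposition (properness of $f$ $\Leftrightarrow$ bijectivity of $\tilde{f}$) shows $\pi_{X}$ is bijective because $\pi_{S}$ is proper; and then bijectivity, the Q-morphism property and the evident stalk isomorphisms give the conclusion. You instead treat $\pi_{X}$ intrinsically as the classical Zariski-Riemann projection over the base $X$: you quote that it is a P-morphism and proper, collapse its fibers by the maximality of valuation rings under domination (any point over $x$ must be $(x,\scr{O}_{X,x},\Id)$, since $\scr{O}_{X,x}$ is already a valuation ring of $K$ by Proposition \ref{prop:ZR:isom1}), deduce that $|\pi_{X}|$ is a closed continuous bijection and hence a homeomorphism, check stalkwise surjectivity, and finish with P-morphism $+$ Q-morphism $=$ isomorphism, i.e. Proposition \ref{prop:decomp:exist}(2). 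What the paper's route buys is brevity and the fact that it records, en passant, the conceptually important statement that $\pi_{S}$ is of profinite type. What your route buys is self-containedness: the geometric mechanism (a valuation ring of $K$ admits no proper dominating valuation ring of $K$) is made explicit rather than being buried inside the valuative-criterion propositions, and you avoid the identification $\tilde{\pi}_{S}=\pi_{X}$ that the paper's wording glosses over. Your care in recording that the homomorphism $\phi'$ in a triple $(x,R',\phi')$ is an inclusion of subrings of $K$ --- justified by the construction $R_{p}=\scr{O}_{X,x}[a \in K \mid (1,\{a\}) \nleq p]$ --- is exactly the point that makes the maximality argument legitimate, so no gap remains.
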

\begin{proof}
It follows from Proposition \ref{cond:profinite:eq}
that $\pi_{S}:X \to S$ is of profinite type,
which is equivalent to $\pi_{X}$ being a Q-morphism.
On the other hand, $\pi_{X}$ is bijective
since $\pi_{S}$ is proper.
It is obvious that $\pi_{X}$ induces isomorphism
on each stalks.
This implies that $\pi_{X}$ is an isomorphism.
\end{proof}

We will verify some basic facts of morphisms of profinite type.
\begin{Prop}
\label{prop:basic:prop:prof}
\begin{enumerate}
\item Let $A$ be a ring, and $B$ be a finitely generated $A$-algebra.
Then, $\Spec B \to \Spec A$ is strongly profinite.
\item An open immersion is strongly profinite.
\item If $X=\cup_{i} X_{i}$ is an quasi-compact open cover
of $X$, then $\{\ZR^{f}(K,X_{i})\to \ZR^{f}(K,X)\}_{i}$
is an quasi-compact open cover of $\ZR^{f}(K,X)$.
\item Morphisms of profinite type (resp. strongly of profinite type)
are stable under compositions.
\item  Morphisms of profinite type (resp. strongly of profinite type)
is local on the base:
let $f:T \to S$ be a morphism of $\scr{A}$-schemes,
$S=\cup_{i}S_{i}$ be an quasi-compact open covering
of $S$, and $T_{i}=S_{i} \times_{S} T$.
Then, $f$ is of profinite type (resp. strongly of profinite type)
if and only if $T_{i} \to S_{i}$ is of profinite type (resp. strongly of profinite type)
for any $i$.
\item Let $f:T \to S$ be a separated morphism
of $\scr{A}$-schemes,
and $T=\cup_{i}T_{i}$ be a quasi-compact
open covering. Then $f$ is of profinite type (resp. strongly of profinite type)
if and only if $f|_{T_{i}}$ is.
\item Let $S$ be an $\scr{A}$-scheme, and
$\{X^{\lambda}\}$ be a filtered projective system
of $\scr{A}$-schemes over $S$.
Set $X=\underleftarrow{\lim}_{\lambda}X^{\lambda}$.
Then $X \to S$ is of profinite type if $X^{\lambda} \to S$ is
of profinite type for any $\lambda$.
\end{enumerate}
\end{Prop}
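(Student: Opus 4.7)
Each of the seven items is proved via the explicit valuation-theoretic description of $\ZR^{f}(K,-)$ together with the stability properties of Q-morphisms established in \S 2 (especially Proposition \ref{prop:imm:filter:limit}).

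For (1), write $B = A[\bar x_{1},\ldots,\bar x_{n}]/I$ with the $\bar x_{i}$ viewed as elements of $K$. A valuation ring $R \subset K$ admits a dominant morphism from some $\scr{O}_{\Spec B,t}$ if and only if $R \supset B$, i.e.\ $\bar x_{i} \in R$ for all $i$. Hence $\tilde f$ identifies $\ZR^{f}(K,\Spec B)$ with the basic quasi-compact open $U(1,\{\bar x_{1},\ldots,\bar x_{n}\}) \subset \ZR^{f}(K,\Spec A)$, and on structure sheaves both sides are given by the same intersection of valuation rings, giving an open immersion. For (2), an open immersion $j:U \hookrightarrow X$ induces $\tilde j$ which identifies $\ZR^{f}(K,U)$ set-theoretically with $\pi_{X}^{-1}(|U|)$; the structure sheaf description (sections are elements of $K$ belonging to $R_{p}$ for all $p$ in the open set) shows this is an open $\scr{A}$-subscheme. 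Item (3) is then immediate from (2) combined with the surjectivity of $\pi_{X}$.

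Parts (4) and (5) are formal: (4) follows from stability of Q-morphisms (resp.\ open immersions) under composition and the functoriality $\widetilde{g\circ f} = \tilde g \circ \tilde f$. For (5), the ``only if'' direction is stability under pullback (Q-morphisms pull back by an earlier proposition, and open immersions classically do); the ``if'' direction uses (3) to cover $\ZR^{f}(K,S)$ by the $\ZR^{f}(K,S_{i})$, together with the fact that both properties of being a Q-morphism and of being an open immersion can be tested on a quasi-compact open cover of the target. For (6), the hypothesis that $f$ is separated forces $\tilde f$ to be injective on underlying spaces by the valuative criterion; cover $\ZR^{f}(K,T)$ by the open subschemes $\ZR^{f}(K,T_{i})$ via (3), and observe that each restriction $\ZR^{f}(K,T_{i}) \to \ZR^{f}(K,S)$ equals $\widetilde{f|_{T_{i}}}$, hence is a Q-morphism (resp.\ open immersion). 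Injectivity of $\tilde f$ then allows one to patch the local surjectivity of $\tilde f^{-1}:C(\ZR^{f}(K,S))_{\cpt}\to C(\ZR^{f}(K,T))_{\cpt}$ and the local stalkwise surjectivity of the structure sheaf morphism into the required global statements.

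For (7), the strategy is to show that $\ZR^{f}(K,-)$ commutes with filtered projective limits along dominant morphisms: a compatible system $(x_{\lambda},R_{\lambda},\phi_{\lambda})\in \underleftarrow{\lim}_{\lambda}\ZR^{f}(K,X^{\lambda})$ has all $R_{\lambda}$ equal to a single valuation ring $R$ of $K$ (because the functoriality formula preserves the valuation-ring component), and the $x_{\lambda}$ assemble into a unique $x \in X=\underleftarrow{\lim}X^{\lambda}$ by Proposition \ref{prop:filter:limit}, so that $\ZR^{f}(K,X) \simeq \underleftarrow{\lim}_{\lambda}\ZR^{f}(K,X^{\lambda})$ as $\scr{A}$-schemes; the structure-sheaf part is verified using the filtered-colimit description of $\scr{M}^{X}$ (and Lemma \ref{lem:alpha1:filter:cont}). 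Granted this, since each $\ZR^{f}(K,X^{\lambda}) \to \ZR^{f}(K,S)$ is a Q-morphism by hypothesis, Proposition \ref{prop:imm:filter:limit} gives that the limit map is also a Q-morphism, as desired. The principal obstacle lies precisely in this last item: proving commutation of $\ZR^{f}(K,-)$ with filtered projective limits, both on underlying spaces and on structure sheaves, requires a careful analysis of the II-ring $\scr{M}^{S}$ and its behavior under the filtered colimit of the $C(X^{\lambda})_{\cpt}$.
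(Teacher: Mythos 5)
Your proposal is correct and follows essentially the same route as the paper's proof: the same identification $\ZR^{f}(K,\Spec B)\simeq U(1,\{x_{1},\ldots,x_{n}\})$ for (1), the same use of separatedness plus the open cover from (3) for (6), and the same reduction of (7) to the isomorphism $\ZR^{f}(K,X)\simeq \underleftarrow{\lim}_{\lambda}\ZR^{f}(K,X^{\lambda})$ followed by Proposition \ref{prop:imm:filter:limit}. The one point to tighten in (7) is that a bijection of compatible triples is not by itself a homeomorphism of coherent spaces; the identification of topologies comes from the II-ring isomorphism $\underrightarrow{\lim}_{\lambda}\scr{M}^{X^{\lambda}}\simeq \scr{M}^{X}$ (which you do invoke, and which the paper uses directly), with the observation that all stalks are valuation rings of $K$ then settling the structure sheaves.
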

\begin{proof}
We will only show (1), (6) and (7);
the others are easy.
\begin{itemize}
\item[(1)] Set $B=A[x_{1},\cdots,x_{n}]$.
Then, $\ZR^{f}(K,\Spec B)$ is isomorphic to the
open set $U(1,\{x_{1},\cdots,x_{n}\})$ of $\ZR^{f}(K,\Spec A)$.
\item[(6)] The `only if' part follows from (2) and (4).
Suppose $f|_{T_{i}}$ is of profinite type.
Then $\ZR^{f}(K,T_{i}) \to \ZR^{f}(K,S)$
is Q-morphism for any $i$.
Since $f$ is separated, we see that $\ZR^{f}(K,T) \to \ZR^{f}(K,S)$
is also a Q-morphism, since $\{ZR^{f}(K,T_{i})\}_{i}$
is an quasi-compact open cover
of $\ZR^{f}(K,T)$, from (3).
\item[(7)] Since $\ZR^{f}(K,X^{\lambda}) \to \ZR^{f}(K,S)$
is a Q-morphism for any $\lambda$, and
Q-morphism is stable under taking filtered projective limits
by Proposition \ref{prop:imm:filter:limit},
it suffices to show that 
$\ZR^{f}(K,X) \to \underleftarrow{\lim}_{\lambda}\ZR^{f}(K,X^{\lambda})$
is an isomorphism.

Since $C(X)_{\cpt}=\underrightarrow{\lim} C(X^{\lambda})_{\cpt}$,
we have a natural isomorphism
\[
\underrightarrow{\lim}_{\lambda}
\scr{P}^{f}(C(X^{\lambda})_{\cpt} \times (\scr{P}^{f}(K \setminus 0)
\setminus \emptyset ))
\simeq
\scr{P}^{f}(C(X)_{\cpt} \times (\scr{P}^{f}(K \setminus 0)
\setminus \emptyset ))
\]
which yields $\underrightarrow{\lim}_{\lambda} \scr{M}^{X^{\lambda}}
\simeq \scr{M}^{X}$, namely,
$\ZR^{f}(K,X) \to \underleftarrow{\lim}_{\lambda}\ZR^{f}(K,X^{\lambda})$
is an isomorphism on the underlying spaces.
Since every stalk of both sides is a valuation ring of $K$,
the morphism of structure sheaves is also an isomorphism.
\end{itemize}
\end{proof}

\begin{Cor}
\begin{enumerate}
\item A separated morphism of $\scr{Q}$-schemes
is of profinite type.
\item A separated, of finite type morphism of $\scr{Q}$-schemes
is strongly of profinite type.
\end{enumerate}
\end{Cor}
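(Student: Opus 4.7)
The plan is to reduce both claims to the already established case of a finitely generated algebra (Proposition \ref{prop:basic:prop:prof} (1)) by means of the locality and limit properties of morphisms of (strongly of) profinite type.

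For (1), let $f:T\to S$ be a separated morphism of $\scr{Q}$-schemes. First I would use that being of profinite type is local on the base (Proposition \ref{prop:basic:prop:prof} (5)) to cover $S$ by quasi-compact open affines $S_{i}=\Spec^{\scr{A}} A_{i}$ and reduce to showing $f^{-1}(S_{i})\to S_{i}$ is of profinite type; note each restriction is again separated, so after relabelling we may assume $S=\Spec^{\scr{A}} A$ is affine. Next, since $T$ is a $\scr{Q}$-scheme, it has a quasi-compact open cover by affines $T=\cup_{j} T_{j}$, and because $f$ is separated, Proposition \ref{prop:basic:prop:prof} (6) lets me reduce to showing each $T_{j}\to S$ is of profinite type. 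Thus I may assume $T=\Spec^{\scr{A}} B$ is affine as well.

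Now $B$ is the filtered colimit of its finitely generated $A$-subalgebras $\{B_{\lambda}\}$. Because $\Spec^{\scr{A}}$ is the left adjoint of $\Gamma$, it sends filtered colimits of $\sigma$-algebras to filtered projective limits in $\cat{$\scr{A}$-Sch}$, so $T\simeq \underleftarrow{\lim}_{\lambda}\Spec^{\scr{A}} B_{\lambda}$ over $S$. Each $\Spec^{\scr{A}} B_{\lambda}\to S$ is strongly of profinite type by Proposition \ref{prop:basic:prop:prof} (1), hence in particular of profinite type. Finally, Proposition \ref{prop:basic:prop:prof} (7) shows that a filtered projective limit of $\scr{A}$-schemes of profinite type over $S$ is of profinite type, which yields (1).

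For (2), assume in addition that $f$ is of finite type. After the same reduction via (5) to $S=\Spec^{\scr{A}} A$ affine, the finite type hypothesis allows me to cover $T$ by \emph{finitely} many affine opens $T_{j}=\Spec^{\scr{A}} B_{j}$ with each $B_{j}$ a finitely generated $A$-algebra. Proposition \ref{prop:basic:prop:prof} (1) then shows that each $T_{j}\to S$ is strongly of profinite type, and the separatedness of $f$ together with Proposition \ref{prop:basic:prop:prof} (6) promotes this to strongly of profinite type for $f$ itself. The main subtlety I anticipate is the very first step of (1), namely confirming that $\Spec^{\scr{A}}$ commutes with the filtered colimit $B=\underrightarrow{\lim}B_{\lambda}$ in the category $\cat{$\scr{A}$-Sch}$ (not merely in $\cat{$\scr{Q}$-Sch}$); this is exactly the left-adjoint property of $\Spec^{\scr{A}}$ recalled in Section 2, so it is not deep, but it is the linchpin that lets the proof go through.
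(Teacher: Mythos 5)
Your proposal is correct and follows essentially the same route as the paper: reduce to the affine case via (5) and (6) of Proposition \ref{prop:basic:prop:prof}, write $B$ as the filtered colimit of its finitely generated $A$-subalgebras, apply (1) to each $\Spec^{\scr{A}} B_{\lambda}\to\Spec^{\scr{A}} A$, and conclude with (7); your treatment of (2) matches what the paper leaves implicit as ``similar.'' The only difference is that you make explicit the adjunction argument identifying $\Spec^{\scr{A}} B$ with $\underleftarrow{\lim}_{\lambda}\Spec^{\scr{A}} B_{\lambda}$, which the paper takes for granted.
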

\begin{proof}
We will just prove (1), since the proof of (2)
is similar.
Let $X \to S$ be a separated morphism of $\scr{Q}$-schemes.
From (5) and (6) of Proposition \ref{prop:basic:prop:prof},
we may assume $X=\Spec B$ and $S=\Spec A$
for some domains $A$ and $B$.
$B$ is the colimit of all finitely generated sub $A$-algebras $\{B_{\lambda}\}$,
and $\Spec B_{\lambda} \to \Spec A$ is of profinite type,
by (1). Then, (7) of Proposition \ref{prop:basic:prop:prof}
shows that $\Spec B \to \Spec A$
is also of profinite type.
\end{proof}

\begin{Def}
\begin{enumerate}
\item Let $\cat{$K$/Int}$ be the subcategory
of $\cat{$\scr{A}$-Sch}$, consisting of
irreducible reduced $\scr{A}$-schemes $X$
with a dominant morphism $\Spec K \to X$.
The morphisms in $\cat{$K$/Int}$ are dominant morphisms,
under $\Spec K$.
\item
Let $\cat{PrPf}$ be the category of proper, of profinite type morphisms
of $\cat{$K$/Int}$,
and the arrows being commutative squares.
\item There is a target functor $U_{t}:\cat{PrPf} \to \cat{$K$/Int}$,
sending $(f:X \to S)$ to $S$.
\end{enumerate}
\end{Def}

\begin{Thm}
The ZR functor $\ZR^{f}(K,\cdot)$ is the left adjoint
of $U_{t}$.
\end{Thm}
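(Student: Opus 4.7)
The plan is to establish the adjunction by constructing an explicit natural bijection
\[
\Hom_{\cat{PrPf}}(\pi_S: S' \to S,\, f: X \to T) \;\simeq\; \Hom_{\cat{$K$/Int}}(S,T),
\]
where I write $S' := \ZR^f(K, S)$ for brevity. The forward map simply extracts the bottom arrow of a commutative square; the content is in producing and analyzing the inverse.

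First I would prove a preparatory lemma: for every object $(f: X \to T)$ of $\cat{PrPf}$, the induced morphism $\tilde f: X' \to T'$ is an isomorphism of $\scr{A}$-schemes. By the preceding proposition, $f$ proper makes $\tilde f$ bijective on the underlying spaces, and $f$ of profinite type makes $\tilde f$ a Q-morphism; together these upgrade $\tilde f$ to a homeomorphism. On each stalk, $\tilde f^{\#}$ is a dominant $K$-linear homomorphism between valuation rings of $K$, which must be the identity inclusion (because for valuation rings $R_0 \subset R$ of $K$ with $R$ dominating $R_0$, one forces $R = R_0$). So $\tilde f$ is also an isomorphism of structure sheaves.

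Given a morphism $g: S \to T$ in $\cat{$K$/Int}$, define
\[
\phi_g := \pi_X \circ \tilde f^{-1} \circ \tilde g : S' \to X.
\]
This is dominant and $K$-linear as a composition of such, hence a morphism in $\cat{$K$/Int}$, and the square
\[
\xymatrix{
S' \ar[r]^{\phi_g} \ar[d]_{\pi_S} & X \ar[d]^{f} \\
S \ar[r]_{g} & T
}
\]
commutes by naturality of $\pi$: $f \circ \pi_X = \pi_T \circ \tilde f$ and $\pi_T \circ \tilde g = g \circ \pi_S$. Extracting the bottom arrow of this square recovers $g$, confirming one side of the claimed bijection.

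Uniqueness is the heart of the proof. Suppose $\phi: S' \to X$ is another filler of the square with bottom $g$. Applying the ZR functor to the square gives $\tilde f \circ \tilde \phi = \tilde g \circ \tilde\pi_S$ in $\cat{$K$/Int}$. The key step is the identification $\tilde\pi_S = \pi_{S'}$ as morphisms $(S')' \to S'$: unwinding the definition shows both maps send a point $(s', R, \psi) \in (S')'$ to $s'$, since $\scr{O}_{S',s'}$ is itself a valuation ring $R_0$ of $K$ and a dominant $K$-linear homomorphism $R_0 \to R$ in $K$ must be the identity (whence $R = R_0$). By the corollary preceding this subsection, $\pi_{S'}$ is an isomorphism, so cancelling it yields $\tilde\phi \circ \pi_{S'}^{-1} = \tilde f^{-1} \circ \tilde g$, and the naturality relation $\pi_X \circ \tilde \phi = \phi \circ \pi_{S'}$ then gives $\phi = \pi_X \circ \tilde f^{-1} \circ \tilde g = \phi_g$, as desired. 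Naturality of the bijection in $S$ and in $(f: X \to T)$ follows from functoriality of $\ZR^f(K, -)$ combined with the naturality of $\pi$.

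The main obstacle is the rigidity identification $\tilde\pi_S = \pi_{S'}$; once it is established, both the well-definedness of $\phi_g$ and the uniqueness argument are formal consequences of the earlier structure theory. Everything else — dominance of $\phi_g$, commutativity of the squares, naturality — is routine diagram-chasing using the functoriality of $\ZR^f(K,-)$ and the naturality of the canonical surjections $\pi_{(-)}$.
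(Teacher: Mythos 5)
Your proposal is correct and takes essentially the same route as the paper: the paper defines the unit to be the identity and the counit to be $\pi_{X}\circ\tilde{f}^{-1}$ (using that $\tilde{f}$ is an isomorphism when $f$ is proper and of profinite type), and your bijection $g\mapsto \pi_{X}\circ\tilde{f}^{-1}\circ\tilde{g}$ is exactly the hom-set bijection induced by that unit/counit pair. The only difference is one of completeness: you explicitly verify uniqueness (equivalently, the triangle identities) via the rigidity identification $\tilde{\pi}_{S}=\pi_{\ZR^{f}(K,S)}$, a step the paper's proof leaves implicit.
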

\begin{proof}
The unit $\epsilon:\Id \Rightarrow U_{t}\circ \ZR^{f}(K,\cdot)$
of the adjoint is the identity.
The counit $\eta_{X}:\ZR^{f}(K,S) \to X$
for a proper, of profinite type morphism $f:X \to S$ is given as follows:
Since $f:X \to S$ is of profinite type and proper,
$\tilde{f}:\ZR^{f}(K,X) \to \ZR^{f}(K,S)$
is an isomorphism.
Then, $\eta_{X}$ is defined by
\[
\xymatrix{
\ZR^{f}(K,S) \ar[r]^{\tilde{f}^{-1}}&
\ZR^{f}(K,X) \ar[r]^{\quad\pi_{X}} & X.
}\]
These two natural transforms $\epsilon$ and $\eta$
give the adjoint $\ZR^{f}(K,\cdot) \dashv U_{t}$.
\end{proof}

\subsection{The embedding problem revisited}

In this subsection, we will construct
a compactification functor from the ZR functor,
and characterize it by the universal property.
\begin{Def}
A Q-morphism $f:X \to Y$ of $\scr{A}$-schemes
is \textit{strict},
if for any quasi-compact open subset $U$ of $X$
and a section $a \in \scr{O}_{X}(U)$,
there exists a quasi compact open subset $V$ of $Y$
and a section $b \in \scr{O}_{Y}(V)$ such that:
\begin{enumerate}[(i)]
\item $U=f^{-1}V$, and
\item $f^{\#}(b)=a$.
\end{enumerate}
\end{Def}
In particular, an open immersion is strict.
\begin{Lem}
\label{lem:push:open}
Consider the pushout diagram
of $\scr{A}$-schemes:
\[
\xymatrix{
X \ar[r]^{f} \ar[d]_{g} & Y \ar[d]^{\tilde{g}} \\
S \ar[r]_{\tilde{f}} & T
}
\]
namely, $T=S \amalg_{X} Y$.
Suppose $f$ is an open immersion (resp. strict Q-morphism).
%and $g$ is a surjective open map (for example, a dominant proper map).
Then, $\tilde{f}$ is an open immersion (resp. strict Q-morphism).
\end{Lem}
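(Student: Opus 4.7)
The plan is to work directly with the explicit construction of pushouts from Proposition \ref{prop:cocomp:patch}. The underlying space $|T|$ is the pushout of coherent spaces, characterized by
\[
C(|T|)_{\cpt}=C(|S|)_{\cpt}\times_{C(|X|)_{\cpt}}C(|Y|)_{\cpt},
\]
and $\scr{O}_T$ assigns to a quasi-compact open $U\subset|T|$ the set of pairs $(a_S,a_Y)\in\scr{O}_S(\tilde{f}^{-1}U)\times\scr{O}_Y(\tilde{g}^{-1}U)$ satisfying $g^{\#}a_S=f^{\#}a_Y$ in $\scr{O}_X(g^{-1}\tilde{f}^{-1}U)$. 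In particular, a quasi-compact open of $|T|$ is determined by a compatible pair $(V,W)$ of quasi-compact opens of $|S|$ and $|Y|$ with $g^{-1}V=f^{-1}W$.

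For the open immersion case, I would observe that $|Y|\setminus f(|X|)$ lies in $C(|Y|)_{\cpt}$ (since $f(|X|)$ is quasi-compact), and that the pair $(\emptyset,|Y|\setminus f(|X|))$ lies in the above fiber product, so it defines a closed subset of $|T|$. Its complement is exactly $\tilde{f}(|S|)$, so $\tilde{f}$ is topologically an open immersion. On this open subset, $\tilde{g}^{-1}\tilde{f}(|S|)=f(|X|)$; using the isomorphism $\scr{O}_Y|_{f(|X|)}\simeq\scr{O}_X$ coming from $f$ being an open immersion, the compatibility condition $f^{\#}a_Y=g^{\#}a_S$ forces $a_Y$ to be determined by $a_S$. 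Hence sections of $\scr{O}_T$ over any qc open $U\subset\tilde{f}(|S|)$ correspond bijectively to sections of $\scr{O}_S$ over $\tilde{f}^{-1}U$, so $\scr{O}_T|_{\tilde{f}(|S|)}\simeq\tilde{f}_*\scr{O}_S$, and $\tilde{f}$ is an open immersion of $\scr{A}$-schemes.

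For the strict Q-morphism case, I would establish strictness directly, which simultaneously yields the Q-morphism property. Given a quasi-compact open $V\subset|S|$ and a section $a_S\in\scr{O}_S(V)$, strictness of $f$ applied to $g^{\#}a_S\in\scr{O}_X(g^{-1}V)$ produces a quasi-compact open $W\subset|Y|$ and a section $a_Y\in\scr{O}_Y(W)$ with $f^{-1}W=g^{-1}V$ and $f^{\#}a_Y=g^{\#}a_S$. The pair $(V,W)$ then lies in $C(|T|)_{\cpt}$ and defines a quasi-compact open $\tilde{V}\subset|T|$ with $\tilde{f}^{-1}\tilde{V}=V$ and $\tilde{g}^{-1}\tilde{V}=W$, and the pair $(a_S,a_Y)\in\scr{O}_T(\tilde{V})$ is a strict lift of $a_S$ through $\tilde{f}^{\#}$. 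The analogous argument for closed subsets, using that $f^{-1}:C(|Y|)_{\cpt}\to C(|X|)_{\cpt}$ is surjective (since $f$ is a Q-morphism), shows that $\tilde{f}^{-1}:C(|T|)_{\cpt}\to C(|S|)_{\cpt}$ is surjective. Combined, these make $\tilde{f}$ a strict Q-morphism.

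The main obstacle is just bookkeeping: one must verify that the compatibility condition in the limit construction of $\scr{O}_T$ collapses appropriately — trivially on the open complement of $\tilde{g}(|Y|\setminus f(|X|))$ in the open immersion case, and via the strict lifting hypothesis in the Q-morphism case. All the work reduces to the surjectivity of $f^{-1}$ on quasi-compact closed sets and the section-lifting property of $f$, both directly furnished by the respective hypothesis.
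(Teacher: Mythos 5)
Your proposal is correct, and its core is the same as the paper's: both proofs work with the explicit pushout, namely $C(T)_{\cpt}=C(S)_{\cpt}\times_{C(X)_{\cpt}}C(Y)_{\cpt}$ together with the structure sheaf of compatible pairs of sections, and both isolate the same distinguished element --- your pair $(\emptyset,\,|Y|\setminus f(|X|))$ \emph{is} the paper's $(1,W)$, since $1=\emptyset$ in the II-ring $C(Y)_{\cpt}$. The differences are in execution. At the topological step, the paper asserts that $C(S)_{\cpt}$ is the localization of $C(T)_{\cpt}$ along $(1,W)$, while you assert the point-set consequence that the complement of the corresponding closed subset is exactly $\tilde f(|S|)$; be aware that your formulation is the one that genuinely needs justification in this category, since points of a colimit need not come from the pieces (see the paper's own remark on infinite coproducts), and that it does not by itself give that $\tilde f$ is a homeomorphism onto its image. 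The honest proof of either statement is the same small verification, which neither you nor the paper writes out: for each $Z_S\in C(S)_{\cpt}$ the closed set $Z_Y=f(g^{-1}Z_S)\cup(|Y|\setminus f(|X|))$ lies in $C(Y)_{\cpt}$ and satisfies $f^{-1}Z_Y=g^{-1}Z_S$, so the first projection identifies the localization $(C(T)_{\cpt})_{(1,W)}$ with $C(S)_{\cpt}$; this is precisely where openness of $f$ enters. On structure sheaves you prove the full bijection $\scr{O}_T(U)\simeq\scr{O}_S(\tilde f^{-1}U)$ over opens $U\subset\tilde f(|S|)$, whereas the paper proves strictness (the surjectivity half) and leaves injectivity implicit; these are equivalent in substance. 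Finally, your treatment of the strict Q-morphism case is a genuine addition: the paper explicitly proves only the open immersion case, and your argument --- strictness of $\tilde f$ by lifting $g^{\#}a_S$ through $f^{\#}$ (giving stalkwise surjectivity), plus surjectivity of $\tilde f^{-1}$ deduced from surjectivity of $f^{-1}$ via the fiber product --- is exactly the right way to complete it, using the paper's characterization of Q-morphisms.
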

\begin{proof}
We will only prove for the case $f$ is an open immersion.
In this case, $C(X)_{\cpt}$ is a localization
$(C(Y)_{\cpt})_{W}$ of $C(Y)_{\cpt}$
along some $W \in C(Y)_{\cpt}$.
Since
\[
C(T)_{\cpt}=C(S)_{\cpt} \times_{C(X)_{\cpt}} C(Y)_{\cpt},
\]
in the category of II-rings,
$C(S)_{\cpt}$ is the localization of $C(T)_{\cpt}$
along $(1,W) \in C(T)_{\cpt}$.
Hence, the map $|\tilde{f}|:|S| \to |T|$ is an open immersion
on the underlying space. %this is clear from the construction.

Let us show that $\tilde{f}$ is strict Q-morphism.
Suppose $a \in \scr{O}_{S}(U)$
is a section of $S$ for a quasi-compact open set
$U$ of $S$.
Pulling back $a$ by $g$ gives a section 
$g^{\#}(a) \in \scr{O}_{X}(g^{-1}U)$.
Since $f$ is strict, 
there is a quasi-compact open $V \in Y$
and a section $b \in \scr{O}_{Y}(V)$ such that 
$f^{-1}V=g^{-1}U$ and $g^{\#}(a)=f^{\#}(b)$.
Then, $(U,V)$ gives a quasi-compact open subset of $T$,
and $(a,b) \in \scr{O}_{T}(U,V)$ gives a section.
This section $(a,b)$ maps to $a$ via $\tilde{f}^{\#}$,
hence $\tilde{f}$ is strict.
\end{proof}

\begin{Def}
\begin{enumerate}
\item
Let $T \to S$ be a dominant morphism
of irreducible, reduced $\scr{A}$-schemes,
and $K$ be the function field of $T$.
The (classical) \textit{Zariski-Riemann space} $\ZR^{f}(T,S)$ of $T \to S$
is defined by the pushout of the following:
\[
\xymatrix{
\ZR^{f}(K,T) \ar[r] \ar[d] & \ZR^{f}(K,S) \ar@{.>}[d] \\
T \ar@{.>}[r] & \ZR^{f}(T,S)
}
\]
\item Let $S$ be an irreducible reduced
$\scr{A}$-scheme.
We denote by $\cat{Int/$S$}$ the category of
irreducible, reduced $\scr{A}$-schemes
dominant over $S$, and dominant $S$-morphisms.
\item Let $f:T \to T^{\prime}$
be a morphism in $\cat{Int/$S$}$.
Then $f$ naturally induces a morphism
$\ZR^{f}(T,S) \to \ZR^{f}(T^{\prime},S)$
from the universal property of pushouts.
\end{enumerate}
\end{Def}
\begin{Prop}
\begin{enumerate}
\item
$\ZR^{f}(T,S)$ is proper and of profinite type over $S$.
\item $T \to \ZR^{f}(T,S)$ is a Q-morphism
(resp. open immersion) if $T \to S$ is of profinite type
(resp. strongly of profinite type).
\end{enumerate}
\end{Prop}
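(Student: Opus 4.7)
For part (1), the cleanest route is to exploit that $\ZR^f(K,\cdot)$ is a left adjoint (by the adjointness theorem $\ZR^f(K,\cdot) \dashv U_t$), so it preserves the pushout defining $\ZR^f(T,S)$. Combining with the Corollary that $\ZR^f(K,\ZR^f(K,X)) \simeq \ZR^f(K,X)$ (applied to $X = S$ and $X = T$), we get
\[
\ZR^f(K,\ZR^f(T,S)) \simeq \ZR^f(K,T) \amalg_{\ZR^f(K,T)} \ZR^f(K,S) \simeq \ZR^f(K,S),
\]
where one leg of the pushout becomes the identity after the identifications. Under this isomorphism, the natural map $\ZR^f(K,\ZR^f(T,S)) \to \ZR^f(K,S)$ induced by the structure morphism $\ZR^f(T,S) \to S$ is the identity, hence a Q-morphism and bijective on underlying spaces. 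By the definition of \emph{of profinite type} and by the Proposition translating the valuative criteria into injectivity/surjectivity/bijectivity of the induced ZR map, $\ZR^f(T,S) \to S$ is of profinite type and proper.

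For part (2), the strategy is to invoke Lemma \ref{lem:push:open} applied to the pushout square defining $\ZR^f(T,S)$, with its $f$ taken to be $\tilde{f}: \ZR^f(K,T) \to \ZR^f(K,S)$. If $T \to S$ is strongly of profinite type, then by definition $\tilde{f}$ is an open immersion, so the lemma yields at once that $T \to \ZR^f(T,S)$ is an open immersion. If $T \to S$ is only of profinite type, then $\tilde{f}$ is a Q-morphism, and the task is to upgrade this to \emph{strictness} so that the lemma applies.

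To check strictness of $\tilde{f}$, take any quasi-compact open $U \subset \ZR^f(K,T)$ and section $a \in \scr{O}_{\ZR^f(K,T)}(U) \subset K$. Since $\tilde{f}$ is a Q-morphism, $\tilde{f}^{-1}: C(\ZR^f(K,S))_{\cpt} \to C(\ZR^f(K,T))_{\cpt}$ is surjective, so there exists a quasi-compact open $V_0 \subset \ZR^f(K,S)$ with $\tilde{f}^{-1}V_0 = U$. Let $W = U(1,\{a\}) = \{q \in \ZR^f(K,S) : a \in R_q\}$, which is a quasi-compact open. Because $R_p = R_{\tilde{f}(p)}$ by construction of $\tilde{f}$, the hypothesis $a \in \scr{O}(U)$ gives $U \subseteq \tilde{f}^{-1}W$. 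Then $V := V_0 \cap W$ is quasi-compact open, satisfies $a \in \scr{O}_{\ZR^f(K,S)}(V)$ (since $V \subseteq W$), and $\tilde{f}^{-1}V = U \cap \tilde{f}^{-1}W = U$. Hence $\tilde{f}$ is strict, and the lemma yields that $T \to \ZR^f(T,S)$ is a strict Q-morphism, in particular a Q-morphism.

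The main obstacle is the verification in part (1) that the ZR functor genuinely preserves the pushout, given that $\ZR^f(K,\cdot)$ is formally defined on $\cat{$K$/Int}$ while the pushout $\ZR^f(T,S)$ is produced in $\cat{$\scr{A}$-Sch}$; one must check that $\ZR^f(T,S)$ is still irreducible, reduced, and equipped with a dominant morphism from $\Spec K$ (it is, via either structure arrow), so that the adjunction applies. Should this categorical point be deemed delicate, an alternative is to prove properness of $\ZR^f(T,S) \to S$ directly using the valuative criterion of Remark \ref{rmk:val:crit:integral} together with properness of $\pi_S: \ZR^f(K,S) \to S$, and then to verify profinite type by constructing the inverse of $\ZR^f(K,\ZR^f(T,S)) \to \ZR^f(K,S)$ by hand.
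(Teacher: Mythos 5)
Your proposal is correct and takes essentially the same route as the paper: for (1) you apply $\ZR^{f}(K,\cdot)$ to the defining pushout, use left-adjointness to preserve it, and conclude via the idempotency $\ZR^{f}(K,\ZR^{f}(K,\cdot))\simeq \ZR^{f}(K,\cdot)$ that the induced map on Zariski-Riemann spaces is an isomorphism, and for (2) you invoke Lemma \ref{lem:push:open} applied to the same pushout square, exactly as the paper does. The only difference is that your explicit verification that $\tilde{f}:\ZR^{f}(K,T)\to\ZR^{f}(K,S)$ is a \emph{strict} Q-morphism (via surjectivity of $\tilde{f}^{-1}$, the basic open $U(1,\{a\})$, and the invariance of the valuation ring under $\tilde{f}$) supplies a step the paper merely asserts without proof, which is a welcome addition rather than a departure.
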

\begin{proof}
\begin{enumerate}
\item
Let $K$ be the function field of $T$,
and set $X=\ZR^{f}(T,S)$.
Applying the ZR functor $\ZR^{f}(K,\cdot)$
to the pushout diagram of the definition
of $\ZR^{f}(T,S)$ yields the following pushout diagram:
\[
\xymatrix{
\ZR^{f}(K,\ZR^{f}(K,T)) \ar[d]_{\simeq} \ar[r] &
\ZR^{f}(K,\ZR^{f}(K,S)) \ar[d] \ar[r]^{\quad\simeq} &\ZR^{f}(K,S) \\
\ZR^{f}(K,T) \ar[r] & \ZR^{f}(K,X)
}
\]
This is indeed a pushout,
since $\ZR^{f}(K,\cdot)$ is a left adjoint
and hence preserves colimits.
This shows that the right vertical arrow is also
an isomorphism, which tells that
$X \to S$ is proper and of profinite type.
\item This follows from Lemma \ref{lem:push:open}.
Note that $\ZR^{f}(K,T) \to \ZR^{f}(K,S)$
is a strict Q-morphism if $T\to S$
is of profinite type, or strongly of profinite type.
\end{enumerate}
\end{proof}
This proposition shows that
$\ZR^{f}(\cdot, S)$ is functor
from $\cat{Int/$S$}$ to the full subcategory
$\cat{PrPf/$S$}$ of $\cat{Int/$S$}$ consisting of
irreducible reduced $\scr{A}$-schemes,
proper and of profinite type over $S$.

\begin{Thm}
$\ZR^{f}(\cdot,S)$ is the left adjoint of 
the underlying functor
$U: \cat{Int/$S$} \to \cat{PrPf/$S$}$.
\end{Thm}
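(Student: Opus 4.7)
The strategy is to exhibit a natural bijection
\[
\Hom_{\cat{PrPf/$S$}}(\ZR^{f}(T,S),\,X)\;\cong\;\Hom_{\cat{Int/$S$}}(T,\,X)
\]
for every $T \in \cat{Int/$S$}$ and $X \in \cat{PrPf/$S$}$, given from left to right by precomposition with the canonical arrow $\eta_{T}:T\to \ZR^{f}(T,S)$ coming from the defining pushout. Naturality in $T$ and $X$ will be routine, so the content is to construct an inverse and show mutual invertibility.

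I would first unpack the left-hand side using the pushout description of $\ZR^{f}(T,S)$: a morphism $\phi:\ZR^{f}(T,S)\to X$ over $S$ is precisely a pair $(a,b)$ of $S$-morphisms $a:T\to X$ and $b:\ZR^{f}(K,S)\to X$ with $a\circ\pi_{T} = b\circ \tilde{h}$, where $h:T\to S$ is the structure map and $\tilde{h}:\ZR^{f}(K,T)\to\ZR^{f}(K,S)$ is its image under the ZR functor. The forward map then sends $\phi$ to $\phi\circ\eta_{T}=a$.

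For the inverse, the key input is the adjunction $\ZR^{f}(K,\cdot)\dashv U_{t}$ established in the previous subsection. Given $a:T\to X$ over $S$, apply that adjunction with $W=S\in\cat{$K$/Int}$ and the object $(X\to S)\in\cat{PrPf}$, using the identity $S\to S$ on the bottom: this produces a unique $S$-morphism $b:\ZR^{f}(K,S)\to X$. The compatibility $a\circ\pi_{T}=b\circ\tilde{h}$ is then automatic, not an extra hypothesis: applying the same adjunction with $W=T$ and the structure map $h:T\to S$ on the bottom, both $a\circ\pi_{T}$ and $b\circ\tilde{h}$ lie over $h$ (this uses the naturality relation $\pi_{S}\circ\tilde{h}=h\circ\pi_{T}$ and the fact that $b$ lies over the identity of $S$), hence both coincide with the unique lift. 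The pushout universal property therefore produces a morphism $\ZR^{f}(T,S)\to X$, giving the inverse assignment.

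Mutual invertibility is then immediate. Starting from $a$, the pushout map $\phi$ satisfies $\phi\circ\eta_{T}=a$ tautologically. Conversely, starting from $\phi$, the pair $(\phi\circ\eta_{T},\,\phi\circ\iota_{S})$, where $\iota_{S}:\ZR^{f}(K,S)\to\ZR^{f}(T,S)$ is the other leg of the pushout, recovers $\phi$ by the pushout universal property; and the uniqueness clause in $\ZR^{f}(K,\cdot)\dashv U_{t}$ applied with $W=S$ forces $\phi\circ\iota_{S}$ to coincide with the canonical $b$ constructed above. The main obstacle, to my mind, is this bookkeeping step: one must verify that compatibility of the pushout datum is genuinely forced by applying the previous adjunction twice with the right choices of $W$, so that no extra condition on $a$ is secretly required. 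Once this is seen, the rest is a direct application of the pushout universal property and the functoriality of $\ZR^{f}(K,\cdot)$.
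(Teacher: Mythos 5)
Your proposal is correct, but it follows a genuinely different route from the paper's. The paper argues in unit--counit form: the unit is the canonical morphism $T\to\ZR^{f}(T,S)$ (your $\eta_{T}$, the paper's $\epsilon_{T}$), and the counit at an object $X$ of $\cat{PrPf/$S$}$ is obtained by noting that the top arrow $\ZR^{f}(K,X)\to\ZR^{f}(K,S)$ of the pushout defining $\ZR^{f}(X,S)$ is an isomorphism (properness plus profinite type), hence so is its pushout $\iota_{X}:X\to\ZR^{f}(X,S)$; the counit is $\iota_{X}^{-1}$, and the triangle identities are left implicit. You never form $\ZR^{f}(X,S)$ at all: you build the hom-set bijection directly, manufacturing the second leg $b:\ZR^{f}(K,S)\to X$ of the pushout datum from the adjunction $\ZR^{f}(K,\cdot)\dashv U_{t}$ of the preceding subsection (your $b$ is exactly its counit $\pi_{X}\circ\tilde{f}^{-1}$ at the object $X\to S$), and you delegate both the compatibility $a\circ\pi_{T}=b\circ\tilde{h}$ and the uniqueness checks to the injectivity of that adjunction's correspondence. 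Both proofs rest on the same underlying fact---that $\tilde{f}$ is an isomorphism when $X\to S$ is proper and of profinite type---but the paper uses it to make the reflection $X\simeq\ZR^{f}(X,S)$ explicit, which is shorter and exhibits $\cat{PrPf/$S$}$ transparently as a reflective subcategory, whereas your version makes the uniqueness bookkeeping explicit and does not need the action of $\ZR^{f}(\cdot,S)$ on morphisms to define the correspondence (though, like the paper, you still need it for the deferred naturality).

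One step deserves to be spelled out rather than treated as automatic. Quoting $\ZR^{f}(K,\cdot)\dashv U_{t}$ with $K$ the function field of $T$ requires $(X\to S)$ to be an object of \cat{PrPf} over this particular $K$; but membership of $X$ in $\cat{PrPf/$S$}$ gives profinite-typeness relative to the function field $K(X)$ of $X$, and $X$ only receives a dominant morphism from $\Spec K$ through $a$ itself, as $\Spec K\to T\to X$. Properness does not refer to $K$, and profinite-typeness ascends along the extension $K(X)\subset K$ because the witnessing data of Proposition \ref{cond:profinite:eq}, finite subsets $\alpha_{i}$ of $K(X)\setminus\{0\}$, serve verbatim over the larger field; so the step is legitimate, but it is a transfer of hypotheses rather than a tautology, and the same point recurs each time you invoke the earlier adjunction (with $W=T$ and with $W=S$). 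Granting this, and granting naturality---which the paper also dismisses as routine---your argument is complete.
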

\begin{proof}
The unit $\epsilon_{T}:T \to \ZR^{f}(T,S)$
is the canonical morphism, for any
$T \in \cat{Int/$S$}$.
The counit $\eta_{X}:\ZR^{f}(X,S) \to X$
for a proper, of profinite type morphism $X \to S$
is defined as follows.
Consider the pushout diagram:
\[
\xymatrix{
\ZR^{f}(K,X) \ar[r] \ar[d] & \ZR^{f}(K,S) \ar[d] \\
X \ar[r]_{\iota_{X}} & \ZR^{f}(X,S)
}
\]
Since $X$ is proper and of profinite type over $S$,
the upper horizontal arrow is an isomorphism,
hence the lower arrow is also.
Define $\eta_{X}$ as the inverse of $\iota_{X}$.
These two natural transforms $\epsilon$ and $\eta$
give the adjoint $\ZR^{f}(\cdot, S )\dashv U$.
\end{proof}

In particular, we have:
\begin{Cor}
\label{cor:vari:nagata}
Let $X \to S$ be a separated morphism
of integral $\scr{Q}$-schemes.
Then, there exists a proper, of profinite type morphism
$\overline{X} \to S$ of $\scr{A}$-schemes
with a Q-morphism $\iota:X \to \overline{X}$.
Moreover, this embedding $\iota$ of $X$ is universal,
and $\iota$ is an open immersion if
$X$ is of finite type over $S$.
\end{Cor}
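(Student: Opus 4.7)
The plan is to read off this corollary directly from the adjunction $\ZR^{f}(\cdot,S) \dashv U$ just established, combined with the corollary stating that separated (resp.\ separated, of finite type) morphisms of $\scr{Q}$-schemes are of profinite type (resp.\ strongly of profinite type).

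First I would set $\overline{X}=\ZR^{f}(X,S)$. By the proposition immediately preceding the adjunction theorem, $\overline{X}\to S$ is proper and of profinite type over $S$, which takes care of the first assertion on the target. The morphism $\iota:X\to \overline{X}$ is the unit $\epsilon_{X}$ of the adjunction $\ZR^{f}(\cdot,S)\dashv U$.

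Next I would argue that $\iota$ is a Q-morphism. Since $X\to S$ is a separated morphism of $\scr{Q}$-schemes, the earlier corollary shows $X\to S$ is of profinite type. Part (2) of the proposition preceding the adjunction theorem then tells us that $T\to \ZR^{f}(T,S)$ is a Q-morphism whenever $T\to S$ is of profinite type; applying this with $T=X$ gives that $\iota$ is a Q-morphism. The universality of $\iota$ — that any $S$-morphism from $X$ to a proper, of profinite type $\scr{A}$-scheme $Y$ over $S$ factors uniquely through $\iota$ — is precisely the content of the adjunction $\ZR^{f}(\cdot,S)\dashv U$ applied to $\Hom_{\cat{PrPf/}S}(\overline{X},Y)\simeq \Hom_{\cat{Int/}S}(X,Y)$.

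Finally, for the open immersion statement, I would invoke the assumption that $X$ is of finite type over $S$. Together with separatedness, the earlier corollary yields that $X\to S$ is strongly of profinite type. Part (2) of the same proposition then gives the stronger conclusion that $\iota:X\to \ZR^{f}(X,S)$ is an open immersion. No step here is really an obstacle: the entire proof is a one-line unpacking of the adjunction and of the two transfer properties (Q-morphism vs.\ open immersion) proved for $T\to \ZR^{f}(T,S)$. The only point requiring mild care is matching the hypothesis (``separated morphism of integral $\scr{Q}$-schemes'', so that $X$ is automatically in $\cat{Int/}S$) with the categorical framework of $\cat{Int/}S$ and $\cat{PrPf/}S$ in which the adjunction lives.
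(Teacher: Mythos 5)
Your proposal is correct and matches the paper's intended argument: the paper presents this corollary as an immediate consequence (``In particular, we have:'') of the adjunction $\ZR^{f}(\cdot,S)\dashv U$, the proposition that $\ZR^{f}(T,S)\to S$ is proper and of profinite type with $T\to\ZR^{f}(T,S)$ a Q-morphism (resp.\ open immersion) when $T\to S$ is of profinite type (resp.\ strongly of profinite type), and the corollary that separated (resp.\ separated, of finite type) morphisms of $\scr{Q}$-schemes are of profinite type (resp.\ strongly of profinite type) --- exactly the pieces you assemble, in the same way.
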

This is a variant of Nagata embedding (\cite{Con}).
Note that from Proposition \ref{prop:open:imm:ZR},
$X \to \ZR_{S}(X)$ becomes also an open immersion,
if $X$ is separated, of finite type over $S$.

\begin{Rmk}
There are previous constructions
of Zariski-Riemann spaces for ordinary schemes;
for example, see \cite{Tem}.
We can see without difficulty that our construction coincides
with them.
However, the proof will be somewhat technical
and takes some time to check
that these are equivalent,
since the definition is very different:
the previous one is defined by the limit space
of admissible blow ups.
The comparison will be treated in the forthcoming paper.
\end{Rmk}

\section{Appendix: The definition of $\scr{A}$-schemes}
\setcounter{subsection}{1}
In this section, we will briefly review the definition of
$\scr{A}$-schemes, when the algebraic system
$\sigma$ is that of rings. For the general definition
and detailed proofs,
we refer to \cite{Takagi}.

Before we start the definitions,
we will explain the intuitive idea and
the essential differences between $\scr{A}$-schemes and
ordinary schemes.
\begin{enumerate}
\item
The fundamental property of ordinary schemes
is that the global section functor admits the left
adjoint, namely the spectrum functor:
\[
\Spec: \cat{Ring} \rightleftarrows \cat{Sch}^{\op}: \Gamma.
\]
This enables various construction of schemes,
such as fiber products.
However, the construction of the
co-unit $X \to \Spec \Gamma(X)$ of the above adjoint
does not actually use the axiom of schemes
that it is locally isomorphic to the spectrum of a ring;
it just uses the property that the restriction
functor corresponds to localizations:
let us describe it more explicitly.
Let $\scr{O}_{X}$ be a sheaf of functions
on a space $X$. When there is a function $f \in \scr{O}_{X}$,
it determines the zero locus $\beta(f)=\{f=0\}$ on $X$.
This correspondence is the intuitive idea of the
support morphism defined below.

When the function $f$ is restricted to an open set $V$
such that $V \cap \beta(f)=\emptyset$,
then $f|_{V}$ is nowhere vanishing.
Therefore, $f$ must be invertible in $\scr{O}_{X}(V)$.
This is formulated below as the property
which we refer to as `restrictions reflect localizations'.

These setups enable us to construct the counit morphism.
This is why we put emphasis on these properties.
\item On the other hand, we stick on to
coherent underlying spaces, when defining
$\scr{A}$-schemes. This is because
coherent spaces have good properties in nature, and
we can take limits and colimits in the category
of coherent spaces.
This shows that we do not have any reason to `forget'
the coherence properties, even when we consider 
limit and colimit spaces.
We believe that this restriction is not wrong,
since we have already seen in \S1 that there are various
benefits because of this.
\end{enumerate}

\begin{Def}
\begin{enumerate}
\item An \textit{idealic semiring} is a set $R$
endowed with two operators $+$ and $\cdot$, satisfying:
\begin{enumerate}
\item $R$ is a commutative 
monoid with respect to $+$ and $\cdot$,
with two unit elements $0$ and $1$, respectively.
Further, $R$ is idempotent with respect to $+$:
$a+a=a$ for any $a \in R$.
\item The distribution law holds: $(a+b)c=ac+bc$
for any elements $a,b,c \in R$.
\item $0$ is the absorbing element with respect to the multiplication:
$0\cdot a=0$ for any $a \in R$.
$1$ is the absorbing element with respect to the addition.
\end{enumerate}
Note that an idealic semiring has a natural ordering,
defined by $a \leq b \Leftrightarrow a+b=b$.
\item An \textit{II-ring} is an idealic semiring
with idempotent multiplications.
This is conventionally called a \textit{distributive lattice},
used in Stone duality.
\item The category of II-rings are denoted by $\cat{IIRng}$.
\end{enumerate}
\end{Def}

\begin{Def}
\begin{enumerate}
\item A topological space $X$ is \textit{sober},
if any irreducible closed subset $Z$ of $X$
has a unique generic point $\xi_{Z}$,
namely, $Z=\overline{\{\xi_{Z}\}}$.
\item A sober space is \textit{coherent},
if it is quasi-compact, quasi-separated
(namely, the intersection of any two quasi-compact
open subset is again quasi-compact),
and has a quasi-compact open basis.
We denote by $\cat{Coh}$ the category
of coherent spaces and quasi-compact morphisms.
\item For a sober space $X$,
$C(X)$ is the set of all closed subsets
$Z$ of $X$.
This becomes an idealic semiring,
defining the addition as taking intersections,
and the multiplication as taking unions.
Moreover, this semiring is \textit{complete},
i.e. admits infinite summations.
The category of complete II-rings is denoted by $\cat{IIRng$^{\dagger}$}$.
\item For a coherent space $X$,
$C(X)_{\cpt}$ is the set of all closed subsets $Z$ of $X$
such that $X \setminus Z$ is quasi-compact.
This becomes an idealic semiring.
\end{enumerate}
\end{Def}
The correspondence $X \mapsto C(X)_{\cpt}$
gives an equivalence of categories 
$\cat{Coh}^{\op} \to \cat{IIRng}$:
the inverse is given by
$R \mapsto \Spec R$,
where $\Spec R$ is the set of prime ideals of $R$
with the well known topology.
This is the Stone duality.
\begin{Def}
\begin{enumerate}
\item For a ring $R$,
let $\alpha_{1}(R)$ be the set of
finitely generated ideals of $R$,
divided by the equivalence relation generated by
$I \cdot I=I$. This gives a functor
$\cat{Rng} \to \cat{IIRng}$,
where $\cat{Rng}$ is the category of rings.
\item For any ring $R$,
$\alpha_{2}:R \to \alpha_{1}(R)$ is a multiplication-preserving
map, sending $f \in R$ to the principal ideal generated by $f$.
This map gives a natural transformation,
and preserves localizations.
\item Let $X$ be a coherent space.
A $\cat{Coh}^{\op}$-valued 
(in other words, $\cat{IIRng}$-valued)
sheaf $\tau_{X}$ is defined by
$U \mapsto \underrightarrow{\lim}_{V} V$,
where $V$ runs through all quasi-compact open subsets of $U$,
and the inductive limit is taken in the category
of coherent spaces, \textit{not} in the category of 
topological spaces. 
\end{enumerate}
\end{Def}
We are finally in the stage of defining
$\scr{A}$-schemes.
\begin{Def}
\begin{enumerate}
\item An $\scr{A}$-scheme is a triple $(X,\scr{O}_{X},\beta_{X})$
where $X$ is a coherent space,
$\scr{O}_{X}$ is a ring valued sheaf on $X$, and
$\beta_{X}:\alpha_{1}\scr{O}_{X} \to \tau_{X}$
is a morphism of $\cat{IIRng}$-valued sheaves on $X$
(which we refer to as the `support morphism'.
Here, $\alpha_{1}\scr{O}_{X}$ is the sheafification
of $U \mapsto \alpha_{1}\scr{O}_{X}(U)$),
satisfying the following property:
for any two open subsets $U \supset V$ of $X$,
the \textit{restriction maps reflect localizations}, i.e.
the map $\scr{O}_{X}(U) \to \scr{O}_{X}(V)$
factors through $\scr{O}_{X}(U)_{Z}$,
where $Z=U \setminus V$ is a closed subset of $U$
and $\scr{O}_{X}(U)_{Z}$ is the localization
of $\scr{O}_{X}(U)$ along
\[
\{ a \in \scr{O}_{X}(U) \mid \beta_{X}\alpha_{2}(a) \geq Z\}.
\]
\item Let $X=(|X|,\scr{O}_{X},\beta_{X})$
and $Y=(|Y|,\scr{O}_{Y},\beta_{X})$ be two
$\scr{A}$-schemes. A morphism $f:X \to Y$
of $\scr{A}$-schemes is a pair $f=(|f|,f^{\#})$,
where $|f|:|X| \to |Y|$ is a quasi-compact morphism
between underlying spaces,
and $f^{\#}:\scr{O}_{Y} \to f_{*}\scr{O}_{X}$
is a morphism of ring valued sheaves on $Y$ which
makes the following diagram commutative:
\[
\xymatrix{
\scr{O}_{Y} \ar[r]^{f^{\#}} \ar[d]_{\beta_{Y}} &
|f|_{*}\scr{O}_{X} \ar[d]^{\beta_{X}} \\
\tau_{Y} \ar[r]_{|f|^{-1}} & |f|_{*}\tau_{X}
}
\]
\item The spectrum functor 
$\Spec^{\scr{A}}:\cat{Rng} \to \cat{$\scr{A}$-Sch}^{\op}$
from the category of rings to the opposite category of $\scr{A}$-schemes,
is defined as follows:
for a ring $R$, the underlying space is defined by
$X=\Spec R$. The structure sheaf $\scr{O}_{X}$
is the sheafification of $U \mapsto R_{Z}$,
where $Z=X \setminus U$ is the complement closed
subset of $X$, and $R_{Z}$ is the localization along
\[
\{ a \in R \mid (a) \geq Z \}.
\]
The support morphism
$\beta_{X}:\alpha_{1}\scr{O}_{X} \to \tau_{X}$
is the canonical isomorphism. Hence
we set $\Spec^{\scr{A}}R=(X,\scr{O}_{X},\beta_{X})$.

For a homomorphism $f:A \to B$,
we have a morphism $\Spec^{\scr{A}} B \to \Spec^{\scr{A}} A$,
as is well known.
\end{enumerate}
\end{Def}
The spectrum functor is the left adjoint of the global
section functor $\Gamma:\cat{$\scr{A}$-Sch}^{\op} \to \cat{Rng}$.
\begin{Rmk}
\begin{enumerate}
\item There are some differences in the notation with
that of \cite{Takagi}:
in the previous paper, the category of II-rings
is denoted by $\cat{PIIRng}$.
Also, the sheaf $\tau_{X}$ is denoted by $\tau^{\prime}_{X}$.
This is because we are comparing them
with those of sober spaces in \cite{Takagi},
and hence had to distinguish the notation.
However, this is not necessary in this paper.
\item In \cite{Takagi},
presheaves on a coherent space $X$
is defined as a functor $C(X)_{\cpt} \to \cat{Set}$.
In this paper, most of the presheaves are
described in a usual way, namely, we
attach algebras to each \textit{open} subsets of $X$.
However, in some of the definitions and arguments,
we describe sheaves as a functor from $C(X)_{\cpt}$
to simplify the argument. These
two ways of descriptions are essentially the same.
\end{enumerate}
\end{Rmk}

The basic ideas of this paper came up
during the stay in Jussieu University.
We would like to express our gratitude to
their hearty supports during the stay,
especially to Professor V. Maillot
and Professor G. Freixas.
Also, the author owes a lot to
Professor A. Moriwaki 
and colleagues in the HAG seminar, who gave
precious chances of discussions.

\textsc{S. Takagi: Department of Mathematics, Faculty of Science,
Kyoto University, Kyoto, 606-8502, Japan}

\textit{E-mail address}: \texttt{takagi@math.kyoto-u.ac.jp}

\begin{thebibliography}{99}
\bibitem[CN]{CN}
Comfort W.W., Negrepontis S.:
\textit{The theory of ultrafilters},
Grundlehren der mathematischen Wissenschaften
\textbf{211} (1974), Springer-Verlag

\bibitem[Con]{Con}
Conrad, B.:
\textit{Deligne's notes on Nagata compactifications},
J. Ramanujan Math. Soc. \textbf{22} (2007),
no.3, 205-257

\bibitem[EGA4]{EGA4}
Grothendieck, A.:
\textit{\'{E}l\'{e}ments de G\'{e}om\'{e}trie Alg\'{e}brique IV.
\'{E}tude locale des sch\'{e}mas et des morphisms de sch\'{e}mas},
Inst. Hautes \'{E}tudes Sci. Publ. Math. no. 32 (1967) 

\bibitem[KS]{KS}
Kashiwara M., Schapira P.:
\textit{Categories and sheaves},
Grundlehren der mathematischen Wissenschaften
\textbf{332} (2006), Springer-Verlag

\bibitem[CWM]{CWM}
Maclane, S.:
\textit{Categories for the working mathematician},
Graduate Texts in Mathematics, vol. \textbf{5} (1998)
2nd edition, Springer-Verlag

\bibitem[Mat]{Matsumura}
Matsumura, H.:
\textit{Commutative ring theory},
Cambridge Studies in Advanced Math. no. \textbf{8} (1986)
Cambridge University Press

\bibitem[Nag]{Nagata}
Nagata M.:
\textit{Imbedding of an abstract variety in a complete variety},
J. Math. Kyoto \textbf{2}, (1962) no.1, 1-10

\bibitem[Ste]{Steven}
Steven, V.: \textit{Locales and toposes as spaces},
Handbook of Spatial Logics, 
Springer (2007) Chap. 8

\bibitem[Tak]{Takagi}
Takagi S.:
\textit{Construction of  schemes over $\FF_{1}$,
and over idempotent rings: towards
tropical geometry}, preprint
(arXiv:1009.0121)
\bibitem[Tem]{Tem}
Temkin, M.:
\textit{Relative Riemann-Zariski space},
preprint (arXiv:0804.2843v1),
to appear in Israel Journal of mathematics
\end{thebibliography}
\end{document}